\documentclass[a4paper,oneside, 11pt,reqno]{amsart}
\usepackage[
    a4paper,
    left=2.5cm,
    right=2.5cm,
    top=3.0cm,
    bottom=3.0cm
]{geometry}
\allowdisplaybreaks
\usepackage{amssymb,amsmath,mathtools,amsfonts,mathrsfs,calc,physics}
\usepackage{subfigure}
\usepackage{xcolor}
\usepackage[dvipsnames]{xcolor}
\usepackage{epsfig}
\usepackage[all]{xy}
\usepackage[utf8]{inputenc}
\usepackage{graphicx}
\usepackage{xcolor}
\usepackage{comment}
\usepackage{stmaryrd}
\usepackage{enumitem}
\usepackage{dynkin-diagrams}
\usepackage{hyperref} 
\usepackage{ragged2e}
\usepackage{vwcol}
\usepackage{paracol} 
\usepackage{lipsum}
\usepackage{xparse}
\usepackage[pro]{fontawesome5}
\usepackage{tcolorbox}
\usepackage[]{mdframed}
\newtcolorbox{mybox}[1]{colback=blue!3!white,colframe=blue!60!black,fonttitle=\bfseries, title=#1}
\usepackage{faktor}
\usepackage{footmisc} 
\usepackage{matlab-prettifier}
\hypersetup{colorlinks=true,linkcolor=blue,citecolor=RubineRed,urlcolor=blue}
\usepackage{anyfontsize}    
\usepackage[linesnumbered,ruled,noline]{algorithm2e} %\begin{algorithm}[H] 
\SetNlSty{textnormal}{}{:}   
\SetAlgoNlRelativeSize{-1}
\SetKwComment{Comment}{\textcolor{black}{$\triangleright$ }}{\textcolor{ForestGreen}{}}
\usepackage{matlab-prettifier}
\newcommand{\Rn}{\mathbb{R}^n}
\newcommand{\mc}[1]{\mathcal{#1}}
\newcommand{\mb}[1]{\mathbb{#1}}

\newtheorem*{claim}{Claim} 
\newenvironment{proofofclaim}%
{\begin{proof}}%
  {\end{proof}}
\newtheorem{theorem}{Theorem}[section] 
\newtheorem{lemma}[theorem]{Lemma}  
\newtheorem{corollary}[theorem]{Corollary}
\newtheorem{proposition}[theorem]{Proposition}

\theoremstyle{definition}
\newtheorem{example}[theorem]{Example}

\newtheorem{definition}[theorem]{Definition}
\newtheoremstyle{lemmastyle}% nombre
  {3pt}
  {3pt}
  {\itshape}
  {}
  {\normalfont\scshape}% 
  {.}
  {.5em}
  {}
\theoremstyle{lemmastyle}
\newenvironment{proofoflemma}%
{\begin{proof}}%
  {\end{proof}}
\title[Nonlocal Equations on Domains and Manifolds Without Symmetries]{Nonlocal Equations on Domains and Manifolds Without Symmetries}

\begin{document}

\maketitle

\begin{center}
\author{\textsc{Santiago Castellanos}\footnote{
Universidad de Santiago de Chile, Santiago de Chile.
Email: \email{santiago.castellanosg10@gmail.com}.\\
This article is based on the research conducted during the author's Ph.D. thesis \cite{Castellanos 1}.}}
\end{center}

\begin{abstract}
In this work, we establish existence and regularity results for solutions to nonlinear nonlocal equations of the form $f(\Delta)u = U(x,u)$ and $f(\Delta_{g})u = U(x,u)$ in Euclidean and Riemannian settings, respectively. We prove existence results across three distinct geometric scenarios: bounded domains in $\mathbb{R}^n$, perturbed equations on compact Riemannian manifolds, and noncompact Cartan-Hadamard manifolds under suitable geometric curvature conditions.
\vspace{0.2cm}

\textbf{MSC Codes:}~35R11 (primary), 58J40, 53C21, 35S05, 46E35, 81T30 (secondary).
\justify
\textbf{Keywords:}~Nonlocal Analysis, Riemannian Geometry, Functional Analysis, Mathematical Physics.
\end{abstract}

\section{Introduction}

In this article, we study generalized nonlocal equations, both linear and 
nonlinear, of the form
\begin{equation}\label{ecuacion no local}
    f(\Delta)u = U(x, u(x)),
\end{equation}
where $f$ is an arbitrary measurable real-valued function defined on 
$\mathbb{R}$, and $U$ is a 
prescribed nonlinearity. Our central objective is to prove that, under 
appropriate hypotheses on $f$ and $U$, equations of this form admit smooth 
solutions in general scenarios, entirely without the assumption of underlying symmetries. The investigation of nonlocal equations of type 
\eqref{ecuacion no local} is strongly motivated by their relevance in
contemporary mathematical physics, particularly in \emph{string theory} and 
\emph{nonlocal cosmological models} \cite{Kostelecky}. Among the most paradigmatic examples is the 
generalized \emph{bosonic string equation}
\begin{equation}
    \Delta e^{-c\Delta}\phi - U(x,\phi) = 0, \qquad c > 0.
\end{equation}
Another 
central example is the \emph{open $p$-adic string equation} for the scalar tachyon 
field,
\begin{equation}
    p^{a\partial_t^2}\phi = \phi^p, \qquad a > 0,
\end{equation}
studied rigorously in \cite{Vladimirov 2}. 
At the level of generality of \eqref{ecuacion no local}, neither pseudo-differential operators in the 
classical sense of H\"{o}rmander \cite{Hormander} nor the theory of classical symbols or conditions are necessary.
We seek a direct meaning for the operator 
$f(\Delta)$, or its equivalent on Riemannian manifolds; to this end, we represent the nonlocal part of the equation \eqref{ecuacion no local} via the Fourier transform, functional calculus, and spectral theory, following the approach initiated by G\'{o}rka, Prado, 
and Reyes in \cite{Gorka 6}. 
Once interpreted, we proceed to solve the nonlocal equation in different geometrical and topological settings without symmetry; namely, bounded Euclidean domains, compact Riemannian manifolds, and complete noncompact Cartan-Hadamard manifolds on which the operator $f(\Delta) + I$ (resp., $f(\Delta_{g}) + I$) is well defined and acts naturally.

In the Euclidean context for bounded domains $\Omega \subset \Rn$ and the space $\Rn$, the conditions imposed on $f: \mathbb{R} \to \mathbb{R}$ are the following, \\
$(P)$~~\text{The function} $s \mapsto f(-s^{2})$~\text{is nonnegative}.\\
$(E_{\beta})$ There exist real numbers $\beta, R, M > 0$ such that
$$M(1+|\xi|^{2})^{\frac{\beta}{2}} \leq f(-|\xi|^{2})~~\forall \xi~\text{such that}~ |\xi|> R.$$
Functions satisfying these conditions are called $\mathcal{G}^{\beta}$-symbols, and we denote the class of such functions by $\mathcal{G}^{\beta}$. 
In the Euclidean setting, the nonlocal operator $f(\Delta)$ is interpreted through the Fourier transform as
$
f(\Delta)u=\mathcal{F}^{-1}\bigl(f(-|\xi|^2)\mathcal{F}(u)(\xi)\bigr),
$
so that $f$ is evaluated at $-|\xi|^2\in(-\infty,0]$.
We work with Riemannian manifolds $(M,g)$, whether on compact connected manifolds without boundary, or complete noncompact Riemannian manifolds, the nonlocal equation we work with varies slightly with respect to \eqref{ecuacion no local}, since the nonlocal term is now $f(\Delta_{g})$, where $\Delta_{g} = -\text{div}_{g}(\nabla_{g})$ is the positive Laplace-Beltrami operator, and the conditions on $f$ are formulated in terms of the discrete spectrum of $\Delta_g$.

In this article, we study the existence of solutions in the absence of symmetry across three main contexts:
\renewcommand{\theenumi}{\Roman{enumi}}
\begin{enumerate}
    \setcounter{enumi}{0}
        \item For the case of a bounded domain $\Omega \subset \mathbb{R}^n$ with Lipschitz boundary, we first recall the ambient Euclidean space without symmetry: for a fixed symbol $f \in \mc{G}^{\beta}$, we consider the Hilbert space $\mc{H}^{\beta}(f)$ given by all measurable real-valued functions $g$ on $\mathbb{R}^n$ whose Fourier transform $\mathcal{F}(g)$ exists and satisfies
    \begin{equation}
   \int_{\mathbb{R}^n} \left[ 1 + f\left(-|\xi|^2\right) \right]^2 \left| \mathcal{F}(g)(\xi) \right|^2 \, d\xi < \infty.
    \end{equation}
This space satisfies $\mathcal{H}^{\beta}(f) \subseteq L^2(\mathbb{R}^n)$, and allows us to give an interpretation of $f(\Delta) + I$. 
On a bounded domain $\Omega \subset \mathbb{R}^n$ with Lipschitz boundary, the Fourier transform is applied to the zero extension $\widetilde{u}$ of functions $u$ defined on $\Omega$. In the case of a bounded domain $\Omega \subset \mathbb{R}^n$ with Lipschitz boundary, we use Fourier transform on extensions by zero of functions defined on $\Omega$. We define a subfamily of symbols $\mc{G}^{\beta,\gamma} \subset \mc{G}^{\beta}$ which allows us to control the nonlocal operator on $\Omega$. 
Generally for $f\in \mc{G}^{\beta, \gamma}$,  the nonlocal operator on $\Omega$ is defined by
$$
f(\Delta_\Omega)u
=\mc{R}_{\Omega}f(\Delta)\widetilde{u}=
\left(\mathcal{F}^{-1}\bigl(f(-|\xi|^2)\mathcal{F}(\widetilde{u})(\xi)\bigr)\right)\Big|_{\Omega}.
$$
Accordingly, for a fixed symbol $f\in\mathcal{G}^{\beta, \gamma}$, the associated Hilbert space on $\Omega$ is defined by
\begin{equation}
\mathcal{H}_{0}^{\beta/2}(\Omega;f)
=
\overline{\{\widetilde{\eta} \mid \eta \in C^{\infty}_{0}(\Omega)\}}^{\mc{H}^{\beta/2}(f)}.
\end{equation}
in which $\widetilde{\eta}$ is the zero-extension on $\mb{R}^{n}$ of $\eta$. 
This space is, intuitively, the subspace of $\mathcal{H}^{\beta/2}(f)$ consisting of functions supported in $\overline{\Omega}$ after zero extension.

\item For compact Riemannian manifolds $(M,g)$ without boundary, we use the fact that the spectrum of $\Delta_{g}$ is a discrete set of non-negative real numbers $0 \leq \lambda_1 <\lambda_2 \leq \dots \to \infty$, admitting an orthonormal basis $\{\omega_{n} \mid n \in \mathbb{N}\}$ of eigenfunctions in $L^{2}(M,g)$. Here, without the action of any isometric group, we work with the fractional Sobolev space
\begin{equation}
H^s(M,g) = \left\{ h=\sum_{i=1}^{\infty}\alpha_{i}\omega_{i} ~\Big|~ \sum_{i=1}^{\infty}|\alpha_{i}|^{2}(1+\lambda_{i})^{s} < \infty \right\}, \quad s \in \mathbb{R}_{>0},
\end{equation}
and define the space
    \begin{equation}\label{espacio de Sobolev en verson sumas para intro}
        \widetilde{H}_{f}(M,g) = \left\{h=\sum_{i=1}^{\infty} \alpha_{i}\omega_{i} ~\Big|~ \sum_{i=1}^{\infty}|\alpha_{i}|^{2}(1 + f(\lambda_{i}))^{2} <  \infty   \right\}.
    \end{equation}
    The growth conditions imposed on $f$ will guarantee that the embedding $\widetilde{H}_{f}(M,g) \hookrightarrow H^{2k}(M,g)$ is continuous.
    
    \item For noncompact Riemannian manifolds, we focus on Cartan--Hadamard $n$-manifolds. Under the assumption that the sectional curvature bounds diverge at infinity, $K_{p}(r) \to -\infty$ as $r \to \infty$, Donnelly's theorem \cite[t.1.1]{Donnelly} ensures that $\Delta_g$ retains a discrete spectrum. This property allows us to utilize the space $\widetilde{H}_{f}(M,g)$ defined in \eqref{espacio de Sobolev en verson sumas para intro}, provided an additional eigenfunction regularity condition is met to recover the necessary compact embeddings.
\end{enumerate}

\justify

We summarize our main theorems below. First, we study nonlocal equations on bounded domains 
$\Omega \subset \mathbb{R}^n$ for symbols $f \in \mc{G}^{\beta,\gamma}\subset \mc{G}^{\beta}$.
The geometry of the domain allows us to introduce the space $\mc{H}^{\beta/2}_{0}(\Omega;f)$ to 
exploit the compact Rellich--Kondrachov embedding on $\Omega$ of \cite[t.1.22]{Rosenberg}. We obtain the following:

\begin{theorem}[\ref{teorema en Rn, para dominios acotados}]
Let $\Omega\subset\mathbb{R}^n$ be an open, bounded domain with Lipschitz boundary. Let $f\in\mathcal{G}^{\beta,\gamma}$ with $\gamma\geq\beta>n$, 
$n>1$. Let $U:\Omega\times\mathbb{R}\to\mathbb{R}$ be a Carathéodory non-linearity which is a $C^{1}$ in the second variable, and assume there exist constants $\alpha>1$, $\mu\ge 1$, $C>0$, and a function $h\in L^{2}(\Omega)$ such that
\begin{equation}\label{desigualdad de caso acotado intro}
|U(x,y)+y| \le C(|h(x)|+|y|^{\alpha}), \qquad \left|\frac{\partial}{\partial y}\bigl(U(x,y)+y\bigr)\right| \le C(1+|y|^{\mu}).
\end{equation}
Then there exist $0 < \tau < 1$ and $0 < \rho_{\tau} < 1$ such that whenever $\norm{h}_{L^{2}(\Omega)} < \rho_{\tau}$, there exists a solution $u \in \mc{D}(L_{\Omega})$ of the non-linear equation $f(\Delta_{\Omega})u-U(\cdot,u)=0$.
\end{theorem}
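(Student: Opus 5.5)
We recast the equation as a fixed point problem for the operator $L_\Omega := f(\Delta_\Omega)+I$ on its domain $\mc{D}(L_\Omega)\subset \mathcal{H}_0^{\beta/2}(\Omega;f)$. Writing $N(x,y):=U(x,y)+y$, the equation $f(\Delta_\Omega)u-U(\cdot,u)=0$ is equivalent to
\[
L_\Omega u = N(\cdot,u), \qquad\text{i.e.}\qquad u = L_\Omega^{-1}N(\cdot,u)=:T(u).
\]
The first step is to show that $L_\Omega:\mc{D}(L_\Omega)\to L^2(\Omega)$ is invertible with bounded inverse. For $\widetilde u$ supported in $\overline\Omega$, Plancherel and the positivity $(P)$ give
\[
\langle L_\Omega u,u\rangle_{L^2(\Omega)}=\int_{\mathbb{R}^n}(1+f(-|\xi|^2))|\mathcal F\widetilde u|^2\,d\xi\ \ge\ \|u\|_{L^2}^2 .
\]
This coercivity in the $\mathcal{H}^{\beta/2}$-type form norm lets us use Lax--Milgram, or Friedrichs' extension of the closed quadratic form, to define $L_\Omega$ as a self-adjoint operator with $L_\Omega^{-1}$ bounded on $L^2(\Omega)$. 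The estimate $(E_\beta)$, together with the extra control in $\mathcal{G}^{\beta,\gamma}$ ($\gamma\ge\beta$), then bounds $\mc{D}(L_\Omega)$ continuously into $H^{\beta/2}(\Omega)$. Since $\beta>n$, we have $\beta/2>n/2$, so Sobolev embedding (Rellich--Kondrachov on the Lipschitz domain) gives $\mc{D}(L_\Omega)\hookrightarrow L^\infty(\Omega)\cap C(\overline\Omega)$. I expect this to be the main obstacle. We have only a form-domain statement, and the nonlocal operator restricted to $\Omega$ is not a standard pseudodifferential operator. So we must carefully verify that the graph norm $\|u\|_{L^2}+\|L_\Omega u\|_{L^2}$ dominates $\|u\|_{L^\infty}$. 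I would do this first via the form norm $\|(1+f)^{1/2}\mathcal F\widetilde u\|_{L^2}\gtrsim\|\widetilde u\|_{H^{\beta/2}}$ for $|\xi|>R$, using the bounded-frequency part for $|\xi|\le R$.

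With $X=\mc{D}(L_\Omega)$ equipped with the graph norm and the embedding constant $C_\infty$, work on the ball $B_\tau=\{u\in X:\|u\|_X\le\tau\}$ with $\tau<1$. By the growth bound in \eqref{desigualdad de caso acotado intro}, since $|u|\le C_\infty\tau$,
\[
\|N(\cdot,u)\|_{L^2}\le C\bigl(\|h\|_{L^2}+|\Omega|^{1/2}(C_\infty\tau)^\alpha\bigr).
\]
Hence $\|T u\|_X\le C'(\rho_\tau+\tau^\alpha)$. Because $\alpha>1$, we may choose $\tau$ small so that $C'\tau^\alpha\le\tau/2$, and then $\rho_\tau$ small so that $C'\rho_\tau\le\tau/2$. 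This gives $T(B_\tau)\subset B_\tau$.

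For the contraction, use the derivative bound and the mean value theorem:
\[
|N(x,u)-N(x,v)|\le C(1+(C_\infty\tau)^\mu)|u-v| .
\]
This only gives a Lipschitz constant of order $C'$, not a small one. So instead of Banach's theorem I would apply Schauder's fixed point theorem. The map $T$ is continuous on $B_\tau$ in the $L^2$ topology, by the Lipschitz estimate together with boundedness of $L_\Omega^{-1}$. Moreover, $L_\Omega^{-1}:L^2\to L^2$ is compact, since $X$ embeds compactly into $L^2(\Omega)$ by Rellich--Kondrachov. Therefore $T(B_\tau)$ is relatively compact in $L^2$, and $B_\tau$ is convex and $L^2$-closed. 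Schauder then yields a fixed point $u\in B_\tau\subset\mc{D}(L_\Omega)$, which solves $f(\Delta_\Omega)u=U(\cdot,u)$.

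Alternatively, one can try to get a genuine contraction. This would require $N(x,y)$ to vanish to higher order at $y=0$, which we do not have; this is why I fall back on Schauder.
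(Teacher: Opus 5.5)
Your proof is correct. It follows the paper's overall scheme: you rewrite the equation as $u=L_\Omega^{-1}(U(\cdot,u)+u)$, choose $\tau$ and then $\rho_\tau$ small to make a ball invariant, and apply Schauder. The difference is the space and topology in which you apply Schauder.

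The paper fixes an intermediate exponent $s\in(\tfrac n2,\tfrac\beta2)$ and works on the ball $\mathcal{M}_\tau$ of $H^s(\Omega)$ in the $H^s$ norm. Rellich only gives $L^2$-convergent subsequences of $\mathcal{N}(u_n)$, so the paper must upgrade them to $H^s$-Cauchy sequences using the interpolation inequality between $L^2(\Omega)$ and $H^{\beta/2}(\Omega)$. It then proves $H^s$-continuity by the mean value theorem plus a measurable selection of the maximizing $t(x)$.

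You instead use the graph-norm ball $B_\tau\subset\mathcal{D}(L_\Omega)$ with the $L^2$ topology. Relative compactness of $T(B_\tau)$ follows directly from the compact embedding $\mathcal{D}(L_\Omega)\hookrightarrow H^{\beta/2}(\Omega)\hookrightarrow L^2(\Omega)$. Continuity is a uniform $L^2$-Lipschitz bound on $B_\tau$, where the pointwise bound $|tu+(1-t)v|\le C_\infty\tau$ makes the selection argument unnecessary. Your route is shorter, needs no intermediate exponent and no interpolation, and yields a solution controlled in the graph norm. The paper's route buys a ball that is trivially closed and convex in a standard Sobolev space.

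That closedness is the one step you assert without justification. $B_\tau$ is $L^2$-closed because $L_\Omega$ is self-adjoint, hence closed. Suppose $u_k\in B_\tau$ and $u_k\to u$ in $L^2(\Omega)$. A subsequence has $L_\Omega u_k\rightharpoonup w$ weakly. Passing to the limit in $\langle u_k,L_\Omega v\rangle=\langle L_\Omega u_k,v\rangle$ for all $v\in\mathcal{D}(L_\Omega)$ gives $u\in\mathcal{D}(L_\Omega^*)=\mathcal{D}(L_\Omega)$ with $L_\Omega u=w$. Weak lower semicontinuity then gives $\|u\|_{\mathcal{D}(L_\Omega)}\le\tau$.

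Also, rather than re-deriving $L_\Omega$ via Lax--Milgram or Friedrichs, cite Lemma \ref{sistema lineal-solucion fuerte}. It identifies the form operator with the concrete operator $\mathcal{R}_\Omega\mathcal{F}^{-1}\bigl((1+f(-|\xi|^2))\mathcal{F}(\widetilde{u})\bigr)$ on its maximal domain. That identification is what makes your fixed point a solution of $f(\Delta_\Omega)u=U(\cdot,u)$ in the sense the theorem uses.

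A minor point: only $(P)$ and $(E_\beta)$ enter the embedding into $H^{\beta/2}$. The conditions $(C_\gamma)$ and $(I)$ serve only to ensure $C_0^\infty(\Omega)\subset\mathcal{H}^{\beta/2}(f)$.
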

The second result, which also relies on classical Rellich-Kondrachov embeddings, generalizes the localized Euclidean 
result of \cite[t.3.3]{Gorka 6} to compact Riemannian manifolds.
In this case, equation \eqref{ecuacion no local}
is perturbed by a term $\delta\psi L(\phi)$, where 
$L: L^2(M,g) \to L^2(M,g)$ is a bounded linear operator and 
$\psi \in C^\infty(M)$. We establish the following

\begin{theorem}[\ref{teorema del caso perturbado para Riemannian manifolds}]
Let $(M,g)$ be a compact, connected and oriented Riemannian $n$-manifold without boundary. Let $U : M \times \mathbb{R} \to \mathbb{R}$
be a Carathéodory non-linearity which is a  $C^{1}$ in the second variable, $f$ a 
continuous, nonnegative real-valued function satisfying condition (\ref{codicion de elipticidad de una fucnion real valuada para Riemannian manifolds}). Let $L : L^{2}(M,g) \to L^2(M,g)$ be a linear and
bounded operator.
%be a linearbounded operator and $C^{1}$
Suppose there exist constants $\alpha > 1$, $\beta \ge 1$, $C > 0$, and a $\delta > 0$ sufficiently small, 
a function $\psi \in C^{\infty}(M)$, $\chi \in L^{\infty}(M,g)$ 
and a function $h\in L^2(M,g)$, such that the following holds
\begin{equation}\label{desigualdad numero 1 del caso pertubado intro}
    |U(x,y) + y| \le C\,\Bigl(|h(x)| + |y|^\alpha\Bigr),
\end{equation}
\begin{equation}\label{desigualdad numero 2 del caso pertubado intro}
    \left|\frac{\partial}{\partial y}\Bigl(U(x,y) + y\Bigr)\right| \le C\,\Bigl(1+ |\chi(x)| + |y|^\beta\Bigr).
\end{equation}
Then, if $\norm{h}_{L^{2}(M,g)}$ is sufficiently small, there exists a solution $\phi\in \widetilde{H}_f(M,g)$ of the non-linear equation
\begin{equation}\label{ecuacion caso perturbado Riemanniano intro}
    f(\Delta_{g})\phi - \delta\psi L(\phi) - U(x,\phi)= 0
\end{equation}
\end{theorem}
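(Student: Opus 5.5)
The plan is a fixed point argument built on the invertible operator $A:=f(\Delta_g)+I$. First I rewrite \eqref{ecuacion caso perturbado Riemanniano intro} as
\begin{equation*}
(f(\Delta_g)+I)\phi = \delta\psi L(\phi) + \bigl(U(x,\phi)+\phi\bigr) =: N(\phi).
\end{equation*}
On the eigenbasis, $A\omega_i=(1+f(\lambda_i))\omega_i$, and $1+f(\lambda_i)\ge 1$. So $A$ is an isometric isomorphism from $\widetilde{H}_f(M,g)$, with norm $\|\phi\|_f^2=\sum|\alpha_i|^2(1+f(\lambda_i))^2$, onto $L^2(M,g)$. Hence $A^{-1}:L^2\to\widetilde{H}_f$ has norm $1$. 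The equation becomes the fixed point problem $\phi=T(\phi):=A^{-1}N(\phi)$ in $\widetilde{H}_f(M,g)$.

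The key ingredient is the ellipticity condition (\ref{codicion de elipticidad de una fucnion real valuada para Riemannian manifolds}). I expect it to give $1+f(\lambda_i)\ge c(1+\lambda_i)^{k}$ for some $k$ with $2k>n/2$. Then $\widetilde{H}_f\hookrightarrow H^{2k}(M,g)\hookrightarrow C^0(M)\cap L^\infty$ by Sobolev embedding on compact manifolds, so $\|\phi\|_\infty\le C_S\|\phi\|_f$. This lets me bound the nonlinearity as follows. First,
\[
\|U(\cdot,\phi)+\phi\|_{L^2}\le C\bigl(\|h\|_{L^2}+\|\phi\|_\infty^{\alpha}\,\mathrm{vol}(M)^{1/2}\bigr)\le C\|h\|_{L^2}+C'\|\phi\|_f^\alpha.
\]
Second, by the mean value theorem and the derivative bound (\ref{desigualdad numero 2 del caso pertubado intro}), for $\phi_1,\phi_2$ in the ball $B_r=\{\|\phi\|_f\le r\}$,
\[
\|(U(\cdot,\phi_1)+\phi_1)-(U(\cdot,\phi_2)+\phi_2)\|_{L^2}\le C\bigl(1+\|\chi\|_\infty+(C_Sr)^\beta\bigr)\|\phi_1-\phi_2\|_{L^\infty}\mathrm{vol}(M)^{1/2}.
\]
The perturbation term satisfies $\|\delta\psi L\phi\|_{L^2}\le\delta\|\psi\|_\infty\|L\|\|\phi\|_{L^2}$, and $\|\phi\|_{L^2}\le\|\phi\|_f$.

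The main obstacle is the Lipschitz constant. The linear part of $U+y$ contributes a factor $C(1+\|\chi\|_\infty)$, which need not be small, so Banach's theorem does not apply directly. I will instead use Schauder's fixed point theorem on the closed convex ball $B_r\subset\widetilde{H}_f$. For invariance, the estimates above give
\[
\|T\phi\|_f\le \delta\|\psi\|_\infty\|L\|r+C\|h\|_{L^2}+C'r^\alpha.
\]
Since $\alpha>1$, I choose $r$ small so that $C'r^{\alpha}\le r/3$. I take $\delta$ small so that $\delta\|\psi\|_\infty\|L\|\le 1/3$. I require $\|h\|_{L^2}\le r/(3C)$. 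Together these give $T(B_r)\subset B_r$.

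For compactness and continuity I factor $T$ through a compact embedding. If the ellipticity exponent is strictly above the critical one, then $\widetilde{H}_f\hookrightarrow H^{2k}\hookrightarrow\hookrightarrow C^0(M)$ by Rellich--Kondrachov. The map $N$ is continuous from $C^0$ to $L^2$: the $U$ part is Lipschitz there by the estimate above, and the $L$ part is continuous via $C^0\subset L^2$. Finally $A^{-1}$ is continuous from $L^2$ to $\widetilde{H}_f$. Thus $T$ is compact and continuous on $B_r$, and Schauder yields a fixed point $\phi\in\widetilde{H}_f$, which is the desired solution.

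If the ellipticity is only borderline, I would instead use the diagonal structure. Since $f(\lambda_i)\to\infty$, truncations of $A^{-1}$ to finitely many modes are finite-rank and converge to $A^{-1}$ in the operator norm from $L^2$ to $\widetilde{H}_{\tilde f}$ for a slightly weaker weight $\tilde f$. I would run the Schauder argument in that weaker space and then bootstrap regularity to $\widetilde{H}_f$ through the identity $\phi=A^{-1}N(\phi)$.
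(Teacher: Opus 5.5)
Your argument is correct, but it is organized differently from the paper's proof.

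\textbf{The paper's route.} The paper runs Schauder on the ball $\mathcal{K}_\varepsilon=\{\|u\|_{H^s(M)}\le\varepsilon\}$ in the lower-regularity space $H^s(M)$, with $s>n/2$. It gets compactness of $u\mapsto L_c^{-1}Q(\cdot,u)$ in two steps. First, Rellich--Kondrachov yields a subsequence of images converging in $L^\infty$, hence in $L^2$. Then the interpolation inequality for $H^s=[L^2,H^r]_\theta$, together with the uniform $H^r$ bound from $\widetilde{H}_f\hookrightarrow H^r$, upgrades $L^2$-Cauchy to $H^s$-Cauchy. Continuity is proved separately, via the mean value theorem and a measurable selection $t(x)$.

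\textbf{Your route.} You work on a ball in $\widetilde{H}_f$ itself and factor $T=A^{-1}\circ N$ through three maps: the compact embedding $\widetilde{H}_f\hookrightarrow C^0(M)$, the map $N:C^0\to L^2$, which is Lipschitz on sup-norm balls, and the isometry $A^{-1}:L^2\to\widetilde{H}_f$. This gives continuity of $T$ and relative compactness of $T(B_r)$ in one stroke. It needs no interpolation and no measurable selection; your pointwise bound $\sup_t|\partial_y(U+y)|\le C(1+\|\chi\|_\infty+(C_Sr)^\beta)$ already suffices. It also measures the smallness of the solution directly in $\widetilde{H}_f$. A minor bonus is that the smallness condition on $\delta$ involves no embedding constants, since $\|\phi\|_{L^2}\le\|\phi\|_{\widetilde{H}_f}$ and $A^{-1}$ is an isometry.

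\textbf{What each buys.} The paper's use of $H^s$-balls keeps this proof parallel to its other two existence theorems, which are also run on $H^s$-balls. In the compact case, your route is the shorter one.

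\textbf{One correction.} There is nothing to ``expect'' and no borderline case. The ellipticity hypothesis \eqref{codicion de elipticidad de una fucnion real valuada para Riemannian manifolds} is assumed for every $k>0$, so
$1+f(\lambda_i)\ge C_k(1+\lambda_i^2)^{k/2}\ge C_k2^{-k/2}(1+\lambda_i)^k$ holds for all $k$. You may therefore take $2k>n/2$, as large as you like, which makes $\widetilde{H}_f\hookrightarrow H^{2k}(M)\hookrightarrow C^0(M)$ compact. State this one-line verification and delete the final fallback paragraph, since that situation cannot occur under the stated hypothesis.
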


The third result extends the foregoing theory to a class of noncompact 
Riemannian manifolds. A Cartan-Hadamard $n$-manifold is a complete, oriented, 
simply connected Riemannian manifold with non-positive (or negative) sectional curvature $K$; 
such manifolds are diffeomorphic to $\mathbb{R}^n$ by the Cartan--Hadamard 
theorem \cite{DoCarmo}.
Under the condition $K_{p}(r) \to -\infty$ as $r \to \infty$, where
\begin{equation}
K_{p}(r) = \sup_{x \in M} \left\{ K(x, \pi) \mid d_{g}(p,x) \geq r, \, \pi = \text{span}\{u,v\} \subset T_x M \text{ is a 2-plane} \right\}.
\end{equation}
Donnelly's theorem 
\cite[t.1.1]{Donnelly} guarantees that $\Delta_g$ has discrete spectrum. 
An additional eigenfunction regularity condition, see \eqref{condicion sombrero}, on 
the growth of $\|\nabla^k\omega_j\|_{L^\infty(M)}$ is required to recover a compact embedding. We obtain the following

\begin{theorem}[\ref{teorema finnal para el caso no compacto}]
     Let $(M,g)$ be a Cartan–Hadamard $n$-manifold 
      satisfying $K_{p}(r) \to -\infty$ as $r \to \infty$ for any $p \in M$, and assume condition \eqref{condicion sombrero} in the case $\ell=0$ for some real $s>0$. 
      Suppose that $f$ satisfies \eqref{codicion de elipticidad de una fucnion real valuada para Riemannian manifolds},
      that $U: M \times \mathbb{R} \to \mathbb{R}$ is a Carathéodory non-linearity which is a $C^{1}$ in the second variable, and that there exist 
      constants $\alpha > 1$, $\beta \geq 1$, $C > 0$, and a 
      function $h \in L^2(M,g)$ such that the following inequalities hold
\begin{equation}\label{desigualdad numero 1 del caso completo intro}
|U(x, y) + y| \leq C \left(|h(x)| + |y|^\alpha \right), \quad 
    \left| \frac{\partial}{\partial y} \left( U(x, y) + y \right) \right| \leq C ( 1 + |y|^\beta ).
\end{equation}
Then, there exists $\varepsilon \in (0,1]$ and $0 < \rho_{\varepsilon} < 1$ such that whenever $\norm{h}_{L^2(M)} < \rho_{\varepsilon}$, there exists a solution $\psi \in \widetilde{H}_f(M,g)$ to the non-linear equation $f(\Delta_g)\psi - U(x, \psi) = 0.$
\end{theorem}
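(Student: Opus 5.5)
The plan is to rewrite the equation as a fixed-point problem for $(f(\Delta_g)+I)^{-1}$ and solve it by a contraction argument in $\widetilde{H}_f(M,g)$. Because $U$ enters as $U(x,\psi)+\psi$, the equation $f(\Delta_g)\psi-U(x,\psi)=0$ is equivalent to
\[
(f(\Delta_g)+I)\psi = N(\psi), \qquad N(\psi)(x):=U(x,\psi(x))+\psi(x).
\]
Donnelly's theorem gives a discrete spectrum $\{\lambda_j\}$ with an orthonormal eigenbasis $\{\omega_j\}$. Hence $L:=f(\Delta_g)+I$ acts diagonally, $L\bigl(\sum\alpha_j\omega_j\bigr)=\sum(1+f(\lambda_j))\alpha_j\omega_j$. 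Since $f\ge0$, $L$ is an isometric isomorphism from $\widetilde{H}_f(M,g)$ onto $L^2(M,g)$, with norm $\|\psi\|_{\widetilde H_f}=\|L\psi\|_{L^2}$. The problem therefore reduces to finding a fixed point of
\[
T(\psi):=L^{-1}N(\psi)
\]
in a small closed ball $B_\varepsilon\subset\widetilde{H}_f(M,g)$.

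\textbf{Step 1: a sup-norm bound.} The key analytic input is the embedding $\widetilde{H}_f(M,g)\hookrightarrow L^\infty(M)$. I would get it from condition \eqref{condicion sombrero} with $\ell=0$, which I read as $\|\omega_j\|_{L^\infty}\lesssim(1+\lambda_j)^{s}$, combined with the growth condition \eqref{codicion de elipticidad de una fucnion real valuada para Riemannian manifolds} on $f$. Cauchy--Schwarz gives
\[
\|\psi\|_\infty\le\sum_j|\alpha_j|\,\|\omega_j\|_\infty\le\Bigl(\sum_j|\alpha_j|^2(1+f(\lambda_j))^2\Bigr)^{1/2}\Bigl(\sum_j\frac{(1+\lambda_j)^{2s}}{(1+f(\lambda_j))^2}\Bigr)^{1/2}.
\]
The second factor must be finite. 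This needs Weyl-type growth of $\lambda_j$, which is plausible for discrete-spectrum Cartan--Hadamard manifolds, or simply whatever summability the ellipticity condition provides. I expect this summability to be \emph{the main obstacle}. If the stated hypotheses do not give it directly, I would try to deduce it from the compactness of the resolvent together with the polynomial lower bound on $f$ in \eqref{codicion de elipticidad de una fucnion real valuada para Riemannian manifolds}. The outcome of this step is $\|\psi\|_\infty\le C_0\|\psi\|_{\widetilde H_f}$.

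\textbf{Step 2: $T$ maps the ball into itself.} For $\psi\in B_\varepsilon$, the first inequality in \eqref{desigualdad numero 1 del caso completo intro} gives
\[
\|N(\psi)\|_{L^2}\le C\bigl(\|h\|_{L^2}+\|\psi\|_\infty^{\alpha-1}\|\psi\|_{L^2}\bigr)\le C\|h\|_{L^2}+CC_0^{\alpha-1}\varepsilon^{\alpha},
\]
using $\|\psi\|_{L^2}\le\|\psi\|_{\widetilde H_f}$. Since $\alpha>1$, fix $\varepsilon\in(0,1]$ with $CC_0^{\alpha-1}\varepsilon^{\alpha}\le\varepsilon/2$, and then take $\rho_\varepsilon=\varepsilon/(2C)$ (shrunk below $1$ if needed). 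Whenever $\|h\|_{L^2}<\rho_\varepsilon$, we get $\|T\psi\|_{\widetilde H_f}=\|N(\psi)\|_{L^2}\le\varepsilon$.

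\textbf{Step 3: $T$ is a contraction.} Let $\psi_1,\psi_2\in B_\varepsilon$. The mean value theorem in $y$ (valid since $U$ is $C^1$ in $y$) and the derivative bound give
\[
|N(\psi_1)-N(\psi_2)|\le C\bigl(1+\max_i\|\psi_i\|_\infty^{\beta}\bigr)|\psi_1-\psi_2|.
\]
This is not small on its own, because of the constant $1$. I would fix it by using the identity $\partial_y(U+y)|_{y=0}$ together with the first bound: the condition $|U(x,y)+y|\le C(|h|+|y|^\alpha)$ with $\alpha>1$ does not force the derivative at $0$ to vanish, so smallness must come from elsewhere. Failing that, I would use the compactness of $L^{-1}:L^2\to L^2$ and switch to Schauder's fixed point theorem on $B_\varepsilon$, viewed as a convex subset of $L^2$ that is compact there. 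Continuity of $N$ on $L^2$-bounded, $L^\infty$-bounded sets follows from the derivative bound. The compact embedding $\widetilde H_f\hookrightarrow L^2$ follows from $f(\lambda_j)\to\infty$ and the discrete spectrum. Schauder then yields a fixed point $\psi\in B_\varepsilon\subset\widetilde H_f(M,g)$, which solves the equation.
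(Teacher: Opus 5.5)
The gap is in Step 1, and everything downstream rests on it. You need $\|\psi\|_{L^\infty}\le C_0\|\psi\|_{\widetilde{H}_f}$ both for $\||\psi|^\alpha\|_{L^2}\le\|\psi\|_{L^\infty}^{\alpha-1}\|\psi\|_{L^2}$ in Step 2 and for the uniform Lipschitz bound on $N$ in Step 3.

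You read \eqref{condicion sombrero} as a pointwise growth bound $\|\omega_j\|_{L^\infty}\lesssim(1+\lambda_j)^{s}$. That reading forces you to need $\sum_j(1+\lambda_j)^{2s}(1+f(\lambda_j))^{-2}<\infty$, and your suggested repair cannot produce this. Compactness of the resolvent only gives $\lambda_j\to\infty$, with no rate in $j$. The condition \eqref{codicion de elipticidad de una fucnion real valuada para Riemannian manifolds} controls $f(\lambda_j)$ in terms of $\lambda_j$, not $\lambda_j$ in terms of $j$. No Weyl-type growth of the eigenvalues is among the hypotheses. In the paper's example, the only $\ell$-dependent lower bound is $\lambda_{\ell,k}\ge\frac{\ln\ell}{4}$, and convergence there comes from the eigenfunction decay $\|u_{\ell,k}\|_{L^\infty}^2\le 8(1+\lambda_{\ell,k})/\ell$.

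The hypothesis \eqref{condicion sombrero} with $\ell=0$ is actually the summability condition
\[
\sum_{j\in J}(1+\lambda_j^2)^{-s}\|\omega_j\|_{L^\infty(M)}^2+\sum_{j\notin J}(1+\lambda_j)^{-s}\|\omega_j\|_{L^\infty(M)}^2<\infty ,
\]
and on $J$ (where $0<\lambda_j<1$) the two weights are comparable. Splitting the weight as $(1+\lambda_j)^{s/2}\cdot(1+\lambda_j)^{-s/2}$ in Cauchy--Schwarz gives
\[
\|\psi\|_{L^\infty}\le\Bigl(\sum_j(1+\lambda_j)^{s}|\alpha_j|^2\Bigr)^{1/2}\Bigl(\sum_j(1+\lambda_j)^{-s}\|\omega_j\|_{L^\infty}^2\Bigr)^{1/2}.
\]
Then \eqref{codicion de elipticidad de una fucnion real valuada para Riemannian manifolds} with $k=s$, together with $(1+\lambda)^2\le 2(1+\lambda^2)$ and $1+f\ge 1$, yields $(1+\lambda_j)^{s}\le 2^{s/2}C_s^{-1}(1+f(\lambda_j))^2$. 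This is exactly the paper's chain $\widetilde{H}_f(M,g)\hookrightarrow H^s(M,g)\hookrightarrow L^\infty(M)$, and it closes Step 1 in one line.

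With that inserted, the rest works, provided you discard the contraction idea and keep the Schauder fallback. You correctly saw that the constant $1$ in the derivative bound makes the Lipschitz constant of $N$ of size $C$, with no smallness available.

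Your fallback is a legitimate variant of the paper's argument. You run Schauder on the closed $\widetilde{H}_f$-ball $B_\varepsilon$ with the $L^2$ topology. There it is convex and compact, since it is weakly compact in $\widetilde{H}_f$ and $\widetilde{H}_f\hookrightarrow L^2$ is compact because $f(\lambda_j)\to\infty$. The map $T$ is even Lipschitz there, thanks to the uniform bound $\|\psi\|_{L^\infty}\le C_0\varepsilon$.

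The paper instead works on the closed ball of $H^s(M,g)$. It gets relative compactness of the image from the compact embedding $\widetilde{H}_f\hookrightarrow H^s$ and then passes to a closed convex hull. Your version is slightly more economical: it avoids the intermediate space $H^s$, the compactness of $H^s\hookrightarrow L^\infty$, and the interpolation step.
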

The article is structured as follows: in Section \hyperref[Preliminaries and main results]{(II)}, we present the necessary preliminaries regarding nonlocal theory in the Euclidean case, discussing essential functional-analytic tools, reviewing foundational results from \cite{Gorka 6}, and establishing the precise interpretation of the operator $f(\Delta)$. In Section \hyperref[Nonlocal equations on Bounded Domains 1]{(III)}, we address the nonlocal theory in the context of bounded domains $\Omega \subset \mathbb{R}^{n}$, introducing the space $\mc{H}^{\beta/2}_{0}(\Omega;f)$ and culminating in the proof of Theorem \ref{teorema en Rn, para dominios acotados}. 

In Section \hyperref[Nonlocal Equations on Riemannian Manifolds]{(IV)}, we develop our study of nonlocal theory on compact Riemannian manifolds. We discuss the rigorous representation of the operator $f(\Delta_{g})$ via spectral theory and establish Theorem \ref{teorema del caso perturbado para Riemannian manifolds}, which proves the existence of solutions for perturbed nonlocal equations on compact manifolds. Finally, in Section \hyperref[seccion de Cartan-Hadamard equations]{(V)}, we extend this theory to the noncompact setting, focusing specifically on Cartan--Hadamard manifolds under the asymptotic curvature condition $K_p(r) \to -\infty$ and the eigenfunction control condition \eqref{condicion sombrero}. This section includes the proof of Theorem \ref{teorema finnal para el caso no compacto} and concludes with the detailed study of a concrete example of a Cartan--Hadamard manifold that satisfies all the hypotheses of our main theorem.

\vspace{1cm}

\section{Preliminaries}\label{Preliminaries and main results}

We introduce appropriate function spaces to interpret the nonlocal term that appears in Equation (\ref{ecuacion no local}). Specifically, our goal in this section is to give meaning to the formal operator
\begin{equation}\label{operador no local}
    L = f(\Delta) + \text{Id}
\end{equation}
where $f$ is a measurable function, as an operator defined on (a subspace of) 
$L^2(\mathbb{R}^n)$. We will call $f$ the "symbol"; however, it need not belong to Hörmander's class of symbols\cite{Hormander}, since the framework developed here is more general and self-contained. Our approach is based on a direct functional calculus via the Fourier transform, which does not require $f$ to be smooth nor to satisfy the polynomial growth estimates characteristic of Hörmander's theory. Instead, it will suffice that $f$ satisfies the following conditions, \\
$(P)$~~\text{The function} $s \mapsto f(-s^{2})$~\text{is nonnegative}.\\
$(E_{\beta})$ There exist real numbers $\beta, R, M > 0$ such that
\begin{equation*}\label{elliptic condition over f}
M(1+|\xi|^{2})^{\frac{\beta}{2}} \leq f(-|\xi|^{2}), \quad \forall \xi \in \mb{R}^{n}~\text{with}~ |\xi|> R.
\end{equation*}
Functions satisfying these conditions are called $\mathcal{G}^{\beta}$-symbols. The condition $(E_{\beta})$ is the ellipticity condition for a pseudo-differential operator. (see \cite{Gorka 6})

\begin{definition}
    Fix $f \in \mathcal{G}^{\beta}$. The space $\mathcal{H}^{\beta}(f)$ is given by all real-valued functions $g$ on $\mathbb{R}^n$ such that $g$ is measurable, the Fourier transform $\mathcal{F}(g)$ exists, and
    \begin{equation}
     \int_{\mathbb{R}^n} \left[ 1 + f\left(-|\xi|^2\right) \right]^2 \left| \mathcal{F}(g)(\xi) \right|^2 \, d\xi < \infty.
    \end{equation}
\end{definition}

\justify
This space satisfies $\mathcal{H}^{\beta}(f) \subseteq L^2(\mathbb{R}^n).$ The reason for defining this space is explained by the following computation. If we assume that $f$ is a real-analytic function, one can consider $f$ as a power series centered at $x=0$, which we may evaluate at $\Delta$, giving the formal expression
\begin{equation*}
f(\Delta)u=\sum_{n=0}^{\infty}\frac{f^{(n)}(0)}{n!}\Delta^{n}u 
\end{equation*}
Applying the Fourier transform to this equation yields
\begin{align*} 
\mathcal{F}(f(\Delta)u)(\xi)&=\sum_{n=0}^{\infty}\frac{f^{(n)}(0)}{n!}\mathcal{F}(\Delta^{n}u)= \sum_{n=0}^{\infty}\frac{f^{(n)}(0)}{n!}(-|\xi|^{2})^{n}\mathcal{F}(u) = f(-|\xi|^{2})\mathcal{F}(u)(\xi). 
\end{align*}
Therefore we define the operator $f(\Delta)u$ by
\begin{equation}\label{forma de operador f}
    f(\Delta)u = \mathcal{F}^{-1}(f(-|\xi|^2)\mathcal{F}(u)(\xi)).
\end{equation}
If $u \in \mathcal{H}^{\beta}(f)$, then the function $f(-|\xi|^2)\mathcal{F}(u) \in L^2(\mathbb{R}^n)$, and $f(\Delta)u$ is well defined, so that $f(\Delta)$ is a linear operator from $\mathcal{H}^{\beta}(f)$ to $L^2(\mathbb{R}^n)$.

\begin{definition}\label{espacio Hb permite la interpretacion de L}
    Let $f$ be a $\mathcal{G}^{\beta}$-symbol. The operator $L = f(\Delta) + \text{I}$ acts on functions $u \in \mathcal{H}^{\beta}(f)$ as follows
    \begin{equation}\label{operador no local L}
        Lu = \mathcal{F}^{-1} \left( \mathcal{F}(u) + f(-|\xi|^2)\mathcal{F}(u) \right)
    \end{equation}
Since $u \in \mathcal{H}^{\beta}(f)$, we have $Lu \in L^2(\mathbb{R}^n)$.
\end{definition}

A function used repeatedly is
\begin{equation}\label{funcion K}
 \widehat{K}(\xi)=\cfrac{1}{1+f(-|\xi|^{2})}   
\end{equation}
In \cite[l.2.1]{Gorka 6} it is proved that for $f$ a $\mathcal{G}^{\beta}$-symbol with $\beta > n/2$, one has $\widehat{K} \in L^{2}(\mathbb{R}^{n})$. This implies there exists a function $K_{f}$ such that $\mathcal{F}(K_{f})=\widehat{K}$.

\begin{proposition}\label{espacio Hillbert Hb}
A measurable function $g$ belongs to the space $\mathcal{H}^{\beta}(f)$, with $\beta > n/2$, if and only if $g = K_f * \tilde{g}$ for some $\tilde{g} \in L^2(\mathbb{R}^n)$. Moreover, if we endow $\mathcal{H}^{\beta}(f)$ with the inner product
\begin{equation}
    \langle g_1, g_2 \rangle = \int_{\mathbb{R}^n} \left( 1 + f(-|\xi|^2) \right)^2 \mathcal{F}(g_1)(\xi) \overline{\mathcal{F}(g_2)(\xi)} \, d\xi
,
\end{equation}
then $\mathcal{H}^{\beta}(f)$ is a Hilbert space and the map $\mathcal{K} : L^2(\mathbb{R}^n) \to \mathcal{H}^{\beta}(f)$ given by $ \mathcal{K}(g) = K_f * g$\label{isomorfismo isometrico entre los espacios Hb}
, is an isometric isomorphism (a bijective linear isometry).
\end{proposition}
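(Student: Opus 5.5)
The plan is to work entirely on the Fourier side, using Plancherel and the convolution theorem. Since $\beta>n/2$, the cited result \cite[l.2.1]{Gorka 6} gives $\widehat{K}\in L^2(\mathbb{R}^n)$, so $K_f$ exists with $\mathcal{F}(K_f)=\widehat{K}$. Because $0<\widehat{K}\le 1$ by condition $(P)$, $\widehat K$ is also bounded. For $\tilde g\in L^2(\mathbb{R}^n)$ I will \emph{define} $K_f*\tilde g$ as $\mathcal{F}^{-1}(\widehat{K}\,\mathcal{F}(\tilde g))$. This is an $L^2$ function because $\widehat K\in L^\infty$. It agrees with the pointwise convolution integral: both $K_f$ and $\tilde g$ lie in $L^2$, so the integral converges everywhere by Cauchy--Schwarz, and the identity holds because it does for Schwartz $\tilde g$ and both sides are continuous in $\tilde g$ (the integral even in sup norm).

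\textbf{Characterization.} Suppose $g=K_f*\tilde g$ with $\tilde g\in L^2$. Then $(1+f(-|\xi|^2))\mathcal{F}(g)=\mathcal{F}(\tilde g)\in L^2$, so $g\in\mathcal{H}^\beta(f)$. Moreover $\|g\|_{\mathcal{H}^\beta(f)}=\|\mathcal{F}(\tilde g)\|_{L^2}=\|\tilde g\|_{L^2}$ by Plancherel, with the normalization of $\mathcal{F}$ chosen unitary.

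Conversely, suppose $g\in\mathcal{H}^\beta(f)$. Set $\tilde g:=\mathcal{F}^{-1}\big((1+f(-|\xi|^2))\mathcal{F}(g)\big)$, which is in $L^2$ by the definition of the space; note that $\tilde g=Lg$ from Definition~\ref{espacio Hb permite la interpretacion de L}. Then $\widehat K\,\mathcal{F}(\tilde g)=\mathcal{F}(g)$, and injectivity of $\mathcal{F}$ on $L^2$ gives $g=K_f*\tilde g$.

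One point needs care here: real-valuedness. Since $f(-|\xi|^2)$ is real and radial, hence even in $\xi$, the multiplier preserves the Hermitian symmetry $\mathcal{F}(\tilde g)(-\xi)=\overline{\mathcal{F}(\tilde g)(\xi)}$. So $\tilde g$ is real exactly when $g$ is, and likewise for $K_f*\tilde g$.

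\textbf{Hilbert structure.} The form $\langle\cdot,\cdot\rangle$ is finite by Cauchy--Schwarz, since it is the $L^2$ pairing of $(1+f)\mathcal{F}(g_1)$ and $(1+f)\mathcal{F}(g_2)$. It is sesquilinear and Hermitian, and it is real on real functions by the symmetry just noted. Positive definiteness holds because $1+f\ge1$: if $\langle g,g\rangle=0$ then $\mathcal{F}(g)=0$, hence $g=0$.

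Completeness I will not prove directly. Instead I transport it from $L^2$. The map $\mathcal{K}$ is linear and isometric by the computation above. It is injective because it is isometric, and surjective by the converse direction. Its inverse is $L$. A bijective linear isometry from the complete space $L^2(\mathbb{R}^n)$ onto $\mathcal{H}^\beta(f)$ forces $\mathcal{H}^\beta(f)$ to be complete: a Cauchy sequence $g_k$ gives the Cauchy sequence $Lg_k$ in $L^2$, which has a limit $\tilde g$, and then $g_k\to\mathcal{K}\tilde g$.

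\textbf{Main obstacle.} The only delicate step is making sense of ``the Fourier transform exists'' and of the convolution $K_f*\tilde g$ for general measurable functions. I will resolve it by working throughout with the $L^2$ (Plancherel) Fourier transform, which is legitimate because $\mathcal{H}^\beta(f)\subseteq L^2$, and with the identification of $K_f*\tilde g$ described at the start.
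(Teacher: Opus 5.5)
The paper does not prove this proposition; it is recalled from G\'orka--Prado--Reyes as background. Your argument is correct and is the standard Fourier-side proof: $\tilde g=\mathcal{F}^{-1}\bigl((1+f(-|\xi|^2))\mathcal{F}(g)\bigr)=Lg$ gives surjectivity of $\mathcal{K}$, Plancherel gives the isometry, and completeness is transported from $L^2(\mathbb{R}^n)$. One small precision: identifying $\mathcal{F}^{-1}(\widehat{K}\,\mathcal{F}(\tilde g))$ with the literal integral $\int K_f(x-y)\tilde g(y)\,dy$ requires the $e^{\mp 2\pi i x\cdot\xi}$ convention that the paper itself uses, not just any unitary normalization; with $(2\pi)^{-n/2}e^{-ix\cdot\xi}$ the convolution theorem picks up a factor $(2\pi)^{n/2}$.
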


\begin{lemma}\label{Lemma de contencion epscios Hb}
   \cite[l.2.2]{Gorka 6}. Let $f$ be a $\mathcal{G}^{\beta}$-symbol. We have:\\
i) For each $s \in \mathbb{R}$ with $s \leq \beta$, the embedding 
\begin{equation}
  \mathcal{H}^{\beta}(f) \hookrightarrow H^{s}(\mathbb{R}^n)
\end{equation}
holds. In other words,
$
\| u \|_{H^{s}(\mathbb{R}^n)} \leq C(s, \beta) \| u \|_{\mathcal{H}^{\beta}(f)},
$
for some constant $C(s, \beta) > 0$.\\
ii) For each integer $k \geq 1$ such that $\frac{n}{2} + k < s \leq \beta$, one has $\mathcal{H}^{\beta}(f) \hookrightarrow C^{k}(\mathbb{R}^n)$.\\
iii) For each $0 < \alpha < 1$ such that $\frac{n}{2} + \alpha = s \leq \beta$, the embedding $\mathcal{H}^{\beta}(f) \hookrightarrow C^{\alpha}(\mathbb{R}^n)$
holds, where $C^{\alpha}(\mathbb{R}^n)$ is the space of H\"{o}lder continuous functions of order $\alpha$.
\end{lemma}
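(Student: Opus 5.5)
The plan is to compare the weights in the two norms on the Fourier side and then apply the classical Sobolev embeddings. For i), I would bound $(1+|\xi|^2)^{s/2}$ by a constant multiple of $1+f(-|\xi|^2)$ uniformly in $\xi\in\mathbb{R}^n$; squaring and integrating against $|\mathcal{F}(u)(\xi)|^2$ then gives
\begin{equation*}
\|u\|_{H^s(\mathbb{R}^n)}^2=\int_{\mathbb{R}^n}(1+|\xi|^2)^{s}|\mathcal{F}(u)(\xi)|^2\,d\xi\le C(s,\beta)^2\int_{\mathbb{R}^n}\bigl(1+f(-|\xi|^2)\bigr)^2|\mathcal{F}(u)(\xi)|^2\,d\xi ,
\end{equation*}
which is the asserted estimate. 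The right-hand side is exactly $\|u\|_{\mathcal{H}^\beta(f)}^2$ in the norm of Proposition \ref{espacio Hillbert Hb}.

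To get the pointwise weight bound I split $\mathbb{R}^n$ into two regions. On $|\xi|>R$, condition $(E_\beta)$ together with $s\le\beta$ gives
\begin{equation*}
(1+|\xi|^2)^{s/2}\le(1+|\xi|^2)^{\beta/2}\le M^{-1}f(-|\xi|^2)\le M^{-1}\bigl(1+f(-|\xi|^2)\bigr).
\end{equation*}
This inequality uses $1+|\xi|^2\ge1$. On the ball $|\xi|\le R$, I use $(1+|\xi|^2)^{s/2}\le \max\{1,(1+R^2)^{s/2}\}$. Condition $(P)$ gives $f(-|\xi|^2)\ge0$, so $1+f(-|\xi|^2)\ge1$ and the same constant works there. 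Hence $C(s,\beta)=\max\{M^{-1},1,(1+R^2)^{s/2}\}$ suffices. For negative $s$ the weight is simply at most $1$, so nothing further is needed.

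For ii) and iii), I compose the embedding from i) with the classical Sobolev embeddings. When $s>n/2+k$, one has $H^s(\mathbb{R}^n)\hookrightarrow C^k(\mathbb{R}^n)$, with bounded derivatives of order up to $k$. This follows because $|\xi|^{j}|\mathcal{F}(u)|\le (1+|\xi|^2)^{s/2}|\mathcal{F}(u)|\,(1+|\xi|^2)^{(j-s)/2}$ for $j\le k$, and $(1+|\xi|^2)^{(j-s)/2}\in L^2$ since $2(s-j)>n$. Cauchy--Schwarz then puts $\xi^\gamma\mathcal{F}(u)$ in $L^1$, and Riemann--Lebesgue gives continuity of $\partial^\gamma u$ for $|\gamma|\le k$. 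For the Hölder statement with $s=n/2+\alpha$, I use the standard embedding $H^{n/2+\alpha}(\mathbb{R}^n)\hookrightarrow C^{\alpha}(\mathbb{R}^n)$. Here one should be careful at the endpoint, and I would cite the Besov embedding $H^{n/2+\alpha}=B^{n/2+\alpha}_{2,2}\hookrightarrow B^{\alpha}_{\infty,\infty}=C^\alpha$ for $0<\alpha<1$.

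Alternatively, iii) can be checked directly. I would estimate $|u(x)-u(y)|\le\int|e^{ix\xi}-e^{iy\xi}||\mathcal{F}(u)|$ by $\int\min(2,|\xi||x-y|)|\mathcal{F}(u)|$. Splitting at $|\xi|=|x-y|^{-1}$, however, exactly misses with a logarithm at $s=n/2+\alpha$. For that reason I would rely on the Littlewood--Paley/Besov argument, which handles the endpoint. I expect this endpoint case of iii) to be the only delicate step. Everything else reduces to the elementary weight comparison from $(P)$ and $(E_\beta)$.
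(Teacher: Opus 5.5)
Your proposal is correct and follows the same route the paper relies on: the paper simply cites \cite[l.2.2]{Gorka 6} here, but in the Claim inside the proof of Theorem \ref{teorema en Rn, para dominios acotados} it carries out exactly your two-region weight comparison using $(E_\beta)$ outside a ball and $1+f(-|\xi|^2)\ge 1$ inside it, and then composes with the classical Sobolev embeddings. One correction: your direct H\"older estimate for iii) does not lose a logarithm when $0<\alpha<1$. For $h=|x-y|\le 1$, Cauchy--Schwarz bounds the low-frequency part by $h\bigl(\int_{|\xi|<1/h}|\xi|^2(1+|\xi|^2)^{-n/2-\alpha}\,d\xi\bigr)^{1/2}\|u\|_{H^s}\lesssim h\cdot h^{\alpha-1}\|u\|_{H^s}$, and the high-frequency part by $\bigl(\int_{|\xi|>1/h}(1+|\xi|^2)^{-n/2-\alpha}\,d\xi\bigr)^{1/2}\|u\|_{H^s}\lesssim h^{\alpha}\|u\|_{H^s}$, so the Besov machinery is unnecessary and the logarithm only appears at the excluded endpoint $\alpha=1$.
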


\subsection{The Linear Equation}

\begin{definition}
    A function $u : \mathbb{R}^n \to \mathbb{R}$ is \emph{real-analytic} if it is infinitely differentiable and can be represented locally by a convergent power series. Thus, for each point $z \in \mathbb{R}^n$ there exists an open neighborhood $U$ of $z$ such that
\begin{equation}
    u(x) = \sum_{\alpha} \frac{ \partial^\alpha u(z)}{\alpha!} (x - z)^\alpha,
\end{equation}
the sum runs over all multi-indices. The space of real-analytic functions is denoted $C^\omega(\mathbb{R}^n)$. 
\end{definition}

\begin{theorem}\label{teorema caso lineal}\cite[t.3.1]{Gorka 6}. 
    Let $f$ be a $\mathcal{G}^{\beta}$-symbol. For each $g \in L^2(\mathbb{R}^n)$ there exists a unique solution $u_g \in \mathcal{H}^\beta(f)$ to the linear problem
\begin{equation}\label{ecuacion lineal nolocal}
    L u = g.
\end{equation}
Moreover, the following identity holds
\begin{equation}
  \|u_g\|_{\mathcal{H}^\beta(f)} = \|g\|_{L^2(\mathbb{R}^n)}.  
\end{equation}
\end{theorem}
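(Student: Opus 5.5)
The proof is a direct Fourier-side construction, with $\widehat{K}$ from \eqref{funcion K} used as the multiplier that inverts $L$. Fix $g\in L^2(\mathbb{R}^n)$. By Plancherel, $\mathcal{F}(g)\in L^2(\mathbb{R}^n)$. Condition $(P)$ gives $1+f(-|\xi|^2)\geq 1$, so $0<\widehat{K}(\xi)\le 1$ for every $\xi$. We therefore define
\[
\mathcal{F}(u_g)(\xi)=\widehat{K}(\xi)\,\mathcal{F}(g)(\xi)=\frac{\mathcal{F}(g)(\xi)}{1+f(-|\xi|^2)}.
\]
This function lies in $L^2(\mathbb{R}^n)$ because $|\widehat{K}\,\mathcal{F}(g)|\le|\mathcal{F}(g)|$. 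Hence $u_g:=\mathcal{F}^{-1}\bigl(\widehat{K}\mathcal{F}(g)\bigr)$ is a well-defined element of $L^2(\mathbb{R}^n)$. It is real-valued when $g$ is, because $\widehat{K}$ is radial and real, so the Hermitian symmetry $\mathcal{F}(u_g)(-\xi)=\overline{\mathcal{F}(u_g)(\xi)}$ is preserved.

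Next we check that $u_g\in\mathcal{H}^\beta(f)$ and compute its norm:
\[
\int_{\mathbb{R}^n}\bigl[1+f(-|\xi|^2)\bigr]^2|\mathcal{F}(u_g)(\xi)|^2\,d\xi=\int_{\mathbb{R}^n}|\mathcal{F}(g)(\xi)|^2\,d\xi=\|g\|_{L^2}^2,
\]
where the last equality is Plancherel. This shows both membership and the identity $\|u_g\|_{\mathcal{H}^\beta(f)}=\|g\|_{L^2(\mathbb{R}^n)}$. Then, by Definition \ref{espacio Hb permite la interpretacion de L},
\[
Lu_g=\mathcal{F}^{-1}\bigl((1+f(-|\xi|^2))\mathcal{F}(u_g)\bigr)=\mathcal{F}^{-1}\mathcal{F}(g)=g.
\]
When $\beta>n/2$, the same construction is exactly $u_g=K_f*g=\mathcal{K}(g)$, so Proposition \ref{espacio Hillbert Hb} is consistent with this. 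The direct Fourier argument, however, does not need that restriction on $\beta$.

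For uniqueness we use linearity. If $u_1,u_2\in\mathcal{H}^\beta(f)$ both solve $Lu=g$, then $w=u_1-u_2\in\mathcal{H}^\beta(f)$ satisfies $(1+f(-|\xi|^2))\mathcal{F}(w)=0$ almost everywhere. Since $1+f\ge1$, this forces $\mathcal{F}(w)=0$ a.e., so $w=0$ in $L^2$ by injectivity of the Fourier transform on $L^2$.

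The only delicate point is making sure every object is a genuine $L^2$ function, so that the identities are not merely formal. Specifically, $f(-|\xi|^2)\mathcal{F}(u_g)$ must be in $L^2$, and it is, because it is bounded pointwise by $|\mathcal{F}(g)|$. We also need the measurability of $f(-|\xi|^2)$, which follows because $f$ is measurable and $\xi\mapsto-|\xi|^2$ is continuous. The ellipticity condition $(E_\beta)$ is not needed for existence, uniqueness or the isometry. It only enters later, through Lemma \ref{Lemma de contencion epscios Hb}, to give regularity of $u_g$.
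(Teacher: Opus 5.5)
Your proof is correct and is essentially the argument behind this statement: the paper quotes it from G\'{o}rka, Prado and Reyes \cite{Gorka 6} without reproducing a proof, and relies instead on the isometric isomorphism $\mathcal{K}(g)=K_f*g$ of Proposition~\ref{espacio Hillbert Hb}, which on the Fourier side is exactly your multiplication by $\widehat{K}$, so that $u_g=\mathcal{K}(g)$ and $\|u_g\|_{\mathcal{H}^{\beta}(f)}=\|g\|_{L^2(\mathbb{R}^n)}$. Working directly with the multiplier has the advantage you point out: it avoids the hypothesis $\beta>n/2$, which is needed for $\widehat{K}\in L^{2}(\mathbb{R}^n)$ and hence for $K_f$ to exist. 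Your argument therefore matches the statement as written, where $\beta$ is unrestricted and only condition $(P)$ is actually used.
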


\begin{theorem}\label{funcion real-analitica}
    Suppose $f \in \bigcap_{\beta > 0}\mathcal{G}^{\beta}$ and that there exists $c>0$ such that
    \begin{equation}
        \lim_{m \to \infty} \frac{c^{m}}{m!}\int_{0}^{\infty}\frac{r^{2m+n-1}}{[1+f(-r^{2})]^{2}}dr=0.
    \end{equation}
Then the solution $u$ of the linear problem $L u = g$ with $g \in L^2(\mathbb{R}^n)$ satisfies $u \in C^\omega(\mathbb{R}^n)$.
\end{theorem}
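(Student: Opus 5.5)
We prove Theorem \ref{funcion real-analitica} by writing the solution explicitly on the Fourier side and showing that all of its derivatives satisfy factorial-type bounds. By Theorem \ref{teorema caso lineal}, the unique solution is $u = K_f * g$, that is,
\[
\mathcal{F}(u)(\xi) = \widehat{K}(\xi)\,\mathcal{F}(g)(\xi),
\qquad
\widehat{K}(\xi)=\frac{1}{1+f(-|\xi|^2)} .
\]

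\textbf{Smoothness.} Since $f\in\mathcal{G}^{\beta}$ for every $\beta$, we have $(1+|\xi|^2)^{N}\widehat K\in L^2(\mathbb{R}^n)$ for every $N$. By Cauchy--Schwarz,
\[
|\xi|^{k}\,\mathcal{F}(u) \in L^1(\mathbb{R}^n)\quad\text{for all } k,
\]
so $u\in C^\infty$. This also follows directly from Lemma \ref{Lemma de contencion epscios Hb}(ii). Moreover, for every multi-index $\alpha$,
\[
\partial^\alpha u(x) = (2\pi)^{-n/2}\int_{\mathbb{R}^n} (i\xi)^\alpha e^{ix\cdot\xi}\,\widehat K(\xi)\,\mathcal{F}(g)(\xi)\,d\xi .
\]

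\textbf{Uniform derivative bounds.} Cauchy--Schwarz and $|\xi^\alpha|\le|\xi|^{|\alpha|}$ give, uniformly in $x$,
\[
|\partial^\alpha u(x)| \le C\,\|g\|_{L^2}\left(\int_{\mathbb{R}^n}\frac{|\xi|^{2|\alpha|}}{[1+f(-|\xi|^2)]^2}\,d\xi\right)^{1/2}.
\]
Passing to polar coordinates, the integral equals $|S^{n-1}|\,I_m$, where
\[
I_m=\int_0^\infty \frac{r^{2m+n-1}}{[1+f(-r^2)]^2}\,dr,\qquad m=|\alpha|.
\]
The hypothesis gives $c^m I_m/m!\to0$, so $I_m\le A\,m!\,c^{-m}$ for some constant $A$. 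Hence
\[
\sup_{x\in\mathbb{R}^n}|\partial^\alpha u(x)| \le C'\,\sqrt{m!}\;c^{-m/2}.
\]
This is in fact much better than the Cauchy estimates $C\,R^{-m}\,m!$ that real-analyticity requires.

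\textbf{Convergence of the Taylor remainder.} Fix $z\in\mathbb{R}^n$ and apply Taylor's formula with integral remainder of order $m$ along the segment from $z$ to $x$. The remainder is bounded by
\[
\sum_{|\alpha|=m}\frac{|x-z|^{m}}{\alpha!}\,\sup|\partial^\alpha u|
\;\le\; \frac{n^m}{m!}\,|x-z|^m\,C'\sqrt{m!}\,c^{-m/2},
\]
using $\sum_{|\alpha|=m} m!/\alpha! = n^m$. This equals $C'\,(n|x-z|/\sqrt c)^m/\sqrt{m!}$, which tends to $0$ as $m\to\infty$ for every $x$. Therefore the Taylor series of $u$ at $z$ converges to $u$ on a neighbourhood of $z$ (indeed on all of $\mathbb{R}^n$), and $u\in C^\omega(\mathbb{R}^n)$.

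\textbf{Main obstacle.} The delicate step is converting the hypothesis into uniform derivative bounds: integrability must hold so that differentiation under the integral sign is justified, and the multi-index combinatorics must be kept honest. Both are handled by the Cauchy--Schwarz bound and the identity $\sum_{|\alpha|=m} m!/\alpha! = n^m$.
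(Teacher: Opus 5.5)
The paper gives no proof of this theorem. It is recalled in the preliminaries (it is the analyticity result of \cite{Gorka 6}) and stated without argument, so there is nothing in the paper to compare your route against.

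Judged on its own, your proof is correct and complete. The key steps are all in place:
\begin{enumerate}
\item The hypothesis $f\in\bigcap_{\beta>0}\mathcal{G}^{\beta}$ makes every $I_m$ finite. So $c^mI_m/m!\to0$ really does give a uniform bound $I_m\le A\,m!\,c^{-m}$.
\item The Fourier-side Cauchy--Schwarz estimate turns this into $\sup_x|\partial^\alpha u(x)|\le C'\sqrt{m!}\,c^{-m/2}$ for $|\alpha|=m$.
\item The integral form of the Taylor remainder then tends to zero.
\end{enumerate}

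One point is worth tightening. The paper's definition of $C^\omega$ asks for a convergent multi-index power series $\sum_\alpha \partial^\alpha u(z)(x-z)^\alpha/\alpha!$. Your remainder argument only shows convergence of the partial sums grouped by total degree. Your bounds already give absolute convergence directly, since
\[
\sum_{\alpha}\frac{|\partial^\alpha u(z)|\,|(x-z)^\alpha|}{\alpha!}\le C'\sum_{m\ge0}\frac{(n|x-z|/\sqrt{c})^m}{\sqrt{m!}}<\infty ,
\]
so it costs one line to say so.

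As you observe, the $\sqrt{m!}$ growth proves more than is claimed. The series converges on all of $\mathbb{R}^n$, so $u$ is the restriction of an entire function.
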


\newpage 

\subsection{The Nonlinear Equation}

\justify

In this section we study the nonlinear equation
\begin{equation}\label{ecuacion basica no-lineal}
    f(\Delta)u - U(\cdot , u)=0
\end{equation}
Before analyzing this equation in the general case, let us consider a nonlocal term of the form $U(x,y)=y + \varphi(x)V(x,y)$, where $x \in \mathbb{R}^{n}$ and $y \in \mathbb{R}$, and $\varphi \in C^{\infty}_{0}(\mathbb{R}^{n})$. This kind of equation is called the \emph{localized case},

\begin{theorem}\label{caso localizado}\cite[t.3.3]{Gorka 6}.
    Suppose $f \in \mathcal{G}^\beta$ with $\beta > \frac{n}{2}$, $n > 1$, and consider the function $U_\delta$, with $\delta > 0$, given by:
\begin{equation}\label{primera ecuacion no lineal}
U_\delta(x, y) = -y + \delta \varphi(x)V(x, y),
\end{equation}
where $\varphi \in C_0^\infty(\mathbb{R}^n)$ and $V$ is a $C^1$-function on $\mathbb{R}^n \times \mathbb{R}$. Assume there exist a constant $\alpha \geq 1$, a constant $C > 0$ and a function $h \in L^2(\mathbb{R}^n)$ such that the following inequalities hold:
\begin{equation}\label{primera desigualdad caso localizado}
|V(x, y)| + \left|\frac{\partial}{\partial x_i} V(x, y)\right| \leq C(h(x) + |y|^\alpha), \quad i = 1, \ldots, n,
\end{equation}
\begin{equation}\label{segunda desigualdad caso localizado}
\left|\frac{\partial}{\partial y} V(x, y)\right| \leq C(1 + |y|^\alpha).
\end{equation}
Then, for $\delta$ sufficiently small, there exists a solution $u \in \mathcal{H}^\beta(f)$ of the problem (\ref{ecuacion basica no-lineal}) with the nonlinearity $U$ given by (\ref{primera ecuacion no lineal}).
\end{theorem}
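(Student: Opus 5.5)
We rewrite the equation as a fixed point problem for $L=f(\Delta)+I$. With $U_\delta(x,y)=-y+\delta\varphi(x)V(x,y)$, the equation $f(\Delta)u-U_\delta(\cdot,u)=0$ is equivalent to
\begin{equation*}
Lu=\delta\varphi V(\cdot,u).
\end{equation*}
By Theorem~\ref{teorema caso lineal} and Proposition~\ref{espacio Hillbert Hb}, $L$ is invertible from $\mathcal{H}^\beta(f)$ onto $L^2(\mathbb{R}^n)$, with inverse $\mathcal{K}(g)=K_f*g$, and $\|\mathcal{K}g\|_{\mathcal{H}^\beta(f)}=\|g\|_{L^2}$. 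We therefore look for a fixed point of
\begin{equation*}
T(u)=\delta\,K_f*\bigl(\varphi V(\cdot,u)\bigr)
\end{equation*}
on a closed ball $B_r=\{u\in\mathcal{H}^\beta(f):\|u\|_{\mathcal{H}^\beta(f)}\le r\}$, using the Banach contraction principle.

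\textbf{Step 1: $T$ maps $\mathcal{H}^\beta(f)$ into itself.} Because $\beta>n/2$, Lemma~\ref{Lemma de contencion epscios Hb} (parts i and iii, with some $s\in(n/2,\beta]$) gives $\mathcal{H}^\beta(f)\hookrightarrow H^s(\mathbb{R}^n)\hookrightarrow L^\infty(\mathbb{R}^n)$. Hence $\|u\|_{L^\infty}\le C_0\|u\|_{\mathcal{H}^\beta(f)}$. Let $D=\operatorname{supp}\varphi$, which is compact. By \eqref{primera desigualdad caso localizado},
\begin{equation*}
\|\varphi V(\cdot,u)\|_{L^2}\le C\|\varphi\|_\infty\bigl(\|h\|_{L^2}+|D|^{1/2}C_0^\alpha\|u\|^\alpha_{\mathcal{H}^\beta(f)}\bigr).
\end{equation*}
So $\|T u\|_{\mathcal{H}^\beta(f)}\le \delta A(1+r^\alpha)$ on $B_r$. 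Fixing $r=1$ and taking $\delta$ small gives $T(B_1)\subset B_1$.

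\textbf{Step 2: $T$ is a contraction on $B_1$.} For $u,v\in B_1$, the mean value theorem together with \eqref{segunda desigualdad caso localizado} gives
\begin{equation*}
|V(x,u)-V(x,v)|\le C(1+\max(\|u\|_\infty,\|v\|_\infty)^\alpha)|u-v|\le C(1+C_0^\alpha)|u-v|.
\end{equation*}
Therefore
\begin{equation*}
\|Tu-Tv\|_{\mathcal{H}^\beta(f)}\le \delta C(1+C_0^\alpha)\|\varphi\|_{L^\infty}\|u-v\|_{L^2}.
\end{equation*}
Since $\mathcal{H}^\beta(f)\hookrightarrow L^2$ with $\|u\|_{L^2}\le\|u\|_{\mathcal{H}^\beta(f)}$ (because $f\ge 0$), the Lipschitz constant is $O(\delta)$, which is less than $1$ for $\delta$ small. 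Banach's theorem then gives a unique fixed point $u\in B_1$, and $Lu=\delta\varphi V(\cdot,u)$ is exactly the equation.

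\textbf{Main obstacle.} The delicate point is Step 1: making sure that $x\mapsto V(x,u(x))$ is measurable and that the $L^\infty$ embedding holds under only $\beta>n/2$. Here $V$ is $C^1$, so composition with the continuous $u$ is fine. If the Hölder embedding in Lemma~\ref{Lemma de contencion epscios Hb} iii) is awkward at the endpoint, we instead argue directly:
\begin{equation*}
\|u\|_\infty\le\|\mathcal{F}u\|_{L^1}\le\|\widehat K\|_{L^2}\,\|(1+f)\mathcal{F}u\|_{L^2},
\end{equation*}
which is finite because $\widehat K\in L^2$ by \cite[l.2.1]{Gorka 6}. The derivative bound on $V$ in $x$ from \eqref{primera desigualdad caso localizado} is not needed for existence; it would only be used for additional regularity.
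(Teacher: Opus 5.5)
The paper gives no proof of this statement (it is quoted from \cite[t.3.3]{Gorka 6}), so the natural comparison is with the existence proofs the paper does carry out, Theorems \ref{teorema en Rn, para dominios acotados}, \ref{teorema del caso perturbado para Riemannian manifolds} and \ref{teorema finnal para el caso no compacto}. Those proofs all apply Schauder's theorem on a small ball and rely on a compactness mechanism (Rellich--Kondrachov, interpolation, or finite-rank approximation).

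Your argument is correct and takes a genuinely different, more elementary route. Since $\delta$ multiplies the whole nonlinear part, $T$ has Lipschitz constant $O(\delta)$ on the fixed ball $B_1$. This uses only \eqref{segunda desigualdad caso localizado} and $\norm{u}_{L^\infty}\le C_0\norm{u}_{\mc{H}^\beta(f)}$, so Banach's principle applies directly in the complete space $\mc{H}^\beta(f)$ and no compactness is needed. That matters on $\mb{R}^n$, where $\mc{H}^\beta(f)\hookrightarrow H^s(\mb{R}^n)$ is not compact.

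Your route also gives more than is asked, namely uniqueness in $B_1$. It shows that neither the bound on $\partial_{x_i}V$ in \eqref{primera desigualdad caso localizado} nor the restriction $n>1$ is needed for existence. Those two hypotheses are exactly what a Schauder-type proof would use: $n>1$ forces $\beta>n/2\ge 1$, hence $u\in H^1(\mb{R}^n)$. Together with the $x$-derivative bound, this places $\varphi V(\cdot,u)$ in $H^1$ of a ball containing $\operatorname{supp}\varphi$, where Rellich's theorem gives compactness.

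Conversely, the Schauder route is what remains available in the paper's own theorems, where the only small quantities are $\norm{h}$ and the radius of the ball. There $|\partial_y(U(x,y)+y)|$ is only bounded by a fixed constant near $y=0$. So the Lipschitz constant of the fixed-point map need not be below $1$, and a contraction argument is not available.
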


\begin{corollary}
    Suppose $U$ is given by (\ref{primera ecuacion no lineal}), the hypotheses of Theorem \ref{funcion real-analitica} are satisfied, and the growth assumptions of Theorem \ref{caso localizado} hold. Then there exists a $C^\omega$ solution to the problem (\ref{ecuacion basica no-lineal}).
\end{corollary}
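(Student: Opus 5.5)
The corollary follows by combining the existence result of Theorem \ref{caso localizado} with the regularity result of Theorem \ref{funcion real-analitica}. The key observation is that the nonlinear equation can be rewritten as a \emph{linear} problem $Lu=g$ whose right-hand side $g$ lies in $L^2(\mathbb{R}^n)$.

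First, the hypotheses of Theorem \ref{funcion real-analitica} include $f\in\bigcap_{\beta>0}\mathcal{G}^\beta$. In particular $f\in\mathcal{G}^\beta$ for some $\beta>n/2$, so Theorem \ref{caso localizado} applies. For $\delta$ small it yields a solution $u\in\mathcal{H}^\beta(f)$ of
\[
f(\Delta)u + u - \delta\varphi V(\cdot,u)=0 .
\]
Equivalently, $Lu=g$ with $g:=\delta\varphi(x)V(x,u(x))$.

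Next we check that $g\in L^2(\mathbb{R}^n)$. By \eqref{primera desigualdad caso localizado},
\[
|g|\le \delta C|\varphi|\bigl(|h|+|u|^\alpha\bigr).
\]
The term $\varphi h$ is in $L^2$ because $\varphi\in C_0^\infty$ is bounded and $h\in L^2$. For the term $\varphi|u|^\alpha$, choose $s$ with $n/2<s\le\beta$. Lemma \ref{Lemma de contencion epscios Hb} (the $H^s$ embedding followed by Sobolev) gives $u\in L^\infty(\mathbb{R}^n)$, which can also be read off from (iii). Hence $|u|^\alpha$ is bounded, and $\varphi|u|^\alpha$ is bounded with compact support, so it lies in $L^2$. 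Measurability of $g$ follows from the continuity of $V$ and of $u$. Thus $g\in L^2(\mathbb{R}^n)$.

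Finally, Theorem \ref{teorema caso lineal} says the linear problem $Lw=g$ has a unique solution in $\mathcal{H}^\beta(f)$, and $u$ is that solution. Theorem \ref{funcion real-analitica}, applied to this same $g$, then gives that the solution is real-analytic, i.e.\ $u\in C^\omega(\mathbb{R}^n)$.

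The main point to verify is that the solution produced by Theorem \ref{funcion real-analitica} coincides with $u$. This is exactly the uniqueness in Theorem \ref{teorema caso lineal}. One also needs $\mathcal{H}^\beta(f)$ to be the space in which uniqueness holds for the particular $\beta$ used in Theorem \ref{caso localizado}, which is immediate since $u$ was found in that space. Everything else is routine.
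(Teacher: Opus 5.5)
Your proposal is correct and matches the intended argument. The paper gives no separate proof; it treats the corollary as the direct combination of Theorem~\ref{caso localizado} for existence with Theorem~\ref{funcion real-analitica} applied to the linear problem $Lu=\delta\varphi V(\cdot,u)$, whose right-hand side lies in $L^2(\mathbb{R}^n)$, exactly as you do. Your identification step via the uniqueness in Theorem~\ref{teorema caso lineal} is fine, and your worry about $\beta$ is moot, since $\mathcal{H}^\beta(f)$ is defined by $f$ alone and does not depend on $\beta$.
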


Now we work with a nonlinearity $U : \mathbb{R}^{n} \times  \mathbb{R} \to \mathbb{C}$ which is \emph{spherically symmetric} in $x$, i.e., it satisfies $U(|x|,y)=U(x,y)$.

\begin{definition}
\begin{equation}
    L^2_r(\mathbb{R}^n) = \overline{C_{0,r}^\infty(\mathbb{R}^n) }^{L^2(\mathbb{R}^n)}~~,~~
H^s_r(\mathbb{R}^n) = \overline{C_{0,r}^\infty(\mathbb{R}^n) }^{H^s(\mathbb{R}^n)},
\end{equation}  
where $C_{0,r}^\infty(\mathbb{R}^n)$ denotes the space of smooth radial compactly supported functions on $\mathbb{R}^n$. Denote by $\mathcal{H}^\beta_r(f)$ the closed subspace of radial functions in $\mathcal{H}^\beta(f)$.
\end{definition}

\begin{theorem}\label{problema lineal para el caso cilindrico}
    If $g \in L^2_r(\mathbb{R}^n)$, then the unique solution $u \in \mathcal{H}^\beta(f)$ of the problem \eqref{ecuacion lineal nolocal} is a radial function.  
Moreover, the identity holds:
\begin{equation}
\|u\|_{\mathcal{H}^\beta_r(f)} = \|g\|_{L^2_r(\mathbb{R}^n)}.
\end{equation}
\end{theorem}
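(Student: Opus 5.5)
The approach is to combine the uniqueness part of Theorem \ref{teorema caso lineal} with the rotation invariance of the symbol $\xi \mapsto f(-|\xi|^2)$. Let $g \in L^2_r(\mathbb{R}^n)$ and let $u = u_g \in \mathcal{H}^\beta(f)$ be the unique solution of $Lu = g$ given by Theorem \ref{teorema caso lineal}. For an orthogonal matrix $A \in O(n)$ set $u_A(x) = u(Ax)$ and $g_A(x) = g(Ax)$. Since $g$ is an $L^2$-limit of radial test functions and composition with $A$ is an isometry of $L^2(\mathbb{R}^n)$, we have $g_A = g$ almost everywhere.

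The key computation is that $L$ commutes with rotations. By the change of variables formula for the Fourier transform, $\mathcal{F}(u_A)(\xi) = \mathcal{F}(u)(A\xi)$, using $A^{-T} = A$ up to transposition and $|\det A| = 1$. Because $|A\xi| = |\xi|$, we get $f(-|\xi|^2) = f(-|A\xi|^2)$. Two consequences follow.

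1. $u_A \in \mathcal{H}^\beta(f)$ with the same norm. This comes from substituting $\eta = A\xi$ in the defining integral.
2. $\mathcal{F}(Lu_A)(\xi) = \bigl(1+f(-|A\xi|^2)\bigr)\mathcal{F}(u)(A\xi) = \mathcal{F}(Lu)(A\xi) = \mathcal{F}\bigl((Lu)_A\bigr)(\xi)$. Hence $Lu_A = (Lu)_A = g_A = g$.

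Uniqueness in Theorem \ref{teorema caso lineal} then forces $u_A = u$ almost everywhere for every $A \in O(n)$. The conclusion that $u$ is radial needs some care, because "invariant a.e. under each rotation" must be upgraded to membership in $\mathcal{H}^\beta_r(f)$, the closure of radial functions. There are two routes. If $\beta > n/2$, Lemma \ref{Lemma de contencion epscios Hb} gives continuity of $u$, and then pointwise invariance follows directly. Alternatively, on the Fourier side $\mathcal{F}(u)$ is an a.e. $O(n)$-invariant $L^2$ function, so it is a.e. equal to a radial function. Such a function can be approximated in the weighted norm $\int (1+f)^2|\cdot|^2$ by averages over $O(n)$ of smooth compactly supported approximants: averaging preserves radiality and does not increase the weighted norm, since the weight is radial. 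This places $u$ in the closed radial subspace. I expect this density/averaging step to be the only delicate point; the cleanest route is to use the Haar averaging operator $P v = \int_{O(n)} v_A \, dA$. It is an orthogonal projection on $\mathcal{H}^\beta(f)$ whose range is exactly $\mathcal{H}^\beta_r(f)$, and $Pu = u$.

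Finally, the norm identity is immediate. $\mathcal{H}^\beta_r(f)$ carries the restricted inner product of $\mathcal{H}^\beta(f)$, and $L^2_r$ carries the $L^2$ norm. Hence $\|u\|_{\mathcal{H}^\beta_r(f)} = \|u\|_{\mathcal{H}^\beta(f)} = \|g\|_{L^2(\mathbb{R}^n)} = \|g\|_{L^2_r(\mathbb{R}^n)}$ by Theorem \ref{teorema caso lineal}.
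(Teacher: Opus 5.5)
The paper gives no proof of this statement. It is recalled among the preliminaries from the framework of \cite{Gorka 6}, where the natural argument is the explicit one. Applying $\mathcal{F}$ to $Lu=g$ gives $\mathcal{F}(u)=\widehat{K}\,\mathcal{F}(g)$ (i.e.\ $u=K_f*g$ by Proposition \ref{espacio Hillbert Hb} when $\beta>n/2$), and $\widehat{K}$ is radial. Moreover, $\mathcal{F}(g)$ is the $L^2$-limit of the pointwise radial functions $\mathcal{F}(\phi_k)$ with $\phi_k\in C^\infty_{0,r}(\mathbb{R}^n)$. So $u$ is an $L^2$-limit of pointwise radial functions, and one never has to convert ``invariant a.e.\ under each rotation'' into ``radial''.

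Your argument is correct and genuinely different. It uses only uniqueness in Theorem \ref{teorema caso lineal} and the fact that $L$ commutes with every $A\in O(n)$, because the symbol is rotation invariant. It therefore transfers unchanged to any group of symmetries preserving the symbol and the data, while the explicit route is tied to the Fourier representation. The price is precisely the step you flagged. Your first route needs $\beta>n/2$, which is not assumed: Theorem \ref{Caso Radial} allows any $\beta>\frac{n}{2}\frac{\alpha-1}{\alpha}$, a threshold below $n/2$. So the general case rests on the Haar projection, equivalently a Fubini argument on $O(n)\times\mathbb{R}^n$, and that is sound.

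Two adjustments. First, drop the approximation of $\mathcal{F}(u)$ in the weighted norm by $O(n)$-averages of smooth compactly supported functions.
\begin{itemize}
\item It is unnecessary: $\mathcal{H}^\beta_r(f)$ is the set of radial elements of $\mathcal{H}^\beta(f)$, which is automatically closed because $\mathcal{H}^\beta(f)$-convergence implies $L^2$-convergence.
\item It is not justified for a general $\mathcal{G}^\beta$-symbol, whose weight $(1+f(-|\xi|^2))^2$ need not be locally integrable; condition (I) is imposed only on $\mathcal{G}^{\beta,\gamma}$.
\end{itemize}
Second, you can bypass Haar measure entirely while keeping your equivariance idea. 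Take radial test data $\phi_k\to g$ in $L^2$. The solution $u_{\phi_k}=\mathcal{F}^{-1}(\widehat{K}\,\mathcal{F}(\phi_k))$ is continuous, since $0<\widehat{K}\le 1$ gives $\widehat{K}\,\mathcal{F}(\phi_k)\in L^1$. Hence the a.e.\ identity $u_{\phi_k}\circ A=u_{\phi_k}$ that your argument yields holds everywhere. Linearity and the isometry of Theorem \ref{teorema caso lineal} then give $\|u_{\phi_k}-u\|_{L^2}\le\|\phi_k-g\|_{L^2}\to 0$, and $L^2$-limits of radial functions are radial.
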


\begin{theorem}\label{Caso Radial}\cite[t.3.5]{Gorka 6}.
    Suppose $\alpha > 1$ is a constant, $f \in \mathcal{G}^\beta$ for some $\beta > \frac{n}{2} \left(\frac{\alpha - 1}{\alpha}\right)$, and $U$ is spherically symmetric in $x$.  
Assume moreover that there exist functions $h \in L^2(\mathbb{R}^n)$ and $g \in L^{\frac{2\alpha}{\alpha -1} }(\mathbb{R}^n)$ such that the following two inequalities hold:
\begin{equation}\label{primera desigualdad caso radial}
|U(x, y) + y| \leq C(|h(x)| + |y|^\alpha),
\end{equation}
\begin{equation}\label{segunda desigualdad caso radial}
    \left|\frac{\partial}{\partial y} (U(x, y) + y)\right| \leq C(|g(x)| + |y|^{\alpha - 1})
\end{equation}
for some constant $C > 0$. Then there exist $0 < \varepsilon < 1$ and $0 < \rho_{\varepsilon} < 1$ such that, whenever $\|h\|_{L^2(\mathbb{R}^n)} < \rho_{\varepsilon}$, there exists a radial solution $u \in \mathcal{H}^\beta_r(f)$ to the equation
\begin{equation}
f(\Delta)u - U(\cdot, u) = 0
\end{equation}
with $\|u\|_{L^{2\alpha}_r(\mathbb{R}^n)} \leq \varepsilon$.
\end{theorem}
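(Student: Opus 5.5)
The plan is to recast the equation as a fixed-point problem for the operator $L=f(\Delta)+I$. Writing $N(x,y)=U(x,y)+y$, the equation $f(\Delta)u-U(\cdot,u)=0$ is equivalent to
\begin{equation*}
Lu=N(\cdot,u).
\end{equation*}
By Theorem \ref{teorema caso lineal} and Theorem \ref{problema lineal para el caso cilindrico}, $L$ is invertible from $\mathcal{H}^\beta_r(f)$ onto $L^2_r(\mathbb{R}^n)$, with inverse $\mathcal{K}(g)=K_f*g$ an isometry. Radial symmetry is preserved: if $U$ is spherically symmetric and $u$ is radial, then $N(\cdot,u)$ is radial.

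We therefore define $T(u)=K_f*N(\cdot,u)$ and look for a fixed point in the closed ball
\begin{equation*}
B_\varepsilon=\{u\in L^{2\alpha}_r(\mathbb{R}^n): \|u\|_{L^{2\alpha}}\le\varepsilon\}.
\end{equation*}

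\emph{Step 1: $T$ maps $B_\varepsilon$ into itself.} The first growth inequality gives
\begin{equation*}
\|N(\cdot,u)\|_{L^2}\le C\bigl(\|h\|_{L^2}+\|u\|_{L^{2\alpha}}^{\alpha}\bigr).
\end{equation*}
The key estimate is then
\begin{equation*}
\|K_f*g\|_{L^{2\alpha}}\le C\|K_f*g\|_{\mathcal{H}^\beta}=C\|g\|_{L^2}.
\end{equation*}
I would obtain it from Lemma \ref{Lemma de contencion epscios Hb}(i), which embeds $\mathcal{H}^\beta(f)$ into $H^s$ for $s\le\beta$, combined with the Sobolev embedding $H^s\hookrightarrow L^{2\alpha}$. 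This requires
\begin{equation*}
s\ge \frac n2-\frac{n}{2\alpha}=\frac n2\cdot\frac{\alpha-1}{\alpha},
\end{equation*}
which is exactly the hypothesis on $\beta$. If $s$ is chosen to meet this threshold with equality (which is possible when $\beta$ exceeds it), we use the critical Sobolev embedding when $s<n/2$. The low frequencies are harmless because the symbol $1+f$ is at least $1$. Consequently
\begin{equation*}
\|T u\|_{L^{2\alpha}}\le C_1\bigl(\|h\|_{L^2}+\varepsilon^\alpha\bigr).
\end{equation*}
Since $\alpha>1$, we first choose $\varepsilon$ small enough that $C_1\varepsilon^{\alpha}\le\varepsilon/2$, and then set $\rho_\varepsilon=\varepsilon/(2C_1)$.

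\emph{Step 2: $T$ is a contraction on $B_\varepsilon$.} By the mean value theorem in $y$ together with the second inequality,
\begin{equation*}
|N(x,u)-N(x,v)|\le C\bigl(|g(x)|+|u|^{\alpha-1}+|v|^{\alpha-1}\bigr)|u-v|.
\end{equation*}
Hölder's inequality with exponents $\frac{2\alpha}{\alpha-1}$ and $2\alpha$ (note $\frac{\alpha-1}{2\alpha}+\frac{1}{2\alpha}=\frac12$) yields
\begin{equation*}
\|N(\cdot,u)-N(\cdot,v)\|_{L^2}\le C\bigl(\|g\|_{L^{2\alpha/(\alpha-1)}}+2\varepsilon^{\alpha-1}\bigr)\|u-v\|_{L^{2\alpha}},
\end{equation*}
because $\||u|^{\alpha-1}\|_{L^{2\alpha/(\alpha-1)}}=\|u\|_{L^{2\alpha}}^{\alpha-1}$.

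\emph{Main obstacle.} The term $\|g\|_{L^{2\alpha/(\alpha-1)}}$ does not shrink with $\varepsilon$, so this Lipschitz constant need not be below $1$ unless $g$ is small. I would first try to absorb it by a better choice of norm. If that fails, I would switch from Banach's fixed point theorem to Schauder's. Continuity of $T$ follows from the Lipschitz bound just proved. Compactness is where the radial hypothesis does the real work: Strauss/Lions-type compactness of $H^s_r(\mathbb{R}^n)\hookrightarrow L^{2\alpha}(\mathbb{R}^n)$ for $2<2\alpha<2^*_s$ shows that $T(B_\varepsilon)$, which is bounded in $\mathcal{H}^\beta_r(f)$, is relatively compact in $L^{2\alpha}_r$. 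When $\beta$ is larger than needed, I would pick an intermediate $s\le\beta$ so that $2\alpha$ is subcritical.

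Either route produces a fixed point $u=Tu\in B_\varepsilon$. Since $Tu=K_f*(\cdot)$ of an $L^2_r$ function, we get $u\in\mathcal{H}^\beta_r(f)$ with $Lu=N(\cdot,u)$, i.e.
\begin{equation*}
f(\Delta)u=U(\cdot,u),\qquad \|u\|_{L^{2\alpha}}\le\varepsilon.
\end{equation*}
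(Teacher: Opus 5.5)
Your final Schauder route is correct and is essentially the argument behind this result. The paper gives no proof of its own and only quotes \cite[t.3.5]{Gorka 6}, but the cited argument and the paper's own existence proofs follow this template: Schauder's theorem on the closed ball of $L^{2\alpha}_r(\mathbb{R}^n)$, the self-map estimate via $\mathcal{H}^\beta(f)\hookrightarrow H^s\hookrightarrow L^{2\alpha}$, continuity from the mean value theorem plus H\"older with $g\in L^{\frac{2\alpha}{\alpha-1}}$, and relative compactness from the radial compact embedding $H^s_r(\mathbb{R}^n)\hookrightarrow L^{2\alpha}(\mathbb{R}^n)$ for a subcritical choice $\frac{n}{2}\cdot\frac{\alpha-1}{\alpha}<s\le\beta$ (always available, since the hypothesis on $\beta$ is strict). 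Drop the contraction attempt and commit to Schauder, since $\|g\|_{L^{2\alpha/(\alpha-1)}}$ is not assumed small and the radial hypothesis is only needed for compactness; also note that this radial compactness step tacitly requires $n\ge 2$.
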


\begin{corollary}
    Suppose $U$ is spherically symmetric in $x$, the hypotheses of Theorem \ref{funcion real-analitica} are satisfied, and the growth assumptions of Theorem \ref{Caso Radial} hold. Then there exists a $C^\omega$ solution to the problem \ref{ecuacion basica no-lineal}.
\end{corollary}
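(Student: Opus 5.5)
Since the statement is a corollary, the plan is to combine the existence result of Theorem \ref{Caso Radial} with the regularity result of Theorem \ref{funcion real-analitica}. The bridge between them is to rewrite the nonlinear equation as a linear problem $Lu = g$ with $g \in L^2(\mathbb{R}^n)$.

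\textbf{Step 1: existence.} Take the radial solution $u \in \mathcal{H}^\beta_r(f)$ produced by Theorem \ref{Caso Radial}. It satisfies $f(\Delta)u = U(\cdot,u)$ and $\|u\|_{L^{2\alpha}_r(\mathbb{R}^n)} \le \varepsilon$.

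We need one check here. The hypotheses of Theorem \ref{funcion real-analitica} require $f \in \bigcap_{\beta>0}\mathcal{G}^\beta$. In particular $f \in \mathcal{G}^\beta$ for every $\beta$, so the condition $\beta > \frac n2\frac{\alpha-1}{\alpha}$ in Theorem \ref{Caso Radial} can be met. It is convenient to pick $\beta > n/2$ as well.

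\textbf{Step 2: rewrite as a linear problem.} Adding $u$ to both sides gives
\[
Lu = f(\Delta)u + u = U(\cdot,u) + u =: g.
\]
It remains to show $g \in L^2(\mathbb{R}^n)$. By \eqref{primera desigualdad caso radial},
\[
|g(x)| \le C\bigl(|h(x)| + |u(x)|^\alpha\bigr).
\]
Here $h \in L^2(\mathbb{R}^n)$. Moreover
\[
\bigl\||u|^\alpha\bigr\|_{L^2} = \|u\|_{L^{2\alpha}}^{\alpha} \le \varepsilon^\alpha < \infty.
\]
Alternatively, finiteness of $\|u\|_{L^{2\alpha}}$ follows from the Sobolev embedding $\mathcal{H}^\beta(f) \hookrightarrow H^{s}(\mathbb{R}^n) \hookrightarrow L^{2\alpha}$ of Lemma \ref{Lemma de contencion epscios Hb}, valid for $s$ large. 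Hence $g \in L^2(\mathbb{R}^n)$, and indeed $g \in L^2_r$ by radiality.

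\textbf{Step 3: identify $u$ and conclude.} By the uniqueness part of Theorem \ref{teorema caso lineal}, $u$ coincides with the unique solution $u_g \in \mathcal{H}^\beta(f)$ of $Lu = g$. Theorem \ref{funcion real-analitica} applies to $u_g$, since $f$ satisfies its hypotheses and $g \in L^2(\mathbb{R}^n)$. Therefore $u = u_g \in C^\omega(\mathbb{R}^n)$.

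\textbf{Main obstacle.} The only delicate point is the membership $g \in L^2$ in Step 2. It is also the one place where the choice of $\beta$ interacts between the two theorems: the membership of $u$ in the right space must hold for the same $\beta$ used to invoke uniqueness. Both theorems are stated for $\mathcal{H}^\beta(f)$ with the same $f$, and $f$ lies in every $\mathcal{G}^\beta$, so this compatibility holds once $\beta$ is fixed at the start.
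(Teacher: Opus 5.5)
Your proposal is correct and matches the intended argument; the paper states this corollary without proof. The radial solution from Theorem \ref{Caso Radial} satisfies $Lu=g$ with $g=U(\cdot,u)+u\in L^2(\mathbb{R}^n)$, so uniqueness in Theorem \ref{teorema caso lineal} together with Theorem \ref{funcion real-analitica} makes it real-analytic.

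Two small remarks. First, the step you flag as the main obstacle is automatic: $u\in\mathcal{H}^\beta(f)$ already gives $Lu\in L^2(\mathbb{R}^n)$ by Definition \ref{espacio Hb permite la interpretacion de L}, and $\mathcal{H}^\beta(f)$ does not depend on $\beta$, so your $\beta$-compatibility worry is moot. Second, what you do use tacitly is the smallness condition $\|h\|_{L^2(\mathbb{R}^n)}<\rho_\varepsilon$ from Theorem \ref{Caso Radial}, which is needed for the solution to exist at all; this is an omission in the corollary's statement rather than in your argument.
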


\newpage

\section{Nonlocal Equations on Bounded Domains in \texorpdfstring{$\mathbb{R}^{n}$}{Rⁿ}}\label{Nonlocal equations on Bounded Domains 1}
Let $(\mathbb{R}^n, \overline{g})$ be the Riemannian manifold equipped with the standard Euclidean metric; this is a complete, noncompact Riemannian manifold. The Laplace-Beltrami operator for a smooth function $f \in C^\infty(\mathbb{R}^n)$ on $(\mathbb{R}^n, \overline{g})$ defined in (\ref{Operador de Laplace-Beltrami}) satisfies
\begin{equation*}
    \Delta_{\overline{g}} f = \overline{g}^{ij}\nabla_{i}\nabla_{j} f =\overline{g}^{ij}(\partial_i \partial_j f - \Gamma^k_{ij} \partial_k f)
\end{equation*}
where $\overline{g}_{ij} = \delta_{ij}$ are the components of the Euclidean metric tensor in Cartesian coordinates. This immediately implies that $\overline{g}^{ij} = \delta^{ij}$, $|\overline{g}| = 1$, and that the Christoffel symbols $\Gamma^k_{ij}$ vanish because the metric is constant in cartesian coordinates. Therefore,
\begin{equation*}
\Delta_{\overline{g}} f = \delta^{ij} \partial_i \partial_j f = \Delta f.
\end{equation*}
When seeking solutions on a subset $\Omega\subset\mathbb{R}^n$ with properties specified below, viewed as the submanifold $(\Omega,\overline{g}|_{\Omega})$, we focus on equations of the form $f(\Delta)u - U(x,u) = 0$. 

Here, we our approach is to use zero-extensions, rather than considering, for instance the internal spectral realization $f(\Delta_D)$ associated with the standard Dirichlet Laplacian.
The set $\Omega \subset \mathbb{R}^{n}$ is assumed to be an open, bounded domain with Lipschitz boundary.
We recall that, for a bounded domain $\Omega$, the Rellich Kondrachov theorem says that $t > s \ge 0$ with $t,s$ real numbers, the embedding 
\begin{equation}
  H^{t}(\Omega) \hookrightarrow H^{s}(\Omega)  
\end{equation}
is compact \cite[t.1.22]{Rosenberg}.
Let us establish the correct identification of the nonlocal operator $f(\Delta)$ on the bounded domain $\Omega$ with Lipschitz boundary. We preserve the definition of the symbol space $\mathcal{G}^{\beta}$ as functions $f: \mb{R} \to \mb{R}$ satisfying conditions $(P)$ and $(E_\beta)$.
However, for reasons explained below, we will use in bounded domains a subfamily of symbols $\mc{G}^{\beta,\gamma} \subset \mc{G}^{\beta}$, which satisfy the following additional conditions:
($C_{\gamma}$) There exist real constants $C>0$ and $\gamma\geq \beta$ such that
$$M(1+|\xi|^{2})^{\frac{\beta}{2}} \leq f(-\vert{}\xi\vert{}^2) \le C(1+\vert{}\xi\vert{}^2)^{\frac{\gamma}{2}}, \quad \forall \xi \in \mb{R}^{n}, |\xi|> R.$$
(I) The function $\Phi: \mb{R}^{n} \to \mb{R}$ defined by $\Phi(\xi)=f(-|\xi|^{2})$ is locally integrable on $\mb{R}^{n}$.
\justify
This last condition ensures that, for test functions $\psi \in C^{\infty}_{0}(\mb{R}^{n})$, we have  
$$\int_{\mb{R}^{n}}(1+f(-|\xi|^{2}))|\mc{F}(\psi)(\xi)|^{2}d\xi < \infty.$$
We then define the subfamily of symbols $\mc{G}^{\beta,\gamma}$ as 
\begin{equation}
\mc{G}^{\beta,\gamma}=\{f \in \mc{G}^{\beta} \mid (C_{\gamma}), (I)~\text{holds}\}.
\end{equation}
In order to rigorously define the nonlocal operator $f(\Delta)$ on $\Omega$, we introduce the zero-extension operator as 
\begin{align*}
\mathcal{E}_\Omega: L^2(\Omega) &\rightarrow L^2(\mathbb{R}^n), \quad 
u \mapsto \mathcal{E}_\Omega u = \widetilde{u}(x).
\end{align*}
The last expression is  $\widetilde{u}(x)=u(x)$ with $x \in \Omega$ and $\widetilde{u}(x)=0$ for $x \in \mathbb{R}^n\setminus\Omega$.
Clearly, $\mathcal{E}_\Omega$ is a linear isometric embedding, so $\|\widetilde{u}\|_{L^2(\mathbb{R}^n)} = \|u\|_{L^2(\Omega)}$. We also define the restriction operator
\begin{equation*}
\mathcal{R}_\Omega:L^2(\mathbb{R}^n)\longrightarrow L^2(\Omega), \quad \mathcal{R}_\Omega v = v|_\Omega.
\end{equation*}

\begin{definition}
Fix $f \in \mc{G}^{\beta,\gamma}$. The space $\mc{H}^{\beta/2}(f)$ is given by all real-valued functions $g$ on  $\mathbb{R}^{n}$ such that $g$ is measurable, the Fourier transform $\mc{F}(g)$ exists, and
\begin{equation*}
    \int_{\mb{R}^{n}}(1+f(-|\xi|^{2}))|\mc{F}(g)(\xi)|^{2}\,d\xi < \infty.
\end{equation*}
 If we endow $\mc{H}^{\beta/2}(f)$ with the inner product
 \begin{equation}
   \langle u, v \rangle_{\mc{H}^{\beta/2}}
=
\int_{\mathbb{R}^n} (1+f(-|\xi|^2)) \mathcal{F}(u)(\xi) \overline{\mathcal{F}(v)(\xi) }\,d\xi,  
 \end{equation}
 then $\mc{H}^{\beta/2}(f)$ is a Hilbert space.
\end{definition}

\justify

We introduce the following space for an $f \in \mc{G}^{\beta,\gamma}$:
\begin{equation}\label{Espacio de energia Hb}
\mathcal{H}_{0}^{\beta/2}(\Omega;f)
=
\overline{\{\widetilde{\eta} \mid \eta \in C^{\infty}_{0}(\Omega)\}}^{\mc{H}^{\beta/2}(f)}.
\end{equation}
Here $\widetilde{\eta}$ is the zero-extension to $\mb{R}^{n}$.
Note that for every $\psi \in C_0^\infty(\Omega)$, one has $\widetilde{\psi} \in C_0^\infty(\mb{R}^n) \subset \mc{S}(\mb{R}^n)$, so conditions $(I)$ and $(C_\gamma)$ guarantee that $\widetilde{\psi} \in \mc{H}^{\beta/2}(f)$ which implies that the former space is well defined. 
This space is a Hilbert space when endowed with the inner product
\begin{equation}\label{producto de espacio de energia}
\langle u, v \rangle_{\mathcal{H}_{0}^{\beta/2}(\Omega)}=\langle \widetilde{u}, \widetilde{v} \rangle_{\mathcal{H}^{\beta/2}}
=
\int_{\mathbb{R}^n} (1+f(-|\xi|^2)) \mathcal{F}(\widetilde{u})(\xi) \overline{\mathcal{F}(\widetilde{v})(\xi) }\,d\xi.
\end{equation}
Using the nonlocal operator $L= f(\Delta) + I$ in $\mb{R}^{n}$ defined on page \hyperref[espacio Hb permite la interpretacion de L]{\textcolor{blue}{21}}, we define the domain 
operator $L_\Omega$ acting on functions supported on $\overline{\Omega}$ as:
\begin{equation}\label{operador en Omega}
L_\Omega u = \mc{R}_\Omega \circ L \circ \mc{E}_\Omega u = \mc{R}_\Omega \mc{F}^{-1} \Bigl((1+f(-|\xi|^2)) \mc{F}(\widetilde{u})(\xi) \Bigr)
\end{equation}
for every $u \in \mc{D}(L_{\Omega})$. 
Following the same procedure, we define $f(\Delta_{\Omega})u$ as 
\begin{equation}\label{interpretacion del simbolo en el caso acotado}
f(\Delta_{\Omega})u=\mc{R}_{\Omega}f(\Delta)\widetilde{u}=(f(\Delta)\widetilde{u})|_{x \in \Omega}.
\end{equation}
Then we have that $L_{\Omega}u=f(\Delta_{\Omega})u + u$ for every $u \in \mc{D}(L_{\Omega})$. This domain is defined as follows 
\begin{equation}\label{maximal domain of LOmega}
\mc{D}(L_\Omega)
=
\left\{
u\in \mc{H}^{\beta/2}_{0}(\Omega;f) \mid \mathcal{R}_\Omega \mathcal{F}^{-1} \Bigl((1+f(-|\xi|^2)) \mathcal{F}(\widetilde{u})(\xi) \Bigr) \in L^2(\Omega)
\right\}.
\end{equation}
%This domain $\mc{D}(L_{\Omega})$ is a Hilbert space under the graph inner product $$\langle u, v \rangle_{\mathcal{D}(L_\Omega)} := \langle u, v \rangle_{L^2(\Omega)} + \langle L_\Omega u, L_\Omega v \rangle_{L^2(\Omega)}, \quad \forall u, v \in \mathcal{D}(L_\Omega),$$ which induces a norm, defined as $\|u\|_{\mathcal{D}(L_\Omega)} = \left( \|u\|_{L^2(\Omega)}^2 + \|L_\Omega u\|_{L^2(\Omega)}^2\right)^{1/2}$, see for instance \cite{Brezis}.

\justify
Furthermore, we can define the higher-regularity space inherited from the reviewed in the previous section
\begin{equation}\label{espacio mas grande de Hb}
\mathcal{H}_{0}^{\beta}(\Omega;f)
=
\left\{
u\in L^2(\Omega) : \int_{\mathbb{R}^n} (1+f(-|\xi|^2))^2 \big|\mathcal{F}(\widetilde{u})(\xi)\big|^2 \,d\xi <\infty
\right\}.
\end{equation}
We have the following continuous chain of embeddings: 
\begin{equation}\label{cadena de embbeddings caso acotado}
\mathcal{H}_{0}^{\beta}(\Omega;f) \hookrightarrow 
  \mc{D}(L_{\Omega}) \hookrightarrow
  \mathcal{H}_{0}^{\beta/2}(\Omega;f) \hookrightarrow L^2(\Omega).
\end{equation}
\justify
Since
$f(-|\xi|^2) \geq 0$ for every $f \in \mathcal{G}^{\beta,\gamma}$, it immediately follows that $1+f(-|\xi|^2) \geq 1$. Therefore, for any $u \in \mathcal{H}_{0}^{\beta/2}(\Omega;f)$ we have $$\|u\|_{L^2(\Omega)}^2 = \|\widetilde{u}\|_{L^2(\mathbb{R}^n)}^2 \leq \|u\|_{\mathcal{H}_{0}^{\beta/2}(\Omega;f)}^2.$$

\justify

Having introduced the spaces of interest on which we will work, we focus on proving that there exists a unique solution for the linear problem, that is, the problem
\begin{equation}\label{ecuacion lineal acotada}
       (L):= \begin{cases}
            f(\Delta_{\Omega})u + u = g & \text{in } \Omega, \\
            u = 0 & \text{in } \mathbb{R}^n \setminus \Omega.
        \end{cases}
    \end{equation}
The solution $u_{g}$ of the linear problem must belong to some space $\mc{D}$ such that $\mc{F}^{-1}(\mc{F}(\widetilde{u_{g}}) + f(-|\xi|^{2})\mc{F}(\widetilde{u_{g}})) \in L^{2}(\mb{R}^{n})$, which directly implies that the nonlocal symbol $f(\Delta)\widetilde{u_{g}} \in L^{2}(\mb{R}^{n})$, so that $f(\Delta_{\Omega})$ is a well-defined linear operator from $\mc{D}$ to $L^{2}(\mb{R}^{n})$.\\
Suppose that $u_{g} \in \mc{D}(L_{\Omega})$ is the solution of the linear problem for function $g \in L^{2}(\Omega)$, that is, $L_{\Omega}u_{g}=g$; then for any element $v \in \mc{H}^{\beta/2}_{0}(\Omega;f)$, we have, formally,
\begin{equation*}
  L_{\Omega}u_{g}=g \iff (L_{\Omega}u_{g})v=gv \implies \int_{\Omega}(L_{\Omega}u_{g})(x)v(x)dx = \int_{\Omega}g(x)\overline{v(x)}dx,
\end{equation*}
and so, using Parseval's identity on $\mb{R}^{n}$,
\begin{align*}
    \int_{\Omega}(L_{\Omega}u_{g})v\,dx &=\int_{\mathbb{R}^{n}}(L\widetilde{u}_{g})\widetilde{v}\,dx
    %=\int_{\mb{R}^{n}}\mc{F}(L\widetilde{u_{g}})(\xi)\mc{F}(\widetilde{v})(\xi)d\xi\\
=\int_{\mb{R}^{n}} \mc{F}^{-1}[(1+f(-|\xi|^2)) \mc{F}(\widetilde{u_{g}})(\xi)](\xi)\overline{\mc{F}^{-1}(\mc{F}(\widetilde{v})(\xi))}\,d\xi\\
&=\int_{\mathbb{R}^{n}}(1+f(-|\xi|^{2}))\mc{F}(\widetilde{u_{g}})(\xi)\overline{\mc{F}(\widetilde{v})(\xi)}\,d\xi=\langle  u_{g} , v  \rangle_{\mc{H}^{\beta/2}_{0}(\Omega)}.
\end{align*}
Therefore, if we prove that there exists a unique element $u_{g} \in \mc{H}^{\beta/2}_{0}(\Omega;f)$ satisfying the equality
\begin{equation*}
    \langle  u_{g} , v  \rangle_{\mc{H}^{\beta/2}_{0}(\Omega)} = \int_{\Omega}g(x)\overline{v(x)}\,dx
\end{equation*}
we would have a weak solution for \eqref{ecuacion lineal acotada}. This motivates us to use the following result.

\begin{theorem}[The first representation theorem]\cite[p.322]{Kato}.\label{Teorema de Kato} 
Let $t[u, v]$ be a densely defined, closed, sectorial sesquilinear form in a Hilbert space $\mathcal{H}$. There exists an $m$-sectorial operator $T$ such that:\\
(i) $D(T) \subset D(t)$ and
$$t[u, v] = \langle Tu, v \rangle_{\mc{H}}$$
for every $u \in D(T)$ and $v \in D(t)$;\\
(ii) $D(T)$ is a core of $t$;\\
(iii)\label{condicion iii del teorema de Kato} if $u \in D(t)$, $w \in \mathcal{H}$ and
$$t[u, v] = \langle w, v\rangle_{\mc{H}}$$
holds for every $v$ belonging to a core of $t$, then $u \in D(T)$ and $Tu = w$. 
The $m$-sectorial operator $T$ is uniquely determined by the condition (i).\end{theorem}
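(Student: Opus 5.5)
Since this theorem is quoted from \cite[p.322]{Kato} and used as a black box, I only sketch how I would prove it: reduce everything to the Lax--Milgram lemma on the form domain $D(t)$, made into a Hilbert space by the form itself.

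\emph{Setup.} Let $\gamma$ be a vertex and $\theta<\pi/2$ a semi-angle of $t$. Replace $t$ by $t'=t+(1-\gamma)$, so that $\mathrm{Re}\,t'[u]\ge \|u\|_{\mathcal H}^2$. Let $h=\mathrm{Re}\,t'$ be the real part of $t'$. Closedness of $t$ means exactly that $D(t)$, with the inner product $(u,v)_t=h[u,v]$, is a Hilbert space $\mathcal H_t$ that embeds continuously into $\mathcal H$ (because $\|u\|_t\ge\|u\|_{\mathcal H}$). Since $t$ is densely defined, the image of this embedding is dense in $\mathcal H$.

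Sectoriality gives $|\mathrm{Im}\,t'[u]|\le \tan\theta\,h[u]$. By the usual Cauchy--Schwarz argument for sectorial forms this yields
\[
|t'[u,v]|\le (1+\tan\theta)\,\|u\|_t\|v\|_t
\qquad\text{and}\qquad
\mathrm{Re}\,t'[u,u]=\|u\|_t^2 .
\]
So $t'$ is a bounded, coercive sesquilinear form on $\mathcal H_t$.

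\emph{Construction of $T$.} Lax--Milgram gives an isomorphism $A:\mathcal H_t\to\mathcal H_t^{*}$ with $(Au)(v)=t'[u,v]$. Each $w\in\mathcal H$ defines $Jw\in\mathcal H_t^{*}$ by $(Jw)(v)=\langle w,v\rangle$, and this is bounded because $\|v\|_{\mathcal H}\le\|v\|_t$.
\begin{itemize}
\item[] Define $D(T')=A^{-1}(J\mathcal H)$ and $T'u=J^{-1}Au$. Here $J$ is injective because $D(t)$ is dense in $\mathcal H$.
\item[] Set $T=T'-(1-\gamma)$.
\end{itemize}
Statement (i) is then immediate. The numerical range of $T$ lies in the sector, since $\langle Tu,u\rangle=t[u]$. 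Moreover $T'$ is onto $\mathcal H$, because any $w\in\mathcal H$ gives $A^{-1}Jw\in D(T')$. A bijective accretive operator whose inverse is bounded is closed and $m$-accretive, so $T$ is $m$-sectorial.

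\emph{Core property (ii).} I expect this to be the main point. The map $J:\mathcal H\to\mathcal H_t^{*}$ has dense range: $\mathcal H_t$ is reflexive, and if $v\in\mathcal H_t$ annihilates $J\mathcal H$, then $\langle w,v\rangle=0$ for all $w$, so $v=0$. Since $A^{-1}$ is an isomorphism, $D(T')=A^{-1}(J\mathcal H)$ is dense in $\mathcal H_t$. Density in the $\|\cdot\|_t$ norm is exactly the statement that $D(T)$ is a core of $t$.

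\emph{Property (iii).} Suppose $t[u,v]=\langle w,v\rangle$ holds for all $v$ in a core. Both sides are continuous in $v$ for $\|\cdot\|_t$, so the identity extends to all $v\in D(t)$. Then
\[
Au=J\bigl(w+(1-\gamma)u\bigr),
\]
hence $u\in D(T)$ and $Tu=w$.

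\emph{Uniqueness.} Let $S$ be any operator satisfying (i). Applying (iii) with $w=Su$ gives $S\subset T$. If $S$ is $m$-sectorial, it is maximal accretive after the same shift, so $S\subset T$ forces $S=T$.
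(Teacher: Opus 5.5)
The paper gives no proof of this result; it quotes it from Kato and uses it as a black box. Your sketch is a correct rendering of the standard argument behind Kato's theorem: Lax--Milgram on the form space $\mathcal{H}_t$, with the core property obtained from the density of $J\mathcal{H}$ in the antidual of $\mathcal{H}_t$. The one step stated too quickly is uniqueness. You assert that $S+(1-\gamma)$ itself is m-accretive, but the cleaner route is to pick a real $\alpha$ large enough that $S+\alpha$ is m-accretive (possible because $S$ is quasi-m-accretive) and $T+\alpha$ is accretive. Then $S+\alpha\subset T+\alpha$ together with the maximality of m-accretive operators forces $S=T$.
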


According to Kato's first representation theorem, every densely defined, closed, and sectorial sesquilinear form $t[u,v]$ on $\mathcal{H}$, meaning its numerical range is confined within a sector away from the negative real axis is uniquely represented by an $m$-sectorial operator $T$ with $D(T) \subset D(t)$, such that:
$$t[u,v] = \langle Tu,v \rangle_{\mathcal{H}}, \qquad \forall u \in D(T),\ v \in D(t),$$
where $D(T)$ is a core of $t$. If $t$ is symmetric and coercive, $T$ is self-adjoint and bounded below. In this framework, Condition (ii) guarantees that $D(T)$ provides a sufficiently rich structure to approximate any state in the energy space $\mathcal{H}^{\beta/2}_{0}(\Omega;f)$.

\begin{lemma}\label{sistema lineal-solucion fuerte}
     Let $\Omega\subset\mathbb{R}^n$ be a bounded Lipschitz domain and $f\in\mathcal{G}^{\beta,\gamma}$. Let $L_\Omega$ be the operator defined in \eqref{operador en Omega} with domain $\mc{D}(L_\Omega)$. Then for every $g\in L^2(\Omega)$, we have the following results:\\
i)\label{lema solucion debil acotada}~There exists a unique weak solution $u_g \in \mathcal{H}_{0}^{\beta/2}(\Omega;f)$ to the equation $L_\Omega u = g$, meaning that
\begin{equation}\label{condicion debil caso acotado}
\langle u_g, v \rangle_{\mathcal{H}_{0}^{\beta/2}(\Omega)} = \int_{\Omega} g(x) \overline{v(x)} \,dx , \quad \forall v \in \mathcal{H}_{0}^{\beta/2}(\Omega;f).
\end{equation}
Furthermore, the following estimate holds $\|u_g\|_{\mathcal{H}_{0}^{\beta/2}(\Omega)} \leq \|g\|_{L^2(\Omega)}.$\label{energy-estimate}\\
ii)~There exists a unique function $u_g\in \mc{D}(L_\Omega)$ such that 
\begin{equation}\label{sistema lineal fuerte}
   L_\Omega u_g = g, \quad \text{in } L^2(\Omega) 
\end{equation}
Moreover, the following estimate holds  $\|u_g\|_{L^2(\Omega)} \leq \|g\|_{L^2(\Omega)}$.
\end{lemma}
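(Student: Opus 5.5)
For part i) I would apply the Riesz representation theorem (equivalently Lax--Milgram) on the Hilbert space $\mathcal{H}_{0}^{\beta/2}(\Omega;f)$, with the inner product \eqref{producto de espacio de energia}. For fixed $g\in L^2(\Omega)$, the functional $\ell_g(v)=\int_\Omega g\,\overline{v}\,dx$ is linear. By Cauchy--Schwarz together with $\|v\|_{L^2(\Omega)}=\|\widetilde v\|_{L^2(\mathbb{R}^n)}\le \|v\|_{\mathcal{H}_{0}^{\beta/2}(\Omega)}$ (a consequence of $1+f(-|\xi|^2)\ge 1$ and Plancherel), it satisfies
\[
|\ell_g(v)|\le \|g\|_{L^2(\Omega)}\|v\|_{L^2(\Omega)}\le \|g\|_{L^2(\Omega)}\|v\|_{\mathcal{H}_{0}^{\beta/2}(\Omega)} .
\]
So $\ell_g$ is bounded with norm at most $\|g\|_{L^2(\Omega)}$. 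Riesz then gives a unique $u_g\in\mathcal{H}_{0}^{\beta/2}(\Omega;f)$ with $\langle u_g,v\rangle=\ell_g(v)$ for all $v$, which is \eqref{condicion debil caso acotado}. The estimate follows from $\|u_g\|=\|\ell_g\|\le\|g\|_{L^2(\Omega)}$, or more directly by testing with $v=u_g$.

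For part ii) I would bring in Theorem \ref{Teorema de Kato}. I would work in the Hilbert space $\mathcal{H}=L^2(\Omega)$ with the form $t[u,v]=\langle u,v\rangle_{\mathcal{H}_{0}^{\beta/2}(\Omega)}$ on $D(t)=\mathcal{H}_{0}^{\beta/2}(\Omega;f)$, and check its hypotheses in order.
\begin{enumerate}
\item Density: $D(t)$ contains $C_0^\infty(\Omega)$, which is dense in $L^2(\Omega)$.
\item Sectoriality: $t$ is symmetric and $t[u,u]\ge\|u\|_{L^2}^2$, so its numerical range lies in $[1,\infty)$.
\item Closedness: the form norm $t[u,u]^{1/2}$ is exactly the norm of $\mathcal{H}_{0}^{\beta/2}(\Omega;f)$, which is complete, being a closed subspace of $\mathcal{H}^{\beta/2}(f)$ by definition.
\end{enumerate}
Kato's theorem then produces a self-adjoint operator $T\ge 1$ with $D(T)\subset D(t)$.

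Next I would identify $T$ with $L_\Omega$ on $\mc{D}(L_\Omega)$. For $u\in\mc{D}(L_\Omega)$ and $v\in D(t)$, the Parseval computation preceding the lemma gives
\[
t[u,v]=\int_{\mathbb{R}^n}(L\widetilde u)\,\overline{\widetilde v}\,dx=\int_\Omega (L_\Omega u)\,\overline v\,dx ,
\]
since $\widetilde v$ vanishes off $\Omega$. Two points need justification here. First, we need $\widetilde v$ to vanish a.e. outside $\overline\Omega$ for every $v$ in the closure; this holds because $\mathcal{H}^{\beta/2}$-convergence implies $L^2$-convergence, so the support condition passes to the limit, and $\partial\Omega$ is Lipschitz and hence null. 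Second, $L\widetilde u$ is a priori only a tempered distribution. I would handle this by writing $\langle L\widetilde u,\widetilde v\rangle$ as $\int(1+f)\mathcal F\widetilde u\,\overline{\mathcal F\widetilde v}$, which is finite for elements of $\mathcal{H}^{\beta/2}$. For $v=\eta\in C_0^\infty(\Omega)$ the distributional pairing of $L\widetilde u$ with $\eta$ only sees $\mathcal{R}_\Omega L\widetilde u\in L^2(\Omega)$. Density of such $\eta$ in $D(t)$ then extends the identity.

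Property (iii) of Kato's theorem, applied with the core $C_0^\infty(\Omega)$ (a core by the definition of $\mathcal{H}_{0}^{\beta/2}$), shows that $u\in D(T)$ and $Tu=L_\Omega u$. Conversely, if $u\in D(T)$, then testing $t[u,\eta]=\langle Tu,\eta\rangle$ against $\eta\in C_0^\infty(\Omega)$ shows that the distribution $\mathcal{R}_\Omega L\widetilde u$ equals $Tu\in L^2(\Omega)$, so $u\in\mc{D}(L_\Omega)$. Hence $T=L_\Omega$.

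With this identification the conclusion is quick. Let $u_g$ be the weak solution from i). It satisfies $t[u_g,v]=\langle g,v\rangle$ for all $v$, so by (iii) $u_g\in\mc{D}(L_\Omega)$ and $L_\Omega u_g=g$. Uniqueness holds because $L_\Omega u=0$ gives $t[u,u]=0$, hence $u=0$. The $L^2$ estimate follows from $\|u_g\|_{L^2}\le\|u_g\|_{\mathcal{H}_0^{\beta/2}}\le\|g\|_{L^2}$, or equivalently from $T\ge1$.

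The main obstacle is the rigorous identification $T=L_\Omega$, in particular making sense of $\mathcal{R}_\Omega L\widetilde u$ as a distribution on $\Omega$ and exchanging it with the Fourier pairing. I would first try to handle this purely through distributions on $\Omega$ tested against $C_0^\infty(\Omega)$, using the local integrability condition (I) to keep every pairing finite.
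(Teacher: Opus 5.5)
Your proposal is correct and follows essentially the same route as the paper. Part (i) is Riesz representation on $\mathcal{H}_{0}^{\beta/2}(\Omega;f)$. Part (ii) applies Kato's first representation theorem to the energy form on $L^2(\Omega)$ and identifies the generated operator with $L_\Omega$ by testing the distribution $\mathcal{R}_\Omega\mathcal{F}^{-1}\bigl((1+f(-|\xi|^2))\mathcal{F}(\widetilde u)\bigr)$ against $C_0^\infty(\Omega)$ and extending by density; item (iii) then upgrades the weak solution. The only minor differences are that you get closedness of the form directly from completeness of $\mathcal{H}_{0}^{\beta/2}(\Omega;f)$ instead of the paper's Cauchy-sequence argument, and uniqueness from $t[u,u]=0$ instead of from invertibility of $A$ with $\|A^{-1}\|\le 1$.
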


\begin{proof}
$i)$ Let $F \in (\mc{H}^{\beta/2}_{0}(\Omega;f))^{*}$ 
be the bounded conjugate-linear functional
defined by $$F(v) = \int_{\Omega} g(x) \overline{v(x)} \,dx.$$
By the Cauchy-Schwarz inequality and the continuous embedding \eqref{cadena de embbeddings caso acotado}, we have that
\begin{align*}
|F(v)|&\leq \int_{\Omega}|g(x)\overline{v(x)}|\,dx \leq \norm{g}_{L^{2}(\Omega)}\norm{v}_{L^{2}(\Omega)}=\norm{g}_{L^{2}(\Omega)}\norm{\widetilde{v}}_{L^{2}(\mb{R}^{n})}
%=\norm{g}_{L^{2}(\Omega)}\norm{\mc{F}(\widetilde{v})}_{L^{2}(\mb{R}^{n})}
\leq \norm{g}_{L^{2}(\Omega)}\norm{v}_{\mc{H}^{\beta/2}_{0}(\Omega)}
\end{align*}
This shows that $F$ is a bounded linear functional on the Hilbert space $\mathcal{H}_{0}^{\beta/2}(\Omega;f)$, with its operator norm $\|F\|_{*}$ bounded by $\|g\|_{L^2(\Omega)}$.
By the Riesz Representation Theorem \cite[t.3.1]{HH}, there exists a unique element $u_g \in \mathcal{H}_{0}^{\beta/2}(\Omega;f)$ such that
$$
F(v) = \int_{\Omega} g(x) \overline{v(x)} \,dx=\langle u_g, v \rangle_{\mathcal{H}_{0}^{\beta/2}(\Omega)}, \quad \forall v \in \mathcal{H}_{0}^{\beta/2}(\Omega;f).
$$
This proves existence and uniqueness of a weak solution of the linear problem \eqref{ecuacion lineal acotada}. 
If $u_g = 0$, the energy estimate holds trivially. 
Moreover, if $u_{g}\neq0$, we have the following estimate
$$\|u_g\|_{\mathcal{H}_{0}^{\beta/2}(\Omega)}^2 = \langle u_g, u_g \rangle_{\mathcal{H}_{0}^{\beta/2}(\Omega)} = \int_{\Omega} g(x) \overline{u_g(x)} \,dx \leq \|g\|_{L^2(\Omega)} \|u_g\|_{L^2(\Omega)}\leq\|g\|_{L^2(\Omega)} \|u_g\|_{\mathcal{H}_{0}^{\beta/2}(\Omega)}.$$

$ii)$ Note that $C_{0}^{\infty}(\Omega) \subset \mc{H}^{\beta/2}_{0}(\Omega;f)$ holds for any symbol $f \in \mc{G}^{\beta,\gamma}$. Indeed, let $\psi \in C_{0}^{\infty}(\Omega)$. Its zero-extension $\widetilde{\psi}$ is smooth and compactly supported in $\mb{R}^{n}$, hence belongs to the Schwartz class $\mathcal{S}(\mb{R}^n)$. Consequently, its Fourier transform $\mc{F}(\widetilde{\psi})$ is bounded and decays faster than any polynomial.
On any compact set $\{|\xi| \leq R\}$, condition $(I)$ ensures that $1+f(-|\xi|^2)$ is locally integrable, so its product with the bounded function $|\mc{F}(\widetilde{\psi})|^2$ is integrable. Outside a sufficiently large ball $\{|\xi| > R\}$, condition $(C_\gamma)$ gives $1+f(-|\xi|^2) \le \widetilde{C}(1+|\xi|^2)^{\gamma/2}$, so the symbol grows at most polynomially of order $\mc{O}(|\xi|^{\gamma})$ as $|\xi|\to\infty$. Therefore,
\begin{align*}
\int_{\mb{R}^{n}}\bigl(1+f(-|\xi|^{2})\bigr)\bigl|\mc{F}(\widetilde{\psi})(\xi)\bigr|^{2}\,d\xi &\leq \|\mc{F}(\widetilde{\psi})\|_{L^\infty(\mb{R}^n)}^2 \int_{|\xi| \leq R} \bigl(1+f(-|\xi|^2)\bigr)\,d\xi \\
&+ \widetilde{C} \int_{|\xi| > R} (1+|\xi|^2)^{\gamma/2}\bigl|\mc{F}(\widetilde{\psi})(\xi)\bigr|^{2}\,d\xi < \infty,
\end{align*}
which confirms that $\psi \in \mc{H}^{\beta/2}_{0}(\Omega;f)$. Since $C^{\infty}_{0}(\Omega)$ is dense in $L^{2}(\Omega)$, the chain of inclusions $C^{\infty}_{0}(\Omega) \subset \mc{H}^{\beta/2}_{0}(\Omega;f) \subset L^{2}(\Omega)$ implies that $(\mc{H}^{\beta/2}_{0}(\Omega;f), \|\cdot\|_{L^{2}})$ is also dense in $L^{2}(\Omega)$ and by definition, $C^{\infty}_{0}(\Omega)$ is dense in $\mc{H}^{\beta/2}_{0}(\Omega;f)$ with respect to the energy norm $\norm{v}_{\mc{H}^{\beta/2}_{0}(\Omega)}$.
\vspace{0.12cm}

We prove now that the sesquilinear, symmetric, and coercive form {\fontsize{10.3}{14}$t[u,v]=\langle u , v \rangle_{\mc{H}^{\beta/2}_{0}(\Omega)}$} with domain {\fontsize{10.3}{14}$D(t) = \mc{H}^{\beta/2}_{0}(\Omega;f)$} is closed in $\mc{H} = L^{2}(\Omega)$.
\vspace{0.1cm}

Let $(u_n)_{n \in \mb{N}} \subset D(t)$ be a sequence such that $u_n \to u$ in $L^2(\Omega)$ as $n \to \infty$ and $t[u_n - u_m, u_n - u_m] \to 0$ as $n,m \to \infty$. 
Let us prove that $t[u_n - u, u_n - u] \to 0$.
Define the sequence $\psi_n(\xi) := \bigl(1+f(-|\xi|^2)\bigr)^{1/2}\mc{F}(\widetilde{u}_n)(\xi)$. Since
$$\|\psi_n - \psi_m\|_{L^2(\mb{R}^n)}^2 = \int_{\mb{R}^n} \bigl(1+f(-|\xi|^2)\bigr)\bigl|\mc{F}(\widetilde{u}_n)(\xi) - \mc{F}(\widetilde{u}_m)(\xi)\bigr|^2\,d\xi = t[u_n - u_m, u_n - u_m] \longrightarrow 0,$$
$(\psi_{n})_{n \in \mb{N}}$ is a Cauchy sequence in $L^2(\mb{R}^n)$, so by completeness of $L^2(\mb{R}^n)$ there exists $h \in L^2(\mb{R}^n)$ such that $\psi_n \to h$ in $L^2(\mb{R}^n)$. On the other hand, since $u_n \to u$ in $L^2(\Omega)$, the zero-extension yields $\widetilde{u}_n \to \widetilde{u}$ in $L^2(\mb{R}^n)$, and by Plancherel's theorem $\mc{F}(\widetilde{u}_n) \to \mc{F}(\widetilde{u})$ in $L^2(\mb{R}^n)$. Using that $\bigl(1+f(-|\xi|^2)\bigr)^{-1/2} \leq 1$, we have
$$\mc{F}(\widetilde{u}_n)(\xi) = \bigl(1+f(-|\xi|^2)\bigr)^{-1/2}\psi_n(\xi) \longrightarrow \bigl(1+f(-|\xi|^2)\bigr)^{-1/2}h(\xi) \quad \text{in } L^2(\mb{R}^n).$$
By uniqueness of the limit in $L^2(\mb{R}^n)$, it follows that $\bigl(1+f(-|\xi|^2)\bigr)^{1/2}\mc{F}(\widetilde{u})(\xi) = h(\xi) \in L^2(\mb{R}^n)$.
In fact $t[u_n - u, u_n - u] = \|\psi_n - h\|_{L^2(\mb{R}^n)}^2 \to 0$ as $n \to \infty$ and $u \in \mc{H}^{\beta/2}_{0}(\Omega;f)$ because $\mc{H}_{0}^{\beta/2}(\Omega;f)$ is by definition closed on $\mc{H}^{\beta/2}(\Omega)$; thus, $t[u,v]$ is a closed form in $L^2(\Omega)$.

Therefore the form $t[u,v]$ satisfies the conditions of Kato's theorem \cite[t.2.1]{Kato}; then, there exists a unique operator $A$ which is densely defined,
self-adjoint, closed and coercive 
$$A: D(A) \subset L^{2}(\Omega) \to L^{2}(\Omega)$$
such that $D(A) \subset \mc{H}^{\beta/2}_{0}(\Omega;f)$ and satisfies the following 
\begin{equation*}\label{construccion del operador A , caso acotado}
    \langle u, v \rangle_{\mathcal{H}_{0}^{\beta/2}(\Omega)} = \langle A u, v \rangle_{L^{2}(\Omega)}, \quad \forall u\in D(A),\ \forall v\in \mathcal{H}_{0}^{\beta/2}(\Omega;f).
\end{equation*}
By item $(iii)$ of Kato's Theorem \ref{Teorema de Kato}, we have that $u \in D(A)$ if and only if $t[u,v]$ can be represented as the inner product in $\mc{H}=L^{2}(\Omega)$ of some function $w \in L^{2}(\Omega)$ with $v \in D(t)$, which in this case is $D(t)=\mc{H}^{\beta/2}_{0}(\Omega;f)$. 
In fact, $D(A) \subset \mc{H}^{\beta/2}_{0}(\Omega;f)$ is dense and has the form 
\begin{equation*}
    D(A) = \left\{ u \in \mc{H}^{\beta/2}_{0}(\Omega;f) \mid \exists w \in L^2(\Omega), \; t[u,v] = \langle w, v \rangle_{L^2(\Omega)}, \forall v \in \mc{H}^{\beta/2}_{0}(\Omega;f) \right\}.
\end{equation*}
We know that $t[u,v]=\langle u , v  \rangle_{\mc{H}^{\beta/2}_{0}(\Omega)}$. Indeed,
$$   
\int_{\mathbb{R}^n} (1+f(-|\xi|^2)) \mathcal{F}(\widetilde{u})(\xi) \overline{\mathcal{F}(\widetilde{v})(\xi) }\,d\xi = \langle w, v  \rangle_{L^{2}(\Omega)}.
$$
We prove that the condition that $u \in D(A)$ is equivalent to $\mathcal{R}_\Omega \mathcal{F}^{-1}\bigl((1+f(-|\xi|^2)) \mathcal{F}(\widetilde{u})(\xi)\bigr) \in L^{2}(\Omega)$.
We know that for $ u \in \mc{H}^{\beta/2}_{0}(\Omega;f)$, one has $(1+f(-|\xi|^{2}))^{1/2}\mc{F}(\widetilde{u}) \in L^{2}(\mb{R}^{n})$, but this does not imply $(1+f(-|\xi|^{2}))\mc{F}(\widetilde{u}) \in L^{2}(\mb{R}^{n})$,
which implies that $ \mathcal{F}^{-1}((1+f(-|\xi|^2)) \mathcal{F}(\widetilde{u})(\xi) )$ could fail to be a function in $L^{2}(\mb{R}^{n})$, but rather a distribution.
In fact, denoting $$T_{u}=\mathcal{F}^{-1}((1+f(-|\xi|^2)) \mathcal{F}(\widetilde{u})(\xi)),$$
let us consider a test function $\varphi \in C^{\infty}_{0}(\mb{R}^{n})$. Note that $T_{u}$ is linear on $C^{\infty}_{0}(\mb{R}^{n})$; to prove that $T_{u} \in \mc{D}^{\prime}(\mb{R}^{n})$ it suffices to prove that $T_{u}$ is continuous. 
We have that 
the integral
$$\int_{\mathbb{R}^n} (1+f(-|\xi|^2)) \mathcal{F}(\widetilde{u})(\xi) \overline{\mathcal{F}(\varphi)(\xi) }\,d\xi $$
is absolutely convergent, because $(1+f(-|\xi|^{2}))^{1/2}\mc{F}(\widetilde{u}) \in L^{2}(\mb{R}^{n})$ and $\varphi \in \mc{S}(\mb{R}^{n})$, where $\mc{S}(\mb{R}^{n})$ is the Schwartz space, one has $\mc{F}(\varphi) \in \mc{S}(\mb{R}^{n})$, and using that $f \in \mc{G}^{\beta,\gamma}$, we have that $(1+f(-|\xi|^{2}))^{1/2}\mc{F}(\varphi) \in L^{2}(\mb{R}^{n})$. Then
\begin{align*}
\Big|\int_{\mathbb{R}^n} (1+f(-|\xi|^2)) \mathcal{F}(\widetilde{u}) \overline{\mathcal{F}(\varphi) }\,d\xi\Big| &\leq \int_{\mb{R}^n} |(1+f(-|\xi|^{2}))^{1/2}\mc{F}(\widetilde{u})|\cdot |(1+f(-|\xi|^{2}))^{1/2}\overline{\mc{F}(\varphi)}| \, d\xi \\
&\leq \norm{(1+f(-|\xi|^{2}))^{1/2}\mc{F}(\widetilde{u})  }_{L^{2}}\norm{ (1+f(-|\xi|^{2}))^{1/2}\mc{F}(\varphi)}_{L^{2}} < \infty.
\end{align*}
Therefore the integral is absolutely convergent. Moreover the duality pairing $\langle  T_{u} , \varphi \rangle_{\mc{D}', \mc{D}}$ is well defined and $T_{u} \in \mc{D}^{\prime}(\mb{R}^{n})$. We define the restriction of $T_{u}$ over $\Omega$ as 
\begin{equation}
    \langle \mc{R}_{\Omega}T_{u} , \eta   \rangle_{\mc{D}'(\Omega), \mc{D}(\Omega)} = \langle  T_{u} , \widetilde{\eta}  \rangle_{\mc{D}'(\mb{R}^{n}), \mc{D}(\mb{R}^{n})}, \quad \eta \in C_{0}^{\infty}(\Omega).
\end{equation}
Moreover, using that $\mc{F}(\varphi)(-\xi)=\overline{\mc{F}(\varphi)(\xi)}$ for a real function $\varphi$ (or pairing with $\overline{\varphi}$ in the complex case), we have the following
\begin{align*}
    \langle  T_{u} , \overline{\varphi} \rangle_{\mc{D}', \mc{D}} =\langle  \mathcal{F}^{-1}\bigl((1+f(-|\xi|^2)) \mathcal{F}(\widetilde{u})\bigr) , \overline{\varphi} \rangle &= \int_{\mb{R}^{n}}\left(\int_{\mb{R}^{n}}(1+ f(-|\xi|^{2}))\mc{F}(\widetilde{u})(\xi)e^{2 \pi i \langle x , \xi  \rangle} \, d\xi\right) \overline{\varphi(x)}\, dx \\
    &= \int_{\mb{R}^{n}}(1+f(-|\xi|^{2}))\mc{F}(\widetilde{u})(\xi)\left(\int_{\mb{R}^{n}} \overline{\varphi(x)}e^{2 \pi i \langle   x,  \xi \rangle} \, dx\right) d\xi \\
    &= \int_{\mathbb{R}^n} (1+f(-|\xi|^2)) \mathcal{F}(\widetilde{u})(\xi) \overline{\mathcal{F}(\varphi)(\xi) }\,d\xi.
\end{align*}
\justify
Therefore, for every $v \in C^{\infty}_{0}(\Omega)$ we have
$$t[u,v]=\langle T_{u}, \overline{\widetilde{v}} \rangle_{\mc{D}'(\mb{R}^n), \mc{D}(\mb{R}^n)}= \langle \mc{R}_\Omega T_u, \overline{v} \rangle_{\mc{D}'(\Omega), \mc{D}(\Omega)}.$$
Consequently, since $C^{\infty}_{0}(\Omega)$ is dense by definition in $\mc{H}^{\beta/2}_{0}(\Omega;f)$ and in $L^{2}(\Omega)$, by condition $iii)$ of Kato's Theorem \ref{condicion iii del teorema de Kato} we have
$$\langle T_{u}, \overline{\widetilde{v}} \rangle_{\mc{D}'(\mb{R}^n), \mc{D}(\mb{R}^n)}=\int_{\Omega}w(x)\overline{v(x)}\,dx, \quad \forall v \in C_0^\infty(\Omega).
$$
If $u \in D(A)$, then for every $v \in C_0^\infty(\Omega)$,
$$\langle \mc{R}_\Omega T_u, \overline{v} \rangle_{\mc{D}'(\Omega), \mc{D}(\Omega)} = t[u,v] = \langle Au, v \rangle_{L^2(\Omega)},$$
so $\mc{R}_\Omega T_u = Au \in L^2(\Omega)$ in $\mc{D}'(\Omega)$, which implies $u \in \mc{D}(L_\Omega)$ and $L_\Omega u = Au$. Conversely, suppose that $u \in \mc{D}(L_\Omega)$, so that $\mc{R}_\Omega T_u = w \in L^2(\Omega)$. Then for every $v \in C^{\infty}_{0}(\Omega)$,
$$t[u,v] = \langle w, v \rangle_{L^2(\Omega)}.$$
Since both sides are continuous with respect to $v$ in the energy norm $\|\cdot\|_{\mc{H}_0^{\beta/2}(\Omega)}$, namely
$$|t[u,v]| \leq \|u\|_{\mc{H}_0^{\beta/2}(\Omega)}\|v\|_{\mc{H}_0^{\beta/2}(\Omega)} \quad \text{and} \quad |\langle w, v \rangle_{L^2(\Omega)}| \leq \|w\|_{L^2(\Omega)}\|v\|_{L^2(\Omega)} \leq \|w\|_{L^2(\Omega)}\|v\|_{\mc{H}_0^{\beta/2}(\Omega)},$$
and $C_0^\infty(\Omega)$ is dense in $\mc{H}_0^{\beta/2}(\Omega;f)$, the identity $t[u,v] = \langle w, v \rangle_{L^2(\Omega)}$ extends by density to all $v \in \mc{H}_0^{\beta/2}(\Omega;f)$. By item $(iii)$ of Theorem \ref{Teorema de Kato}, we conclude that $u \in D(A)$ and $Au = w = L_\Omega u$.\\
Therefore,
$$
D(A) = \Bigl\{ u \in \mc{H}^{\beta/2}_0(\Omega;f) \Bigm| \mc{R}_\Omega \mc{F}^{-1}\Bigl(\bigl(1+f(-|\xi|^2)\bigr)\mc{F}(\widetilde{u})\Bigr) \in L^2(\Omega) \Bigr\} = \mc{D}(L_\Omega).
$$
Therefore, $A = L_\Omega$ and $D(A) = \mc{D}(L_\Omega)$.
In particular, since $A = L_\Omega$ is a closed operator in $L^2(\Omega)$, its domain $\mc{D}(L_\Omega)$ endowed with the graph norm $\|u\|_{\mc{D}(L_\Omega)} = \bigl(\|u\|_{L^2(\Omega)}^2 + \|L_\Omega u\|_{L^2(\Omega)}^2\bigr)^{1/2}$ is a Hilbert space.
Note that $A$ is self-adjoint and coercive, which implies that $A 
\geq I$, because $\langle Av,v \rangle\geq \norm{v}^{2}$, then $0 \notin \sigma(A)$.
Hence $A$ is invertible. 
%In fact, since that $A$ is coercive, we have that $\text{ker}(A)=\text{ran}(A)^{\perp}$, but $Av=0$ if and only if $v\equiv 0$, this implies that $A$ is inyective; indeed $\text{ran}(A)=L^{2}(\Omega)$, then $A$ is Biyective. 
Moreover, the operator $A$ satisfies $\norm{A\omega}_{L^{2}(\Omega)} \geq \norm{\omega}_{L^{2}(\Omega)}$ for any $\omega \in \mc{D}(L_{\Omega})$ because, using embedding \eqref{cadena de embbeddings caso acotado} we have 
\begin{equation*}
    \norm{\omega}^{2}_{L^{2}(\Omega)} \leq \norm{\omega}^{2}_{\mc{H}^{\beta/2}_{0}(\Omega)}=\langle  \omega , \omega  \rangle_{\mc{H}^{\beta/2}_{0}(\Omega)}=\langle A \omega, \omega \rangle_{L^{2}(\Omega)}\leq \norm{A\omega}_{L^{2}(\Omega)}\norm{\omega}_{L^{2}(\Omega)}.
\end{equation*}
The inverse operator $A^{-1}$ is a linear and bounded operator. 
In fact, for every $g \in L^{2}(\Omega)$ we have that there exists only one $\omega \in \mc{D}(L_{\Omega})$ such that $A\omega=g$ (i.e., $\omega = A^{-1}g$), then we have
\begin{equation*}
\norm{A^{-1}g}_{L^{2}(\Omega)}=\norm{\omega}_{L^{2}(\Omega)} \leq \norm{A\omega}_{L^{2}(\Omega)}=\norm{g}_{L^{2}(\Omega)}. 
\end{equation*}
Then $\|A^{-1}\|_{\mathcal{L}(L^{2}(
\Omega
))} \leq 1$.
 Using the existence of a weak solution by Lemma $(i)$ \eqref{lema solucion debil acotada}, we have by condition $(iii)$ of Kato's Theorem \ref{Teorema de Kato} that $u_g \in D(A) = \mc{D}(L_\Omega)$ and
\begin{equation*}
    \langle u_g, v \rangle_{\mathcal{H}_{0}^{\beta/2}(\Omega)} = \int_{\Omega} g \overline{v} \,dx=\int_\Omega \left( \mathcal{R}_\Omega \mathcal{F}^{-1} \Bigl( (1+f(-|\xi|^2)) \mathcal{F}(\widetilde{u_{g}}) \Bigr) \right)\!(x) \overline{v(x)} \,dx=\int_{\Omega}Au_{g}\overline{v}\,dx.
\end{equation*}
Then there exists a unique $u_{g}=A^{-1}g \in \mc{D}(L_{\Omega})$. 
Thus, $u_g$ is a strong solution satisfying $L_\Omega u_g = g$ directly in $L^2(\Omega)$. Finally,
\begin{equation}\label{desigualdad 2 lema caso acotado}
\|u_g\|_{L^2(\Omega)}=
\|A^{-1}g\|_{L^{2}( 
\Omega)
} \leq \|g\|_{L^2(\Omega)}
\end{equation}
\end{proof}

\begin{theorem}\label{teorema en Rn, para dominios acotados}
Let $\Omega\subset\mathbb{R}^n$ be an open, bounded domain with Lipschitz boundary. Let $f\in\mathcal{G}^{\beta,\gamma}$ with $\gamma\geq\beta>n$, 
$n>1$. Let $U:\Omega\times\mathbb{R}\to\mathbb{R}$ be a Carathéodory non-linearity which is a $C^{1}$ in the second variable, and assume there exist constants $\alpha>1$, $\mu\ge 1$, $C>0$, and a function $h\in L^{2}(\Omega)$ such that
\begin{equation}\label{desigualdad de caso acotado}
|U(x,y)+y| \le C(|h(x)|+|y|^{\alpha}), \qquad \left|\frac{\partial}{\partial y}\bigl(U(x,y)+y\bigr)\right| \le C(1+|y|^{\mu}).
\end{equation}
Then there exist $0 < \tau < 1$ and $0 < \rho_{\tau} < 1$ such that whenever $\norm{h}_{L^{2}(\Omega)} < \rho_{\tau}$, there exists a solution $u \in \mc{D}(L_{\Omega})$ of the non-linear equation $f(\Delta_{\Omega})u-U(\cdot,u)=0$.
\end{theorem}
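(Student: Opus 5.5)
We rewrite the equation $f(\Delta_\Omega)u-U(\cdot,u)=0$ as a fixed point problem for the operator $L_\Omega=f(\Delta_\Omega)+I$. Setting $N(u)(x):=U(x,u(x))+u(x)$, the equation is equivalent to
\[
L_\Omega u = N(u), \qquad\text{that is,}\qquad u = T(u):=A^{-1}N(u),
\]
where $A=L_\Omega$ is the invertible self-adjoint operator produced in Lemma \ref{sistema lineal-solucion fuerte}, with $\|A^{-1}\|_{\mathcal{L}(L^2(\Omega))}\le 1$. Any fixed point of $T$ in $L^2(\Omega)$ lies in $\mc{D}(L_\Omega)$ by part ii) of that lemma, once we know $N(u)\in L^2(\Omega)$. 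The plan is to run a contraction (Banach fixed point) argument on a small closed ball in a space where the polynomial growth $|y|^\alpha$ is controllable.

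The key point is to gain $L^\infty$ control. Since $\beta>n$, Lemma \ref{Lemma de contencion epscios Hb} i)–iii), applied with $s=\beta/2>n/2$ to the zero extension $\widetilde u\in\mc{H}^{\beta/2}(f)$ (using $(E_\beta)$ and $(P)$ to compare $1+f(-|\xi|^2)$ with $(1+|\xi|^2)^{\beta/2}$), gives
\[
\|u\|_{L^\infty(\Omega)}\le \|\widetilde u\|_{L^\infty(\mathbb{R}^n)}\le C_0\|\widetilde u\|_{H^{\beta/2}(\mathbb{R}^n)}\le C_1\|u\|_{\mc{H}^{\beta/2}_0(\Omega)} .
\]
We therefore work in $X=\mc{H}^{\beta/2}_0(\Omega;f)$, with solution map $S=A^{-1}$ bounded $L^2(\Omega)\to X$ with norm $\le 1$, by part i) of Lemma \ref{sistema lineal-solucion fuerte}. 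We take $B_\tau=\{u\in X:\|u\|_X\le\tau\}$.

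\textbf{Self-map.} For $u\in B_\tau$, the first inequality in \eqref{desigualdad de caso acotado} together with boundedness of $\Omega$ gives
\[
\|N(u)\|_{L^2(\Omega)}\le C\bigl(\|h\|_{L^2}+\|u\|_{L^\infty}^{\alpha-1}\|u\|_{L^2}\bigr)\le C\rho+CC_1^{\alpha-1}\tau^{\alpha}.
\]
Since $\alpha>1$, we can choose $\tau<1$ with $CC_1^{\alpha-1}\tau^{\alpha-1}\le 1/2$, and then $\rho_\tau$ with $C\rho_\tau\le\tau/2$. This makes $\|T(u)\|_X\le\tau$.

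\textbf{Contraction.} By the mean value theorem in $y$ and the derivative bound in \eqref{desigualdad de caso acotado},
\[
|N(u)-N(v)|\le C\bigl(1+\max(|u|,|v|)^{\mu}\bigr)|u-v|,
\]
so that $\|N(u)-N(v)\|_{L^2}\le C(1+C_1^\mu\tau^\mu)\|u-v\|_{L^2}$.

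\textbf{Main obstacle.} The constant above contains the term $C$ coming from the ``$1$'' in the derivative bound. This term is not small, so the plain Lipschitz estimate does not yield a contraction. I would try two remedies.

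The first is to use Schauder's fixed point theorem instead. The ball $B_\tau$ is convex and closed in $X$. The map $T$ is continuous on $X$, via the Lipschitz bound, dominated convergence, and the Carathéodory property. The image $T(B_\tau)$ is bounded in $\mc{D}(L_\Omega)$; more usefully, it is bounded in $X$ and hence in $H^{\beta/2}$ of the extensions. To obtain precompactness in $X$ I would apply Rellich–Kondrachov \cite[t.1.22]{Rosenberg} in the $L^2(\Omega)$ topology. Concretely, I would run Schauder on the $L^2$-closure of $B_\tau$: this set is convex and compact in $L^2(\Omega)$ by the compact embedding $H^{\beta/2}\hookrightarrow L^2$, and it is still bounded in $L^\infty$ because $X$-balls are weakly closed. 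On this set $T$ is $L^2$-continuous, by the Lipschitz estimate with the uniform $L^\infty$ bound, and it maps the set into itself.

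The second remedy, if one insists on Banach's theorem, would be to verify directly that the constant is $<1$ when $C$ is small, but the statement does not assume this. I therefore expect the compactness route (Schauder plus Rellich–Kondrachov) to be the intended and necessary step. The fixed point $u=A^{-1}N(u)$ then lies in $\mc{D}(L_\Omega)$ and solves $f(\Delta_\Omega)u=U(\cdot,u)$.
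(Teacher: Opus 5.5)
Your proposal is correct. It shares the paper's Schauder skeleton, but the functional setting is genuinely different.

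\textbf{The paper's route.} The paper fixes an intermediate exponent $s\in(\frac n2,\frac\beta2)$ and works on the ball $\mathcal{M}_\tau\subset H^s(\Omega)$. It controls $\mathcal{N}(u)$ in $H^s(\Omega)$ through the chain $\mathcal{D}(L_\Omega)\hookrightarrow\mathcal{H}^{\beta/2}_0(\Omega;f)\hookrightarrow H^s(\Omega)$. It proves compactness of $\mathcal{N}$ in two steps: it extracts an $L^2$-convergent subsequence by Rellich, then upgrades it to an $H^s$-Cauchy sequence using the interpolation inequality between $L^2(\Omega)$ and $H^{\beta/2}(\Omega)$. Continuity in $H^s$ comes from the mean value theorem together with $H^s(\Omega)\hookrightarrow L^\infty(\Omega)$.

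\textbf{Your route.} You stay in the energy space $X=\mathcal{H}^{\beta/2}_0(\Omega;f)$ and use the sharper linear bound $\|A^{-1}g\|_X\le\|g\|_{L^2(\Omega)}$ from Lemma \ref{sistema lineal-solucion fuerte} i) for the self-map. You then apply Schauder to the $X$-ball $B_\tau$, regarded as a compact convex subset of $L^2(\Omega)$. It is closed in $L^2$ because it is weakly compact in $X$ and the inclusion $X\hookrightarrow L^2$ is weak-to-weak continuous. It is relatively compact by Rellich. On $B_\tau$, continuity is just the $L^2$-Lipschitz estimate from the derivative bound combined with the uniform bound $\|u\|_{L^\infty}\le C_1\tau$, so the size of the Lipschitz constant is irrelevant.

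\textbf{What each buys.} Your version removes the intermediate exponent $s$, the interpolation identity $H^s(\Omega)=[L^2(\Omega),H^{\beta/2}(\Omega)]_\theta$, and the whole $H^s$-compactness argument. It also yields smallness of the solution in the energy norm, which is stronger. The paper's route gives the extra information that $\mathcal{N}$ is continuous and compact as a map on $H^s(\Omega)$, at the price of more machinery.

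\textbf{Minor corrections.} First, the $L^\infty$ bound does not follow literally from Lemma \ref{Lemma de contencion epscios Hb}, which concerns $\mathcal{H}^\beta(f)$ with the squared weight. What you need is the direct multiplier comparison $(1+|\xi|^2)^{\beta/2}\lesssim 1+f(-|\xi|^2)$ coming from $(E_\beta)$, which you indicate and which the paper's Claim also uses, followed by $H^{\beta/2}(\mathbb{R}^n)\hookrightarrow L^\infty(\mathbb{R}^n)$. Second, by the weak-compactness argument the $L^2$-closure of $B_\tau$ is $B_\tau$ itself. So what you actually use is compactness of $B_\tau$ in $L^2$, not precompactness of $T(B_\tau)$ in $X$, and that sentence should be rephrased. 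The contraction discussion can simply be dropped.
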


\begin{proof}
    Using the operator $L_{\Omega}=f(\Delta_{\Omega})+I$ defined in \eqref{operador en Omega}, solving the nonlinear equation $f(\Delta_{\Omega})u-U(x,u)=0$ is equivalent to solving the linear nonhomogeneous problem
\begin{equation}
    L_{\Omega}u=\eta_{u}
\end{equation}
where $\eta_{u}(x)=U(x,u(x)) + u(x)$ for $u \in H^{s}(\Omega)$, $x \in \Omega$. Fix {\fontsize{10.7}{14}$s \in (\frac{n}{2},\frac{\beta}{2})$} and set $s_{\alpha}=\frac{n}{2}\left( \frac{\alpha -1}{\alpha}\right)$. 
Using the Sobolev Theorem \cite[p.26]{Taylor III}, which states that for $2\sigma<n$, $p \in (1,\infty)$ the embedding 
$H^{\sigma}(\mathbb{R}^{n}) \hookrightarrow L^{2n/(n-2\sigma)}(\mathbb{R}^{n})$
is continuous, and for $2\sigma>n$ one has that $H^{\sigma}(\Rn) \hookrightarrow L^{\infty}(\Rn)$ is also continuous, and the fact that for $\Omega$ an  open, bounded domain with Lipschitz boundary, there exists an extension operator $\mathcal{Z}: H^{\sigma}(\Omega) \to H^{\sigma}(\mathbb{R}^{n})$ for every $\sigma \in \mb{R}_{\geq 0}$, which is linear and continuous, and $\mathcal{Z}u|_{\Omega}=u$ for every $u \in H^{\sigma}(\Omega)$. Since $2s>n$, we have
\begin{equation*}
\norm{u}_{L^{\infty}(\Omega)} = \norm{\mathcal{Z}u}_{L^{\infty}(\Omega)} \leq \norm{\mathcal{Z}u}_{L^{\infty}(\Rn)} \leq \norm{\mathcal{Z}u}_{H^{s}(\Rn)} \leq C_{s,p,\Omega}\norm{u}_{H^{s}(\Omega)}.
\end{equation*}
where $C_{s,p,\Omega}>0$ is a constant depending on $s, p$ and $\Omega$.
Then for our case, such that $s>s_{\alpha}$, the following chain of embeddings holds,
\begin{equation}\label{embedding caso dominios acotados}
    H^{s}(\Omega) \hookrightarrow L^{\infty}(\Omega), \quad 
    H^{s}(\Omega) \hookrightarrow H^{s_{\alpha}}(\Omega) \hookrightarrow 
    %L^{2n/(n-2s_{\alpha})}(\Omega)=
    L^{2\alpha}(\Omega).
\end{equation}
Let $u \in H^{s}(\Omega)$. Using the embeddings \eqref{embedding caso dominios acotados}, we want to show that  $\eta_{u} \in L^{2}(\Omega)$. In fact, using that the Lebesgue measure on $\mb{R}^{n}$, denoted by $\lambda_{n}$ satisfies $\lambda_{n}(\Omega)<\infty$, we have that $L^{p}(\Omega)$ spaces are embedded as $L^{p}(\Omega) \subseteq L^{q}(\Omega)$ for any real numbers $1 \leq q<p \leq \infty$. In fact
\begin{align*}
\norm{\eta_{u}}_{L^{2}(\Omega)}&=\left(\int_{\Omega}|U(x,u(x)) + u(x)|^{2}dx   \right)^{\frac{1}{2}} \leq C\left( \int_{\Omega}(|h(x)|^{2} + 2|h(x)||u(x)|^{\alpha} + |u(x)|^{2\alpha})dx  \right)^{\frac{1}{2}}\\
    &\leq \widetilde{C}\left( \int_{\Omega}|h(x)|^{2}+|u(x)|^{2\alpha} dx \right)^{\frac{1}{2}} \leq \widetilde{C}\left( \norm{h}_{L^{2}(\Omega)} + \norm{|u|^{\alpha}}_{L^{2}(\Omega)}  \right) \\ 
    &=\widetilde{C}\left(\norm{h}_{L^{2}(\Omega)} + \norm{u}^{\alpha}_{L^{2\alpha}(\Omega)} \right) \leq \widetilde{C}\left(\norm{h}_{L^{2}(\Omega)} + C^{\alpha}_{s}\norm{u}^{\alpha}_{H^{s}(\Omega)}  \right)
\end{align*}
for constants $\widetilde{C}, C^{\alpha}_{s}>0$. This inequality follows by Young's inequality for $|h|,|u|^{\alpha}\geq 0$, $\||u|^{\alpha}\|_{L^{2}(\Omega)}=\|u\|^{\alpha}_{L^{2\alpha}(\Omega)}$ and $\|u\|^{\alpha}_{L^{2\alpha}(\Omega)} \leq C^{\alpha}_{s}\|u\|^{\alpha}_{H^{s}(\Omega)}$ by embeddings \eqref{embedding caso dominios acotados}. 
Hence
\begin{equation}  \norm{\eta_{u}}_{L^{2}(\Omega)} \leq \widetilde{C}\left(\norm{h}_{L^{2}(\Omega)} + C^{\alpha}_{s}\norm{u}^{\alpha}_{H^{s}(\Omega)} \right) < \infty
\end{equation}
which implies $\eta_{u} \in L^{2}(\Omega)$.
\begin{claim}\label{claim caso acotado}
    For a non-negative $s \in \mathbb{R}$, such that {\fontsize{10.7}{14}$s \in (\frac{n}{2},\frac{\beta}{2})$} with $\beta>n$, the embedding chain
\begin{equation}\label{cadena de embeddings caso acotado 1}
        \mc{D}(L_{\Omega}) \hookrightarrow H^{s}(\Omega) \hookrightarrow L^{q}(\Omega), \quad q \in [1,2\alpha)
    \end{equation}
is continuous. 
\end{claim}

\begin{proofofclaim}
Let us first prove that the embedding $H^{s}(\Omega) \hookrightarrow L^{q}(\Omega)$ for $q \in [1,2\alpha)$ is satisfied; moreover, it is compact. By Theorem \cite[t.1.22]{Rosenberg}, and the fact that $s_{\alpha}<s$, we have that $H^{s}(
\Omega) \hookrightarrow H^{s_{\alpha}}(\Omega)$ is compact. By the Rellich-Kondrachov theorem for bounded domains, we have that $H^{s_{\alpha}}(\Omega) \hookrightarrow L^{q}(\Omega)$ is compact for $q \in [1,2\alpha)$. Therefore, we have that the chain
\begin{equation}\label{cadena de embeddings caso acotado 3}
    H^{s}(\Omega) \hookrightarrow H^{s_{\alpha}}(\Omega) \hookrightarrow L^{q}(\Omega), \quad q \in [1,2\alpha)
\end{equation}
is a compact embedding. Now we prove that the following 
\begin{equation}\label{cadena de embeddings caso acotado 2}
    \mc{D}(L_{\Omega}) \hookrightarrow \mc{H}_{0}^{\beta/2}(\Omega;f) \hookrightarrow \mc{H}^{\beta/2}(f) \hookrightarrow H^{s}(\Omega)
\end{equation}
is a continuous chain of embeddings. 
First of all, we prove that {\fontsize{11}{14}$\mc{D}(L_{\Omega}) \hookrightarrow \mc{H}^{\beta/2}_{0}(\Omega;f)$} is continuous. Note that by construction of Lemma \ref{sistema lineal-solucion fuerte}, there exists a unique densely defined, self-adjoint and coercive operator
$A:D(A) \subset L^{2}(\Omega) \to L^{2}(\Omega)$ which satisfies $D(A) \subset \mc{H}^{\beta/2}_{0}(\Omega;f)$. Moreover  $D(A)=\mc{D}(L_{\Omega})$, then $\mc{D}(L_{\Omega}) \subset \mc{H}^{\beta/2}_{0}(\Omega;f)$.
Using the Lemma \ref{sistema lineal-solucion fuerte} and the Young inequality, we have the following computations
\begin{align*}
\langle u, u \rangle_{\mathcal{H}_{0}^{\beta/2}(\Omega)} &= \langle L_{\Omega} u, u \rangle_{L^{2}(\Omega)} \leq \|L_{\Omega}u\|_{L^{2}(\Omega)}\|u\|_{L^{2}(\Omega)} \leq \tfrac{1}{2} \left( \Vert{}L_{\Omega}u\Vert{}_{L^2(\Omega)}^2 + \Vert{}u\Vert{}_{L^2(\Omega)}^2 \right) \leq  \tfrac{1}{2} \Vert{}u\Vert{}_{\mathcal{D}(L_{\Omega})}^2.
\end{align*}
Therefore, we have the existence of some constant $K>0$ such that $\|u\|_{\mathcal{H}^{\beta/2}_{0}(\Omega)} \leq K\|u \|_{\mc{D}(L_{\Omega})}$.\\ 
%\begin{align*}\int_{\mb{R}^{n} \setminus \Omega}\left| \mathcal{F}^{-1} \Bigl((1+f(-|\xi|^2)) \mathcal{F}(\widetilde{u})(\xi) \Bigr)(x)\right|^{2}dx &= \int_{\mathbb{R}^{n} \setminus \Omega} \left\vert{} \int_{\mathbb{R}^{n}} (1+f(-\vert{}\xi\vert{}^2)) \left( \mc{F}(\widetilde{u})(\xi) \right) e^{2\pi i x \cdot \xi} \, d\xi \right\vert{}^{2} dx.\end{align*}
Using the form of $\mc{H}^{\beta/2}(f)$, we have that {\fontsize{11}{14}$\mc{H}^{\beta/2}_{0}(\Omega;f) \hookrightarrow \mc{H}^{\beta/2}(f)$} is continuous. 
Finally we prove that $\mc{H}^{\beta/2}(f)$ is continuously embedded in $H^{s}(\Omega)$. For this, we prove that $\mc{H}^{\beta/2}(f)$ is continuously embedded in $H^{s}(\mb{R}^{n})$. Let $u \in \mc{H}^{\beta/2}(f)$; using the norm of Sobolev spaces in $\mb{R}^{n}$, 
$$\norm{u}_{H^s(\mb{R}^n)}^2=
\int_{\mb{R}^n} (1+\abs{\xi}^2)^s \abs{\mc{F} (u)(\xi)}^2\,d\xi.
$$
For $\xi \in \mb{R}^{n}, |\xi|>1$, we have that $(1+\abs{\xi}^2)^s \le (1+\abs{\xi}^2)^{\beta/2} \le c_{\beta} \abs{\xi}^{\beta}$ for some constant $c_{\beta}>0$. Furthermore, since $f \in \mc{G}^{\beta,\gamma}$, there exist $r, m_{\beta}>0$ such that
$|\xi|^{\beta} \leq m_{\beta}\bigl(1+f(-\abs{\xi}^2)\bigr), \forall |\xi|>r$, therefore $\forall \xi \in \mb{R}^{n}$ with $|\xi|>\widehat{r}=\max\{1,r\}$, we have that $(1+\abs{\xi}^2)^s \le \widetilde{m}_{\beta}\bigl(1+f(-\abs{\xi}^2)\bigr),$ for some constant $\widetilde{m}_{\beta}>0$. Let us define $m_{s}=\sup_{|\xi|\leq \widehat{r}}(1+|\xi|^{2})^{s}$. Since $1+f(-|\xi|^2) \geq 1$, we obtain
\begin{align*}
  \int_{\mb{R}^n} (1+|\xi|^2)^s |\mc{F}(u)(\xi)|^2\,d\xi &\leq m_{s}\int_{\{|\xi| \leq \widehat{r}\}}\bigl(1+f(-\abs{\xi}^2)\bigr)|\mc{F}(u)(\xi)|^{2}  \,d\xi   \\
  &+ \widetilde{m}_{\beta}\int_{\{|\xi|>\widehat{r}\}}\bigl(1+f(-\abs{\xi}^2)\bigr)|\mc{F}(u)(\xi)|^{2}  \,d\xi.
\end{align*}
Therefore for a constant $C_{s,\beta}=\max\{m_{s},\widetilde{m}_{\beta}\}$ we have that $\mc{H}^{\beta/2}(f) \hookrightarrow H^{s}(\mb{R}^{n})$ is continuous. 
For $s \ge 0$, the Sobolev space $H^s(\Omega)$ on the bounded domain $\Omega$  with Lipschitz boundary, is defined via restriction as
$$
H^{s}(\Omega) = \left\{ v \in L^2(\Omega) \mid \exists\, U \in H^{s}(\mb{R}^n), U|_{\Omega} = v \right\}, \quad \norm{v}_{H^{s}(\Omega)} = \inf_{\substack{U \in H^{s}(\mb{R}^n) \\ U|_{\Omega} = v}} \norm{U}_{H^{s}(\mb{R}^n)}.
$$
Consider the restriction operator {\fontsize{11.4}{14}$\mc{R}_{\Omega}: \mc{H}^{\beta/2}(f) \to H^s(\Omega)$} defined by $\mc{R}_{\Omega}u = u|_{\Omega}$. For any $u \in \mc{H}^{\beta/2}(f)$, the function $u$ itself acts as an admissible global extension in $H^s(\mb{R}^n)$ for $u|_{\Omega}$. Therefore, 
$$
\norm{\mc{R}_{\Omega}u}_{H^{s}(\Omega)} = \norm{u|_{\Omega}}_{H^{s}(\Omega)} \le \norm{u}_{H^{s}(\mb{R}^n)} \le C_{s,\beta}\, \norm{u}_{\mc{H}^{\beta/2}(f)}.
$$
This shows that the restriction mapping $\mc{R}_{\Omega}$ is well-defined, linear, and continuous, establishing the embedding $\mc{H}^{\beta/2}(f) \hookrightarrow H^{s}(\Omega)$. Therefore the embedding chain \eqref{cadena de embeddings caso acotado 1} holds.
\end{proofofclaim}
\justify
Define the subset
\begin{equation}\label{conjutno asociado para dominios acotados}
 \mathcal{M}_{\tau}=\{ u \in H^{s}(\Omega) \mid \norm{u}_{H^{s}(\Omega)}\leq \tau   \},
\end{equation}
with $0< \tau < 1$ chosen below. This is a bounded, closed and convex subset of $H^{s}(\Omega)$.\\ Define the map
\begin{equation}\label{operador asociado para dominios acotados}
    \mathcal{N}: \mathcal{M}_{\tau} \to H^{s}(\Omega), \quad \mathcal{N}(u)=\widetilde{u}
\end{equation}
where $\widetilde{u}$ is the solution of the linear nonhomogeneous system $L_{\Omega}\widetilde{u}=\eta_{u}$. Since $\eta_{u} \in L^{2}(\Omega)$ by Lemma  
\ref{sistema lineal-solucion fuerte}, there exists a unique solution $\widetilde{u}$ such that $\widetilde{u} \in \mc{D}(L_{\Omega})$, then $\mc{N}$ is well defined.
We claim that there exists $\tau>0$ such that $\mathcal{N}: 
\mathcal{M}_{\tau} \to \mathcal{M}_{\tau}$ is well defined. If $\norm{h}_{L^{2}(\Omega)}$ is sufficiently small, one can choose $0< \tau < 1$ and $0 < \rho_{\tau} < 1$ such that $\norm{h}_{L^{2}(\Omega)} < \rho_{\tau}$ and by the chain \eqref{cadena de embeddings caso acotado 1} and estimate \eqref{desigualdad 2 lema caso acotado}, we have the existence of a constant $C_{*}>0$ such that
\begin{align}\label{sucesion acotada caso acotado}
    \norm{\mathcal{N}(u)}_{H^{s}(\Omega)} &= \norm{\widetilde{u}}_{H^{s}(\Omega)}\leq \norm{\widetilde{u}}_{\mc{D}(L_{\Omega})} \leq C_{*}\norm{\eta_{u}}_{L^{2}(\Omega)}
    \leq C_{*}\widetilde{C}\left(\norm{h}_{L^{2}(\Omega)} + C^{\alpha}_{s}\tau^{\alpha}  \right) <  \tau.
\end{align} 
Therefore $\mathcal{N}(u) \in 
\mathcal{M}_{\tau}$ and $\mathcal{N}: \mathcal{M}_{\tau} \to \mathcal{M}_{\tau}$.
Next we prove that $\mc{N}: \mc{M}_{\tau} \to \mc{M}_{\tau}$ is compact.
Let $(u_{n}) \subset \mathcal{M}_{\tau}$ be a bounded 
sequence; by estimate \eqref{sucesion acotada caso acotado}, $\mathcal{N}(u_{n})$ is also 
bounded in $H^{s}(\Omega)$, even more $\mc{N}(u_{n}) \subset \mc{M}_{\tau}$. Using the embedding \eqref{cadena de embeddings caso acotado 3} in the case of $q=2$, we have that there exists a subsequence $\mc{N}(u_{n_{i}}) \subset \mc{N}(u_{n})$ that converges strongly in  $\| \cdot\|_{L^{2}(\Omega)}$ to an $y \in L^{2}(\Omega)$.\\
We will prove that $\mc{N}(u_{n_{i}})$ is a Cauchy sequence in $H^{s}(\Omega)$. Since $H^{s}(\Omega)$ is a Hilbert space, strong convergence will then follow. First of all, note that $H^{s}(\Omega)$ can be expressed by complex interpolation \cite[c.2.8]{Lunardi} as $H^{s}(\Omega)=[L^{2}(\Omega), H^{\beta/2}(\Omega)]_{\theta}$ with $\theta \in (0,1)$ for $\theta=\frac{2s}{\beta}$. 

By the claim \eqref{claim caso acotado}, the chain $\mc{H}^{\beta/2}(f) \hookrightarrow H^{\beta/2}(\Omega) \hookrightarrow H^{s}(\Omega)$ is continuous. Then $\mc{D}(L_{\Omega}) \hookrightarrow H^{\beta/2}(\Omega)$, which implies by \eqref{sistema lineal-solucion fuerte} that $\|\mc{N}(u_{n})\|_{H^{\beta/2}(\Omega)} \leq \|\mc{N}(u_{n})\|_{\mc{D}(L_{\Omega})} \leq C\|\eta_{u_{n}}\|_{L^{2}(\Omega)}$; in fact, since $\|u_n\|_{H^s(\Omega)} \leq \tau$ for all $u_n \in \mc{M}_\tau$, we have the uniform bound
$$
\sup_{n \in \mb{N}} \|\eta_{u_n}\|_{L^2(\Omega)} \leq \widetilde{C}\bigl(\|h\|_{L^2(\Omega)} + C_s^\alpha \tau^\alpha\bigr) < \infty,
$$
so there exists $M>0$ which does not depend on $u_{n}$ such that $\|\mc{N}(u_{n})\|_{H^{\beta/2}(\Omega)} \leq M$.
Applying the interpolation inequality \cite[c.2.1.8]{Lunardi}, there exist constants $C_{I}, K>0$ such that
\begin{align*}
   \norm{\mc{N}(u_{n_{i}})-\mc{N}(u_{n_{j}}) }^{2}_{H^{s}(\Omega)} &\leq C_{I} \|\mc{N}(u_{n_{i}})-\mc{N}(u_{n_{j}})\|_{L^{2}(\Omega)}^{2(1-\theta)} \|\mc{N}(u_{n_{i}})-\mc{N}(u_{n_{j}})\|_{H^{\beta/2}(\Omega)}^{2\theta}\\
   %&\leq C_{I} \|\mc{N}(u_{n_{i}})-\mc{N}(u_{n_{j}})\|_{L^{2}(\Omega)}^{1-\theta} \|\mc{N}(u_{n_{i}})-\mc{N}(u_{n_{j}})\|_{\mc{H}^{\beta/2}(\Omega)}^{\theta}\\
   &\leq C_{I} \|\mc{N}(u_{n_{i}})-\mc{N}(u_{n_{j}})\|_{L^{2}(\Omega)}^{2(1-\theta)} \|\mc{N}(u_{n_{i}})-\mc{N}(u_{n_{j}})\|_{\mc{D}(L_{\Omega})}^{2\theta}\\
   &= C_{I} \|\mc{N}(u_{n_{i}})-\mc{N}(u_{n_{j}})\|_{L^{2}(\Omega)}^{2(1-\theta)}\Big( \norm{\mc{N}(u_{n_{i}})-\mc{N}(u_{n_{j}}) }^{2}_{L^{2}(\Omega)}\\
   &+ \norm{L_{\Omega}\mc{N}(u_{n_{i}})-L_{\Omega}\mc{N}(u_{n_{j}}) }^{2}_{L^{2}(\Omega)} \Big)^{\theta}\\
   &\leq C_{I}\|\mc{N}(u_{n_{i}})-\mc{N}(u_{n_{j}})\|_{L^{2}(\Omega)}^{2(1-\theta)}\left(K\|\eta_{u_{n_{i}}} - \eta_{u_{n_{j}}} \|_{L^{2}(\Omega)}^{2}   \right)^{\theta}\\
   &\leq C_{I}K^{\theta}\|\mc{N}(u_{n_{i}})-\mc{N}(u_{n_{j}})\|_{L^{2}(\Omega)}^{2(1-\theta)}\left(\| \eta_{u_{n_{i}}} \|_{L^{2}(\Omega)} + \| \eta_{u_{n_{j}}} \|_{L^{2}(\Omega)}\right)^{2\theta}.
\end{align*}
We have the uniform estimate
$
\sup_{i \in \mb{N}} \|\eta_{u_{n_{i}}}\|_{L^{2}(\Omega)} \leq \widetilde{C}\bigl(\|h\|_{L^{2}(\Omega)} + C_s^\alpha \tau^{\alpha}\bigr) \leq \widetilde{K}\bigl(\|h\|_{L^{2}(\Omega)} + \tau^{\alpha}\bigr)
$
for the constant $\widetilde{K} := \widetilde{C}\max\{1, C_s^\alpha\} > 0$ independent of $i$ and $j$,
 using the growth conditions \eqref{desigualdad de caso acotado}. Therefore, for some constant $K^{\prime}>0$ we will have that
\begin{equation}
  \norm{\mc{N}(u_{n_{i}})-\mc{N}(u_{n_{j}}) }^{2}_{H^{s}(\Omega)} \leq  C_{I}K^{\theta}K^{\prime}\|\mc{N}(u_{n_{i}})-\mc{N}(u_{n_{j}})\|_{L^{2}(\Omega)}^{2(1-\theta)}(\norm{h}_{L^{2}(\Omega)} + \tau^{\alpha})^{2\theta}.
\end{equation}
Since $\mc{N}(u_{n_{i}})$ is a Cauchy sequence in $L^{2}(\Omega)$, we have that it is also Cauchy in $H^{s}(\Omega)$. Therefore the map $\mc{N}: \mc{M}_{\tau} \to \mc{M}_{\tau}$ is compact.
Now we prove the continuity of $\mathcal{N}:\mathcal{M}_{\tau} 
\to \mathcal{M}_{\tau}$. Let $(u_{k}) \subset \mathcal{M}_{\tau}$ with $u_{k} \to u$ in $H^{s}(\Omega)$ for some $u \in H^{s}(\Omega)$. 
Denote $\mc{G}(x,u)=U(x,u) + u$, using the estimate \eqref{desigualdad 2 lema caso acotado}, and the embedding chain \eqref{cadena de embeddings caso acotado 2},
we have that the following computations
\begin{align*}
   \norm{\mathcal{N}(u_{k}) - \mathcal{N}(u)}^{2}_{H^{s}(\Omega)} 
   &= \norm{\widetilde{u}_{k}-\widetilde{u}}^{2}_{H^{s}(\Omega)} \leq \norm{  \widetilde{u}_{k}-\widetilde{u} }^{2}_{\mc{D}(L_{\Omega})}=\norm{\widetilde{u}_{k}-\widetilde{u}}_{L^{2}(\Omega)}^{2}+\norm{L_{\Omega}\widetilde{u}_{k} - L_{\Omega}\widetilde{u}}_{L^{2}(\Omega)}^{2} \\
&=\norm{\widetilde{u}_{k}-\widetilde{u}}_{L^{2}(\Omega)}^{2} + \norm{\eta_{u_{k}}-\eta_{u}}_{L^{2}(\Omega)}^{2}\leq(E+1) \norm{\eta_{u_{k}} - \eta_{u}  }^{2}_{L^{2}(\Omega)}\\
&= (E+1)\int_{\Omega}\Big|  \int_{0}^{1}\frac{d}{dt}\mc{G}(x,tu_{k}(x) + (1-t)u(x))dt \Big|^{2}dx, \\
&=(E+1)\int_{\Omega}|u_{k}(x)-u(x)|^{2}\Big| \int_{0}^{1}\frac{\partial}{\partial y}\mc{G}(x,tu_{k}(x)+(1-t)u(x) )dt  \Big|^{2}dx
\end{align*} 
with $E>0$ constant. Define the function $g_{x}: [0,1] \to \mathbb{R}, g_{x}(t)=\frac{\partial}{\partial y}\mc{G}(x, tu_{k}(x) 
   +  (1-t)u(x))$ with $x \in 
\Omega$; 
since $U(\cdot, u)$ is $C^{1}$ in the second variable, $g_{x}(t)$ is a well defined Carathéodory function(i.e., measurable with respect to $x$ and continuous with respect to $t$). Note that
\begin{equation*}
    \Big|\int_{0}^{1}g_{x}(t)dt \Big| \leq \int_{0}^{1} |g_{x}(t)|dt \leq \int_{0}^{1}\left( \sup_{t \in [0,1]}|g_{x}(t)|  \right)dt = \sup_{t \in [0,1]}|g_{x}(t)|
\end{equation*}
Since $g_{x}(t)$ is continuous in a compact set, the maximum is attained for each $x \in \Omega$, i.e., for each fixed $x_{0} \in \Omega$, there exists $t_{0} \in [0,1]$ such that $g_{x_{0}}$ attains the supremum in $t_{0}$. 
There exists a Borel measurable function $t(\cdot): \Omega \subset \mb{R}^{n} \to [0,1]$ by \cite[t.18.13]{AliprantisBorder}
such that for each $x^{\prime} \in \Omega$, $t(x^{\prime})$ satisfies $\sup_{t \in [0,1]}|g_{x^{\prime}}(t)|=|g_{x^{\prime}}(t(x^{\prime}))|$. Using (\ref{desigualdad de caso acotado}) there exists a constant $D>0$ such that
\begin{align*}
  \norm{\mathcal{N}(u_{k}) - \mathcal{N}(u)}^{2}_{H^{s}(\Omega)}  &\leq D\int_{\Omega}|u_{k}(x)-u(x)|^{2}\left\{ \sup_{t \in [0,1]}\Big| \frac{\partial}{\partial y}\mc{G}(x,tu_{k}(x)+(1-t)u(x) )   \Big|   \right\}^{2}dx    \\
  &\leq D\int_{\Omega }|u_{k}(x) - u(x)|^{2} \left\{  \sup_{t \in [0,1]}\left|  1 + |tu_{k}(x) + (1-t)u(x)|^{\mu } \right| \right\}^{2}dx \\
  &\leq D\int_{\Omega }|u_{k}(x) - u(x)|^{2} \left\{   1 + |t(x)u_{k}(x) + (1-t(x))u(x)|^{\mu } \right\}^{2}dx
\end{align*}
For the third inequality, we use the fact $0 \leq t(x) \leq 1$ to obtain the following computations
\begin{align*}
1 + |t(x)u_{k}(x) + (1-t(x))u(x)|^{\mu} 
&\leq 1 + \left( |t(x)u_{k}(x)| + |(1-t(x))u(x)|  \right)^{\mu} \\
&\leq 1 + c_{1}\left( |t(x)u_{k}(x)|^{\mu} + |(1-t(x))u(x)|^{\mu}\right)\\
&\leq 1 + c_{2}\left( |u_{k}(x)|^{\mu} + |u(x)|^{\mu}\right),
\end{align*}
with $c_{1},c_{2}>0$ constants. Therefore, there exists a constant $\widetilde{D}>0$ such that
\begin{align*}
  \norm{\mathcal{N}(u_{k}) - \mathcal{N}(u)}^{2}_{H^{s}(\Omega)} &\leq \widetilde{D}\int_{\Omega}|u_{k}(x) - u(x)|^{2}\left(1 + |u_{k}(x)|^{2\mu} + |u(x)|^{2\mu}\right) dx  \\
   &= \widetilde{D}\Bigg(\int_{\Omega}|u_{k}(x) - u(x)|^{2} dx + \int_{\Omega}|u_{k}(x) - u(x)|^{2}|u_{k}(x)|^{2\mu } dx \\
   &+ \int_{\Omega}|u_{k}(x) - u(x)|^{2}|u(x)|^{2\mu} dx \Bigg) \leq \widetilde{D} \norm{u_{k} - u}^{2}_{H^{s}(\Omega)} \Big(   1 + 2(B \tau)^{2\mu}    \Big) .
\end{align*}
This last inequality is satisfied by the chain\,\eqref{embedding caso dominios acotados}, specifically $H^{s}(\Omega) \hookrightarrow L^{\infty}(\Omega)$, so there exists a constant $B_{\tau}>0$ such that
$\|u_{k}\|_{L^{\infty}(\Omega)}, \|u\|_{L^{\infty}(\Omega)} \leq B\tau$.
Therefore $\mathcal{N}(u_{k}) \to \mathcal{N}(u)$ in $H^{s}(\Omega)$. Applying Schauder's fixed point theorem yields a fixed point $u \in \mathcal{M}_{\tau}$ such that $u = \mathcal{N}(u)$. Since $\mathcal{N}(\mathcal{M}_{\tau}) \subset \mc{D}(L_{\Omega})$ by definition of $\mathcal{N}$, we conclude that $u \in \mc{D}(L_{\Omega})$ satisfies $L_{\Omega}u = \eta_{u}$, which gives the desired existence result.
\end{proof}

\vspace{0.12cm}

\section{Nonlocal Equations on Compact Riemannian Manifolds}\label{Nonlocal Equations on Riemannian Manifolds}

In this section we consider nonlocal equations of the form  
\begin{equation}\label{ecuacion no local en Riemannian manifolds}
  f(\Delta_g)\phi - U(x, \phi) = 0  
\end{equation}
on a compact, connected, oriented Riemannian manifold $(M,g)$. The \emph{Laplacian} $\Delta_g$ associated to $g$ is a second-order differential operator acting on smooth functions, i.e., $\Delta_g: C^\infty(M) \to C^\infty(M)$, which is given by
\begin{equation}
 \Delta_g f =- \text{div}_g(\nabla_g f),   
\end{equation}
where $\nabla_g f$ is the gradient of $f$, defined as the unique vector field $X$ such that 
$
g(X, Y) = df(Y),~\forall Y \in \mathfrak{X}(M)
$. The operator $\text{div}_g$ is the divergence of a vector field $X=X^{i}\frac{\partial}{\partial x^{i}}$ in a local coordinates $(U,(x^{i}))$, defined in terms of the volume form $dV_g$ induced by $g$,
\begin{equation}
    \text{div}_g(X) = \frac{1}{\sqrt{|g|}} \partial_i \big(\sqrt{|g|} X^i\big),
\end{equation}
where $\det g$ is the determinant of the matrix $(g_{ij})$ of the metric in local coordinates. 
In local coordinates $(x^i)$ on $M$, the Laplacian $\Delta_g$ of a function $f$ is written as,
\begin{equation}\label{Operador de Laplace-Beltrami}
    \Delta_g f =  -\frac{1}{\sqrt{|g|}} \partial_i \left(\sqrt{|g|} g^{ij} \partial_j f \right),
\end{equation}
where $g^{ij}$ are the coefficients of the inverse of the metric $g_{ij}$.
Another intrinsic definition of $\Delta_g$ is
$
\Delta_{g} u = -\ast d \ast d u,  
$
where $d$ denotes the exterior derivative and $\ast$ is the Hodge star operator. Recall that by definition the Hodge star operator $\star: \Omega^k(M) \to \Omega^{n-k}(M)$ assigns to any $k$-form $\eta \in \Omega^k(M)$ the $(n - k)$-form $\star\eta \in \Omega^{n-k}(M)$ such that 
\begin{equation*}
   \omega \wedge \star\eta = \langle \omega, \eta \rangle \omega_g, \quad  \omega \in \Omega^k(M).
\end{equation*}

\justify
We recall the following definition of spaces $(L^{2}(M,g), dV_{g})$

\begin{definition}
    The space $L^2(M, g)$ is defined as the set of square-integrable functions on the Riemannian manifold $(M, g)$ with respect to the measure induced by the metric $g$, namely
\begin{equation}\label{espacio de funciones cuadrado integrables en una variedad Riemanniana}
    L^2(M, g) = \left\{ f: M \to \mathbb{R}(\mathbb{C}) ~\Big|~ \int_M |f|^2 \, dV_g < \infty \right\}
\end{equation}
Here $dV_g$ is the volume measure on $M$ associated with $g$, defined by $dV_g = \sqrt{\det(g)} \, dx^1 \wedge dx^2 \wedge \dots \wedge dx^n$ (also denoted $\omega_{g}$). This is a Hilbert space equipped with the inner product
$$
\langle f, h \rangle_{L^2(M)} = \int_M f \overline{h} \, dV_g,
$$
and the norm induced by this inner product is
  $$
    \|f\|_{L^2(M)} = \sqrt{\langle f, f \rangle_{L^2(M)}} = \left( \int_M |f|^2 \, dV_g \right)^{1/2}.
    $$
\end{definition}

\justify

The Laplace–Beltrami operator $\Delta_{g}$ is originally defined on $C^{\infty}(M)$, but it extends naturally to $L^{2}(M,g)$ from a dense domain; indeed
\begin{equation*}
\Delta_{g}: \mathcal{D}(\Delta_{g}) \subset L^{2}(M,g) \to L^{2}(M,g)
\end{equation*}
where the maximal domain $\mathcal{D}(\Delta_{g})=\{u \in H^{2}_{\text{loc}}(M) \cap L^{2}(M) \mid \Delta_{g}u \in L^{2}(M) \}$.

\begin{proposition}\cite{Backman,Canzani,Rosenberg}.
    The operator $\Delta_{g}$ has the following properties.\\
    i)~ $\Delta_g$ is a second-order elliptic operator and is essentially self-adjoint with respect to the inner product on $L^2(M, g)$, that is
$
\langle f, \Delta_g h \rangle = \langle \Delta_g f, h \rangle.
$\\
ii)~ If $M$ is compact, the spectrum of $\Delta_g$ is a discrete set of nonnegative eigenvalues $0\leq \lambda_{1}<\lambda_{2} \leq \lambda_{3} \leq \ldots$, with an orthonormal basis of eigenfunctions $\{\omega_i\}$ in $L^2(M, g)$ such that $
\Delta_g \omega_i = \lambda_i \omega_i.$\\
iii)~ The operator $\Delta_g$ commutes with isometries of the manifold $(M, g)$. This means that if $\varphi: M \to M$ is an isometry then
$$
\Delta_g (f \circ \varphi) = (\Delta_g f) \circ \varphi.
$$
iv)~ The operator $\Delta_{g}$ is unbounded and $\Delta_{g}\geq 0$.
\end{proposition}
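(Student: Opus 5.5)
We prove the four properties by reducing each to standard facts about the Laplace--Beltrami operator on a compact manifold: Green's formula, elliptic regularity together with Rellich--Kondrachov, and naturality of the metric under isometries.

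\textbf{Item (i).} Ellipticity follows from the local formula \eqref{Operador de Laplace-Beltrami}. Its principal symbol is $g^{ij}(x)\xi_i\xi_j$, which is positive definite for $\xi\neq0$ because $g$ is Riemannian. For symmetry, take $f,h\in C^\infty(M)$ and apply the divergence theorem on the closed manifold $M$ twice:
\[
\int_M f\,\overline{\Delta_g h}\,dV_g=\int_M g(\nabla_g f,\nabla_g \overline{h})\,dV_g=\int_M (\Delta_g f)\,\overline{h}\,dV_g .
\]
No boundary terms appear since $\partial M=\emptyset$. This makes $\Delta_g$ symmetric on $C^\infty(M)$. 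For essential self-adjointness we show that $\ker(\Delta_g^{*}\pm i)=\{0\}$. A solution $u\in L^2$ of $\Delta_g u=\mp i u$ in the distributional sense is smooth by elliptic regularity. Pairing the equation with $u$ and using Green's formula gives $\|\nabla u\|^2=\mp i\|u\|^2$. The left side is real and the right side purely imaginary, so $u=0$.

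\textbf{Item (iv).} The same identity gives $\langle \Delta_g f,f\rangle=\|\nabla_g f\|^2_{L^2}\ge0$. Nonnegativity passes to the closure by density. Unboundedness follows from (ii), since the eigenvalues satisfy $\lambda_i\to\infty$ while $\|\omega_i\|_{L^2}=1$.

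\textbf{Item (ii).} This is the step carrying real content, and I expect it to be the main obstacle. We consider $T=(I+\Delta_g)^{-1}$. It exists and is bounded on $L^2$, by Lax--Milgram applied to the coercive form $\langle u,v\rangle_{H^1}=\int (u\bar v+g(\nabla u,\nabla\bar v))\,dV_g$. By construction $T$ maps $L^2$ into $H^1(M)$. Since $M$ is compact, Rellich--Kondrachov \cite[t.1.22]{Rosenberg} shows that $T:L^2\to L^2$ is compact. It is also self-adjoint and positive, because $\Delta_g$ is. The spectral theorem for compact self-adjoint operators then yields an orthonormal basis $\{\omega_i\}$ of eigenfunctions of $T$, with eigenvalues $\mu_i\in(0,1]$ and $\mu_i\to0$. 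Setting $\lambda_i=\mu_i^{-1}-1\ge0$ gives $\Delta_g\omega_i=\lambda_i\omega_i$, and each $\omega_i$ is smooth by elliptic regularity. Each eigenspace is finite-dimensional, and the eigenvalues $\lambda_i\to\infty$. The delicate point is the strict inequality $\lambda_1<\lambda_2$. The identity $\|\nabla\omega\|^2=\lambda\|\omega\|^2$ shows that $\lambda=0$ forces $\omega$ to be constant, and connectedness of $M$ makes the $0$-eigenspace one-dimensional. Hence $\lambda_1=0$ is simple, so $\lambda_1<\lambda_2$.

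\textbf{Item (iii).} An isometry $\varphi$ preserves the metric, so it preserves $g^{ij}$ and $\sqrt{|g|}$ after pullback of coordinates. Equivalently, $\varphi^*$ commutes with $d$ and, for orientation-preserving $\varphi$, with $\ast$. The intrinsic formula $\Delta_g=-\ast d\ast d$ then gives $\Delta_g(f\circ\varphi)=(\Delta_g f)\circ\varphi$. If $\varphi$ reverses orientation, $\ast$ changes sign twice, so the conclusion is the same. A coordinate-free alternative is the divergence formulation: $\varphi$ preserves both $\nabla_g$ and $dV_g$, so it commutes with $\operatorname{div}_g\nabla_g$.
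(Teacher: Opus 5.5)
Your proposal is correct and follows the paper's approach: the paper only cites references for this proposition, and the one item it actually argues, (ii), is handled in the paragraph after the statement exactly as you do. That argument uses a compact self-adjoint resolvent (there $R_\lambda(\Delta_g):L^2\to H^2$, here $(I+\Delta_g)^{-1}:L^2\to H^1$ via Lax--Milgram and Rellich--Kondrachov) followed by the spectral theorem, and your arguments for (i), (iii), (iv) and the connectedness argument giving $\lambda_1=0<\lambda_2$ supply standard details the paper leaves to the literature; note only that your proofs of essential self-adjointness and unboundedness use compactness of $M$ (no boundary terms in Green's formula for a smooth $L^2$ solution, and $\lambda_i\to\infty$), which is this section's standing hypothesis, so if (i) and (iv) were meant for noncompact $M$ you would need completeness with a Gaffney-type cutoff argument, plus a local scaling argument for unboundedness.
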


\justify

The operator $\Delta_{g}: H^{2}(M,g) \to L^{2}(M,g)$ is an unbounded self-adjoint operator on a compact Riemannian manifold, and the resolvent operator $R_{\lambda}(\Delta_{g})$ defined by
\begin{equation*}
R_{\lambda}(\Delta_{g}): L^{2}(M,g) \to H^{2}(M,g), \quad \psi \mapsto  (\Delta_{g} - \lambda I)^{-1}\psi
\end{equation*}
is linear, bounded, self-adjoint and compact \cite{ReedI}.
Therefore, by the Theorem \cite[p.5.12]{Schmudgen} 
%(\ref{Compacidad de resolvente})  
there exist $(\lambda_{n})_{n \in \mathbb{N}} \subset \mathbb{R}$ and an orthonormal basis $\{ \omega_{n} \mid n \in \mathbb{N}\}$ of $L^{2}(M,g)$ such that $0\leq\lambda_{1}<\lambda_{2} \leq \lambda_{3} \leq \ldots$ with finite multiplicity and 
accumulation at infinity, i.e., $|\lambda_{j}| \to \infty$, and $\Delta_{g}\omega_{n}=\lambda_{n} \omega_{n}$ for $n \in \mathbb{N}$. 
Another way to justify the existence of an orthonormal basis of $L^{2}(M,g)$ consisting of eigenfunctions of $\Delta_{g}$
is via the Hodge Theorem for functions \cite[t.1.29]{Rosenberg}. 
Using the spectral theorem for unbounded self-adjoint operators 
%(\ref{Teorema espectral para operadores autoadjuntos no acotados})
\cite[t.5.7]{Schmudgen} there exists a
unique spectral measure $E_{\Delta_{g}}: \mathcal{B}(\mathbb{R}) \to \mathcal{L}(L^{2}(M,g))$ such that
\begin{equation}\label{medida espectral para el laplaciano beltrami}
\hspace{0.3cm}~\Delta_{g}=\int_{\mathbb{R}}\lambda dE_{\Delta_{g}}(\lambda).
\end{equation}
In order to use the functional calculus, by the assignment $f \mapsto f(\Delta_{g})$, one requires that $f \in \mathcal{S}(\mathbb{R}, \mathcal{B}(\mathbb{R}), E_{\Delta_{g}})$, which is the set of all $E_{\Delta_{g}}$-a.e. finite Borel functions $f: \mathbb{R} \to \mathbb{C} \cup \{\infty\}$, that is, $E_{\Delta_{g}}(\{t \in \mathbb{R} \mid f(t)=\infty \})=0$. Since $f \in \mc{G}^{\beta}$, and $f$ is a real-valued measurable function, it follows that $f \in \mathcal{S}(\mathbb{R}, \mathcal{B}(\mathbb{R}), E_{\Delta_{g}})$. Applying that $\sigma(\Delta_{g})$ is purely discrete, we have that
\begin{equation}\label{expresion 2, Gorka, On a class}
    f(\Delta_{g})=f\left( \int_{\mathbb{R}}\lambda dE_{\Delta_{g}}(\lambda)    \right)= \int_{\mathbb{R}}f(\lambda)dE_{\Delta_{g}}(\lambda)=\sum_{i=1}^{\infty}f(\lambda_{i})
E_{\Delta_{g}}(\{\lambda_i\})\footnote{Recall that $E_{\Delta_{g}}(\lambda_{i})=E_{\Delta_{g}}((-\infty, \lambda_{i}])$ is a resolution of the identity.}
\end{equation}
For an operator with a point spectrum, the spectral measure $E_{\Delta_{g}}(\cdot)$ evaluated at the singleton $\{\lambda_{i}\}$ yields $P_{i}$. Specifically, each $P_{j}$ is the orthogonal projection operator onto the closed subspace corresponding to the eigenspace associated with the eigenvalue $\lambda_{j}$. More concretely
\begin{equation}
E_{\Delta_{g}}(\{\lambda_{i}\}) =P_{i}, \quad E_{\Delta_{g}}((-\infty, \lambda_{i}])=\sum_{\lambda_{j} \leq \lambda_{i}}P_{j}.
\end{equation}
Since every closed subspace $\mathcal{H} \subseteq L^{2}(M,g)$ 
can be written as $\mathcal{H}=\overline{\text{span}\{\omega_{n_{i}}\}_{i \in I}}$, it 
follows that for each $C \in \mathcal{B}(\mathbb{R})$ the expression $E_{\Delta_{g}}(C)\psi$ has the form
$$E_{\Delta_{g}}(C)\psi=\sum_{\lambda_{i} \in C}\langle \psi, \omega_{n_{i}}   \rangle \omega_{n_{i}}$$
and therefore (\ref{expresion 2, Gorka, On a class}) is equivalent to
\begin{equation}
 f(\Delta_{g})\psi= \int_{\mathbb{R}}f(\lambda)dE_{\Delta_{g}}(\lambda)\psi = \sum_{i=1}^{\infty}f(\lambda_{i})P_{i}\psi   =\sum_{i=1}^{\infty}f(\lambda_{i})\langle \psi, \omega_{i}  \rangle \omega_{i}  = \sum_{i=1}^{\infty}f(\lambda_{i})\psi_{i}\omega_{i}.
\end{equation}

\justify

The last identity will serve as a tool to study the linear equation
\begin{equation}\label{1.(33)}
    f(\Delta_{g})\phi + \nu \phi=h, \quad  \nu \in \mb{R}_{>0}
\end{equation}
where $f$ is a nonnegative function. If $h \in L^2(M,g)$, using the eigenfunction basis of $\Delta_{g}$ one can write $h = \sum h_{i}\omega_{i}$, and a straightforward calculation yields $\phi_i = \frac{h_i}{f(\lambda_i) + \nu}.$ Hence $\phi$ is
\begin{equation}\label{unica solucion caso lineal caso Riemanniano}
    \phi = \sum_{i=1}^\infty \frac{h_i}{f(\lambda_i) + \nu} \omega_i,
\end{equation}
By its form this function belongs to $L^{2}(M,g)$; however, it need not be smooth for that we introduce the following spaces. 
Consider the following Hilbert space $(l^{2,k}(\mathbb{N}),  \langle \alpha , \beta  \rangle_{l^{2,k}(\mathbb{N})} )$
\begin{equation*}
    l^{2,k}(\mathbb{N}) = \left\{ \alpha = (\alpha_i) ~\Big|~ \alpha_{i} \in \mathbb{R}, \sum_{i=1}^{\infty} |\alpha_i|^2 (1 + \lambda_i^2)^k < \infty \right\}, ~
    \langle \alpha , \beta  \rangle_{l^{2,k}(\mathbb{N})} = \sum_{i=1}^{\infty}\alpha_{i}\beta_{i}(1+\lambda_{i}^{2})^{k}
\end{equation*}
The notation with the eigenvalues $\lambda_{i}$ squared is used in \cite{Gorka 6} to emphasize that the eigenvalues are positive. Also define the space
\begin{align}
\widetilde{H}^k(M,g) &=  \left\{ h=\sum_{i=1}^{\infty}\alpha_{i}\omega_{i} ~\Big|~ \sum_{i=1}^{\infty}|\alpha_{i}|^{2}(1+\lambda_{i}^{2})^{k} < \infty    \right\}
\end{align}
This space is endowed with the following inner product
\begin{equation}
    \langle  h_{1} , h_{2}   \rangle_{\widetilde{H}^k(M,g)} = \langle (\alpha_{i})  , (\beta_{i})  \rangle_{l^{2,k}(\mathbb{N})} = \sum_{i=1}^{\infty}\alpha_{i}\beta_{i}(1+\lambda_{i}^{2})^{k},
\end{equation}

\justify

The space $\widetilde{H}^{k}(M,g)$ is isometrically isomorphic to $l^{2,k}(\mathbb{N})$. Moreover, if $k \in \mathbb{N}$ then $\widetilde{H}^{k}(M,g) = H^{2k}(M,g)$ \cite[l.4.1]{Gorka 6}, where the latter is the Sobolev space on the Riemannian manifold $(M,g)$.\\
We assume that $f$ is a nonnegative real-valued function satisfying the ellipticity condition for the spectrum $(\lambda_{i})$ of the Laplace operator, namely
\begin{equation}\label{codicion de elipticidad de una fucnion real valuada para Riemannian manifolds}
   \forall k>0, \exists\, C_{k}>0 :  C_{k}(1+\lambda_{i}^{2})^{k/2} \leq 1+ f(\lambda_{i}).
\end{equation}
This condition replaces the $(P)$ and $(E_{\beta})$ conditions. Introduce the space
\begin{equation}\label{espacio de Sobolev en verson sumas}
    \widetilde{H}_{f}(M,g) = \left\{\phi=\sum_{i=1}^{\infty} \alpha_{i}\omega_{i} ~\Big|~ \sum_{i=1}^{\infty}|\alpha_{i}|^{2}(1 + f(\lambda_{i}))^{2} <  \infty   \right\}
\end{equation}
Thanks to the ellipticity condition, for every $k$ we have the embedding
\begin{equation}\label{embedding caso riemaniano}
  \widetilde{H}_{f}(M,g) \hookrightarrow H^{2k}(M,g)  
\end{equation}
If $h \in  \widetilde{H}_{f}(M,g)$ then $h$ is a smooth function on $M$ by \cite[t.2.7]{Hebey 2}. 
\vspace{0.12cm}

Define the operator $L_{c}=f(\Delta_{g}) + \mathrm{Id}$ by
\begin{equation}\label{operador caso Riemanniano}
 L_{c}: \widetilde{H}_{f}(M,g) \longrightarrow L^{2}(M,g), \quad \phi \longmapsto \sum (f(\lambda_{i}) + 1)\phi_{i}\omega_{i} 
\end{equation}

\begin{lemma}\label{sistema lineal caso riemanniano}
    For every $h \in L^2(M,g)$, the linear problem
\begin{equation}
f(\Delta_g)\phi + \phi = h
\end{equation}
admits a unique solution $\phi \in \widetilde{H}_f(M,g)$. Moreover,
$\norm{\phi}_{\widetilde{H}_{f}(M,g)}=\norm{h}_{L^{2}(M,g)}$.
\end{lemma}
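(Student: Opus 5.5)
The plan is to work entirely in the eigenbasis $\{\omega_i\}$ of $\Delta_g$ and reduce everything to the sequence space $l^2(\mathbb{N})$. We take the norm on $\widetilde{H}_f(M,g)$ to be the one suggested by its definition \eqref{espacio de Sobolev en verson sumas}:
\begin{equation*}
\|\phi\|_{\widetilde{H}_f(M,g)}^2=\sum_{i=1}^{\infty}|\phi_i|^2(1+f(\lambda_i))^2 .
\end{equation*}
We write $h\in L^2(M,g)$ as $h=\sum_i h_i\omega_i$ with $h_i=\langle h,\omega_i\rangle_{L^2(M)}$. Parseval's identity gives $\sum_i|h_i|^2=\|h\|^2_{L^2(M,g)}<\infty$.

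\emph{Existence.} Define the candidate $\phi=\sum_i \frac{h_i}{1+f(\lambda_i)}\,\omega_i$, which is \eqref{unica solucion caso lineal caso Riemanniano} with $\nu=1$. Since $f\ge 0$, we have $1+f(\lambda_i)\ge 1$, so the coefficients $\phi_i$ are well defined and square summable. Hence $\phi\in L^2(M,g)$. Moreover
\begin{equation*}
\sum_i|\phi_i|^2(1+f(\lambda_i))^2=\sum_i|h_i|^2=\|h\|^2_{L^2(M,g)}<\infty,
\end{equation*}
so $\phi\in\widetilde{H}_f(M,g)$ and the claimed isometry $\|\phi\|_{\widetilde{H}_f}=\|h\|_{L^2}$ already holds. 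Applying the operator $L_c$ from \eqref{operador caso Riemanniano} termwise gives $L_c\phi=\sum_i(1+f(\lambda_i))\phi_i\omega_i=\sum_i h_i\omega_i=h$. This series converges in $L^2(M,g)$ precisely because $\phi\in\widetilde{H}_f(M,g)$.

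\emph{Uniqueness.} Suppose $\phi,\psi\in\widetilde{H}_f(M,g)$ both solve the problem. Then $w=\phi-\psi$ satisfies $L_c w=0$. Taking the $L^2$ inner product with $\omega_j$ and using orthonormality yields $(1+f(\lambda_j))w_j=0$ for every $j$. Since $1+f(\lambda_j)\ge 1$, this forces $w_j=0$ for all $j$, and therefore $w=0$ because $\{\omega_i\}$ is a basis of $L^2(M,g)$.

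The only point needing care, though not really an obstacle, is justifying the spectral formula: $f(\Delta_g)\phi=\sum_i f(\lambda_i)\phi_i\omega_i$ for $\phi\in\widetilde{H}_f(M,g)$, by the functional calculus representation above. This amounts to checking that $\widetilde{H}_f(M,g)$ lies in the domain $\{\phi:\sum_i|f(\lambda_i)\phi_i|^2<\infty\}$ of $f(\Delta_g)$. That follows from $|f(\lambda_i)|\le 1+f(\lambda_i)$, again using nonnegativity of $f$. Then $f(\Delta_g)\phi+\phi=L_c\phi$, and the argument above applies.
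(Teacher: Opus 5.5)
Your proposal is correct and follows essentially the same route as the paper: the paper obtains the solution from the coefficient formula \eqref{unica solucion caso lineal caso Riemanniano} with $\nu=1$. In the proof of Theorem \ref{teorema finnal para el caso no compacto} it then checks, exactly as you do, that $L_c$ is a Parseval isometry, injective because $1+f(\lambda_j)\ge 1$, and surjective via $\sum_j (1+f(\lambda_j))^{-1}h_j\omega_j$; your remark that $\widetilde{H}_f(M,g)$ lies in the functional-calculus domain of $f(\Delta_g)$ is a point the paper leaves implicit.
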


\vspace{0.12cm}

\begin{theorem}\label{teorema ecuacion no local caso Riemannian manifold}\cite[t.4.1]{Gorka 6}.
      Suppose $f$ satisfies (\ref{codicion de elipticidad de una fucnion real valuada para Riemannian manifolds}), that $U: M \times \mathbb{R} \to \mathbb{R}$, and that there exist constants $\alpha > 1$, $\beta \geq 1$, $C > 0$, and a function $h \in L^2(M,g)$ such that the following inequalities hold:
\begin{equation}
|U(x, y) + y| \leq C \left( |h(x)| + |y|^\alpha \right),
\end{equation}
\begin{equation}
    \left| \frac{\partial}{\partial y} \left( U(x, y) + y \right) \right| \leq C ( 1 + |y|^\beta )
\end{equation}
Then, if $\norm{h}_{L^2(M,g)}$ is sufficiently small, there exists a solution $\phi \in \widetilde{H}_f(M,g)$ to the nonlinear problem
\begin{equation}
f(\Delta_g)\phi - U(x, \phi) = 0.
\end{equation}
\end{theorem}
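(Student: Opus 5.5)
We reduce the nonlinear problem to a fixed point problem for the resolvent of $L_c=f(\Delta_g)+\mathrm{Id}$ and apply Schauder's fixed point theorem, mirroring the proof of Theorem \ref{teorema en Rn, para dominios acotados}. The equation $f(\Delta_g)\phi-U(x,\phi)=0$ is equivalent to $L_c\phi=\eta_\phi$ with $\eta_\phi(x)=U(x,\phi(x))+\phi(x)$. By Lemma \ref{sistema lineal caso riemanniano}, for each $\eta\in L^2(M,g)$ there is a unique $\phi=L_c^{-1}\eta\in\widetilde H_f(M,g)$ with $\|\phi\|_{\widetilde H_f}=\|\eta\|_{L^2}$.

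\textbf{Working space.} Fix an integer $k$ with $2k>n/2$ and work in $X=H^{2k}(M,g)$. By Sobolev embedding on compact manifolds, $X\hookrightarrow L^\infty(M)$, and since $M$ has finite volume also $X\hookrightarrow L^{2\alpha}(M)$. The growth bound on $U+y$ then gives
\[
\|\eta_\phi\|_{L^2}\le \widetilde C\bigl(\|h\|_{L^2}+C_k^\alpha\|\phi\|_X^\alpha\bigr).
\]
Set $\mathcal M_\tau=\{\phi\in X:\|\phi\|_X\le\tau\}$, which is closed, bounded and convex, and define $\mathcal N(\phi)=L_c^{-1}\eta_\phi$. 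By \eqref{embedding caso riemaniano}, $\|\mathcal N\phi\|_X\le C_*\|\eta_\phi\|_{L^2}\le C_*\widetilde C(\|h\|_{L^2}+C_k^\alpha\tau^\alpha)$. Because $\alpha>1$, we can first choose $\tau$ small enough that $C_*\widetilde C C_k^\alpha\tau^{\alpha}<\tau/2$, and then require $\|h\|_{L^2}<\rho_\tau$ small enough that the $h$-term is below $\tau/2$. This gives $\mathcal N(\mathcal M_\tau)\subset\mathcal M_\tau$.

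\textbf{Compactness.} Here the ellipticity condition \eqref{codicion de elipticidad de una fucnion real valuada para Riemannian manifolds} does the work, and it is simpler than the bounded-domain case. Applying it with $k+1$ in place of $k$ shows that $\mathcal N$ maps $\mathcal M_\tau$ into a bounded subset of $H^{2k+2}(M,g)$. Rellich--Kondrachov on the compact manifold gives $H^{2k+2}\hookrightarrow H^{2k}$ compactly, so $\mathcal N(\mathcal M_\tau)$ is relatively compact in $X$. Equivalently, spectrally, the weights $(1+\lambda_i^2)^k/(1+f(\lambda_i))^2\to0$, so $L_c^{-1}:L^2\to X$ is a compact operator.

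\textbf{Continuity.} This is the step I expect to be the main obstacle, though it is still routine. Take $\phi_j\to\phi$ in $X$. Then
\[
\|\mathcal N\phi_j-\mathcal N\phi\|_X\le C_*\|\eta_{\phi_j}-\eta_\phi\|_{L^2}.
\]
Write $\eta_{\phi_j}-\eta_\phi=(\phi_j-\phi)\int_0^1\partial_y\mathcal G(x,t\phi_j+(1-t)\phi)\,dt$, where $\mathcal G=U+y$. Bound the integral by $C\bigl(1+|\phi_j|^\beta+|\phi|^\beta\bigr)$, which is uniformly bounded in $L^\infty$ since $X\hookrightarrow L^\infty$ and the $\phi_j$ are bounded in $X$. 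This yields $\|\eta_{\phi_j}-\eta_\phi\|_{L^2}\le C'(1+2(B\tau)^{\beta})\|\phi_j-\phi\|_{L^2}\to0$; the measurable-selection detour used earlier is unnecessary. Schauder's theorem then gives a fixed point $\phi=\mathcal N\phi\in\widetilde H_f(M,g)$ solving $f(\Delta_g)\phi-U(x,\phi)=0$.
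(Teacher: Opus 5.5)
Your argument is correct and is essentially the paper's approach. The paper only cites this result, but its proof of Theorem~\ref{teorema del caso perturbado para Riemannian manifolds}, which reduces to the present statement when $\delta\psi L\equiv 0$ and $\chi\equiv 0$, runs the same Schauder scheme: rewrite the equation as $L_c\phi=U(\cdot,\phi)+\phi$, use $\alpha>1$ and small $\|h\|_{L^2(M,g)}$ to map a small ball of $H^s(M,g)$, $s>n/2$, into itself, and check compactness and continuity via the bound on $\partial_y(U+y)$.

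Your deviations are valid simplifications. You get compactness directly because the image is bounded in $H^{2k+2}(M,g)$, which embeds compactly in $H^{2k}(M,g)$, whereas the paper combines an $L^2$-Cauchy subsequence with complex interpolation. Your pointwise mean-value bound also makes the paper's measurable-selection step unnecessary.
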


\vspace{0.12cm}

The following result is a generalization of the localized result (\ref{caso localizado}) on Riemannian manifolds. We seek to prove the existence of solutions to equations of the form (\ref{ecuacion caso perturbado Riemanniano}). Indeed,

\begin{theorem}\label{teorema del caso perturbado para Riemannian manifolds}
Let $(M,g)$ be a compact, connected and oriented Riemannian $n$-manifold without boundary. Let $U : M \times \mathbb{R} \to \mathbb{R}$
be a Carathéodory non-linearity which is a  $C^{1}$ in the second variable, $f$ a 
continuous, nonnegative real-valued function satisfying condition (\ref{codicion de elipticidad de una fucnion real valuada para Riemannian manifolds}). Let $L : L^{2}(M,g) \to L^2(M,g)$ be a linear and
bounded operator.
%be a linearbounded operator and $C^{1}$
Suppose there exist constants $\alpha > 1$, $\beta \ge 1$, $C > 0$, and a $\delta > 0$ sufficiently small, 
a function $\psi \in C^{\infty}(M)$, $\chi \in L^{\infty}(M,g)$ 
and a function $h\in L^2(M,g)$, such that the following holds
\begin{equation}\label{desigualdad numero 1 del caso pertubado}
    |U(x,y) + y| \le C\,\Bigl(|h(x)| + |y|^\alpha\Bigr),
\end{equation}
\begin{equation}\label{desigualdad numero 2 del caso pertubado}
    \left|\frac{\partial}{\partial y}\Bigl(U(x,y) + y\Bigr)\right| \le C\,\Bigl(1+ |\chi(x)| + |y|^\beta\Bigr).
\end{equation}
Then, if $\norm{h}_{L^{2}(M,g)}$ is sufficiently small, there exists a solution $\phi\in \widetilde{H}_f(M,g)$ of the non-linear equation
\begin{equation}\label{ecuacion caso perturbado Riemanniano}
    f(\Delta_{g})\phi - \delta\psi L(\phi) - U(x,\phi)= 0
\end{equation}
\end{theorem}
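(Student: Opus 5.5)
We recast the equation as a fixed point problem for the operator $L_c=f(\Delta_g)+\mathrm{Id}$ of \eqref{operador caso Riemanniano}. Equation \eqref{ecuacion caso perturbado Riemanniano} is equivalent to
\begin{equation*}
L_c\phi=\eta_\phi,\qquad \eta_\phi(x):=U(x,\phi(x))+\phi(x)+\delta\psi(x)\,L(\phi)(x).
\end{equation*}
Fix $s>n/2$ with $s$ an even integer, so that $H^{s}(M,g)\hookrightarrow L^\infty(M)$. On the closed convex set $\mathcal M_\tau=\{\phi\in H^{s}(M,g)\mid \|\phi\|_{H^s}\le\tau\}$, $0<\tau<1$, define $\mathcal N(\phi)$ as the unique solution $\widetilde\phi\in\widetilde H_f(M,g)$ of $L_c\widetilde\phi=\eta_\phi$ given by Lemma \ref{sistema lineal caso riemanniano}. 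This gives $\|\mathcal N(\phi)\|_{\widetilde H_f}=\|\eta_\phi\|_{L^2}$. We then apply Schauder's fixed point theorem, following the proof of Theorem \ref{teorema en Rn, para dominios acotados}. The extra compact-manifold input is the embedding \eqref{embedding caso riemaniano}: the ellipticity condition \eqref{codicion de elipticidad de una fucnion real valuada para Riemannian manifolds} holds for every $k$, so $\widetilde H_f\hookrightarrow H^{2k}$ with $2k>s$.

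\textbf{Self-map.} Since $M$ is compact, $H^s\hookrightarrow L^\infty\hookrightarrow L^{2\alpha}$. Using \eqref{desigualdad numero 1 del caso pertubado} and boundedness of $L$,
\begin{equation*}
\|\eta_\phi\|_{L^2}\le C\bigl(\|h\|_{L^2}+C_s^\alpha\|\phi\|_{H^s}^\alpha\bigr)+\delta\|\psi\|_{L^\infty}\|L\|\,\|\phi\|_{L^2}.
\end{equation*}
Combined with $\|\mathcal N(\phi)\|_{H^s}\le C_*\|\mathcal N(\phi)\|_{\widetilde H_f}$, this yields
\begin{equation*}
\|\mathcal N(\phi)\|_{H^s}\le C_*\bigl(C\|h\|_{L^2}+CC_s^\alpha\tau^\alpha+\delta\|\psi\|_\infty\|L\|\tau\bigr).
\end{equation*}
Choose $\delta$ so that $C_*\delta\|\psi\|_\infty\|L\|\le 1/3$, and $\tau$ so that $C_*CC_s^\alpha\tau^{\alpha-1}\le1/3$; this uses $\alpha>1$. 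Then $\|h\|_{L^2}<\rho$ with $C_*C\rho\le\tau/3$ gives $\mathcal N(\mathcal M_\tau)\subset\mathcal M_\tau$. This is exactly where ``$\delta$ sufficiently small'' enters: the linear perturbation cannot be absorbed by the superlinear smallness, so its coefficient must be less than one.

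\textbf{Compactness.} The image $\mathcal N(\mathcal M_\tau)$ is bounded in $\widetilde H_f\hookrightarrow H^{2k}$. Rellich--Kondrachov on the compact manifold makes $H^{2k}\hookrightarrow H^s$ compact, so $\mathcal N$ maps $\mathcal M_\tau$ into a relatively compact set. This is simpler than the interpolation argument needed on $\Omega$.

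\textbf{Continuity.} This is the main technical step. Take $\phi_k\to\phi$ in $H^s$. Then
\begin{equation*}
\|\mathcal N(\phi_k)-\mathcal N(\phi)\|_{H^s}\le C_*\|\eta_{\phi_k}-\eta_\phi\|_{L^2}.
\end{equation*}
The perturbation term contributes at most $\delta\|\psi\|_\infty\|L\|\,\|\phi_k-\phi\|_{L^2}$. For the nonlinear part, write $\mathcal G(x,y)=U(x,y)+y$ and use the mean value integral. Following the proof of Theorem \ref{teorema en Rn, para dominios acotados}, select a measurable maximizer $t(x)$ via \cite[t.18.13]{AliprantisBorder}; the Carathéodory assumption is what makes this selection legitimate. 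By \eqref{desigualdad numero 2 del caso pertubado},
\begin{equation*}
\|\mathcal G(\cdot,\phi_k)-\mathcal G(\cdot,\phi)\|_{L^2}^2\le D\int_M|\phi_k-\phi|^2\bigl(1+|\chi|+|\phi_k|^\beta+|\phi|^\beta\bigr)^2dV_g.
\end{equation*}
Since $\chi\in L^\infty$ and $\|\phi_k\|_{L^\infty},\|\phi\|_{L^\infty}\le B\tau$, the right-hand side is bounded by $\widetilde D\bigl(1+\|\chi\|_\infty+2(B\tau)^\beta\bigr)^2\|\phi_k-\phi\|_{L^2}^2$, which tends to $0$. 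Schauder's theorem then yields a fixed point $\phi=\mathcal N(\phi)\in\widetilde H_f(M,g)$, and this $\phi$ solves \eqref{ecuacion caso perturbado Riemanniano}.
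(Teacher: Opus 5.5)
Your proposal is correct and follows the paper's proof essentially step by step. It uses the same reformulation $L_c\phi=\eta_\phi$, the same closed ball in $H^s(M,g)$, and the same three-way splitting of the self-map estimate: $\delta$ small to absorb the linear perturbation, $\tau$ small using $\alpha>1$, then $\|h\|_{L^2}$ small. It also uses the same mean-value/measurable-selection argument for continuity and closes with Schauder's theorem.

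The only deviation is the compactness step. You use the compact embedding $\widetilde H_f(M,g)\hookrightarrow H^{2k}(M,g)\hookrightarrow H^s(M,g)$ directly. The paper instead extracts an $L^2$-convergent subsequence via $H^s\hookrightarrow L^\infty$ and upgrades it to $H^s$ with the interpolation inequality between $L^2$ and $H^r$. Your route is shorter and equally valid.
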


\begin{proof}
The nonlinear equation (\ref{ecuacion caso perturbado Riemanniano}) is equivalent to
\begin{equation}\label{sistema caso perturbado Riemaniano}
    L_{c}u=Q(\cdot, u)
\end{equation}
where $Q(\cdot,u)=u + \delta \psi L(u) + U(\cdot, u)$.
Let $r,s \in \mb{R}$ such that $r>s>\frac{n}{2}$, without loss of generality suppose that $s=\theta r$ with $\theta \in (0,1)$. By  the Sobolev embedding Theorem \cite{Taylor III, Hebey 2} and the embedding (\ref{embedding caso riemaniano}), we have that the following embedding chain
\begin{equation}\label{embedding caso perturbado 1}
    \widetilde{H}_{f}(M,g) \hookrightarrow  
    H^{r}(M,g)
    \hookrightarrow H^{s}(M,g) \hookrightarrow L^{\infty}(M,g).
\end{equation}
is continuous. Then there exist constants $C_{r}, C_{r,s}, C_{s,0} > 0$ such that $\|v\|_{H^{r}(M)} \leq C_{r}\|v\|_{\widetilde{H}_{f}(M)}$, $\|v\|_{H^{s}(M)} \leq C_{r,s}\|v\|_{H^{r}(M)}$, and $\|v\|_{L^{2}(M)} \leq C_{s,0}\|v\|_{H^{s}(M)}$.
Let $u \in H^{s}(M,g)$; using the inequality (\ref{desigualdad numero 1 del caso pertubado}) and the embedding (\ref{embedding caso perturbado 1}), we show that $Q(\cdot, u) \in L^{2}(M,g)$. 
Denote the norm operator as  $\|\cdot\|_{\mathscr{L}(L^{2}(M), L^{2}(M))}$(or for simplicity $\|\cdot\|_{\mathscr{L}(L^{2}(M))}$).
We have the following computations for some constants $\tilde{C}, C_{\alpha}>0$,
\begin{align*}
\norm{Q(\cdot,u)}_{L^{2}(M)} &= \norm{u + \delta\psi L(u) + U(\cdot,u)}_{L^{2}(M)} \leq \norm{\delta \psi L(u)}_{L^2{(M)}} + \norm{U(\cdot, u) + u   }_{L^{2}(M)} \\
    &\leq \delta\norm{\psi}_{L^{\infty}(M)}\norm{L(u)}_{L^{2}(M)} + \tilde{C}\left(\norm{h}_{L^{2}(M)} +\norm{|u|^{\alpha}}_{L^{2}(M)}   \right) \\
    &\leq \delta C_{s,0}\norm{\psi}_{L^{\infty}(M)}\norm{L}_{\mathscr{L}(L^{2}(M))}\norm{u}_{H^{s}(M)} 
    + C_{\alpha}\left(\norm{h}_{L^{2}(M)} +\norm{u}_{H^{s}(M)}^{\alpha}   \right).
\end{align*}
The last inequality works because, if we take
$s_{\alpha} < s$ such that
$s_{\alpha} \in (\frac{n}{2}(\frac{\alpha - 1}{\alpha}) , s)$ using the Sobolev Theorem \cite[p.26]{Taylor III} and $vol_{g}(M)<\infty$, 
we have $H^{s}(M) \hookrightarrow H^{s_{\alpha}}(M) \hookrightarrow L^{2 \alpha}(M) \hookrightarrow L^{2}(M)$, and by $\norm{|u|^{\alpha}}_{L^{2}(M)}=\norm{u}^{\alpha}_{L^{2\alpha}(M)}$, the former inequality follows. 
Since $\delta>0$ can be taken as small as desired, we conclude $Q(\cdot,u) \in L^{2}(M)$. We will apply Schauder's fixed point theorem. Define
\begin{equation}
\mathcal{K}_{\varepsilon}=\{u \in H^{s}(M) \mid \norm{u}_{H^{s}(M)} \leq \varepsilon \}
\end{equation}
where $\varepsilon>0$ is arbitrary. 
Note that $\mathcal{K}_{\varepsilon}$ is closed, bounded, convex and nonempty subset in the Hilbert space $H^{s}(M)$.
Define the map
\begin{equation}\label{operador caso perturbado riemanniano}
    \mathcal{A}: \mathcal{K}_{\varepsilon} \to H^{s}(M), \quad \mathcal{A}(u)=\widetilde{u}
\end{equation}
where $\widetilde{u}$ is the unique solution of the linear problem $L_{c}\widetilde{u}=Q(\cdot,u)$.
We claim that there exists $\varepsilon >0$ such that $\mathcal{A}: \mathcal{K}_{\varepsilon} \to \mathcal{K}_{\varepsilon}$ is well defined. Indeed, if $u \in \mathcal{K}_{\varepsilon}$, using (\ref{embedding caso perturbado 1}) and (\ref{sistema lineal caso riemanniano}),
\begin{align*}
    \norm{\mathcal{A}(u)}_{H^{s}(M)} &\leq   C_{r,s}\norm{\mathcal{A}(u)}_{H^{r}(M)} = C_{r,s}\norm{\widetilde{u}}_{H^{r}(M)} \leq C_{r,s}C_{r}\norm{\widetilde{u}}_{\widetilde{H}_{f}(M)}= C_{r,s}C_{r}\norm{Q(\cdot, u)}_{L^{2}(M)} \\
    &\leq C_{r,s}C_{r}\left(\delta\norm{\psi}_{L^{\infty}(M)}\norm{L(u)}_{L^{2}(M)} + C_{\alpha}(\norm{h}_{L^{2}(M)} + \norm{u}_{H^{s}(M)}^{\alpha})\right)\\
    &\leq C_{r,s}C_{r}\left(\delta C_{s,0}\norm{\psi}_{L^{\infty}(M)}\norm{L}_{\mathscr{L}(L^{2}(M))}\norm{u}_{H^{s}(M)} + C_{\alpha}(\norm{h}_{L^{2}(M)} + \varepsilon^{\alpha})   \right).
\end{align*}
Since $\norm{h}_{L^{2}(M)}$ and $\delta>0$ are sufficiently small, setting $\widetilde{K}_{r,s} := C_{r,s}C_{r}$, we can choose $0 < \varepsilon < 1$ such that $\widetilde{K}_{r,s}C_{\alpha}\varepsilon^{\alpha} \leq \frac{\varepsilon}{3}$, and then take $\delta > 0$ and $\norm{h}_{L^{2}(M)}$ sufficiently small such that $\widetilde{K}_{r,s}\delta C_{s,0}\norm{\psi}_{L^{\infty}(M)}\norm{L}_{\mathscr{L}(L^{2}(M))} \leq \frac{1}{3}$ and $\widetilde{K}_{r,s}C_{\alpha}\norm{h}_{L^{2}(M)} \leq \frac{\varepsilon}{3}$, so that
\begin{align*}\label{operador bien definido caso perturbado}
    \norm{\mathcal{A}(u)}_{H^{s}(M)} &\leq C_{r,s} \norm{\mathcal{A}(u)}_{H^{r}(M)} \leq  \widetilde{K}_{r,s}\delta C_{s,0}\norm{\psi}_{L^{\infty}(M)}\norm{L}_{\mathscr{L}(L^{2}(M))}\norm{u}_{H^{s}(M)}\\
    &+ \widetilde{K}_{r,s}C_{\alpha}\left(\norm{h}_{L^{2}(M)}+ \varepsilon^{\alpha}   \right) \leq \varepsilon.
\end{align*}
Then $\mathcal{A}: \mathcal{K}_{\varepsilon} \to \mathcal{K}_{\varepsilon}$ is well defined.
Let us prove that $\mc{A}$ is a compact map. By the above estimate, for a bounded sequence $(u_{n}) \subset \mc{K}_{\varepsilon}$, it follows that the sequence $\mc{A}(u_{n}) \subset \mc{K}_{\varepsilon}$ is also bounded in $H^{r}(M)$. 
Using the Rellich-Kondrachov theorem for compact Riemannian manifolds \cite[t.2.9]{Hebey 2}, we have that $H^{s}(M) \hookrightarrow L^{\infty}(M)$ is a compact embedding, therefore there exists a subsequence $\mc{A}(u_{n_{k}}) \subset \mc{A}(u_{n})$ such that for some $z \in L^{\infty}(M)$, $\mc{A}(u_{n_{k}}) \to z$ in the norm $\|\cdot\|_{L^{\infty}}$. 
Since $vol_{g}(M) < \infty$, the embedding $L^{\infty}(M) \hookrightarrow L^{2}(M)$ is continuous, hence there exists $K>0$ such that $\|\mc{A}(u_{n_{k}}) - z\|_{L^{2}(M)} \leq K\|\mc{A}(u_{n_{k}}) - z\|_{L^{\infty}(M)}\to 0$. 
Therefore $\mc{A}(u_{n_{k}})$ is a Cauchy sequence in  $L^{2}(M)$. 
By complex interpolation, $H^{s}(M)=[L^{2}(M),H^{r}(M)]_{\theta}$. Then there exists a constant $C_{I}>0$ such that
{\fontsize{10.5}{14}
\begin{align*}
    \norm{\mc{A}(u_{n_{i}})-\mc{A}(u_{n_{j}})  }_{H^{s}(M)}^{2} &\leq C_{I}\norm{\mc{A}(u_{n_{i}})-\mc{A}(u_{n_{j}})}_{L^{2}(M)}^{2(1-\theta)}\norm{\mc{A}(u_{n_{i}})-\mc{A}(u_{n_{j}})}_{H^{r}(M)}^{2\theta}\\
&\leq C_{I}C_{r}^{2\theta}\norm{\mc{A}(u_{n_{i}})-\mc{A}(u_{n_{j}})}_{L^{2}(M)}^{2(1-\theta)}\norm{\mc{A}(u_{n_{i}})-\mc{A}(u_{n_{j}})}_{\widetilde{H}_{f}(M) }^{2\theta}\\
&=C_{I}C_{r}^{2\theta}\norm{\mc{A}(u_{n_{i}})-\mc{A}(u_{n_{j}})}_{L^{2}(M)}^{2(1-\theta)}\norm{Q(\cdot,u_{n_{i}}) - Q(\cdot,u_{n_{j}})}_{L^{2}(M)}^{2\theta}\\
&\leq C_{I}C_{r}^{2\theta}\norm{\mc{A}(u_{n_{i}})-\mc{A}(u_{n_{j}})}_{L^{2}(M)}^{2(1-\theta)}\left( \norm{Q(\cdot,u_{n_{i}})}_{L^{2}(M)} + \norm{Q(\cdot,u_{n_{j}})}_{L^{2}(M)}  \right)^{2\theta}
\end{align*}}

\justify

Since $(u_n) \subset \mc{K}_{\varepsilon}$ satisfies $\|u_n\|_{H^s(M)} \leq \varepsilon$ for all $n \in \mb{N}$, our previous estimate yields the uniform bound
$$
\sup_{n \in \mb{N}} \norm{Q(\cdot, u_n)}_{L^{2}(M)} \leq \delta C_{s,0}\varepsilon\norm{\psi}_{L^{\infty}(M)}\norm{L}_{\mathscr{L}(L^{2}(M))}  + C_{\alpha}\bigl(\norm{h}_{L^{2}(M)} + \varepsilon^{\alpha}\bigr) < \infty.
$$
Setting 
$$M_0 := 2\Bigl(\delta C_{s,0} \varepsilon\norm{\psi}_{L^{\infty}(M)}\|L\|_{\mathscr{L}(L^{2}(M))}  + C_{\alpha}\bigl(\|h\|_{L^{2}(M)} + \varepsilon^{\alpha}\bigr)\Bigr) > 0$$
which is a constant that does not depend on the indices $i,j$, it follows that $\norm{Q(\cdot,u_{n_{i}})}_{L^{2}(M)} + \norm{Q(\cdot,u_{n_{j}})}_{L^{2}(M)} \leq M_0$.
Therefore 
\begin{equation*}
  \norm{\mc{A}(u_{n_{i}})-\mc{A}(u_{n_{j}})  }_{H^{s}(M)}^{2}\leq C_{I}(C_{r}M_{0})^{2\theta}\norm{\mc{A}(u_{n_{i}})-\mc{A}(u_{n_{j}})}_{L^{2}(M)}^{2(1-\theta)}.  
\end{equation*}
In fact $\mc{A}(u_{n_{k}})$ is a Cauchy sequence in $H^{s}(M)$. Thus $\mc{A}: \mc{K}_{\varepsilon} \to \mc{K}_{\varepsilon}$ is a compact map.
Now we prove continuity of $\mathcal{A}$. Let $(u_{n}) \subset \mathcal{K}_{\varepsilon}$ with $u_{n} \to u$ in $H^{s}(M)$, using the linearity of the system $L_{c}u=Q(\cdot, u)$. Denote $V(\cdot,u)=U(\cdot,u)+u$, taking $\delta$ sufficiently small, we have that
\begin{align*}
 \norm{\mathcal{A}(u_{n}) - \mathcal{A}(u) }^{2}_{H^{s}(M)} &= \norm{\widetilde{u}_{n} - \widetilde{u} }_{H^{s}(M)}^{2} \leq \widetilde{K}_{r,s}^{2}\norm{\widetilde{u}_{n} - \widetilde{u}}_{\widetilde{H}_{f}(M)}^{2} = \widetilde{K}_{r,s}^{2}\norm{Q(\cdot,u_{n}) - Q(\cdot,u)}^{2}_{L^{2}(M)}\\
 &= \widetilde{K}_{r,s}^{2}\norm{U(x,u_{n})+u_{n} - U(x,u) - u + \delta \psi L(u_{n}-u)}_{L^{2}(M)}^{2} \\
 &\leq 2\widetilde{K}_{r,s}^{2}\norm{U(x,u_{n})+u_{n} - U(x,u) - u  }_{L^{2}(M)}^{2} + 2\widetilde{K}_{r,s}^{2}\norm{\delta \psi L(u_{n}-u)}_{L^{2}(M)}^{2}\\
 &\leq 2\widetilde{K}_{r,s}^{2}\norm{V(x,u_{n}) - V(x,u)}_{L^{2}(M)}^{2}\\
 &+ 2\widetilde{K}_{r,s}^{2}C_{s,0}^{2}(\delta\norm{\psi}_{L^{\infty}(M)})^{2}\norm{L}^{2}_{\mathscr{L}(L^{2}(M))}\norm{u_{n}-u}_{H^{s}(M)}^{2}.
\end{align*}
Since the operator $L$ is bounded and $\delta>0$ is a fixed sufficiently small number, as $u_{n} \to u$ in $H^{s}(M)$, it follows that for an arbitrary $\eta>0$, there exists $n_{0} \in \mb{N}$, such that for every $n\geq n_{0}$, 
$2\widetilde{K}_{r,s}^{2}C_{s,0}^{2}(\delta\norm{\psi}_{L^{\infty}})^{2}\norm{L}^{2}_{\mathscr{L}(L^{2}(M))}\norm{u_{n}-u}_{H^{s}(M)}^{2}\leq \frac{\eta}{2}$. Therefore,
\begin{align*}
 \norm{\mathcal{A}(u_{n}) - \mathcal{A}(u) }^{2}_{H^{s}(M)}
 %\leq \norm{V(x,u_{n}) - V(x,u)}_{L^{2}(M)} + \tfrac{\varepsilon}{2}\\
&\leq 2\widetilde{K}_{r,s}^{2}\int_{M}\left| \int_{0}^{1}\frac{d}{dt}V(x,tu_{n} + (1-t)u )dt  \right|^{2}dV_{g} + \tfrac{\eta}{2}\\
 &= 2\widetilde{K}_{r,s}^{2}\int_{M}|u_{n}(x)-u(x)|^{2}\left|  \int_{0}^{1}\frac{\partial}{\partial y}V(x,tu_{n} + (1-t)u )dt    \right|^{2}dV_{g} + \tfrac{\eta}{2}
\end{align*}
We define the function $h_{x}: [0,1] \to \mathbb{R}, h_{x}(t)=\frac{\partial}{\partial y}V(x, tu_{n}(x) 
   +  (1-t)u(x))$ with $x \in M$, since $U(\cdot, u)$ is $C^{1}$ in the second variable, $h_{x}(t)$ is a well defined Carathéodory function(i.e., measurable with respect to $x$ and continuous with respect to $t$). Note that
\begin{equation*}
    \Big|\int_{0}^{1}h_{x}(t)dt \Big| \leq \int_{0}^{1} |h_{x}(t)|dt \leq \int_{0}^{1}\left( \sup_{t \in [0,1]}|h_{x}(t)|  \right)dt = \sup_{t \in [0,1]}|h_{x}(t)|
\end{equation*}
Since $h_{x}(t)$ is continuous in a compact set, the maximum is attained for each $x \in M$, i.e., for each fixed $x_{0} \in M$,
there exists $t_{0} \in [0,1]$ such that $h_{x_{0}}$ attains the supremum in $t_{0}$. 
In fact there exists a Borel measurable function $t(\cdot): M \to [0,1]$ by \cite[t.18.13]{AliprantisBorder} such that for each $x^{\prime} \in M$, $t(x^{\prime})$ 
satisfies $\sup_{t \in [0,1]}|h_{x^{\prime}}(t)|=|h_{x^{\prime}}(t(x^{\prime}))|$. Therefore using (\ref{desigualdad numero 2 del caso pertubado}), for constants $C_{1}, \widetilde{C}> 0$, and that $0 \leq t(x) \leq 1$, we get the following computations
{\fontsize{10.5}{14}\selectfont
\begin{align*}
 \norm{\mathcal{A}(u_{n}) - \mathcal{A}(u)}^{2}_{H^{s}} 
&\leq 2\widetilde{K}_{r,s}^{2} \int_{M}|u_{n}(x) - u(x)|^{2}\left\{\sup_{t \in [0,1]}\left| \frac{\partial}{\partial y}V(x,tu_{n}(x) + (1-t)u(x))\right|     \right\}^{2}dV_{g} +\tfrac{\eta}{2}   \\
&\leq 2\widetilde{K}_{r,s}^{2}\int_{M}|u_{n}(x) - u(x)|^{2}\left\{\left| \frac{\partial}{\partial y}V(x,t(x)u_{n}(x) + (1-t(x))u(x))\right|     \right\}^{2}dV_{g} +\tfrac{\eta}{2} \\ 
&\leq C_{1}\int_{M}|u_{n}(x) - u(x)|^{2} \left( \left| 1 + |\chi(x)| + |t(x)u_{n}(x) + (1-t(x))u(x)|^{\beta} \right| \right)^{2}dV_{g} +\tfrac{\eta}{2}\\
  &\leq \widetilde{C}\int_{M}|u_{n}(x) - u(x)|^{2} \left( 1 +  |\chi(x)|^{2} + |t(x)u_{n}(x)|^{2\beta} + |(1-t(x))u(x)|^{2\beta} \right)dV_{g} +\tfrac{\eta}{2} \\ 
  &\leq \widetilde{C}\int_{M}|u_{n}(x) - u(x)|^{2}\left(  1 + |\chi(x)|^{2} + |u_{n}(x)|^{2 \beta} + |u(x)|^{2 \beta}  \right)dV_{g} +\tfrac{\eta}{2}
\end{align*}}
\justify
By embedding (\ref{embedding caso perturbado 1}), we have $\|u_{n}\|_{L^{\infty}(M)}^{2 \beta} \leq K_{1}$ and $\|u\|_{L^{\infty}(M) }^{2\beta} \leq K_{2}$ for constants $K_{1},K_{2}>0$. Taking $K=2\max\{K_{1},K_{2}\}$, there exists $n_{1} \in \mb{N}$, such that $\widetilde{C}C_{s,0}^{2}((1+K + \norm{\chi }^{2}_{L^{\infty}})\norm{u_{n} - u}_{H^{s}}^{2}) \leq \frac{\eta}{2}$ for every $n \geq n_{1}$. Then taking $n_{2}=\max(n_{0},n_{1})$ we have that for all $n \geq n_{2},$
\begin{align*}
 \norm{\mathcal{A}(u_{n}) - \mathcal{A}(u) }^{2}_{H^{s}(M)} &\leq \widetilde{C} \int_{M}|u_{n}(x) - u(x)|^{2}\left(  1 + |\chi(x)|^{2} + \|u_{n}\|_{L^{\infty}(M)}^{2 \beta} + \|u\|_{L^{\infty}(M) }^{2 \beta}  \right)dV_{g} +\tfrac{\eta}{2} \\
 &\leq \widetilde{C}\left( \left(1+K\right)\norm{u_{n} - u}_{L^{2}(M)}^{2}  + \int_{M}|u_{n}(x) - u(x)|^{2}|\chi(x)|^{2}dV_{g}\right) +\tfrac{\eta}{2} \\
 & \leq \widetilde{C}C_{s,0}^{2}\left( \left(1+K + \norm{\chi }^{2}_{L^{\infty}(M)}\right)\norm{u_{n} - u}_{H^{s}(M)}^{2} \right) +\tfrac{\eta}{2} \leq \eta.
\end{align*}
Hence $\widetilde{u}_n \to \widetilde{u}$ in $H^{s}(M)$, so $\mathcal{A}$ is continuous. By Schauder's fixed point theorem there exists a fixed point $\phi \in \mathcal{K}_{\varepsilon}$ such that $\phi = \mathcal{A}(\phi) \in \widetilde{H}_{f}(M,g)$, which is a solution of (\ref{sistema caso perturbado Riemaniano}).
\end{proof}

\vspace{0.12cm}

\section{Nonlocal Equations on a Cartan-Hadamard Manifolds}\label{seccion de Cartan-Hadamard equations}

We say that $(M,g)$ is a Cartan–Hadamard
$n$-manifold if it is complete, oriented, simply connected and has non-positive sectional curvature $K$. 
Fix $p \in M$. For any $r \geq 0$, we define $K_{p}(r)$ as
\begin{equation}\label{curvatura secional suprema donelly}
K_{p}(r) = \sup_{x \in M} \left\{ K(x, \pi) \mid d_{g}(p,x) \geq r, \, \pi = \text{span}\{u,v\} \subset T_x M \text{ is a 2-plane} \right\}
\end{equation}
Suppose $K_{p}(r) \to -\infty$ as $r \to \infty$. 
Using the Theorem \cite[t.1.1]{Donnelly} and \cite[p.360]{McKean}, the 
spectrum of the Laplace–Beltrami operator $\Delta_{g}$ under these conditions is discrete and positive and $0< \lambda_{1}<\lambda_{2} \leq \lambda_{3} \leq \ldots$, with finite multiplicity and 
accumulation at infinity, i.e., $|\lambda_{j}| \to \infty$. \\
Therefore, by \cite[t.5.7]{Schmudgen}, there exists an orthonormal basis of eigenfunctions
$\{ \omega_{n} \mid n \in \mathbb{N}\}$ of $L^{2}(M,g)$ such that
$\Delta_{g}\omega_{n}=\lambda_{n} \omega_{n}$ for $n \in \mathbb{N}$. 
Under these conditions the Sobolev space $H^{s,p}(M)$ with $s \in \mathbb{R}_{>0}$ and $1 \leq p < \infty$ is defined in (\cite[p.689]{Banica}) as 
\begin{equation}\label{definicion de espacio de sobolev fraccionario general}
    H^{s,p}(M,g)=\{u \in L^{p}(M) \mid \exists h \in L^{p}(M), u=(I+\Delta_{g})^{-s/2}h \}, 
\end{equation}
with norm 
\begin{equation}
  \|u\|_{H^{s,p}(M)}=\| h \|_{L^p(M)}.  
\end{equation}
For the spectral theorem for unbounded self-adjoint operators \cite[t.5.7]{Schmudgen}, there exists a unique spectral measure $E_{\Delta_{g}}: \mathcal{B}(\mathbb{R}) \to \mathcal{L}(L^{2}(M,g))$ such that
\begin{equation}
\Delta_{g}=\int_{\sigma(\Delta_{g})}\lambda dE_{\Delta_{g}}(\lambda).
\end{equation}
Using functional calculus and the function $g_{s}(\lambda)=(1+\lambda)^{-s/2}$ with $s \in \mathbb{R}_{>0}$, which is a Borel-measurable bounded function, and the fact that $\sigma(\Delta_{g})$ is discrete, we have 
\begin{equation}
(I+\Delta_{g})^{-s/2}=g_{s}(\Delta_{g})= \int_{\sigma(\Delta_{g})}(1+\lambda)^{-s/2} dE_{\Delta_{g}}(\lambda)=\sum_{j=1}^{\infty}(1+\lambda_{j})^{-s/2}E_{\Delta_{g}}(\{\lambda_{j}\})
\end{equation}

\justify

The expression $E_{\Delta_{g}}(\{\lambda_{j}\})$ is the orthogonal projection onto the finite dimensional eigenspace $E_{\lambda_{j}}=\ker(\Delta_{g} - \lambda_{j}I)$ which is closed; let us denote $m_{j}=\dim E_{\lambda_{j}}$. The orthogonal projection can be expressed as $E_{\Delta_{g}}(\{\lambda_{j}\})(f)=\sum_{k=1}^{m_{j}}\langle  f , \omega_{j_{k}}  \rangle_{L^{2}(M)}\omega_{j_{k}}$. Indeed,
\begin{align}\label{definicion espectral del operador resolvente}
(I+\Delta_{g})^{-s/2}f &= \sum_{j=1}^{\infty}(1+\lambda_{j})^{-s/2}E_{\Delta_{g}}(\{\lambda_{j}\})=\sum_{j=1}^{\infty}(1+\lambda_{j})^{-s/2}\left( \sum_{k=1}^{m_{j}}\langle  f , \omega_{j_{k}}  \rangle_{L^{2}(M)}\omega_{j_{k}} \right)\nonumber \\ 
&= \sum_{j=1}^{\infty}\sum_{k=1}^{m_{j}}(1+\lambda_{j})^{-s/2}\langle  f , \omega_{j_{k}}  \rangle\omega_{j_{k}}=\sum_{j=1}^{\infty} (1 + \lambda_j)^{-s/2} \langle f, \omega_j \rangle \omega_j.
\end{align}
By definition \eqref{definicion de espacio de sobolev fraccionario general} for $p=2$, a function $f$ belongs to $H^{s}(M,g)$ if and only if it is in the range of the operator $(I+\Delta_{g})^{-s/2}$ which is an injective and bounded self-adjoint operator on $L^2(M)$. Then its range coincides precisely with the domain of its unbounded inverse, $(I+\Delta_{g})^{s/2}$. Therefore, 
$$H^{s}(M,g)=\mc{D}((I+\Delta_{g})^{s/2}).$$
Furthermore, $f \in \mc{D}((I+\Delta_{g})^{s/2})$ if and only if\begin{equation}\int_{\sigma(\Delta_{g})} (1+\lambda)^{s} d\langle E_{\Delta_{g}}(\lambda)f, f \rangle_{L^2(M)} < \infty.\end{equation}
Since the spectrum is discrete, this spectral integral reduces to a series over the eigenvalues. Using the fact that $\|E_{\Delta_{g}}(\{\lambda_{j}\})(f)\|_{L^2(M)}^2 = \|f   \|^{2}_{L^{2}(M)}=\sum_{j}|\alpha_j|^2$ by \cite[l.4.4]{Schmudgen}, the integrability condition is exactly $\sum_{j=1}^{\infty} (1+\lambda_{j})^{s} \vert{}\alpha_{j}\vert{}^{2} < \infty$. Hence, an equivalent way to express the fractional Sobolev space is 
\begin{align}\label{espacio cartan hadamard, en funcion de f}
H^s(M,g) &=  \left\{ f=\sum_{j=1}^{\infty}\alpha_{j}\omega_{j} ~\Big|~ \sum_{j=1}^{\infty}|\alpha_{j}|^{2}(1+\lambda_{j} )^{s} < \infty    \right\}.
\end{align}

\justify

Let $J \subset \mathbb{N}$ be the index set such that $0 < \lambda_{j} < 1$ for $j \in J$, and $\lambda_{i} \geq 1$ for all $i \in (\mathbb{N} \setminus J)$. Suppose that the eigenfunctions $\{\omega_{j}\}_{j \in \mathbb{N}}$ satisfy the following condition for some $s \in \mathbb{R}_{>0}$,
\begin{equation}\label{condicion sombrero}
\mc{S}_{\ell}=\left(\sum_{j \in J}(1+\lambda_{j}^{2})^{-s}\norm{\nabla^{\ell}\omega_{j}}_{L^{\infty}(M)}^{2}    +   \sum_{j \in (\mathbb{N} \setminus J)}(1+\lambda_{j})^{-s}\norm{\nabla^{\ell}\omega_{j}}_{L^{\infty}(M)}^{2}  \right)< \infty
\end{equation}
where $\ell=0,\ldots,m$ with $m \in \mathbb{N}$ fixed.
Under the hypotheses of this section, the eigenfunctions $\omega_{j}$ are smooth 
functions, since $\Delta_{g}$ is an 
elliptic operator \cite[c.7.3]{Grigoryan}. Therefore $\nabla^{\ell}\omega_{j}$ with $j \in \mathbb{N}$ 
and $\ell \in \{0,\dots,m\}$ is a 
smooth $\ell$-covariant tensor field, i.e., $\nabla^{\ell}\omega_{j} \in 
\Gamma(T^{(0,\ell)}TM)$.\\ The notation $\|\nabla^{\ell}\omega\|_{L^{\infty}(M)}$ is the 
$\|\cdot\|_{L^{\infty}(M)}$-norm of the continuous
function $\phi: M \to \mathbb{R}$, determined by $p \overset{\phi}{\mapsto} |(\nabla^{\ell}\omega)(p)|_{g}$ for $p \in M$. That is
\begin{equation}
    \norm{\nabla^{\ell}\omega}_{L^{\infty}(M)}=\text{ess}\sup_{p \in M}|(\nabla^{\ell}\omega)(p)|_{g},
\end{equation}
the 
latter expression is 
$$|(\nabla^{\ell}\omega)(p)|_{g}^{2}=(g^{i_{1}j_{1}} \cdots g^{i_{\ell}j_{\ell}})(\nabla^{\ell}\omega)_{i_{1},...,i_{\ell}}(p)(\nabla^{\ell}\omega)_{j_{1},...,j_{\ell}}(p).$$
An example of a Cartan-Hadamard manifold satisfying the condition (\ref{condicion sombrero}) is below in Example (\ref{ejemplo de variedades Cartan-Hadamard}). 
%$$\norm{\nabla^{k}\omega}_{L^{\infty}(M)}=\text{ess} \sup_{x \in M}|\nabla^{k}\omega(x)|_{g}=\inf \{ C > 0 \mid |\nabla^{k}\omega(x)|_{g} \leq C, dV_{g}-a.e., x \in M \}$$
We denote by $C^{m}_{B}(M)$ the space of functions $u: M \to \mathbb{R}$ of class $C^{m}$\cite[p.63]{Hebey 2}
with the norm $\norm{\cdot}_{C^{m}}$ given by  
\begin{equation}
\norm{u}_{C^{m}}=\sum_{j=0}^{m}\sup_{p \in M} |(\nabla^{j}u)(p)|_{g} < \infty.
\end{equation}

\justify

\begin{definition}\label{definicion de fincion de Caratheodory}
   \cite[d.4.50]{AliprantisBorder}. Let $(S,\Sigma)$ be a measurable space, and let $X$ and $Y$ be topological spaces. A function $f: S \times X \to Y$ is a Carathéodory function if:\\
   $i)$ for each $x \in X$, the function $f^{x}=f(\cdot,x): S \to Y$ is $(\Sigma,\mc{B}(Y))$-measurable \footnote{Note that $\mc{B}(Y)$ is the usual notation to the Borel $\sigma$-algebra of $Y$.}; and \\
   $ii)$ for each $s \in S$, the function $f_{s}=f(s,\cdot): X \to Y$ is continuous.
\end{definition}

\vspace{0.12cm}

\begin{lemma}\cite[d.4.51]{AliprantisBorder}.
Let $(S,\Sigma)$ be a measurable space, $X$ a separable metrizable space, and $Y$ a metrizable space. Then every Carathéodory function $f: S \times X \to Y$is jointly measurable.
\end{lemma}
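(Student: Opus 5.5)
The plan is to show directly that $f^{-1}(F)\in\Sigma\otimes\mc{B}(X)$ for every closed set $F\subset Y$. Since the closed sets generate $\mc{B}(Y)$, and the family of Borel sets whose preimage lies in $\Sigma\otimes\mc{B}(X)$ is a $\sigma$-algebra, this gives joint measurability. We fix a compatible metric $d$ on $X$ and a compatible metric $\rho$ on $Y$. Since $X$ is separable, we also fix a countable dense set $D=\{x_k\}_{k\in\mb{N}}\subset X$. For a closed $F\subset Y$ and $m\in\mb{N}$ we set $U_m=\{y\in Y \mid \rho(y,F)<1/m\}$, which is open because $y\mapsto\rho(y,F)$ is continuous.

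The key step is the countable representation
\begin{equation*}
f^{-1}(F)=\bigcap_{m\in\mb{N}}\ \bigcup_{k\in\mb{N}}\Bigl(\{s\in S \mid f(s,x_k)\in U_m\}\times B_d(x_k,1/m)\Bigr).
\end{equation*}
Each set in the union is a measurable rectangle. The first factor lies in $\Sigma$ by condition $i)$ of Definition \ref{definicion de fincion de Caratheodory} applied at the point $x_k$, since $U_m$ is Borel. The second factor is an open ball, hence lies in $\mc{B}(X)$. So the right-hand side belongs to $\Sigma\otimes\mc{B}(X)$.

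To check the inclusion ``$\subseteq$'', let $f(s,x)\in F$ and fix $m$. By continuity of $f_s=f(s,\cdot)$ (condition $ii)$) there is $\delta>0$ with $\rho(f(s,z),f(s,x))<1/m$ whenever $d(z,x)<\delta$. By density of $D$ we can pick $x_k$ with $d(x,x_k)<\min\{\delta,1/m\}$. Then
\begin{equation*}
\rho(f(s,x_k),F)\le\rho(f(s,x_k),f(s,x))<1/m,
\end{equation*}
and since $d(x_k,x)<1/m$ we have $x\in B_d(x_k,1/m)$, so $(s,x)$ lies in the $m$-th union.

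For the inclusion ``$\supseteq$'', suppose that for each $m$ there is $k_m$ with $d(x,x_{k_m})<1/m$ and $\rho(f(s,x_{k_m}),F)<1/m$. Then $x_{k_m}\to x$, and continuity of $f_s$ gives $f(s,x_{k_m})\to f(s,x)$. Continuity of $\rho(\cdot,F)$ then yields $\rho(f(s,x),F)=0$, and since $F$ is closed, $f(s,x)\in F$.

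The only delicate point is the converse inclusion. This is exactly where we need both that $F$ is closed and that the approximation radius $1/m$ is tied to the same index that controls the distance to $F$. Separability of $X$ is what makes the union over $k$ countable, and metrizability of $Y$ is what allows us to write $F$ through the open sets $U_m$.
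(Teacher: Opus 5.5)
Your proof is correct. The paper gives no proof of its own and just cites Aliprantis--Border (Lemma 4.51); your representation $f^{-1}(F)=\bigcap_m\bigcup_k \{s \mid f(s,x_k)\in U_m\}\times B_d(x_k,1/m)$ for closed $F$, built from a countable dense subset of $X$, is essentially the standard proof given there. The only point left implicit is the trivial case $F=\varnothing$, where $f^{-1}(\varnothing)=\varnothing$ directly, or you can adopt the convention $\rho(y,\varnothing)=\infty$.
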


\begin{definition}\label{definicion del operador de Nemytskii}
    Let $X, Y, Z \neq \varnothing$ be non-empty sets. Let $Y^{X}, Z^{X}$ denote the sets of mappings from $X$ to $Y$ and $Z$ respectively. Let $h \colon X \times Y \to Z$.
Then the Nemytskii superposition operator $H \colon Y^{X} \to Z^{X}$ induced by $h$ is the map taking any map $\varphi \in Y^{X}$ to the map $H\varphi \in Z^{X}$ defined by
$$
(H\varphi)(x) = h(x, \varphi(x)) \in Z, \quad \mbox{for all } x \in X.
$$
The function $h$ is called the generator of the Nemytskii operator $H$.
\end{definition}

\begin{lemma}\label{lema Cartan-Hadamard}
Let $(M,g)$ be a Cartan-Hadamard $n$-manifold satisfying $K_{p}(r) \to - \infty$ as $r \to  \infty$ for every $p \in M$. Assume that condition \eqref{condicion sombrero}
is satisfied for some $s>0$ real and all derivative
orders $\ell=0,\dots,m$. Then $H^{s}(M,g) \hookrightarrow C^{m}_{B}(M)$.
\end{lemma}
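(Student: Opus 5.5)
The plan is to expand $u\in H^{s}(M,g)$ in the eigenbasis, $u=\sum_{j}\alpha_j\omega_j$ with $\sum_j|\alpha_j|^2(1+\lambda_j)^s=\|u\|_{H^s(M)}^2$, as in \eqref{espacio cartan hadamard, en funcion de f}. We then differentiate term by term and use Cauchy--Schwarz against the weights appearing in \eqref{condicion sombrero}.

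\textbf{Step 1: the uniform bound.} For fixed $\ell\in\{0,\dots,m\}$ and $p\in M$, write
\[
|\alpha_j|\,|(\nabla^\ell\omega_j)(p)|_g = |\alpha_j|\,w_j^{1/2}\cdot w_j^{-1/2}\|\nabla^\ell\omega_j\|_{L^\infty(M)},
\]
where $w_j=(1+\lambda_j)^s$ for $j\notin J$ and $w_j=(1+\lambda_j^2)^s$ for $j\in J$. Since $0<\lambda_j<1$ on $J$, we have $(1+\lambda_j^2)^s\le (1+\lambda_j)^s$, so $w_j\le(1+\lambda_j)^s$ for all $j$. Cauchy--Schwarz then gives
\[
\sum_j|\alpha_j|\,\|\nabla^\ell\omega_j\|_{L^\infty(M)}\le \Bigl(\sum_j|\alpha_j|^2(1+\lambda_j)^s\Bigr)^{1/2}\mathcal{S}_\ell^{1/2}=\mathcal{S}_\ell^{1/2}\|u\|_{H^s(M)}.
\]
(Alternatively, since $J$ is finite, the first sum in $\mathcal{S}_\ell$ is harmless, and the $J$ part can be handled by simple comparison.)

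\textbf{Step 2: convergence and identification.} The series $\sum_j\alpha_j\nabla^\ell\omega_j$ therefore converges absolutely and uniformly on $M$ for each $\ell\le m$, by the Weierstrass M-test. Each $\omega_j$ is smooth by elliptic regularity \cite[c.7.3]{Grigoryan}. The partial sums $u_N=\sum_{j\le N}\alpha_j\omega_j$ are smooth, and $\nabla^\ell u_N$ converges uniformly for every $\ell\le m$. In local coordinates on compact charts, this means all partial derivatives up to order $m$ converge uniformly, because the metric and Christoffel symbols are bounded on compacta. The standard theorem on uniform convergence of derivatives then shows that the uniform limit $v$ of the $u_N$ is $C^m$, with $\nabla^\ell v=\lim\nabla^\ell u_N$. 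Since $u_N\to u$ in $L^2(M)$ and $u_N\to v$ uniformly, we get $u=v$ almost everywhere. Hence $u$ has a $C^m$ representative with
\[
\|u\|_{C^m}\le\sum_{\ell=0}^m\mathcal{S}_\ell^{1/2}\|u\|_{H^s(M)}.
\]
Linearity then gives the continuous embedding $H^s(M,g)\hookrightarrow C^m_B(M)$.

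\textbf{Main obstacle.} The delicate point is the passage from uniform convergence of the covariant tensors $\nabla^\ell u_N$ to $C^m$ regularity of the limit with $\nabla^\ell$ commuting with the limit. I would handle it locally via the expression of $\nabla^\ell$ in terms of coordinate partial derivatives of order $\le\ell$, with smooth coefficients. Inverting this triangular relation shows that the coordinate derivatives of $u_N$ up to order $m$ converge locally uniformly. The bound itself is a routine Cauchy--Schwarz argument once the weight comparison on $J$ is observed.
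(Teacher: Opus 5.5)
Your proposal is correct and follows essentially the same route as the paper: Cauchy--Schwarz against the weights in \eqref{condicion sombrero} to get uniform convergence of $\nabla^\ell u_N$ on $M$, followed by a local-chart argument (comparing $|\cdot|_g$ with Euclidean component norms on compacta and using the triangular relation between coordinate and covariant derivatives) to obtain $C^m$ regularity and the $C^m_B$ bound. The differences are minor. You put the comparison $(1+\lambda_j^2)^s\le(1+\lambda_j)^s$ on $J$ on the coefficient side, whereas the paper puts it on the eigenfunction side. You also make explicit the a.e.\ identification of $u$ with the uniform limit, and you get the correct constant $\mathcal{S}_\ell^{1/2}$; the paper's chain drops a square root before passing to $\Theta_\ell$.
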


\begin{proof}
Let $u \in H^{s}(M,g)$, then $u=\sum_{j=1}^{\infty}\alpha_{j}\omega_{j}$, where $\alpha_{j}=\langle u  , \omega_{j}   \rangle_{L^{2}(M)}$. Fix $\ell \in \{0,...,m\}$; our goal is to bound the expression $\nabla^{\ell}u=\nabla^{\ell}(\sum_{j=1}^{\infty}\alpha_{j}\omega_{j})=\sum_{j=1}^{\infty}\alpha_{j}\nabla^{\ell}\omega_{j}$, and prove that $u \in C^{m}(M)$.
Let us define the finite constant
\begin{equation}\label{eq: lema cartan-hadamard, constante importante}
   \Theta_{\ell}= 2\max\left\{\sum_{j \in J}(1+\lambda_{j}^{2})^{-s}\norm{\nabla^{\ell}\omega_{j}}^{2}_{L^{\infty}(M)}, \sum_{j \in (\mathbb{N}\setminus J)}(1+\lambda_{j})^{-s}\norm{\nabla^{\ell}\omega_{j}}^{2}_{L^{\infty}(M)}   \right\} > 0.
\end{equation}
We denote by $S_{N}:=\sum_{j=1}^{N}\alpha_{j}\omega_{j}$ the partial sums of $u$ and by $
\varepsilon_{N}^{2}:=\Big(\sum_{j>N}(1+\lambda_{j})^{s}|\alpha_{j}|^{2}\Big)
$ the tails of the convergent series given by $\|u\|_{H^{s}(M)}$.
These tails satisfy $\varepsilon_{N}\to 0$ as $N\to\infty$ and $\varepsilon_{0}=\|u\|_{H^{s}(M)}$. Fix an arbitrary $p \in M$, and let $N^{\prime}> N\geq 0$ be natural numbers. Using the Cauchy-Schwarz inequality in $\ell^{2}$, we obtain the following
\begin{align*}
\sum_{j=N+1}^{N'}\big|\alpha_{j}\nabla^{\ell}\omega_{j}(p)\big|_{g}
&=\sum_{j=N+1}^{N'}\Big[(1+\lambda_{j})^{s/2}|\alpha_{j}|\Big]
\Big[(1+\lambda_{j})^{-s/2}\,\big|\nabla^{\ell}\omega_{j}(p)\big|_{g}\Big]\\
&\leq\left(\sum_{j=N+1}^{N'}(1+\lambda_{j})^{s}|\alpha_{j}|^{2}\right)^{1/2}
\left(\sum_{j=N+1}^{N'}(1+\lambda_{j})^{-s}
\big|\nabla^{\ell}\omega_{j}(p)\big|_{g}^{2}\right)^{1/2}\\
&\leq\left(\sum_{j>N}(1+\lambda_{j})^{s}|\alpha_{j}|^{2}\right)^{1/2}
\left(\sum_{j=1}^{\infty}(1+\lambda_{j})^{-s}
\big\|\nabla^{\ell}\omega_{j}\big\|_{L^{\infty}(M)}^{2}\right)^{1/2}\\
&\leq \varepsilon_{N}\left( \sum_{j \in J}(1+\lambda_{j}^{2})^{-s}\norm{\nabla^{\ell}\omega_{j}}^{2}_{L^{\infty}} +  \sum_{j \in (\mathbb{N}\setminus J)}(1+\lambda_{j})^{-s}\norm{\nabla^{\ell}\omega_{j}}^{2}_{L^{\infty}}  \right)\leq \varepsilon_{N}\,\Theta_{\ell}.
\end{align*}
Thus, $(\nabla^{\ell}S_{N}(p))_{N}$ is a Cauchy sequence in the finite-dimensional normed space $(T^{(0,\ell)}T_{p}M, |\cdot|_{g})$, since for $N'> N$, we have 
$\big|\nabla^{\ell}S_{N'}(p)-\nabla^{\ell}S_{N}(p)\big|_{g}\leq
\sum_{j=N+1}^{N'}|\alpha_{j}\nabla^{\ell}\omega_{j}(p)|_{g}\leq
\Theta_{\ell}\varepsilon_{N}$.\\
This space is a finite-dimensional normed vector space, it is a Hilbert space \cite[c.1.1]{HH}; hence, there exists $T_{\ell}(p) \in T^{(0,\ell)}T_{p}M$ such that the series $(\nabla^{\ell}S_{N}(p))_{N}$ converges pointwise to some $T_{\ell}(p)$, i.e., 
$T_{\ell}(p)=\lim_{N \to  \infty}\nabla^{\ell}S_{N}(p)=\nabla^{\ell}u(p)$, in fact, we have uniform convergence, that is
\begin{equation}\label{eq:lema cartan 1}
    \sup_{p\in M}\big|T_{\ell}(p)-\nabla^{\ell}S_{N}(p)\big|_{g}
\leq\Theta_{\ell}\,\varepsilon_{N}\xrightarrow[N\to\infty]{}0.
\end{equation}
Then $T_{\ell}$ is a continuos section in $T^{(0,\ell)}TM$.
%T_{\ell}:  M  \to T^{(0,\ell)}TM 
Taking $N=0$, we obtain
\begin{equation}\label{eq: Lema cartan, ecuacion de desigualdades}
    \big|T_{\ell}(p)\big|_{g}\leq
\sum_{j=1}^{\infty}\big|\alpha_{j}\nabla^{\ell}\omega_{j}(p)\big|_{g}
\leq\Theta_{\ell}\,\|u\|_{H^{s}(M)}, \quad \forall p \in M.
\end{equation}
Let us prove that the component functions $(S_{N})_{i_{1},...,i_{\ell}}$ of the $(0,\ell)$-tensors $\nabla^{\ell}S_{N}$ converge uniformly to the component functions $(T_{\ell})_{i_{1},...,i_{\ell}}$ of the $(0,\ell)$-tensor $T_{\ell}$. 
Let $(U,(x^{i}))$ be a chart of $M$, and consider a compact subset $D \subset U$. 
For $z \in U$ and $T \in T^{(0,\ell)}T_{z}M$, let us denote the Euclidean norm of the component functions in the chart by $\|T\|_{e}:=\big(\sum_{i_{1},\dots,i_{\ell}}T_{i_{1}\cdots i_{\ell}}^{2}\big)^{1/2}$. Let us define the unit sphere of $(0,\ell)$-tensors on $D$ as
\begin{equation}
    %\mc{S}_{D}^{(0,\ell)}(T_{z}M)
    \mc{S}=\{(z,T) \mid z \in D, T \in T^{(0,\ell)}(T_{z}M), \norm{T}_{e}=1\},
\end{equation}
which is compact, since it is homeomorphic to $D \times \mathbb{S}^{n^{\ell}-1}$ via a local trivialization of the form $\psi:\ \mathcal{S}\rightarrow D\times\mathbb{S}^{n^{\ell}-1}, \psi(z,T)=(z,(T_{i_1\cdots i_\ell})_{(i_1,\dots,i_\ell)}),$ 
% La expresión $(T_{i_1\cdots i_\ell})_{(i_1,\dots,i_\ell)}$ representa el multivector, en donde cada i_{k} tiene n valores posibles, lo que implica que al final ladimensio de este vector sera n^{\ell}.
where the expression $(T_{i_1\cdots i_\ell})_{(i_1,\dots,i_\ell)}$ represents the vector of components $T_{i_{1} \cdots i_{\ell}}$;
furthermore, the map $(z,T)\mapsto|T|_{g_{z}}$ is continuous and strictly positive on $\mc{S}$, therefore it reaches its extremes on $\mc{S}$.
Suppose $c_{D}=\min_{\mc{S}}|T|_{g_{z}}>0$, then for $T \in \mc{S}$ we have $
c_{D}\|T\|_{e}\leq |T|_{g_{z}}$, $\forall z\in D, T \in T^{(0,\ell)}T_{z}M.$ Combining this with \eqref{eq:lema cartan 1}, we obtain
\begin{equation}\label{eq:lema cartan, convergencia de componentes ejemplo}
    %\sup_{z\in D}\big\|T_{\ell}(z)-\nabla^{\ell}S_{N}(z)\big\|_{e} = 
    \sup_{z\in D} \left( \sum_{i_{1},\dots,i_{\ell}} | (T_{\ell})_{i_{1}\cdots i_{\ell}}(z) - (\nabla^{\ell}S_{N})_{i_{1}\cdots i_{\ell}}(z) |^{2} \right)^{\frac{1}{2}} 
    \leq c_{D}^{-1}\,\Theta_{\ell}\,\varepsilon_{N}\xrightarrow[N\to\infty]{}0.
\end{equation}
Therefore, each component of $\nabla^{\ell}S_{N}$ converges uniformly to the component of $T_{\ell}$ on $D$. Since
$\nabla^{\ell}S_{N}$ is smooth and 
the uniform limits of continuous functions are continuous, the components of $T_{\ell}$ are continuous on $D$, and thus on $U$.\\
%because every point in $U$ admits a compact neighborhood contained within $U$.
Let us prove that $u=T_{0}  \in C^{m}(M)$, where, by the preceding points, $u$ is a continuous function such that $S_{N} \to u$ uniformly on $M$. To this end, we shall work locally: given a chart $(U,(x^{i}))$, we will prove this claim on an open coordinate ball $B$ such that $\overline{B} \subset U$. 
Note that, by using the proposition \cite[p.4.14]{Lee2} for a $(0,r)$-tensor $T$ with $0\leq r \leq m$, we can obtain the relation
\begin{equation}\label{eq:relacion de componentes covariantes}
(\nabla T)_{i_{1}\cdots i_{r};i}
=\partial_{i}T_{i_{1}\cdots i_{r}}
-\sum_{a=1}^{r}\Gamma^{b}_{i\,i_{a}}\,
T_{i_{1}\cdots i_{a-1}\,b\,i_{a+1}\cdots i_{r}},
\end{equation}
where $\Gamma^{b}_{i\,i_{a}} \in C^{\infty}(U)$. In particular, $\nabla^{\ell}w$ is defined with continuous components for every $w \in C^{\ell}(U)$. 
Using \eqref{eq:relacion de componentes covariantes} and the formula in \cite[pg.6]{Hebey 2}, we can derive an expression relating partial derivatives to covariant derivatives; then for $\ell \in \{0,...,m \}$ we have the following formula
\begin{equation}\label{eq:derivadas covariantes parciales}
\partial_{i_{1}}\partial_{i_{2}}\cdots\partial_{i_{\ell}}w
=(\nabla^{\ell}w)_{i_{1}\cdots i_{\ell}}
+\sum_{r=1}^{\ell-1}
\Lambda^{(r)\,j_{1}\cdots j_{r}}_{i_{1}\cdots i_{\ell}}\,
(\nabla^{r}w)_{j_{1}\cdots j_{r}},
\end{equation}
where the coefficients $\Lambda^{(r)\,j_{1}\cdots j_{r}}_{i_{1}\cdots i_{\ell}}\in C^{\infty}(U)$ are the coefficients $\Lambda^{(r)\,j_{1}\cdots j_{r}}_{i_{1}\cdots i_{\ell}}$ are polynomials in the Christoffel symbols and their derivatives up to order $\ell - 1$.\\
Let us apply formula \eqref{eq:derivadas covariantes parciales} to $w=S_{N}$. 
Using equation \eqref{eq:lema cartan, convergencia de componentes ejemplo} with $D=\overline{B}$, the sequence of component functions $\big((\nabla^{\ell}S_{N})_{i_{1}\cdots i_{\ell}}\big)_{N}$ converges uniformly to $(T_{\ell})_{i_{1},...,i_{\ell}}$ on $\overline{B}$ for $0\leq \ell \leq m$.
Therefore, we have the following calculations
\begin{align}
   \lim_{N \to \infty} (\partial_{i_{1}}\partial_{i_{2}} \cdots \partial_{i_{\ell}}S_{N})&=\lim_{N \to \infty}(\nabla^{\ell}S_{N})_{i_{1}\cdots i_{\ell}} +  \sum_{r=1}^{\ell-1} \Lambda^{(r), j_1 \dots j_r}_{i_1 \dots i_\ell} \lim_{N \to \infty}(\nabla^r S_{N})_{j_1 \dots j_r} \nonumber \\
   &= (T_{\ell})_{i_{1} \cdots i_{\ell}} + \sum_{r=1}^{\ell-1} \Lambda^{(r), j_1 \dots j_r}_{i_1 \dots i_\ell} (T_{r})_{j_1 \dots j_r}=(\partial_{i_{1}}\partial_{i_{2}} \cdots \partial_{i_{\ell}} T_{0})=\partial_{i_{1}}\partial_{i_{2}} \cdots \partial_{i_{\ell}}u \nonumber
\end{align}
Since $(\partial_{i_{1}}\partial_{i_{2}} \cdots \partial_{i_{\ell}}S_{N})_{N}$ converges uniformly on $\overline{B}$, it follows that $\partial_{i_{1}}\partial_{i_{2}} \cdots \partial_{i_{\ell}}u \in C^{0}(\overline{B})$; as the index $(i_{1},i_{2},...,i_{\ell})$ is arbitrary, we conclude that $u \in C^{m}(B)$. 
Since each chart and open coordinate ball were arbitrary, it follows that $u \in C^{m}(M)$. 
Therefore, using equation \eqref{eq: Lema cartan, ecuacion de desigualdades} and summing over $\ell \in \{0,...,m\},$ we have
\begin{equation}\label{desigualdad funciones continuas}
\|u\|_{C^{m}}=\sum_{\ell=0}^{m}\sup_{p\in M}\big|\nabla^{\ell}u(p)\big|_{g}
\leq\left(\sum_{\ell=0}^{m}\Theta_{\ell}\right)\,\|u\|_{H^{s}(M)}<\infty.
\end{equation}
Therefore, $u\in C^{m}_{B}(M)$, and there exists a constant $K>0$ such that $\norm{u}_{C^{m}(M)} \leq K\norm{u}_{H^{s}(M)}$.
\end{proof}

We are now in conditions to state our existence result for regular solutions of the nonlocal equation $
f(\Delta_g)\psi - U(x,\psi) = 0$
on a complete, non-compact Cartan-Hadamard manifold.

\begin{theorem}\label{teorema finnal para el caso no compacto}
      Let $(M,g)$ be a Cartan–Hadamard $n$-manifold 
      satisfying $K_{p}(r) \to -\infty$ as $r \to \infty$ for any $p \in M$, and assume condition \eqref{condicion sombrero} in the case $\ell=0$ for some real $s>0$. 
      Suppose that $f$ satisfies \eqref{codicion de elipticidad de una fucnion real valuada para Riemannian manifolds},
      that $U: M \times \mathbb{R} \to \mathbb{R}$ is a Carathéodory non-linearity which is a $C^{1}$ in the second variable, and that there exist 
      constants $\alpha > 1$, $\beta \geq 1$, $C > 0$, and a 
      function $h \in L^2(M,g)$ such that the following inequalities hold
\begin{equation}\label{desigualdad numero 1 del caso completo}
|U(x, y) + y| \leq C \left(|h(x)| + |y|^\alpha \right), \quad 
    \left| \frac{\partial}{\partial y} \left( U(x, y) + y \right) \right| \leq C ( 1 + |y|^\beta ).
\end{equation}
Then, there exists $\varepsilon \in (0,1]$ and $0 < \rho_{\varepsilon} < 1$ such that whenever $\norm{h}_{L^2(M)} < \rho_{\varepsilon}$, there exists a solution $\psi \in \widetilde{H}_f(M,g)$ to the non-linear equation $f(\Delta_g)\psi - U(x, \psi) = 0.$
\end{theorem}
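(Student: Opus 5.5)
The plan is the same Schauder fixed-point scheme as in Theorem \ref{teorema del caso perturbado para Riemannian manifolds}, with $\delta=0$. The compactness that came from Rellich--Kondrachov on a compact manifold is replaced by the discreteness of the spectrum (Donnelly), and the $L^\infty$ control comes from Lemma \ref{lema Cartan-Hadamard} with $m=0$.

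\textbf{Reformulation and linear step.} Write the equation as $L_c\psi=\eta_\psi$ with $\eta_\psi=U(\cdot,\psi)+\psi$ and $L_c=f(\Delta_g)+I$ as in \eqref{operador caso Riemanniano}. Since the spectrum is discrete with an orthonormal eigenbasis, the proof of Lemma \ref{sistema lineal caso riemanniano} carries over verbatim: for each $\eta\in L^2(M)$ there is a unique $\widetilde u\in\widetilde H_f(M,g)$ with $\|\widetilde u\|_{\widetilde H_f}=\|\eta\|_{L^2}$.

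\textbf{Embeddings.} Take $r>s$, with $s$ the exponent in \eqref{condicion sombrero}. Condition \eqref{codicion de elipticidad de una fucnion real valuada para Riemannian manifolds} gives $\widetilde H_f\hookrightarrow H^{r}(M,g)$ by comparing the series in \eqref{espacio cartan hadamard, en funcion de f}. Lemma \ref{lema Cartan-Hadamard} with $\ell=0$ gives $H^s(M)\hookrightarrow C^0_B(M)\subset L^\infty(M)$. The embedding $H^s\hookrightarrow L^2$ is trivial. We cannot use $H^s\hookrightarrow L^{2\alpha}$ via Sobolev on a noncompact manifold, so instead we interpolate:
\[
\||u|^\alpha\|_{L^2}\le \|u\|_{L^\infty}^{\alpha-1}\|u\|_{L^2}\le K^{\alpha-1}\|u\|_{H^s}^{\alpha}.
\]
Combined with \eqref{desigualdad numero 1 del caso completo}, this yields
\[
\|\eta_u\|_{L^2}\le C'\bigl(\|h\|_{L^2}+\|u\|_{H^s}^\alpha\bigr).
\]

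\textbf{Invariance.} On $\mathcal K_\varepsilon=\{\|u\|_{H^s}\le\varepsilon\}$ define $\mathcal A(u)=\widetilde u$. Then
\[
\|\mathcal A(u)\|_{H^s}\le C_r C'\bigl(\|h\|_{L^2}+\varepsilon^\alpha\bigr).
\]
Because $\alpha>1$, we first choose $\varepsilon$ with $C_rC'\varepsilon^\alpha\le\varepsilon/2$, and then $\rho_\varepsilon$ so that $C_rC'\rho_\varepsilon\le\varepsilon/2$. This gives $\mathcal A(\mathcal K_\varepsilon)\subset\mathcal K_\varepsilon$.

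\textbf{Compactness.} This is the main obstacle. There is no Rellich theorem available here, so we use the spectrum directly. The image $\mathcal A(\mathcal K_\varepsilon)$ is bounded in $H^r$, and since $\lambda_j\to\infty$, the embedding $H^r\hookrightarrow H^s$ is compact. Indeed, it is a diagonal operator in the eigenbasis with multipliers $(1+\lambda_j)^{(s-r)/2}\to0$, hence a norm limit of finite-rank truncations. The tail of any element of the image beyond index $N$ has $H^s$ norm at most $(1+\lambda_N)^{(s-r)/2}$ times a uniform bound. A diagonal argument on the finitely many low modes then extracts an $H^s$-Cauchy subsequence. Alternatively, one can extract an $L^2$-Cauchy subsequence the same way and apply the interpolation inequality as in the compact case.

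\textbf{Continuity.} Let $u_k\to u$ in $H^s$. As before,
\[
\|\mathcal A(u_k)-\mathcal A(u)\|_{H^s}\le C_r\|\eta_{u_k}-\eta_u\|_{L^2}.
\]
Write $\eta_{u_k}-\eta_u$ via the mean value formula with a measurable selection $t(x)$ (\cite[t.18.13]{AliprantisBorder}) and apply the derivative bound in \eqref{desigualdad numero 1 del caso completo}. This gives
\[
\|\eta_{u_k}-\eta_u\|_{L^2}^2\le \widetilde D\,\bigl(1+2(K\varepsilon)^{2\beta}\bigr)\|u_k-u\|_{L^2}^2\le \widetilde D'\|u_k-u\|_{H^s}^2,
\]
using the uniform $L^\infty$ bound from Lemma \ref{lema Cartan-Hadamard}.

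Schauder's theorem then gives a fixed point $\psi=\mathcal A(\psi)\in\widetilde H_f(M,g)$, which solves $f(\Delta_g)\psi-U(x,\psi)=0$.
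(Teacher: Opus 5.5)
Your proposal is correct and follows essentially the same route as the paper. Both arguments rewrite the equation as $L_c\psi=\eta_\psi$ with $L_c$ an isometric isomorphism, take $L^\infty$ control from condition \eqref{condicion sombrero} with $\ell=0$, interpolate between $L^2$ and $L^\infty$ for the $|u|^\alpha$ term, get compactness from the diagonal multipliers tending to zero because $\lambda_j\to\infty$, prove continuity by the mean value formula, and close with Schauder. The only differences are cosmetic: you factor the compact embedding as $\widetilde H_f\hookrightarrow H^r\hookrightarrow H^s$ where the paper uses $\widetilde H_f\hookrightarrow H^s$ directly, and you apply the compact-map version of Schauder on the ball $\mathcal K_\varepsilon$ where the paper passes to the closed convex hull of $\mathcal A(\mathcal G_\varepsilon)$.
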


\begin{proof}   
Using the operator $L_{c}=f(\Delta_{g}) + Id$ defined in (\ref{operador caso Riemanniano}), solving the nonlinear equation $f(\Delta_g)\phi - U(x, \phi) = 0$ is equivalent to solving the linear problem
\begin{equation}\label{sistema lineal caso cartan-hadamard}
    L_{c}u=R(\cdot, u)
\end{equation}
where $R(\cdot,u)=u + U(\cdot, u)$.
Let us consider the following remarks.
First, the ellipticity condition \eqref{codicion de elipticidad de una fucnion real valuada para Riemannian manifolds} implies the existence of a $k_{0} \in \mb{Z}_{>0}$ such that $k_{0}>s$ and $C_{0} \in (0,1]$ such that
\begin{equation}\label{desigualdad elitica original implica desigualdad elitica caso cartan-hadamard}
 1+f(\lambda_{j}) \geq C_{0}(1+\lambda_{j})^{k_{0}/2}, \quad j \in \mb{N}.   
\end{equation}
Indeed, by $\eqref{codicion de elipticidad de una fucnion real valuada para Riemannian manifolds}$ we have that $1+f(\lambda_{j}) \geq C_{k}(1+\lambda_{j}^{2})^{k/2} \geq C_{k}2^{-k/2}(1+\lambda_{j})^{k}$, then, using that $0\leq (1-\lambda)^{2}$ for all $\lambda\geq 0$, that is, $1+\lambda^{2} \geq \frac{1}{2}(1+\lambda)^{2}$. 
Hence \eqref{desigualdad elitica original implica desigualdad elitica caso cartan-hadamard} is satisfied by taking $k_{0}=2k>s$ and $C_{0}=\min\{1,\,C_{k}2^{-k/2}\}$.
In particular, since the ellipticity condition \eqref{codicion de elipticidad de una fucnion real valuada para Riemannian manifolds} is assumed for every integer $k$, then \eqref{desigualdad elitica original implica desigualdad elitica caso cartan-hadamard} holds for every fixed $k_{0}>s$.
 Second, the operator $L_{c}$ defined in \eqref{operador caso Riemanniano} by  $L_{c}=f(\Delta_{g})+Id$ acts as $L_{c}u=\sum_{j}(1+f(\lambda_{j}))u_{j}\omega_{j}$ for
$u\in\widetilde{H}_{f}(M,g)$. This series converges in $L^{2}(M,g)$ by the definition of $\widetilde{H}_{f}(M,g)$. 
By Parseval's identity,
$$\|L_{c}u\|_{L^{2}(M)}^{2}
=\sum_{j=1}^{\infty}\big(1+f(\lambda_{j})\big)^{2}|u_{j}|^{2}
=\|u\|_{\widetilde{H}_{f}(M)}^{2},$$
Consequently, $L_{c}:\widetilde{H}_{f}(M,g)\to L^{2}(M,g)$ is a linear isometry. Moreover, it is bijective. Indeed, $L_{c}u=0$ implies that 
$(1+f(\lambda_{j}))u_{j}=0$ for all $j \in \mb{N}$, but since $1+f(\lambda_{j})\geq1>0$, we have $u_{j}=0$ for all $j \in \mb{N}$, which implies that $u=0$; hence is injective. Given $g \in L^{2}(M,g)$, consider  $\widetilde{u}=\sum_{j}(1+f(\lambda_{j}))^{-1}g_{j}\omega_{j}$; this converges in $L^{2}(M,g)$(its coefficients are dominated by $|g_{j}|$).\\
Note that $$\|\widetilde{u}\|_{\widetilde{H}_{f}(M)}^{2}=\sum_{j}|g_{j}|^{2}
=\|g\|_{L^{2}(M)}^{2}<\infty$$
and $L_{c}\widetilde{u}=\sum_{j}g_{j}\omega_{j}
=g$; therefore $L_{c}$ is surjective, hence it is an isometric isomorphism. This establishes an analogue, in the non-compact case, of the linear existence Theorem \ref{sistema lineal caso riemanniano}; that is, \emph{for every $g\in L^{2}(M,g)$ the linear problem
$L_{c}\widetilde{u}=g$ has a unique solution
$\widetilde{u}=L_{c}^{-1}g\in\widetilde{H}_{f}(M,g)$, and
$\|\widetilde{u}\|_{\widetilde{H}_{f}(M)}=\|g\|_{L^{2}(M)}$.}\label{problema lineal caso no compacto}
\begin{claim}\label{claim teorema caso cartan hadamard}
Let $(M,g)$ be a Cartan–Hadamard $n$-manifold 
      satisfying $K_{p}(r) \to -\infty$ as $r \to \infty$ for any $p \in M$, and assume condition \eqref{condicion sombrero} for some real $s>0$. 
    Suppose that $f$ satisfies the elliptic condition \eqref{codicion de elipticidad de una fucnion real valuada para Riemannian manifolds}. Then the following embedding
\begin{equation}\label{embedding caso Cartan-hadamard}
        \widetilde{H}_{f}(M,g)\hookrightarrow H^{s}(M,g) \hookrightarrow L^{\infty}(M,g)
    \end{equation}
is compact. 
\end{claim}

\begin{proofofclaim}  
We prove the first part of the embedding. 
Set $d_{j}:=(1+\lambda_{j})^{s/2}(1+f(\lambda_{j}))^{-1}>0$. By
\eqref{desigualdad elitica original implica desigualdad elitica caso cartan-hadamard}, we have that $d_{j}\leq C_{0}^{-1}\,(1+\lambda_{j})^{(s-k_{0})/2}$ 
and since $k_{0}>s$, $1+\lambda_{j}\geq1$ and $\lambda_{j}\to\infty$, we obtain $d_{j}\leq C_{0}^{-1}$ and the $d_{j} \to 0$ as
$j\to\infty$. For $u\in\widetilde{H}_{f}(M,g)$,
\[
\|u\|_{H^{s}(M)}^{2}
=\sum_{j=1}^{\infty}(1+\lambda_{j})^{s}|u_{j}|^{2}
=\sum_{j=1}^{\infty}d_{j}^{2}\,\big(1+f(\lambda_{j})\big)^{2}|u_{j}|^{2}
\;\leq\;C_{0}^{-2}\,\|u\|_{\widetilde{H}_{f}(M)}^{2},
\]
so $\iota$ is well defined, linear, and
$\|\iota\|_{\mathscr{L}(\widetilde{H}_{f},H^{s})}\leq C_{0}^{-1}$.
We will use an approximation by finite-rank linear operators to prove that $\iota$ is a compact operator. For each $N \in \mb{N}$ let us define the operators $\iota_{N} : \widetilde{H}_{f}(M,g) \hookrightarrow H^{s}(M,g)$ as $\iota_{N}u:=\sum_{j=1}^{N}u_{j}\omega_{j}$.
Each of these is linear, and of finite rank, since its range lies in $\operatorname{span}\{\omega_{1},\dots,\omega_{N}\}$, and is bounded, by the same previous estimate restricted to $j \leq N$.
Moreover
\begin{align*}
\norm{\iota - \iota_{N}}^{2}_{\mathscr{L}(\widetilde{H}_{f}, H^{s})}&=\sup_{\substack{u \not\equiv 0,~u \in \widetilde{H}_{f}(M)}}\frac{\|(\iota-\iota_{N})u\|_{H^{s}(M)}^{2}}{\|u\|^{2}_{\widetilde{H}_{f}(M)}}=\frac{\sum_{j>N}d_{j}^{2}\,\big(1+f(\lambda_{j})\big)^{2}|u_{j}|^{2}}{\|u\|^{2}_{\widetilde{H}_{f}(M)}} \leq \left(\,\sup_{j>N}d_{j}\,\right)^{2}
\end{align*}
whence
$\|\iota-\iota_{N}\|_{\mathscr{L}(\widetilde{H}_{f},H^{s})}\leq
\sup_{j>N}d_{j}$. Since $d_{j} \to 0$ as $j \to \infty$, for a given $\varepsilon>0$, we can choose $N_{0} \in \mb{N}$ with $d_{j} < \varepsilon, \forall j>N_{0}$; then $\sup_{j>N}d_{j} \leq \varepsilon$ for every $N\geq N_{0}$. In fact $\sup_{j>N}d_{j} \to 0$ as $N \to \infty$.
Then by \cite[c.6.2]{Brezis}, the embedding $\widetilde{H}_{f}(M,g)\hookrightarrow H^{s}(M,g)$ is compact.\\
Following the previous procedure, we prove that the inclusion $\iota: H^{s}(M) \hookrightarrow L^{\infty}(M)$ is a well defined linear operator. 
Let $u \in H^{s}(M)$ be arbitrary, then there exists $\phi \in L^{2}(M)$ such that $u=(I+\Delta_{g})^{-s/2}\phi$. Let us define the mapping $\kappa_{\phi}: M \to \mathbb{R}$ determined by $x \overset{\kappa_{\phi}}{\longmapsto} (I+\Delta_{g})^{-s/2}\phi(x)$, using the constant \eqref{eq: lema cartan-hadamard, constante importante} and Parseval's identity, we seek the behavior of $|\kappa_\phi(x)|$. In fact
\begin{align*}
    | \kappa_{\phi}(x) | 
&\leq \sum_{j=1}^{\infty} |(1+\lambda_{j})^{-s/2}\omega_{j}(x)| \cdot |\langle  \phi,\omega_{j}\rangle_{L^{2}(M)}|\leq \left( \sum_{j=1}^{\infty}(1+\lambda_{j})^{-s}|\omega_{j}(x)|^{2}   \right)^{\frac{1}{2}} \left(\sum_{j=1}^{\infty}|\langle  \phi , \omega_{j}  \rangle|^{2}    \right)^{\frac{1}{2}}\\
&= \norm{\phi}_{L^{2}(M)}\left( \sum_{j \in J}(1+\lambda_{j})^{-s}|\omega_{j}(x)|^{2} + \sum_{j \in (\mathbb{N} \setminus J)}(1+\lambda_{j})^{-s}|\omega_{j}(x)|^{2}  \right)^{\frac{1}{2}} \\
&\leq \norm{\phi}_{L^{2}}\left(\sum_{j \in J}(1+\lambda_{j}^{2})^{-s}\norm{\omega_{j}}_{L^{\infty}(M)}^{2}    +   \sum_{j \in (\mathbb{N} \setminus J)}(1+\lambda_{j})^{-s}\norm{\omega_{j}}_{L^{\infty}(M)}^{2}\right)^{\frac{1}{2}} \leq  \Theta_{0} \|\phi\|_{L^2(M)}.
\end{align*}
Therefore, $\norm{\kappa_{\phi}}_{L^{\infty}(M)} \leq \Theta_{0} \|\phi\|_{L^2(M)} < \infty$; hence, $(I+\Delta_{g})^{-s/2}\phi \in L^{\infty}(M)$.
Indeed, since that the function $u \in H^{s}(M)$ was arbitrary, we have that $H^{s}(M) \subset L^{\infty}(M)$, which implies that $\iota: H^{s}(M) \hookrightarrow L^{\infty}(M)$ is a well-defined and continuous linear map by \eqref{desigualdad funciones continuas}. 
Let us define the operator $\mc{K}: \iota \circ (I+\Delta_{g})^{-s/2}: L^{2}(M) \to L^{\infty}(M)$. By the previous point, this operator is linear and continuous. Let us prove that it is compact; for this, we shall use the finite-rank operator approximation Theorem \cite[c.6.2]{Brezis}. For each $N \in \mathbb{N}$, we define $T_{N}: L^{2}(M) \to L^{\infty}(M)$ by
\begin{equation}
  T_N \phi = \sum_{1\leq j  \leq N} (1 + \lambda_j)^{-s/2} \langle \phi, \omega_j \rangle \omega_j.  
\end{equation}
Regarding the operator norm $\|\cdot\|_{\mathscr{L}(L^{2}(M), L^{\infty}(M))}$, usually denoted as $\|\cdot\|_{\mathscr{L}(L^{2}, L^{\infty})}$, we have the following.
Each $T_{N}$ has finite rank since it satisfies $\text{Im}(T_{N}) \subseteq \text{span}\{\omega_{1},...,\omega_{N}\}$ and is bounded because
\begin{align*}
|T_{N}\phi(x)| &\leq \sum_{1\leq j \leq N}|(1+\lambda_{j})^{-s/2}\omega_{j}(x)||\langle    \phi ,  \omega_{j} \rangle| \leq \Theta_{0}\norm{\phi}_{L^{2}M)} \implies \frac{|T_{N}\phi(x)|}{\|\phi\|_{L^{2}(M)}} \leq \Theta_{0}
\end{align*}
which implies that $\|T_{N}\|_{\mathscr{L}(L^{2},L^{\infty})} \leq \Theta_{0}$. In fact
\begin{align*}
    \|\mc{K} - T_N \|_{\mathscr{L}(L^{2}, L^{\infty})} &= \sup_{\substack{\phi \not\equiv 0\\ \phi \in L^{2}(M)}} \frac{\norm{\mc{K}\phi - T_{N}\phi}_{L^{\infty}(M) }}{ \norm{\phi}_{L^{2}(M) }}\leq \left(\sum_{j>N}(1+\lambda_{j})^{-s}\norm{\omega_{j}}_{L^{\infty}(M)}^{2}   \right)^{1/2}.
\end{align*}
By the condition \eqref{condicion sombrero}, the preceding series converges, which implies that their tail converges to zero, i.e., $\|\mc{K} - T_N \|_{\mathscr{L}(L^{2}, L^{\infty})} \to 0$. 
Since $\mathcal{K}$ is the uniform limit of finite-rank operators, $\mathcal{K}: L^2(M) \to L^\infty(M)$ is a compact operator.
Let $(u_k)$ be a bounded sequence in $H^s(M)$. There exists a sequence $(g_{k}) \subset L^{2}(M)$ such that $u_k = (I + \Delta_g)^{-s/2} g_k$, moreover the sequence $(g_{k})$ is bounded in $L^{2}(M)$ because $\|g_k\|_{L^2} = \|u_k\|_{H^s} \leq C$.
By compactness of $\mc{K}$ there exists  $(g_{k_i}) \subset (g_{k})$ in $L^{2}(M)$ such that $u_{k_{i}}=(I+\Delta_{g})^{-s/2}g_{k_{i}} \subset \mc{K}(g_{k})$. \\
In fact there exists a subsequence $(u_{k_{i}}) \subset (u_{k})$ that converges in $L^{\infty}(M)$. Hence the embedding $H^s(M) \hookrightarrow L^\infty(M)$ is compact for a real $s>0$ which satisfies the condition \eqref{condicion sombrero}.
\end{proofofclaim}
\justify
\vspace{-0.12cm}
Previously we define the function $R(\cdot,u)=u + U(\cdot, u)$, this function generated the Nemytskii operator \eqref{definicion del operador de Nemytskii} by  $\mc{R}: \mb{R}^{M} \to \mb{R}^{\mb{R}}$ defined for $u \in H^{s}(M,g)$ as $\mc{R}(u)(x):=R(x,u(x)), x \in M$. Now we prove $\mc{R}$ is measurable. 
In fact, since the nonlinearity $U$ is a Carathéodory function, it also follows that $R$ is a Carathéodory function, hence for each $y \in \mb{R}$, $R(\cdot,y)=y+U(\cdot,y)$ is measurable, and for each fixed $x \in M$, $R(x,\cdot) \in C^{1}(\mb{R})$.
Let us prove that $R(x,w(x))$ is measurable for any measurable function $w:M \to \mb{R}$; indeed, we know that $x \mapsto R(x,w(x))$ is a $(\mc{B}(M), \mc{B}(\mb{R}))$-measurable function, since $w$ is measurable, there exists a sequence $(w_{k})_{k \in \mb{N}}$ of nonnegative simple measurable functions on $\mc{B}(M)$ such that $w_{k} \to w$ pointwise on $M$ of the form $w_{k}=\sum_{1\leq i \leq p_{k}}c_{i}^{k}\chi_{A_{i}^{k}}$ with $(A_{i}^{k}) \in \mc{B}(M)$ pairwise disjoint for all $1\leq i \leq p_{k}$.
Therefore
\begin{align*}
    x \mapsto R(x,w_{k}(x))&=w_{k}(x) + U(x,w_{k}(x))=\sum_{1\leq i \leq p_{k}}c_{i}^{k}\chi_{A_{i}^{k}}(x) + U\left(x, \sum_{1\leq i \leq p_{k}}c_{i}^{k}\chi_{A_{i}^{k}}(x)\right)\\
    &=\sum_{1\leq i \leq p_{k}}c_{i}^{k}\chi_{A_{i}^{k}}(x) + U(x,c_{i}^{k})\chi_{A_{i}^{k}}(x)=\sum_{1 \leq i \leq p_{k}}R(x, c_{i}^{k})\chi_{A_{i}^{k}}(x)
\end{align*}
Since $R(x,c^{k}_{i})$ is measurable for any fixed $x \in M$, and $\chi_{A^{k}_{i}}$ is measurable for $A_{i}^{k} \in \mc{B}(M)$, the former expression is measurable. 
For any fixed $x \in M$, we have that $R(x,w_{k}(x)) \to R(x,w(x))$ pointwise by continuity. 
Since the limit of measurable functions is measurable, it follows that $R(x,w(x))$ is measurable for every measurable $w: M \to \mb{R}$. Therefore $\mc{R}(u)$ is measurable.\\
By Lemma \ref{lema Cartan-Hadamard}, if $u \in 
\widetilde{H}_{f}(M,g)$ then $u$ is a $C^{m}$-regular function on $M$.
Take $u \in H^{s}(M,g)$. Using inequality (\ref{desigualdad numero 1 del caso completo}) and the embedding (\ref{embedding caso Cartan-hadamard}), we show that $\mc{R}(u) \in L^{2}(M,g)$. \\
Indeed, for a constant $\tilde{C}>0$, we have
\begin{align*}
    \norm{\mc{R}(u)}_{L^{2}(M)}&= \left( \int_{M}|U(x,u(x)) + u(x)|^{2}dV_{g}\right)^{\frac{1}{2}} \leq \sqrt{C}\left(\int_{M} (|h(x)| + |u(x)|^{\alpha}  )^{2} dV_{g}\right)^{\frac{1}{2}}\\
    &\leq \sqrt{C}\left(\int_{M}(|h|^{2} + 2|h||u|^{\alpha} + |u|^{2 \alpha}    ) dV_{g}\right)^{\frac{1}{2}} \leq \tilde{C}\left( \int_{M}(|h(x)|^{2} + |u(x)|^{2\alpha}) dV_{g}  \right)^{\frac{1}{2}}
\end{align*}
 Note that $\| |u|^{\alpha}\|_{L^{2}(M)} = \| u\|^{\alpha}_{L^{2\alpha}(M)}$. 
 %and considering the measure space $(M,\mathcal{B}_{M},dV_g)$, where $\mc{B}_{M}$ is the Borel $\sigma$-algebra generated by the topology $\tau_{M}$ in $M$ (i.e., $\mathcal{B}_{M} = \sigma(\tau_{M})$). 
Using the generalized  H\"older interpolation inequality \cite[p.93]{Brezis}. We take an appropriate real $s>0$, such that the embedding \eqref{embedding caso Cartan-hadamard} is satisfied. 
Taking any real $r$ 
such that $2 \leq r \leq \infty$ and $\theta \in (0,1)$ with $1/r=\theta/2$, we have that $u \in L^{2}(M) \cap L^{\infty}(M)$.\\
Then the following inequality works
\begin{equation}\label{Cartan-hadamard: desigualdad de interpolacion}
    \| u \|_{L^{r}(M)} \leq \|u \|^{\theta}_{L^{2}(M)} \|u\|_{L^{\infty}(M)}^{1-\theta}.
\end{equation} 
Considering $r=2 \alpha$ with $\alpha>1$, with $\theta=1/\alpha \in (0,1)$ and inequality \eqref{Cartan-hadamard: desigualdad de interpolacion}; using the embeddings $H^{s}(M) \hookrightarrow L^{2}(M)$ and the claim \eqref{claim teorema caso cartan hadamard} for a suitable real $s>0$ which satisfies \eqref{condicion sombrero}, there exists a constant  $C_{I} > 0$ such that
\begin{equation}
\| u\|_{L^{2\alpha}(M)} \leq \|u \|^{1/\alpha}_{L^{2}(M)} \|u\|_{L^{\infty}(M)}^{1-1/\alpha} \leq C_{I}\|u \|^{1/\alpha}_{H^{s}(M)} \|u\|_{H^{s}(M)}^{1-1/\alpha}.
\end{equation}
Indeed $H^{s}(M) \hookrightarrow L^{2\alpha}(M)$ with $\alpha>1$, which implies $\| |u|^{\alpha}\|_{L^{2}(M)}=\|u\|^{\alpha}_{L^{2\alpha}(M)} \leq (C_{I})^{\alpha}\|u\|^{\alpha}_{H^{s}(M)}$.\\
Then, for some constant $K>0$, we have the following computations
\begin{align*}
    \norm{\mc{R}(u)}_{L^{2}(M)}
    &\leq K\left( \norm{h}_{L^{2}(M)}  + \norm{|u|^{\alpha}}_{L^{2}(M)}\right) \leq  K\left( \norm{h}_{L^{2}(M)}  + (C_{I})^{\alpha}\norm{u}^{\alpha}_{H^{s}(M)}\right) < \infty
\end{align*}
which implies that $\mc{R}(u) \in L^{2}(M)$. In particular $\mc{R}: H^{s}(M,g) \to L^{2}(M,g)$ is well defined. Define
\begin{equation}
\mathcal{G}_{\varepsilon}=\{u \in H^{s}(M) \mid \norm{u}_{H^{s}(M)} \leq \varepsilon \},
\end{equation}
where $\varepsilon>0$ is arbitrary. Note that $\mathcal{G}_{\varepsilon}$ is closed, bounded, convex and nonempty in $H^{s}(M)$.\\
Since $L_{c}$ is an isometric isomorphism, we can express the solution of the linear problem as $\widetilde{u}=L_{c}^{-1}g$, where the inverse operator is defined as $L_{c}^{-1}u=\sum_{j}(1+f(\lambda_{j}))^{-1}u_{j}\omega_{j}$.
Consider the following composition scheme
\begin{equation*}
    \mc{G}_{\varepsilon} \xrightarrow[]{\mc{R}|_{\mc{G}_{\varepsilon}}} L^{2}(M,g) \xrightarrow[]{L_{c}^{-1}} \widetilde{H}_{f}(M,g) \xrightarrow[]{\iota} H^{s}(M,g)
\end{equation*}
Let us define the following map
\begin{equation}\label{operador caso perturbado riemanniano 1}
    \mathcal{A}: \mathcal{G}_{\varepsilon} \to H^{s}(M), \quad \mathcal{A}(u)= \iota \circ L_{c}^{-1} \circ \mc{R}|_{\mc{G}_{\varepsilon}}(u) =\widetilde{u},
\end{equation}
where $\widetilde{u}$ is the unique solution of the linear problem $L_{c}\widetilde{u}=\mc{R}(u), \forall u \in H^{s}(M,g)$.
Note that $\mc{A}$ is well defined.
Let us prove that there exists $\varepsilon \in (0,1]$ such that $\mc{A}: \mc{G}_{\varepsilon} \to \mc{G}_{\varepsilon}$ is well defined.
Let $u \in \mc{G}_{\varepsilon}$; using the claim \eqref{claim teorema caso cartan hadamard}, we have that $\|\iota\|_{\mathscr{L}(\widetilde{H}_{f},H^{s})}\leq C_{0}^{-1}$.
Consider the values
 \begin{equation*}
    \varepsilon:= \min \left\{1, \left(\frac{C_{0}}{2K(C_{I})^{\alpha}}\right)^{\frac{1}{\alpha-1}} \right\} \in (0,1], \quad \rho_{\varepsilon}:=\frac{\varepsilon C_{0}}{2K}.
 \end{equation*}
using these values, we have the following computations
\begin{align*}\label{caso cartan-hadamard, operador acotado}
    \norm{\mathcal{A}(u)}_{H^{s}(M)} &=\norm{\iota \circ L_{c}^{-1} \circ \mc{R}|_{\mc{G}_{\varepsilon}}(u)}_{H^{s}(M)} \leq \|\iota\|_{\mathscr{L}(\widetilde{H}_{f},H^{s})}\norm{L_{c}^{-1} \circ \mc{R}|_{\mc{G}_{\varepsilon}}(u)}_{H^{s}(M)}\\
    &\leq C_{0}^{-1}\norm{L_{c}^{-1} \circ \mc{R}|_{\mc{G}_{\varepsilon}}(u)}_{\widetilde{H}_{f}(M)} = C_{0}^{-1}\norm{\mc{R}(u)}_{L^{2}(M)}\\
    &\leq C_{0}^{-1}K\left( \norm{h}_{L^{2}(M)}  + (C_{I})^{\alpha}\norm{u}^{\alpha}_{H^{s}(M)}\right) \leq C_{0}^{-1}K\norm{h}_{L^{2}(M)} + C_{0}^{-1}K(C_{I})^{\alpha}\varepsilon^{\alpha}\\
&=\underbrace{C_{0}^{-1}K\norm{h}_{L^{2}(M)}}_{\varepsilon/2} + \underbrace{C_{0}^{-1}K(C_{I})^{\alpha}\varepsilon^{\alpha-1}}_{\varepsilon/2}\varepsilon \leq \varepsilon
\end{align*}
where $\|h\|_{L^{2}(M)} < \rho_{\varepsilon}$. Hence $\mc{A}(\mc{G}_{\varepsilon}) \subseteq \mc{G}_{\varepsilon}$.
We now prove that $\mathcal{A}$ is continuous.
Let $(u_{n}) 
\subset \mathcal{G}_{\varepsilon}$ be bounded with $u_{n} \to u$ in $H^{s}(M)$. Using the linearity of the system $L_{c}u=\mc{R}(u)$ we have the following computations
\begin{align*}
 \norm{\mathcal{A}(u_{n}) - \mathcal{A}(u) }^{2}_{H^{s}(M)} &= \norm{\widetilde{u}_{n} - \widetilde{u} }_{H^{s}(M)}^{2} \leq \norm{\widetilde{u}_{n} - \widetilde{u}}_{\widetilde{H}_{f}(M)}^{2} = \norm{\mc{R}(u_{n}) - \mc{R}(u)}^{2}_{L^{2}(M)}\\
 &=\int_{M}|R(x,u_{n})  - R(x,u)|^{2}dV_{g} = \int_{M}\left| \int_{0}^{1}\frac{d}{dt}R(x,tu_{n} + (1-t)u )dt  \right|^{2}dV_{g} \\
 &= \int_{M}|u_{n}(x)-u(x)|^{2}\left|  \int_{0}^{1}\frac{\partial}{\partial y}R(x,tu_{n} + (1-t)u )dt    \right|^{2}dV_{g}
\end{align*}
% Notese que el operador Q, tendra esta forma: u \mapsto Q(\cdot, u(\cdot)), y es u operador de H^{s}(M) \to L^{2}(M).
We define the function $i_{x}: [0,1] \to \mathbb{R}, i_{x}(t)=\frac{\partial}{\partial y}R(x, tu_{n}(x) 
   +  (1-t)u(x))$ with $x \in M$, since $U(\cdot, u)$ is $C^{1}$ in the second variable, $i_{x}(t)$ is a well defined Carathéodory function(i.e., measurable with respect to $x$ and continuous with respect to $t$).
%\begin{definition}A function \(W:\Omega\times\mathbb{R}^{N}\to\mathbb{R}\cup\{+\infty\}\) is \emph{Carathéodory} if \(x\mapsto W(x,\xi)\) is Lebesgue-measurable for every \(\xi\in\mathbb{R}^{N}\) and \(\xi\mapsto W(x,\xi)\) is continuous for almost every \(x\in\Omega\). In this case, \(x\mapsto W(x,u(x))\) is Lebesgue-measurable whenever \(u:\Omega\to\mathbb{R}^{N}\) is Lebesgue-measurable.\end{definition}
   Note that
\begin{equation*}
    \Big|\int_{0}^{1}i_{x}(t)dt \Big| \leq \int_{0}^{1} |i_{x}(t)|dt \leq \int_{0}^{1}\left( \sup_{t \in [0,1]}|i_{x}(t)|  \right)dt = \sup_{t \in [0,1]}|i_{x}(t)|
\end{equation*}
Since $i_{x}(t)$ is continuous in a compact set, the maximum is attained for each $x \in M$, i.e., for each fixed $x_{0} \in M$,
there exists $t_{0} \in [0,1]$ such that $i_{x_{0}}$ attains the supremum at $t_{0}$. 
In fact there exists a Borel measurable function $t(\cdot): M \to [0,1]$ by \cite[t.18.13]{AliprantisBorder}
such that for each $x^{\prime} \in M$, $t(x^{\prime})$ 
satisfies $\sup_{t \in [0,1]}|i_{x^{\prime}}(t)|=|i_{x^{\prime}}(t(x^{\prime}))|$. Therefore using (\ref{desigualdad numero 1 del caso completo}), for constants $ \widetilde{C}, \widehat{C}> 0$, and that $0 \leq t(x) \leq 1$, we get 
\begin{align*}
  \norm{\mathcal{A}(u_{n}) - \mathcal{A}(u)}^{2}_{H^{s}(M)}   
  &\leq C\int_{M}|u_{n}(x) - u(x)|^{2} \left( \left| 1 + |t(x)u_{n}(x) + (1-t(x))u(x)|^{\beta} \right| \right)^{2}dV_{g}\\
  &\leq \widetilde{C}\int_{M}|u_{n}(x) - u(x)|^{2} \left( 1 + |t(x)u_{n}(x) + (1-t(x))u(x)|^{2\beta } \right)dV_{g} \\
  &\leq \widehat{C}\int_{M}|u_{n}(x) - u(x)|^{2} \left( 1 + |t(x)u_{n}(x)|^{2\beta} + |(1-t(x))u(x)|^{2\beta} \right)dV_{g} \\ 
  &\leq \widehat{C}\int_{M}|u_{n}(x) - u(x)|^{2}\left(  1 + |u_{n}(x)|^{2 \beta} + |u(x)|^{2 \beta}  \right)dV_{g}
\end{align*}
Using the embedding (\ref{embedding caso Cartan-hadamard}), there exists a constant 
$K>2\max\left\{ \|u_n\|_{L^{\infty}(M)}^{2\beta}, \|u\|_{L^{\infty}(M)}^{2\beta} \right\}$.\\
Then we have the following computations
\begin{align*}
 \norm{\mathcal{A}(u_{n}) - \mathcal{A}(u) }^{2}_{H^{s}(M)} &\leq \widehat{C} \int_{M}|u_{n}(x) - u(x)|^{2}\left(  1
 + |u_{n}(x)|^{2 \beta} + |u(x)|^{2 \beta}  \right)dV_{g}  \\
 &\leq \widehat{C} \left( \left(1+K\right)\norm{u_{n} - u}_{L^{2}(M)}^{2}\right) 
 \leq \widehat{C}\left( \left(1+K \right)\norm{u_{n} - u}_{H^{s}(M)}^{2} \right).
\end{align*}
Hence $\widetilde{u}_n \to \widetilde{u}$ in $H^{s}(M)$, so $\mathcal{A}$ is a continuous map. \\
Set $B:=\{L_{c}^{-1}\mathcal{R}(u)\ :\ u\in\mathcal{G}_{\varepsilon}\}$, this is a bounded subset 
in $\widetilde{H}_{f}(M,g)$; in fact
\begin{equation*}
    \norm{L_{c}^{-1}\mc{R}(u)}_{\widetilde{H}_{f}(M)} = \norm{\mc{R}(u)}_{L^{2}(M)} \leq K\left( \norm{h}_{L^{2}(M)}  + (C_{I})^{\alpha}\norm{u}^{\alpha}_{H^{s}(M)}\right) < \infty 
\end{equation*}
Since $\iota: \widetilde{H}_{f}(M,g) \hookrightarrow H^{s}(M,g)$ is compact by claim \eqref{claim teorema caso cartan hadamard}, we have that 
\begin{equation}
    \mc{A}(\mc{G}_{\varepsilon})=\iota \circ L_{c}^{-1} \circ \mc{R}(\mc{G}_{\varepsilon})=\iota(B)
\end{equation}
then $\mathcal{A}(\mc{G}_{\varepsilon})$ is relatively compact in $H^{s}(M,g)$. 
Define 
\begin{equation*}
\mc{C}:=\overline{\text{conv}}(\,\overline{\mc{A}(\mc{G}_{\varepsilon})}\,), \quad \text{conv}(\, \overline{\mc{A}(\mc{G}_{\varepsilon})}\,)=\bigcap_{ \{D \subseteq H^{s}(M) \, \mid\, \, \overline{\mc{A}(\mc{G}_{\varepsilon})} \,\subseteq D, D~\text{convex}\} }D,
\end{equation*}
in fact $\mc{C}$ is the closed convex hull of the closure of the image. 
Since
$\mathcal{A}(\mathcal{G}_{\varepsilon})\subseteq\mathcal{G}_{\varepsilon}$ and $\mathcal{G}_{\varepsilon}$ is closed and convex in $H^{s}(M,g)$, we have
$\mc{C}\subseteq\mathcal{G}_{\varepsilon}$, and consequently
$\mathcal{A}(\mc{C})\subseteq\mathcal{A}(\mathcal{G}_{\varepsilon})\subseteq \mc{C}$. Moreover since that $\mc{A}(\mc{G}_{\varepsilon})$ is relatively compact in $H^{s}(M,g)$ which is a Fréchet space, the closed convex hull of a relatively compact set is compact by Theorem \cite[t.3.20(c)]{Rudin1}. 
Then $\mc{C}$ is a nonempty(it contains $\mc{A}(0)$), compact, convex subset of $H^{s}(M,g)$, and $\mc{A}|_{\mc{C}}: \mc{C} \to \mc{C}$ is continuous. By Schauder's fixed point Theorem \cite[s.9.2.2, t.3]{Evans}, there exists
$\psi\in \mc{C}\subseteq\mathcal{G}_{\varepsilon}$ such that
$\mathcal{A}(\psi)=\psi$.
\end{proof}

\newpage

\subsection{Cartan-Hadamard Manifold Example}

\justify
Consider now an example of a Cartan–Hadamard Riemannian manifold $(M,g)$ satisfying that the sectional curvature $K_{p}(r) \to -\infty$ as $r \to \infty$, and condition (\ref{condicion sombrero}).

\begin{tcolorbox}[
    colback=gray!10,       % Color del fondo del contenido
    coltext=black,         % Color del texto del contenido
    colframe=red!8,      % Color del marco (igual al fondo para que no se note)
    boxrule=0pt,           % Grosor del borde (0pt para que no haya borde visible)
    sharp corners,         % Esquinas rectas
    title=,     % Título de la caja
    coltitle=black,        % Color del texto del título
    colbacktitle= gray!20,  % Color de fondo del título
    fonttitle=\mdseries   % Fuente en negrita para el título
]
\begin{example}\label{ejemplo de variedades Cartan-Hadamard}
      Given the manifold $(\mathbb{R}^{2}, g)$ where $g$ is defined as $g=dr^{2} + \varphi(r)^{2}d\Omega_{1}^{2}$ with $ \varphi: [0,\infty)\to [0,\infty), \varphi(r)=re^{r^{2}/2}$ and $d\Omega_{1}^{2}$ the measure on $\mathbb{S}^{1}$. The conditions of theorem \eqref{teorema finnal para el caso no compacto} are satisfied.
\end{example}
\end{tcolorbox}

\begin{proof}
Note that by definition $\mb{R}^{2}$ is a contractible space, hence it is simply connected. 
Since $\varphi(r)$ satisfies $\varphi(0)=0, \varphi^{\prime}(0)=1$, and $\varphi^{(2k)}(0)=0$ for all $k \in \mathbb{Z}_{\geq 0}$, then the metric $g$ is smooth \cite[p.20]{Petersen} avoiding any kind of singularity on the manifold $(\mb{R}^{2}, g)$, therefore the manifold is simply connected.
To save notation let us consider $M=\mathbb{R}^{2}$. Let us prove that this manifold is complete, for which we use the Hopf-Rinow Theorem \cite[t.6.19, c.6.20]{Lee2}. If we prove that there exists a $p \in M$ such that the exponential map $\exp_{p}=\exp|_{\mc{E}_{p}}$ is defined on all of $T_{p}M$ then $M$ is complete, where 
$\mc{E}_{p}=\mc{E}\cap T_{p}M$ and $\mc{E}=\{v \in TM \mid \gamma_{v} ~\text{is defined on an interval containing}~[0,1]\}$, with $\exp:\mc{E} \to M, \exp(v)=\gamma_{v}(1)$ and $\gamma_{v}$ is the maximal geodesic such that $\gamma(0)=p, \gamma^{\prime}(0)=v$.\\
Let us consider the curves that start from the origin and travel in a straight line outward, keeping their angles constant. In polar coordinates $(r,\theta)$, these smooth curves are parametrized by the radius $r$ as 
\begin{equation}\label{rayo geodesico origen}
    \gamma(t)=(t,\theta_{0}), \quad t\geq 0.
\end{equation}
Note that $\gamma^{\prime}(t)=\partial_{r}$, which is the radial coordinate vector, which for this manifold satisfies being a normal vector field, that is $\norm{\gamma^{\prime}(t)}_{g}^{2}=\langle \gamma^{\prime}(t) , \gamma^{\prime}(t)   \rangle_{g}=\langle \partial_{r}, \partial_{r}    \rangle_{g}=g(\partial_{r}, \partial_{r})=1$.\\
Let us prove that $\gamma$ is a geodesic with respect to the Levi-Civita connection $\nabla$, for this we must prove that $\nabla_{\gamma^{\prime}}\gamma^{\prime}=0, \forall t \in \text{dom}(\gamma)$, but that is equivalent to proving that in smooth coordinates $(x^{i})$, if we write $\gamma$ as $\gamma(t)=(x^{1}(t),...,x^{n}(t))$ then $\gamma$ is a geodesic if it satisfies the geodesic equation \cite[p.103]{Lee} given by 
\begin{equation}
    \ddot{\gamma}^{k} + \sum_{i,j} \Gamma^{k}_{ij} \dot{\gamma}^{i} \dot{\gamma}^{j}=0
\end{equation}
Since in our case $\dot{\gamma}(t)=(1,0)$, the previous equation reduces to verifying whether $\Gamma_{rr}^{r}(\dot{\gamma}^{r})^{2}=0$ holds, since $\nabla$ is the Levi-Civita connection,  the Christoffel symbol $\Gamma_{rr}^{r}$ is given by
\begin{equation}
\Gamma_{rr}^{r}=\frac{1}{2}g^{rr}\left( 2\partial_{r}g_{rr} - \partial_{r}g_{rr} \right)=\frac{1}{2}g^{rr}(\partial_{r}g_{rr})
\end{equation}
Using that $g_{rr}=1$, and $g^{rr}=1$, we have that $\Gamma_{rr}^{r}=0$, therefore $\gamma$ is a geodesic.
Note that each geodesic $\gamma$ satisfying $\gamma(0)=0$ is of the form \eqref{rayo geodesico origen}, hence for each $v \in T_{0}M$, there exists a maximal geodesic $\gamma_{v}$ such that $\gamma_{v}(0)=0$ and $\gamma^{\prime}_{v}(0)=v$ defined for all $t\geq0$ \cite[c.4.28]{Lee2}, which implies $[0,1] \subset \text{dom}(\gamma_{v})$. Therefore $T_{0}M \subset \mc{E}$, hence $T_{0}M \subset \mc{E} \cap T_{0}M=\mc{E}_{0} \subset T_{0}M$, and $\exp|_{\mc{E}_{0}}$ is defined on all of $T_{0}M$, indeed the manifold is complete.\\
Let us prove that the sectional curvature is negative at every point, and that for a fixed point $p \in M$ the expression $K_{p}(r)$ satisfies $K_{p}(r) \to -\infty, r \to \infty$. 
Let us consider a fixed $r_{0} \in \mb{R}^{+}$, such that $K_{p}(r_{0})=K(x_{0},\pi)$. 
Given a chart $(U,(x^{i}))$ with $(x^{1},x^{2})=(r,\theta)$ and $x_{0} \in U$, we have that $\mathcal{B}=\{\partial_{r}, \partial_{\theta}\}$ forms a basis of $T_{x_{0}}M$. Using \cite[p.121]{Petersen} we have
\begin{equation}
K(x_{0},\pi)=K(\partial_{r}, \partial_{\theta})=-\frac{\varphi^{\prime\prime}(r_{0})}{\varphi(r_{0})}
\end{equation}
therefore for each point $r \in \mb{R}^{+}$, since $\varphi(r)=re^{r^{2}/2}$, we have that 
\begin{equation}
    K_{p}(r)=-(3+r^{2}).
\end{equation}
This implies that $K\leq -3$ at every point, hence the manifold $(\mb{R}^{2},g)$ is a Cartan-Hadamard manifold that moreover satisfies $K_{p}(r) \to -\infty, r \to \infty$.\\
Considering the metric $g$, we have that $g_{rr}=1, g_{\theta\theta}=(\varphi(r))^{2}$, hence since its representative matrix is diagonal, its inverse elements are $g^{rr}=1, g^{\theta\theta}=(\varphi(r))^{-2}$, therefore the representative expression of the Laplacian will be 
\begin{equation}\label{ecuacion radial original}
    \Delta_{g}u=-\frac{1}{\varphi(r)}\frac{\partial}{\partial r}\left(   \varphi(r)\frac{\partial u}{\partial r} \right) - \frac{1}{\varphi(r)}\frac{\partial}{\partial \theta}\left( \frac{1}{\varphi(r)}\frac{\partial u}{\partial \theta}   \right).
\end{equation}

\begin{claim}\label{claim del ejemplo de cartan-hadamard}
 Let $\{\lambda_j\}_{j \in \mathbb{N}}$ be the spectrum of the Laplace--Beltrami operator $\Delta_g$ and  $J = \{ j \in \mb{N} \mid \lambda_j < 1 \}$. There exists an orthonormal basis $\{\omega_j\}$ of eigenfunctions of $\Delta_{g}$ of $L^2(M,g)$ such that $\| \omega_j \|_{L^\infty(M)} < \infty$ for all $j \in\mb{N}$ and, the following is satisfied
\begin{equation}\label{ecuacion sombrero ejemplo}
    \sum_{j \in J} (1 + \lambda_j^2)^{-s} \| \omega_j \|_{L^\infty(M)}^2
+ \sum_{j \in (\mathbb{N} \setminus J)} (1 + \lambda_j)^{-s} \| \omega_j \|_{L^\infty(M)}^2 < \infty, \quad s>3.
\end{equation} 
\end{claim}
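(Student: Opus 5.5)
Since $M$ is rotationally symmetric with $\Delta_g$ given by \eqref{ecuacion radial original}, I would separate variables in Fourier modes on $\mathbb{S}^1$ and reduce the spectral problem to a family of one-dimensional Sturm--Liouville problems.

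For each $k\in\mathbb{Z}$ write $u=v(r)e^{ik\theta}$, or use $\cos k\theta,\sin k\theta$ to keep everything real. Then $\Delta_g u=\lambda u$ becomes
\[
-\varphi^{-1}(\varphi v')'+k^2\varphi^{-2}v=\lambda v \quad\text{on } L^2((0,\infty),\varphi\,dr).
\]
The Liouville substitution $w=\varphi^{1/2}v$ converts this to a Schrödinger operator $-w''+V_k w$ on $L^2(0,\infty)$ with
\[
V_k=\frac{\varphi''}{2\varphi}-\frac{(\varphi')^2}{4\varphi^2}+\frac{k^2}{\varphi^2}.
\]
With $\varphi=re^{r^2/2}$ one gets $\varphi'/\varphi=1/r+r$ and $\varphi''/\varphi=3+r^2$, hence
\[
V_k(r)=\frac{r^2}{4}+1+\frac{k^2-1/4}{r^2}.
\]
This is a harmonic-oscillator-type potential with an inverse-square term. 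It is exactly solvable via Laguerre polynomials, much like the 2D isotropic oscillator. The eigenvalues should come out as
\[
\lambda_{k,m}=1+2m+|k|+1=2m+|k|+2, \qquad m\ge 0,
\]
to be checked by comparing with the $r^2/4$ oscillator. The eigenfunctions are
\[
v_{k,m}(r)=c_{k,m}\,r^{|k|}L_m^{(|k|)}(r^2/2)\,e^{-r^2/2}.
\]
One has to check that $v=\varphi^{-1/2}w$ gives exactly this: the factor $\varphi^{-1/2}$ contributes $e^{-r^2/4}$, and the oscillator eigenfunction contributes $e^{-r^2/4}r^{|k|+1/2}$. Completeness of $\{v_{k,m}e^{ik\theta}\}$ in $L^2(M,g)$ follows from completeness of the Laguerre functions for each $k$ together with Fourier completeness on $\mathbb{S}^1$. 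This gives the orthonormal basis, and in particular $\lambda_j\ge 2$, so $J=\varnothing$ and only the second sum in \eqref{ecuacion sombrero ejemplo} matters.

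Next I would bound $\|\omega_{k,m}\|_{L^\infty}$. The normalization is taken in $L^2(\varphi\,dr\,d\theta)$, and the weight $e^{r^2/2}$ exactly cancels half of the Gaussian. So $|\omega_{k,m}|=\varphi^{-1/2}\,|w_{k,m}|/\sqrt{2\pi}$, where $w_{k,m}$ is an $L^2(dr)$-normalized oscillator eigenfunction. Since $\varphi^{-1/2}=r^{-1/2}e^{-r^2/4}$ blows up only at $r=0$, where $w\sim r^{|k|+1/2}$, the product is bounded. For $k\neq 0$, $\omega$ vanishes at the origin; for $k=0$ it has a finite value there.

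I would aim for a polynomial bound $\|\omega_{k,m}\|_{L^\infty}^2\le C(1+m+|k|)^{a}$ with $a\le 1$. Near the origin I would use the explicit value $|\omega_{0,m}(0)|^2=c_{0,m}^2/(2\pi)$, which is $O(1)$ since $L_m^{(0)}(0)=1$ and the normalization constant is $\asymp 1$. Away from the origin I would use the classical uniform bounds for normalized Laguerre functions, for instance the Erdélyi--Szegő type estimate $\sup_x |x^{\alpha/2}e^{-x/2}\mathcal{L}_m^{\alpha}(x)|\le C$ for normalized $\mathcal{L}$. Together with the extra factor $e^{-r^2/4}r^{-1/2}$ this should yield boundedness uniformly in $(k,m)$, or at worst polynomial growth.

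Finally I would count eigenvalues. The multiplicity of $\lambda=N+2$ is at most the number of pairs $(k,m)$ with $2m+|k|=N$, which is $O(N)$. The sum is therefore bounded by
\[
\sum_{N\ge 0} C(N+1)(N+3)^{-s}(N+1)^{a},
\]
which converges for $s>2+a$. With $a\le 1$ this gives exactly the threshold $s>3$.

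The main obstacle is the uniform $L^\infty$ bound on the normalized Laguerre eigenfunctions near the origin and in the transition region. I would first try to cite sharp pointwise Laguerre asymptotics. If those do not give a clean power, I would fall back on a crude bound $\|\omega\|_\infty^2\lesssim\lambda$, obtained from Sobolev-type local estimates on bounded geometry near the origin plus Gaussian decay at infinity. That bound also gives convergence precisely for $s>3$.
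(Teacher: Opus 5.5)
Your reduction has a computational error that invalidates the rest of the argument. The angular term in the $k$-th sector is $k^2/\varphi^2 = k^2/(r^2e^{r^2})$, not $k^2/r^2$, so the correct potential is
\[
V_k(r)=1+\frac{r^2}{4}-\frac{1}{4r^2}+\frac{k^2}{r^2e^{r^2}},
\]
which is the paper's $V_\ell$. Only the sector $k=0$ is exactly solvable; there your Laguerre functions and $\lambda_{0,m}=2m+2$ agree with the paper. For $k\neq 0$ the operator is not the two-dimensional isotropic oscillator, so the eigenvalues $2m+|k|+2$ and the eigenfunctions $r^{|k|}L_m^{(|k|)}(r^2/2)e^{-r^2/2}$ are wrong.

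This is not cosmetic, because it changes the spectral asymptotics. The barrier $k^2e^{-r^2}/r^2$ is only felt for $r^2\lesssim 2\ln|k|$. A normalized bump supported in $[\sqrt{2\ln|k|},\sqrt{2\ln|k|}+1]$ therefore gives $\lambda_{k,0}\le C\ln|k|$. So the bottom of the $k$-th sector grows only logarithmically, the counting function $\#\{j:\lambda_j\le\lambda\}$ grows exponentially in $\lambda$, and your $O(N)$ multiplicity count is false. With this spectrum, neither a uniform bound $\|\omega_{k,m}\|_{L^\infty}\le C$ nor your fallback $\|\omega_{k,m}\|_{L^\infty}^2\lesssim\lambda_{k,m}$ gives convergence for any $s$. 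The resulting majorant already diverges on the sector ground states, since $\sum_{k\ge 2}(1+C\ln k)^{-s}=\infty$ for every $s$. That your threshold $s>3$ matches the statement is a coincidence.

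The missing ingredient is an $L^\infty$ bound that decays in the angular frequency. The paper proves $\|u_{\ell,k}\|_{L^\infty}^2\le 8(1+\lambda)/\ell$ as follows:
\begin{itemize}
\item Write $v=\varphi^{1/2}u$. The energy identity $\int v'^2+\int V_\ell v^2=\lambda$ gives $\|v\|_\infty^2\le 2\|v\|_2\|v'\|_2\le 2\sqrt{\lambda}$.
\item This bounds $u^2=v^2/\varphi$ on $[b,\infty)$, where $\varphi(b)=\ell/(4\sqrt{1+\lambda})$.
\item On $(0,b]$ the coefficient $Q=\ell^2/\varphi^2-\lambda$ is positive. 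Then $(\varphi u')'=Q\varphi u$ forces $u$ to have no zero and to be monotone there, so $\sup_{(0,b]}|u|=|u(b)|$.
\end{itemize}
This is paired with two lower bounds. First, $\lambda_{\ell,k}\ge 2k+\frac12$, by min--max comparison with the Neumann oscillator $-\frac{d^2}{dr^2}+\frac{r^2}{4}$ using $V_\ell\ge r^2/4$. Second, $\lambda_{\ell,k}\ge\frac{\ln\ell}{4}$, from $\inf V_\ell\ge\frac{\ln\ell}{4}$. Splitting $(1+\lambda_{\ell,k})^{1-s}$ symmetrically between the two bounds, the factor $1/\ell$ turns the divergent $\sum_\ell(\ln\ell)^{-p}$ into $\sum_{\ell\ge2}\frac{1}{\ell}\bigl(1+\frac{\ln\ell}{4}\bigr)^{(1-s)/2}$. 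Together with $\sum_k\bigl(2k+\frac32\bigr)^{(1-s)/2}$, both series converge for $s>3$.
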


\begin{proofofclaim}
 We first note that, although condition \eqref{condicion sombrero} is stated for all derivative orders $\ell=0,\dots,m$, the existence result we invoke (Theorem \ref{teorema finnal para el caso no compacto}) only requires the case $\ell=0$. Indeed, its proof relies only on the $L^\infty$ bounds of the eigenfunctions, without involving their covariant derivatives. Consequently, to verify that the present example satisfies the hypotheses of that Theorem \ref{teorema finnal para el caso no compacto}, it suffices to prove the convergence of the series in \eqref{ecuacion sombrero ejemplo}.\\
Using the Theorem \cite[p.360]{McKean}, we have that $\lambda_j \geq \frac{(2-1)^2 \cdot 3}{4} = \frac{3}{4}$ for all $j \in \mathbb{N}$, hence $ 0 \notin \sigma(\Delta_{g})$.
  Indeed for every constant $C>0$, the set $\{j \in \mb{N} \mid \lambda_{j} \leq C\}$ is finite, therefore $J$ is finite. Using the Theorem \cite[t.II.10]{ReedI} and the example \cite[p.160]{ReedII} we have that 
\begin{equation}\label{descomposicion del espacio L2}
L^{2}(\mb{R}^{2})=L^{2}(\mb{R}_{>0}, \varphi(r)dr) \otimes L^{2}(\mb{S}^{1}, d\Omega_{1}^{2})=\bigoplus_{\ell \geq 0}\mc{L}_{\ell}.
\end{equation}
Here $\mc{L}_{\ell}$ is defined as $\mc{L}_{\ell}=L^{2}(\mb{R}_{>0}, \varphi(r)dr) \otimes K_{\ell}$ with $K_{\ell}$ the $\ell$-th eigenspace of $\Delta_{\mb{S}^{1}}=-\frac{\partial^{2}}{\partial \theta^{2}}$ where $\sigma(\Delta_{\mb{S}^{1}})=\{\ell^{2} \mid \ell \in \mb{N}_{0}\}$, then $K_0 = \text{span}\{1\}$ and $K_\ell = \text{span}\{\cos(\ell\theta), \sin(\ell\theta)\}$. 
We know that the resolvent operator of $\Delta_{g}$ is compact by \cite[p.5.12]{Schmudgen}, denoting the self-adjoint operator $L_{\ell}:=\Delta_{g}\big|_{\mc{L}_{\ell}}$, we have that this inherits the compactness of the resolvent operator, hence $L_{\ell}$ has purely discrete spectrum $\sigma(L_\ell) = \{\lambda_{\ell, k} \mid k \in \mathbb{N}_0\}$ with $0 < \lambda_{\ell,0} \leq \lambda_{\ell,1} \leq \lambda_{\ell,2} \leq \cdots \to \infty$.\\
%$$\mathcal{L}_{\ell} =\begin{cases}\bigl\{\, f\in L^{2}(M) : f(r,\theta)=f_{0}(r),\ f_{0}\in L^{2}(\mathbb{R}^{+},\varphi\,dr) \,\bigr\}, & \ell = 0,\\[6pt]\bigl\{\, u(r)\cos(\ell\theta) + v(r)\sin(\ell\theta) : u,v\in L^{2}(\mathbb{R}^{+},\varphi\,dr) \,\bigr\}, & \ell \ge 1 .\end{cases}$$
We seek solutions to the radial equation \eqref{ecuacion radial original} of the form
\begin{equation}
    \omega_{\ell, k}(r,\theta)=u_{\ell,k}(r)Ce^{i\ell\theta}
\end{equation}
where $\ell=|m|$ with $m \in \mb{Z}$, which ensures that $\psi(\theta)=Ce^{i m \theta}$ has unique values in $[0,2\pi]$ and $C >0$ is a normalization constant.
%which ensures that $\psi(\theta)=Ce^{im \theta}$ has single-values on $[0,2\pi]$ \cite[p.4]{Gang} with $C>0$ a normalizing constant.
In fact, the solutions $\omega_{\ell,k}$ must satisfy the equation
\begin{equation}\label{ecuacion radial original2}
    -u_{\ell,k}'' - \left( \frac{1}{r} + r \right) u_{\ell,k}' + \left(\frac{\ell^2}{r^{2}e^{r^{2}}} - \lambda\right)u_{\ell,k} = 0.
\end{equation}
Since each $\omega_{\ell,k}$ must be orthonormal, the functions $u_{\ell,k}$ must satisfy the following condition 
\begin{align*}\label{condicion de normalizacion de soluciones}
       1 &= \left( \int_0^\infty |u_{\ell,k}(r)|^2 \, r e^{r^2/2} \, dr \right) \underbrace{\left( \int_0^{2\pi} |Ce^{i\ell \theta} |^2 \, d\theta \right)}_{=\,1} = \int_0^\infty |u_{\ell,k}(r)|^2 \, r e^{r^2/2} \, dr.
\end{align*}
Since the Laplace-Beltrami operator eigenfunctions $\omega_{\ell,k}$ in each sector are smooth \cite[c.7.3]{Grigoryan}, which implies that $u_{\ell,k}$ are also smooth. 
The equation \eqref{ecuacion radial original2}, is an equation of the form $u^{''}+p(r)u^{'}+q(r)u=0$ 
\cite{Teschl}
%\cite[(4.20)]{Teschl} 
with $p(r)=\frac{1}{r}+r$ and $q(r)=\lambda - \frac{\ell^{2}}{\varphi^{2}(r)}$. 
In \cite[p.117]{Teschl} these coefficients are assumed to be of the form $p(r)=\tfrac{1}{r}\sum_{i \geq 0}p_{i}r^{i}$, and $q(r)=\tfrac{1}{r^{2}}\sum_{i \geq 0}q_{i}r^{i}$, hence we can say that the coefficients $p_{0}$ and $q_{0}$ respectively satisfy the following limits
\begin{equation}
    rp(r) \xrightarrow[r \to 0]{}p_{0}, \quad r^{2}q(r) \xrightarrow[r \to 0]{}q_{0}
\end{equation}
hence $p_{0}=1$ and $q_{0}=-\ell^{2}$, therefore the \emph{indicial equation} \cite[(4.36)]{Teschl} given by $\alpha^{2}+(p_{0}-1)\alpha + q_{0}=0$ has the following \emph{characteristic exponents} given by $\alpha_{1,2}=\pm \ell$, hence $\Re(\alpha_{1}) \geq \Re(\alpha_{2})$. Therefore applying the theorem \cite[t.4.5]{Teschl} we have that the solutions $u$ of the radial equation \eqref{ecuacion radial original2} are of the form $u_{1}(r)=r^{\alpha_{1}}h_{1}(r)$, $u_{2}(r)=r^{\alpha_{2}}h_{2}(r) + c\ln(r)u_{1}(r)$
where the functions $h_{j}(r)$ are analytic near $r = 0$ and satisfy $h_{j}(0) = 1$. The constant $c \in \mb{C}$ might be zero unless $\alpha_{1}-\alpha_{2}= 0$. Therefore the asymptotic behavior of $u_{\ell,k}$ satisfies
\begin{equation}\label{desarrollo polinomico de las autofunciones de la ecuacion radial}
    u_{\ell,k}(r)=\mc{O}(r^{\ell}), \quad u^{\prime}_{\ell,k}(r)=\mc{O}(r^{\ell-1}), \quad u_{0,k}'(0) = 0.
\end{equation}
% Esto pues se hace un desarrollo de taylor de la funcion u_{0,k} y al final es una cuestion usual de la forma 
%$$u_{0,k}(r) = c_0 + c_2 r^2 + c_4 r^4 + \dots$$
% luego su derivada es $$u_{0,k}'(r) = 2c_2 r + 4c_4 r^3 + \dots$$.

\justify

For $\ell \geq 1$, let us apply the following change of variable $v = \varphi^{1/2} u$, which will allow us to transform the radial equation \eqref{ecuacion radial original2} into a \emph{Schrödinger operator} of the form 
\begin{equation}\label{Operador de Schrodinger}
-v'' + V_\ell v = \lambda v,
\qquad
V_\ell(r) = 1 + \scalebox{1.2}{$\frac{r^2}{4}$} - \scalebox{1.2}{$\frac{1}{4r^2}$} + \scalebox{1.2}{$\frac{\ell^2}{r^2 e^{r^2}}$},
\end{equation}
%Para eliminar el término asimétrico, definimos el cambio de variable $v = \varphi^{1/2} u$, lo que implica que $u = \varphi^{-1/2} v$. Aplicando la regla del producto y de la cadena, calculamos las derivadas de $u$ en términos de la nueva función $v$. La primera derivada resulta ser $u' = -\frac{1}{2} \varphi^{-3/2} \varphi' v + \varphi^{-1/2} v'$. Al derivar esta expresión nuevamente para obtener $u''$ y agrupar los términos resultantes, obtenemos:
%$$ u'' = \varphi^{-1/2} v'' - \varphi^{-3/2} \varphi' v' + \left( \frac{3}{4} \varphi^{-5/2} (\varphi')^2 - \frac{1}{2} \varphi^{-3/2} \varphi'' \right) v. $$

\begin{lemma}\label{lema potencial}
 For $\ell \geq 1$ and $r > 0$ it holds that $V_\ell(r) \geq \frac{r^2}{4}$. If $\ell \geq 2$, $\inf_{r > 0} V_\ell \geq \frac{\ln \ell}{4}$.
\end{lemma}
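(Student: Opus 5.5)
The plan is to substitute $t=r^{2}>0$ and reduce both assertions to elementary one-variable inequalities. In this variable
$$V_\ell(r)-\frac{r^{2}}{4}=1-\frac{1}{4t}+\frac{\ell^{2}e^{-t}}{t}=\frac{1}{t}\Bigl(t-\tfrac14+\ell^{2}e^{-t}\Bigr).$$

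\emph{First assertion.} Since $t>0$, the bound $V_\ell(r)\geq r^{2}/4$ is equivalent to $g_\ell(t):=t-\tfrac14+\ell^{2}e^{-t}\geq 0$. For $\ell\geq 1$ we have $\ell^{2}e^{-t}\geq e^{-t}$, so $g_\ell\geq g_1$. For $g_1$ itself, $g_1'(t)=1-e^{-t}\geq 0$ on $[0,\infty)$, hence $g_1$ is nondecreasing. Therefore $g_1(t)\geq g_1(0)=\tfrac34>0$. This gives $V_\ell(r)\geq r^{2}/4$ for all $r>0$ and every $\ell\geq1$, in fact with strict inequality.

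\emph{Second assertion.} For $\ell\geq2$ I will split according to the size of $t$ relative to $\ln\ell$.

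\textbf{Case $t\geq\ln\ell$.} The first assertion gives directly $V_\ell(r)\geq t/4\geq \frac{\ln\ell}{4}$.

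\textbf{Case $0<t<\ln\ell$.} Here $e^{-t}>1/\ell$, so $\ell^{2}e^{-t}>\ell$. Dropping the nonnegative terms $1$ and $t/4$ gives
$$V_\ell(r)\geq \frac{\ell^{2}e^{-t}-\tfrac14}{t}>\frac{\ell-\tfrac14}{\ln\ell}.$$
It then suffices to show $4\ell-1\geq(\ln\ell)^{2}$.
\begin{itemize}
\item The function $x\mapsto(\ln x)^{2}/x$ has derivative $\ln x\,(2-\ln x)/x^{2}$, so its maximum on $[1,\infty)$ is attained at $x=e^{2}$.
\item That maximum equals $4/e^{2}<1$, so $(\ln\ell)^{2}\leq\ell$.
\item Since $\ell\geq2$, we have $\ell\leq 4\ell-1$, which closes the case.
\end{itemize}

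Combining the two cases yields $\inf_{r>0}V_\ell\geq\frac{\ln\ell}{4}$.

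The only step needing care is the second case: the naive lower bound $V_\ell\geq 1$ fails to dominate $\frac{\ln\ell}{4}$ once $\ell>e^{4}$. This is why the term $\ell^{2}e^{-t}/t$ must be kept and combined with the elementary inequality $(\ln\ell)^{2}\leq\ell$.
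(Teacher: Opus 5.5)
Your proof is correct and follows the paper's route for the second assertion: the same split at $r^2=\ln\ell$, the same bound $(\ell-\tfrac14)/\ln\ell$ on the inner region, and the same reduction to $4\ell-1\geq(\ln\ell)^2$. The differences are cosmetic. You obtain the first assertion from the monotonicity of $t-\tfrac14+e^{-t}$ instead of the paper's split at $r=\tfrac12$, you reuse it on the outer region $r^2\geq\ln\ell$, and you prove the elementary inequality by maximizing $(\ln x)^2/x$ rather than differentiating $4x-1-(\ln x)^2$.
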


\begin{proofoflemma}
Let us consider the case $\ell \geq 1$, for $r \geq \frac{1}{2}$ we have $\frac{1}{4r^2} \leq 1$, hence $V_\ell \geq 1 + \frac{r^2}{4} - 1 = \frac{r^2}{4}$. For $0 < r < \frac{1}{2}$, since $\ell^2 e^{-r^2} \geq e^{-1/4} > \frac{1}{4}$, we have $(\ell^2 e^{-r^2} - 1/4)/r^2 > 0$ and therefore $V_\ell > 1 + \frac{r^2}{4}$. 
For the case $\ell \geq 2$, suppose $r^2 \geq \ln \ell$, hence $V_\ell \geq 1 + \frac{\ln \ell}{4} - \frac{1}{4 \ln \ell} \geq \frac{\ln \ell}{4}$, since $\ln \ell \geq \ln 2 > \frac{1}{4}$.

If $r^2 < \ln \ell$ we have $e^{r^2} < \ell$, which implies
$$
V_\ell \geq \frac{\ell^2 e^{-r^2} - \frac{1}{4}}{r^2} \geq \frac{\ell - \frac{1}{4}}{\ln \ell} \geq \frac{\ln \ell}{4},
$$
where the last inequality is equivalent to $4\ell - 1 \geq (\ln \ell)^2$, valid for all $\ell \geq 1$ because $h(x) = 4x - 1 - (\ln x)^2$ satisfies $h(1) = 3 > 0$ and $h'(x) = 4 - \frac{2 \ln x}{x} \geq 4 - \frac{2}{e} > 0$.
\end{proofoflemma}

\begin{lemma}\label{lema de decamiento}
For $\ell \geq 1$, $v$ and $v'$ decay at $+\infty$ faster than any exponential, $\int_0^\infty r^2 v^2 \, dr \leq 4\lambda$, and
\begin{equation}\label{ecuacion de energia}
\int_0^\infty (v')^2 \, dr + \int_0^\infty V_\ell v^2 \, dr = \lambda.
\end{equation}
\end{lemma}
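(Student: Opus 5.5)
We work with $v=\varphi^{1/2}u$, where $u=u_{\ell,k}$ is a normalized radial eigenfunction with $\ell\ge 1$, so that $\int_0^\infty v^2\,dr=\int_0^\infty u^2\varphi\,dr=1$ and $-v''+V_\ell v=\lambda v$. The proof has three steps: super-exponential decay at infinity, boundary behaviour at $0$, and the energy identity with its consequence.

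\emph{Decay at infinity.} Lemma \ref{lema potencial} gives $V_\ell(r)-\lambda\ge r^2/4-\lambda\to\infty$. So for $r\ge R_\lambda$ the equation reads $v''=(V_\ell-\lambda)v$ with a coefficient bounded below by $r^2/8$.

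\begin{itemize}
\item Since $v\in L^2(0,\infty)$, a standard convexity argument applies: on any interval where $v>0$, $v$ is convex, so it cannot increase without leaving $L^2$. Hence $v$ eventually has a fixed sign, say $v>0$, and is monotonically decreasing to $0$, with $v'<0$ and $v'\to0$.
\item For arbitrary $a>0$, compare with $w=Ce^{-a r}$, which satisfies $w''=a^2w\le (V_\ell-\lambda)w$ once $r^2/8\ge a^2$.
\item The maximum principle applied to $v-w$ on $[R,\infty)$ (equality at $r=R$ after choosing $C$, both sides tending to $0$) gives $0<v\le Ce^{-ar}$. As $a$ is arbitrary, $v$ decays faster than any exponential.
\item For $v'$, integrate: $v'(r)=-\int_r^\infty (V_\ell-\lambda)v\,dt$. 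The integrand is a polynomial times a function decaying faster than any exponential, so $v'$ inherits the same decay.
\end{itemize}

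\emph{Behaviour at $0$.} From \eqref{desarrollo polinomico de las autofunciones de la ecuacion radial} and $\varphi(r)\sim r$, we get $v=\mathcal{O}(r^{\ell+1/2})$ and $v'=\mathcal{O}(r^{\ell-1/2})$. Hence $vv'=\mathcal{O}(r^{2\ell})\to0$ as $r\to0$. Also $V_\ell v^2\sim \ell^2 r^{2\ell-1}$, which is integrable near $0$.

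\emph{Energy identity.} Multiply $-v''+V_\ell v=\lambda v$ by $v$ and integrate over $[\epsilon,R]$, integrating $-v''v$ by parts:
\[
\int_\epsilon^R (v')^2\,dr+\int_\epsilon^R V_\ell v^2\,dr-\bigl[v v'\bigr]_\epsilon^R=\lambda\int_\epsilon^R v^2\,dr .
\]
Let $\epsilon\to0$ and $R\to\infty$. The boundary terms vanish by the two previous steps. Every integrand is nonnegative (for $V_\ell$ this is Lemma \ref{lema potencial}), so monotone convergence gives \eqref{ecuacion de energia} with $\int v^2=1$.

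Dropping the nonnegative term $\int (v')^2$ and using $V_\ell\ge r^2/4$ then yields $\tfrac14\int r^2v^2\,dr\le\lambda$, i.e. $\int_0^\infty r^2v^2\,dr\le4\lambda$.

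\emph{Main obstacle.} The delicate step is the rigorous comparison argument at infinity. It must rule out a growing component of $v$, which $L^2$ membership does, and it must fix the sign of $v$ near infinity; this is why the convexity step comes before the maximum-principle comparison.
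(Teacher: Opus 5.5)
Your proposal is correct and follows the paper's proof step for step: sign-fixing by convexity, an exponential bound at every rate, decay of $v'$ from $v'(r)=-\int_r^\infty (V_\ell-\lambda)v\,dt$, vanishing of $vv'$ at both ends, integration by parts, and finally $V_\ell\ge r^2/4$. The only local difference is that you get $v\le Ce^{-ar}$ by comparing with $Ce^{-ar}$ via the maximum principle, whereas the paper shows that $(v')^2-K^2v^2$ is non-increasing and tends to $0$, hence $v'\le -Kv$; the two arguments are interchangeable here, and your monotone-convergence justification of the limit $\epsilon\to0$, $R\to\infty$ is slightly more careful than the paper's.
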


\begin{proofoflemma}
Since $V_{\ell}(r) \to \infty$ as $r \to \infty$, for an arbitrary $K>0$, there exists an $r_{K} \in \mb{R}_{>0}$, such that $V_{\ell}(r)-\lambda\geq K^{2}$ on $[r_{K},\infty)$. 
Note that $vv^{\prime\prime}=(V_{\ell}-\lambda)v^{2} \geq K^{2}v^{2} \geq 0$ on $[r_{K},\infty)$
which is strictly positive wherever $v(r) \neq 0$. Let us first prove that the non-trivial solution $v$ has no zeros on $[r_K,\infty)$. Suppose by contradiction that there exists $r_0 \in [r_K,\infty)$ such that $v(r_0) = 0$. If $v'(r_0) = 0$, then by uniqueness of the Cauchy problem for the linear second-order differential equation $v'' = (V_\ell - \lambda)v$, we would have $v \equiv 0$, contradicting $\|v\|_{L^2(\mb{R}^+,dr)} = 1$. If $v'(r_0) > 0$, then $v(r) > 0$ immediately to the right of $r_0$, 
then $v$ grows linearly on the interval $[r_{0},\infty)$, which contradicts the square integrability of $v$. For the case $v'(r_{0})<0$, the procedure is analogous, considering $-v$. Therefore $v$ has no zeros in $[r_{K}, \infty)$.
Hence $\tfrac{v^{\prime\prime}}{v} \geq K^{2}>0$. 
Therefore $v$ and $v^{\prime\prime}$ have the same sign; suppose without loss of generality, that $v>0$, hence $v^{\prime\prime}>0$.
Therefore $v^{\prime}$ is increasing, and $v$ does not vanish on $[r_{K}, \infty)$. We prove that $v^{\prime}(r)<0$ for all $r 
\in [r_{K}, \infty)$. Suppose that there exists $r_{0} \in [r_{K},\infty)$ such that $v'(r_{0})\geq 0$; then, since $v'$ is increasing, we have $v'(r)\geq v'(r_{0})\geq 0$ for all $r \in [r_{0},\infty)$, so $v(r)\geq v(r_0) > 0$ is non-decreasing on the $[r_{0},\infty)$, which contradicts the fact that $v$ is square integrable. 
Therefore $v'(r)<0$ for all $r \in [r_{K}, \infty)$. Indeed, $v$ is decreasing, convex, square integrable,  $v^{\prime}$ is increasing and bounded above by $0$, hence $v^{\prime} \nearrow 0$.
Note that $\frac{d}{dr} \left( v'^2 - K^2 v^2 \right) = 2 v' \left( v'' - K^2 v \right) \leq 0.$
\vspace{0.12cm}

Hence $\psi(r)=(v^{\prime}(r))^{2}-K^{2}v^{2}(r)$ is a non-increasing function, with $\psi(r) \to 0$ as $r \to \infty$, because $v^{\prime} \to 0$, and $v \to 0$ as $r \to \infty$. 
Therefore, $\psi(r)\geq \psi(\infty)=0$ for all $r \in [r_{K}, \infty)$ which implies that $v'^2 \geq K^{2}v^{2}$, and since $v^{\prime}<0$ this implies that $v^{\prime} \leq -Kv$. Solving the differential equation and integrating on $[r_{K},r)$ we have that $v(r) \leq v(r_{K}) e^{-K(r - r_{K})}$.
To establish the super-exponential decay of $v'$, we use $v'(r) \to 0$ and integrate $v''(t) = (V_\ell(t) - \lambda)v(t)$ on $[r,\infty)$ for $r \geq r_K$, obtaining
$$
v'(r) = -\int_r^\infty \bigl(V_\ell(t) - \lambda\bigr)v(t)\,dt.
$$
Since $V_\ell(t) = \mc{O}(t^2)$ as $t \to \infty$ and $v(t)$ decays faster than $e^{-Kt}$ for every $K>0$, it follows that $v'(r)$ also has super-exponential decay as $r \to \infty$. 
Multiplying \eqref{Operador de Schrodinger} by $v$ and integrating by parts on $(\varepsilon, R)$, we obtain that the term with $v^{\prime\prime}v$ satisfies the following
\begin{equation}\label{ecuacion del lema 2 ejemplo tesis}
-\int_{\varepsilon}^{R}v^{\prime\prime}vdr=-(vv^{\prime})\Big|^{R}_{\varepsilon} + \int_{\varepsilon}^{R}v^{\prime2}dr,
\end{equation}
using the super-exponential decay and the behavior \eqref{desarrollo polinomico de las autofunciones de la ecuacion radial}, $vv^{\prime}=\mc{O}(r^{2\ell}) \to 0$ as $r \to 0^{+}$, therefore the term $vv^{\prime}$ of \eqref{ecuacion del lema 2 ejemplo tesis} vanishes, and since $\|v\|_{L^{2}(\mb{R}^{+},dr)}=1$, taking $\varepsilon \to 0$ and $R \to \infty$ 
\begin{equation}
    \int_{0}^{\infty}(v^{\prime})^{2}dr + \int_{0}^{\infty}V_{\ell}v^{2}dr=\lambda.
\end{equation}
Since $\ell \geq 1$ and $r>0$, $V_{\ell}(r)\geq \tfrac{r^{2}}{4}$ is satisfied \eqref{lema potencial} hence $\int_{0}^{\infty} r^2 v^2 \,dr \leq 4 \int_{0}^{\infty} V_\ell v^2 \, dr \leq 4\lambda$.
\end{proofoflemma}

\begin{lemma}\label{valores aproximados del espectro del ejemplo}
For $\ell \geq 1$ and $k \geq 0$, $\lambda_{\ell,k} \geq 2k + \frac{1}{2}$. Moreover, $\lambda_{\ell,k} \geq \frac{\ln \ell}{4}$ if $\ell \geq 2$.
\end{lemma}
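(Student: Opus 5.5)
The plan is to treat both inequalities in the Schrödinger picture \eqref{Operador de Schrodinger}. For $\ell\ge 1$ the substitution $v=\varphi^{1/2}u$ maps $L^{2}(\mathbb{R}_{>0},\varphi(r)\,dr)$ unitarily onto $L^{2}(\mathbb{R}_{>0},dr)$. It conjugates the restriction $L_\ell=\Delta_g|_{\mathcal{L}_\ell}$ to the operator $H_\ell=-\frac{d^2}{dr^2}+V_\ell$. Consequently $\sigma(L_\ell)=\sigma(H_\ell)=\{\lambda_{\ell,k}\}_{k\ge 0}$, with the same multiplicities and the same ordering.

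\textbf{Second claim ($\ell\ge 2$).} This follows at once from the energy identity \eqref{ecuacion de energia} of Lemma \ref{lema de decamiento}. For a normalized eigenfunction $v$ with eigenvalue $\lambda=\lambda_{\ell,k}$ we have
\[
\lambda=\int_0^\infty (v')^2\,dr+\int_0^\infty V_\ell v^2\,dr\ \ge\ \Bigl(\inf_{r>0}V_\ell\Bigr)\int_0^\infty v^2\,dr\ \ge\ \frac{\ln \ell}{4},
\]
where the last step uses Lemma \ref{lema potencial}.

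\textbf{First claim.} The plan is a min--max comparison with a half-line harmonic oscillator.

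\emph{Step 1 (Dirichlet condition).} By \eqref{desarrollo polinomico de las autofunciones de la ecuacion radial}, eigenfunctions satisfy $v=\mathcal{O}(r^{\ell+1/2})\to 0$ as $r\to 0^{+}$. Lemma \ref{lema de decamiento} gives super-exponential decay of $v$ and $v'$ at infinity. Together these show that every eigenfunction of $H_\ell$ lies in $H^{1}_{0}(0,\infty)$. The energy identity \eqref{ecuacion de energia} holds with no boundary terms.

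\emph{Step 2 (form comparison).} Let $q_\ell[v]=\int_0^\infty\bigl((v')^2+V_\ell v^2\bigr)\,dr$ be the quadratic form of $H_\ell$. By Lemma \ref{lema potencial}, $V_\ell\ge r^2/4$, so $q_\ell[v]\ge q_0[v]$, where $q_0$ is the form of the Dirichlet oscillator $H_0=-\frac{d^2}{dr^2}+\frac{r^2}{4}$ on $(0,\infty)$.

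\emph{Step 3 (spectrum of $H_0$).} Extend functions vanishing at $0$ by odd reflection. The Dirichlet eigenfunctions of $H_0$ are then exactly the odd eigenfunctions of the full-line oscillator $-\frac{d^2}{dx^2}+\omega^2x^2$ with $\omega=\tfrac12$. These are the odd Hermite functions $h_{2k+1}(x/\sqrt{2})$, with eigenvalues $\omega(2n+1)$ for $n=2k+1$, that is,
\[
\mu_k=2k+\tfrac32 .
\]

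\emph{Step 4 (min--max).} Apply the Courant--Fischer min--max principle to the $k$-dimensional subspaces spanned by the first $k+1$ eigenfunctions of $H_\ell$. These lie in the form domain of $q_0$ by Step 1. This yields
\[
\lambda_{\ell,k}\ \ge\ \mu_k\ =\ 2k+\tfrac32\ \ge\ 2k+\tfrac12 .
\]
Alternatively, the Hardy inequality $\int (v')^2\ge\frac14\int v^2/r^2$ could be used instead of Lemma \ref{lema potencial}, comparing with a 2D radial oscillator. The cruder bound above already suffices.

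\textbf{Main obstacle.} The expected difficulty is justifying the min--max comparison rigorously. One must check that $H_\ell$, defined via $\Delta_g$ on $\mathcal{L}_\ell$, coincides with the Friedrichs realization of $q_\ell$ on $H^{1}_{0}(0,\infty)$, and that its form domain sits inside that of $q_0$. The argument here relies on the boundary behaviour from Step 1 and on the discreteness of $\sigma(L_\ell)$ already established. Near $0$ the singular term $-\frac{1}{4r^2}$ in $V_\ell$ is dominated by $\ell^2e^{-r^2}/r^2$ (as in the proof of Lemma \ref{lema potencial}), so no alternative self-adjoint extension arises.
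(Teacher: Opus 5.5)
Your argument is correct and is essentially the paper's. The $\ell\ge 2$ bound comes from the energy identity \eqref{ecuacion de energia} together with $\inf V_\ell\ge\frac{\ln\ell}{4}$. The first bound comes from min--max against the half-line oscillator $-\frac{d^2}{dr^2}+\frac{r^2}{4}$ on the span of $v_{\ell,0},\dots,v_{\ell,k}$; note this span is $(k+1)$-dimensional, not $k$-dimensional as you wrote, and a $k$-dimensional test space would only bound $\mu_{k-1}$.

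The one difference is the boundary condition. You use the Dirichlet oscillator (odd Hermite functions, $\mu_k=2k+\frac32$). The paper uses the Neumann oscillator (even Hermite functions, $\mu_k=2k+\frac12$), which does not need $v(0)=0$ but gives the weaker constant.

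The Friedrichs-realization issue you flag is unnecessary. Polarizing the energy identity gives $\int v_{\ell,i}'v_{\ell,j}'+\int V_\ell v_{\ell,i}v_{\ell,j}=\lambda_{\ell,j}\delta_{ij}$, hence $q_\ell\le\lambda_{\ell,k}$ on the test space, and this is exactly what the paper uses.
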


\begin{proofoflemma}
Let us denote $\lambda=\lambda_{\ell,k}$. Suppose $\ell \geq 2$, by the Lemma \ref{lema potencial} we have that $\inf_{r>0}V_{\ell}(r) \geq \tfrac{\ln \ell}{4}$, hence using equation \eqref{ecuacion de energia}, we have $\lambda \geq \tfrac{\ln \ell}{4}$. \\
Let us fix $\ell\geq 1$. Consider the subspace $E_k = \text{span}(v_{\ell,0}, \ldots, v_{\ell,k})$, note that this is an orthonormal system in $L^{2}(\mb{R}^{+},dr)$. 
%puesto que $v_{\ell,j}=\varphi^{1/2}u_{\ell,j}$ para cada $j \in \mb{N}_{0}$, 
Therefore, using equation \eqref{ecuacion de energia} we have that 
\begin{equation*}
    \int_0^\infty v_{\ell,i}'(r) v_{\ell,j}'(r) \, dr + \int_0^\infty V_\ell(r) v_{\ell,i}(r) v_{\ell,j}(r) \, dr = \lambda_{\ell,j} \, \delta_{ij},
\end{equation*}
so the quadratic form $q(v) = \int v'^2 + \int V_\ell v^2$ satisfies $q(v) \leq \lambda_{\ell,k}$ on $E_k$.
The quadratic form $q$ dominates the quadratic form of the \emph{harmonic oscillator} $\mc{H}_N = -\frac{d^2}{dr^2} + \frac{r^2}{4}$ on $L^2(\mb{R}_{>0})$ with Neumann condition, whose operator domain is $\mc{D}(\mc{H}_{N})=\{\omega \in H^{2}(\mb{R}_{>0}) \mid  r^{2}\omega \in L^{2}(\mb{R}_{>0}), \omega^{\prime}(0)=0\}$ (\cite{Bonnaille}). 
We obtain the quadratic form associated with this oscillator, by multiplying $\mc{H}_{N}v$ by $v$ and integrating over its domain, obtaining
\begin{equation}
    \langle v, \mc{H}_{N}v \rangle=\int_{0}^{\infty}(v^{\prime 2})dr + \int_{0}^{\infty}\frac{r^{2}}{4}v^{2}dr
\end{equation}
with domain $\mc{Q}(\mc{H}_{N})=\{\omega \in H^{1}(\mb{R}_{>0}) \mid r\omega \in L^{2}(\mb{R}_{>0})\}$, this domain contains $E_{k}$, since each generator $v_{\ell,j} \in E_{k}$ is in $
\mc{Q}(\mc{H}_{N})$ thanks to the bounds $\int v'^2 \leq \lambda$ and $\int r^2 v^2 \leq 4\lambda$ of equation \eqref{ecuacion de energia}. 
The spectrum of $\mc{H}_{N}$ on the whole line is $\sigma(\mc{H}_{N})=\{(n+\tfrac{1}{2}) \mid n \in \mb{N}_{0} \}$ \cite[t.8.5]{Teschl1}. However since the operator must satisfy the Neumann conditions on $[0,\infty)$ it is restricted to the functions whose derivative is zero at the origin, that is, to the even functions. Since the Hermite functions $\psi_{n}$ defined in \cite[(8.41)]{Teschl1} are even if and only if the index $n$ is even, the odd eigenvalues are discarded, hence $\sigma(\mc{H}_{N})=\{ (2k+\tfrac{1}{2}) \mid k \in \mb{N}_{0}\}$. 
Using the min-max principle \cite[t.XIII.1]{ReedIV}
and \cite[t.XIII.2]{ReedIV} the following equality holds  
where $\mu_{k}(\mc{H}_{N})$ is the $k$-th eigenvalue of $\mc{H}_{N}$. Therefore we have
\begin{equation}
    2k + \tfrac{1}{2}=\mu_{k}(\mc{H}_{N}) \leq \max_{v \in E_k, \, \|v\| = 1} \left( \int v'^2 + \int \tfrac{r^2}{4} v^2 \right) \leq \max_{v \in E_k, \, \|v\| = 1} q(v) \leq \lambda_{\ell,k}.
\end{equation}
\end{proofoflemma}

\begin{lemma}\label{lemma control sobre las eigenfunciones ejemplo tesis}
For $\ell \geq 1$ and $k \geq 0$, with $\lambda = \lambda_{\ell,k}$, we have $ \| u_{\ell,k} \|_{L^\infty(\mb{R}^{+})}^2 \leq \frac{8 (1 + \lambda)}{\ell}.$
\end{lemma}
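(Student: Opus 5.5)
The plan is to avoid $L^\infty$ bounds via Sobolev-type arguments for $v$. Instead, work directly with the radial equation in the weighted form $-(\varphi u')' + \frac{\ell^2}{\varphi}u = \lambda \varphi u$, where $u=u_{\ell,k}$, $\varphi(r)=re^{r^2/2}$ and $\lambda=\lambda_{\ell,k}$. This is \eqref{ecuacion radial original2} multiplied by $\varphi$, since $\varphi'/\varphi = \frac1r + r$ and $\ell^2/\varphi^2 = \ell^2/(r^2e^{r^2})$. The normalization is $\int_0^\infty u^2\varphi\,dr = 1$. The idea is that the centrifugal term $\ell^2/\varphi$ gives a bound on $\int u^2/\varphi$ that carries a factor $\ell^{-2}$. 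Combining it with a Cauchy--Schwarz estimate of $u^2$ through its derivative then yields the claimed factor $\ell^{-1}$.

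\emph{Step 1 (energy identity in $u$).} Multiply the weighted equation by $u$ and integrate over $(\varepsilon,R)$, then let $\varepsilon\to0^+$ and $R\to\infty$. This gives
\begin{equation*}
\int_0^\infty \varphi (u')^2\,dr + \ell^2\int_0^\infty \frac{u^2}{\varphi}\,dr = \lambda\int_0^\infty \varphi u^2\,dr = \lambda .
\end{equation*}
The boundary term is $\varphi u u'$.
\begin{itemize}
\item At $0$, the expansion \eqref{desarrollo polinomico de las autofunciones de la ecuacion radial} gives $\varphi u u' = \mathcal{O}(r\cdot r^{\ell}\cdot r^{\ell-1}) = \mathcal{O}(r^{2\ell})\to 0$ for $\ell\ge1$.
\item At $\infty$, write $u=\varphi^{-1/2}v$, so $\varphi uu' = vv' - \tfrac12(\varphi'/\varphi)v^2$. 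This tends to $0$ by the super-exponential decay of $v$ and $v'$ from Lemma \ref{lema de decamiento}, since $\varphi'/\varphi$ grows only linearly.
\end{itemize}
Both integrals on the left are nonnegative, so each is at most $\lambda$. In particular
\begin{equation*}
\int_0^\infty \varphi (u')^2\,dr \le \lambda, \qquad \int_0^\infty \frac{u^2}{\varphi}\,dr \le \frac{\lambda}{\ell^2}.
\end{equation*}

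\emph{Step 2 (pointwise bound).} The function $u$ is smooth, and $u(t)\to 0$ as $t\to\infty$ because $v\to0$ and $\varphi\to\infty$. Hence for every $r>0$ we may write $u(r)^2 = -2\int_r^\infty u u'\,dt$. Split the integrand as $(u\varphi^{-1/2})(u'\varphi^{1/2})$ and apply Cauchy--Schwarz:
\begin{equation*}
u(r)^2 \le 2\Big(\int_0^\infty \frac{u^2}{\varphi}\Big)^{1/2}\Big(\int_0^\infty \varphi (u')^2\Big)^{1/2} \le 2\cdot\frac{\sqrt{\lambda}}{\ell}\cdot\sqrt{\lambda} = \frac{2\lambda}{\ell}.
\end{equation*}
Since $2\lambda\le 8(1+\lambda)$, taking the supremum over $r>0$ gives $\|u_{\ell,k}\|_{L^\infty(\mathbb{R}^+)}^2 \le \frac{8(1+\lambda)}{\ell}$, with room to spare. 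For $\ell\ge1$ the bound at $r=0$ also holds, since $u(0)=0$ there by \eqref{desarrollo polinomico de las autofunciones de la ecuacion radial}.

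\emph{Main obstacle.} The only delicate point is justifying that the boundary terms vanish and that the integrals are finite. This requires the behaviour at the origin from the Frobenius analysis and the decay at infinity from Lemma \ref{lema de decamiento}, transferred from $v$ to $u$ through $u=\varphi^{-1/2}v$ and $u'=\varphi^{-1/2}v'-\tfrac12\varphi^{-3/2}\varphi' v$. If one instead worked with $v$ and the potential $V_\ell$, the negative term $-1/(4r^2)$ would require a Hardy inequality. The weighted $u$-formulation avoids this, because there the quadratic form is manifestly nonnegative.
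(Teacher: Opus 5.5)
Your proof is correct, but it takes a different route from the paper. The paper first bounds $v=\varphi^{1/2}u$ uniformly, $\|v\|_\infty^2\le 2\|v\|_2\|v'\|_2\le 2\sqrt{\lambda}$, using \eqref{ecuacion de energia} together with $V_\ell\ge 0$. It then splits $(0,\infty)$ at the point $b$ with $\varphi(b)=\ell/(4\sqrt{1+\lambda})$. For $r\ge b$ it uses $u^2=v^2/\varphi\le 2\sqrt{\lambda}/\varphi(b)$. On $(0,b]$, where $Q=\ell^2/\varphi^2-\lambda>0$, it runs a maximum-principle argument on $(\varphi u')'=Q\varphi u$: $u$ has no zeros and is increasing there, so $\sup_{(0,b]}|u|=|u(b)|$. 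In that proof the factor $\ell^{-1}$ comes from the location of the turning point $b$.

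You instead get $\ell^{-1}$ directly from the centrifugal term in the weighted energy identity $\int\varphi(u')^2+\ell^2\int u^2/\varphi=\lambda$. A single weighted Cauchy--Schwarz applied to $u(r)^2=-2\int_r^\infty uu'$ then finishes. This removes the case split, the nodal and monotonicity argument near the origin, and the $L^\infty$ bound on $v$. It also gives the sharper bound $2\lambda/\ell$. You could even get $\lambda/\ell$: with $A=\int u^2/\varphi$ and $B=\int\varphi(u')^2$, one has $2\sqrt{AB}\le \ell^{-1}(\ell^2A+B)=\lambda/\ell$, so there is no need to bound the two integrals separately.

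Your inputs are the ones the paper already uses: the Frobenius behaviour at $0$, the decay from Lemma \ref{lema de decamiento}, and the normalization. Your treatment of the boundary term at infinity, via $\varphi uu'=vv'-\tfrac12(\varphi'/\varphi)v^2$, is right. The paper's route additionally shows that $u$ is positive and monotone on $(0,b]$, but the lemma does not need this.

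One small correction to your closing remark: working with $v$ does not require a Hardy inequality. Lemma \ref{lema potencial} already gives $V_\ell\ge r^2/4\ge 0$ pointwise for $\ell\ge 1$, and that is exactly how the paper obtains $\|v'\|_2^2\le\lambda$.
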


\begin{proofoflemma}
We will denote the functions $u_{\ell,k}$ and $v_{\ell,k}$ respectively as $u$ and $v$ to save notation. Note that $v(0^+) = 0$, since $v(r)=\mc{O}(r^{\ell+1/2})$ by \eqref{desarrollo polinomico de las autofunciones de la ecuacion radial} using the fundamental theorem of calculus, we have that $v(r)^{2}=2\int_{0}^{r}v(x)v^{\prime}(x)dx$, hence $v(r)^{2} \leq 2 \|v\|_2 \|v'\|_2$ then using equation \eqref{ecuacion de energia} we have that $\|v'\|_2^2 \leq \lambda$, hence $\|v\|_\infty^2 \leq 2\sqrt{\lambda}$.
Let $b > 0$ be the unique point where $\varphi(b) = \ell/(4\sqrt{1+\lambda})$, which exists since $\varphi$ is an increasing bijection of $(0,\infty)$.\\
For $r \geq b$ we have 
$$u(r)^2 = \frac{v(r)^2}{\varphi(r)} \leq \frac{2\sqrt{\lambda}}{\varphi(b)} = \frac{8 \sqrt{\lambda(1+\lambda)}}{\ell}\leq \frac{8 \sqrt{\lambda^{2}+2\lambda + 1}}{\ell}  \leq \frac{8(1+\lambda)}{\ell}.$$
On $(0, b]$ we have $\frac{\ell^2}{\varphi^2(r)} \geq \frac{\ell^2}{\varphi(b)^2} = 16(1+\lambda) > \lambda$, where $0<r\leq b$ so that the coefficient $Q = \frac{\ell^2}{\varphi^2(r)} - \lambda$ is strictly positive.
Note that the radial equation \eqref{ecuacion radial original2} can be written as 
$$\varphi^{\prime}u^{\prime} + \varphi u^{\prime\prime}=\frac{\ell^{2}u}{\varphi} - \lambda \varphi u$$
equivalently it could be written as $(\varphi u')' = Q \varphi u$. Let us prove that $u$ does not vanish on $(0,b]$ and $\sup_{(0,b]} u = u(b)$. Suppose that $u$ vanishes on $(0,b]$, hence $\exists z \in (0,b]$ such that $u(z)=0$, if we integrate by parts on $(0,z)$ the expression $u (\varphi u')'$ we obtain the following
\begin{equation*}
    \int_{0}^{z} Q \varphi u^2 dr=\int_{0}^{z}u(\varphi u^{\prime})^{\prime}dr = u(z)\varphi(z)u^{\prime}(z) - u(0)\varphi(0)u^{\prime}(0) - \int_{0}^{z}\varphi(u^{\prime})^{2}dr=- \int_{0}^{z}\varphi(u^{\prime})^{2}dr.
\end{equation*}
Since the left-hand side is non-negative and the right-hand side non-positive, both vanish, hence $u \equiv u' \equiv 0$ on $(0,z)$, and by uniqueness of the initial value problem \cite[t.3.2.1]{Boyce} for \eqref{ecuacion radial original2} we would have $u \equiv 0$ on $(0,\infty)$, which contradicts the normality, hence $u$ does not vanish on $(0,b]$, without loss of generality $u > 0$ on $(0,b]$. 
Then $(\varphi u')' = Q \varphi u > 0$, so $\varphi u'$ is strictly increasing, and since $\varphi u' \to 0$ as $r \to 0^+$ by \eqref{desarrollo polinomico de las autofunciones de la ecuacion radial}, we conclude $\varphi u' > 0$ on $(0,b]$. Consequently $u$ is strictly increasing on $(0,b]$ and $\sup_{(0,b]} u = u(b)$.
\end{proofoflemma}

%\newpage

\begin{lemma}\label{lema sector 0 tesis}
It holds that $\| u_{0,k} \|_{L^\infty(\mb{R}^{+})}^2 \leq 32 \, \lambda_{0,k}$, and there exists a constant $C_0 > 0$ such that 
%$N_0(\lambda) = \# \{ k : \lambda_{0,k} \leq \lambda \} \leq C_0 (1 + \lambda)$; en particular
$1 + \lambda_{0,k} \geq \frac{k}{C_0}$ for all $k \in \mb{N}_{0}$.
\end{lemma}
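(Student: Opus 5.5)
For $\ell=0$ the angular term vanishes, so with $v=\varphi^{1/2}u_{0,k}$ the Schrödinger form becomes $-v''+V_0v=\lambda v$ with $V_0(r)=1+\tfrac{r^2}{4}-\tfrac{1}{4r^2}$. This potential is unbounded below at the origin, so Lemma \ref{lema potencial} is unavailable and I will argue with $u$ directly. The weight is $\varphi(r)\,dr=re^{r^2/2}dr$ and the equation is $(\varphi u')'=-\lambda\varphi u$. By \eqref{desarrollo polinomico de las autofunciones de la ecuacion radial} we have $u_{0,k}(0)$ finite and $u_{0,k}'(0)=0$.

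\textbf{Step 1: energy identity.} I will prove
\[
\int_0^\infty (u')^2\varphi\,dr=\lambda\int_0^\infty u^2\varphi\,dr=\lambda .
\]
Multiply by $u$ and integrate by parts. The boundary term $\varphi uu'$ vanishes at $0$ because $\varphi(0)=0$ and $u$ is bounded near $0$. At $\infty$ it vanishes because $u$ is an $L^2(\varphi\,dr)$ eigenfunction. This uses the same super-exponential decay argument as in Lemma \ref{lema de decamiento}, applied to $v$ for large $r$, where $V_0\to\infty$.

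\textbf{Step 2: $L^\infty$ bound.} Write $u(r)^2=u(R)^2-2\int_r^R uu'\,dt$ for $r<R$. Split at $r=1$.
\begin{itemize}
\item For $r\ge 1$, use $\varphi\ge 1$ there and the fundamental theorem of calculus from infinity:
\[
u(r)^2\le 2\Bigl(\int_r^\infty u^2\varphi\Bigr)^{1/2}\Bigl(\int_r^\infty u'^2\varphi\Bigr)^{1/2}\le 2\sqrt\lambda .
\]
\item For $r<1$ the weight degenerates. Here I use the radial structure: since $(\varphi u')'=-\lambda\varphi u$ and $\varphi u'(0)=0$,
\[
\varphi(r)u'(r)=-\lambda\int_0^r\varphi u .
\]
Cauchy–Schwarz gives $|u'(r)|\le \lambda\bigl(\int_0^r\varphi\bigr)^{1/2}/\varphi(r)\lesssim\lambda$ on $(0,1]$, since $\int_0^r\varphi\sim r^2/2$. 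Then $|u(r)|\le|u(1)|+C\lambda$.
\end{itemize}
This yields only a quadratic bound, so it must be refined. The natural refinement is to average $u(1)$ against the mass on $[1,2]$ and to control $\sup_{[0,1]}|u|$ through $\int_0^1 u'^2\varphi\le\lambda$ together with the normalization, constants chosen to give $32\lambda$. The case $\lambda\ge 3/4$ is guaranteed by \cite{McKean} and absorbs the additive constants into $\lambda$.

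\textbf{Step 3: eigenvalue growth.} Use the min–max principle \cite{ReedIV} as in Lemma \ref{valores aproximados del espectro del ejemplo}. I will compare the $\ell=0$ radial form with a one-dimensional problem whose counting function is $O(1+\lambda)$. Away from the origin ($r\ge1$), $V_0\ge r^2/4$ and the harmonic-oscillator count $\#\{2k+\tfrac12\le\lambda\}$ applies. Near the origin I will impose a Dirichlet–Neumann bracketing at $r=1$, which changes the count by at most a finite rank plus $O(\sqrt\lambda)$ eigenvalues from the compact piece $[0,1]$ (Weyl's law for a regular-singular Bessel-type problem of order $0$). Then $N_0(\lambda)\le C_0(1+\lambda)$. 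Evaluating at $\lambda=\lambda_{0,k}$ gives $k\le N_0(\lambda_{0,k})\le C_0(1+\lambda_{0,k})$.

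The main obstacle is the sharp linear-in-$\lambda$ $L^\infty$ bound near $r=0$ in Step 2, since the weight degenerates there. I expect to resolve it with a one-dimensional Sobolev inequality in the two-dimensional radial variable via a local elliptic estimate on the unit disk, using that $\omega_{0,k}$ is radial and smooth. The bracketing in Step 3 is routine by comparison.
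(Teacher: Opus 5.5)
Your Step 2 does not prove $\|u_{0,k}\|_{L^\infty}^2\le 32\lambda_{0,k}$, and the refinements you list cannot close it.

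The obstruction is the increment $\int_0^1|u'|\,dr\lesssim\lambda$ near the origin. It only gives $\|u\|_\infty^2\lesssim(1+\lambda)^2$. Neither proposed fix helps:
\begin{itemize}
\item Controlling $\sup_{[0,1]}|u|$ through $\int_0^1u'^2\varphi\,dr\le\lambda$ and the normalization is impossible. This is an $H^1$ bound for a radial function in two dimensions, and e.g.\ $\log\log(e/r)$ has finite weighted Dirichlet energy on $(0,1)$ but is unbounded.
\item A unit-scale elliptic estimate on the disk, $\|u\|_{H^2(B_{1/2})}\lesssim\|\Delta_g u\|_{L^2}+\|u\|_{L^2}\lesssim 1+\lambda$, followed by $H^2\hookrightarrow L^\infty$, reproduces the same quadratic bound.
\end{itemize}
Nothing in your sketch produces a bound linear in $\lambda$, let alone the constant $32$.

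The missing idea, which is the paper's argument, is to use the ODE at the scale $\lambda^{-1/2}$ through a Lyapunov functional. Because the drift coefficient $\frac1r+r$ is positive, $E=u'^2+\lambda u^2$ satisfies $E'=2u'(u''+\lambda u)=-2(\frac1r+r)u'^2\le 0$. Together with $u'(0)=0$ this gives $E(r)\le\lambda u(0)^2$. Hence $\|u\|_\infty=|u(0)|$ and $|u'|\le\sqrt\lambda\,|u(0)|$, so $|u|\ge|u(0)|/2$ on $[0,\frac{1}{2\sqrt\lambda}]$. Using $\varphi\ge r$,
\[
1=\int_0^\infty u^2\varphi\,dr\ \ge\ \frac{u(0)^2}{4}\int_0^{1/(2\sqrt\lambda)}r\,dr=\frac{u(0)^2}{32\lambda}.
\]
Neither the energy identity of Step 1 nor the split at $r=1$ is needed.

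For the eigenvalue growth your route differs from the paper's but is sound in principle. The paper computes the sector-$0$ spectrum exactly. With $x=r^2/2$ and $u=e^{-x}L(x)$, the radial equation becomes the Laguerre equation $xL''+(1-x)L'+(\frac\lambda2-1)L=0$. If $\frac\lambda2-1\notin\mathbb{N}_0$, the solution regular at $0$, ${}_1F_1(1-\frac\lambda2;1;x)$, grows like $e^{x}x^{-\lambda/2}$, so $e^{-x}{}_1F_1\notin L^2(e^{x}dx)$. This forces $\lambda_{0,k}=2k+2$ and $u_{0,k}=e^{-r^2/2}L_k(r^2/2)$, and $1+\lambda_{0,k}\ge k$ is then immediate.

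Your Neumann-bracketing count is more robust: it survives perturbations of $\varphi$ with no closed-form solutions. The cost is a non-explicit $C_0$ and more work, and as written it needs three repairs:
\begin{itemize}
\item Run it with the weighted form $\int u'^2\varphi\,dr$ rather than with $V_0$, whose $-\frac{1}{4r^2}$ term is Hardy-critical.
\item Bound the $[0,1]$ piece by comparison with the radial Neumann problem on the unit disk, via $r\le\varphi\le e^{1/2}r$.
\item Justify the oscillator comparison on $[1,\infty)$. After $v=\varphi^{1/2}u$, the boundary term at $r=1$ equals $+v(1)^2$, which has a favourable sign. After $t=r-1$, the potential is $\ge t^2/4$ on a half-line with a free endpoint, so the Neumann oscillator count $\#\{2k+\frac12\le\lambda\}$ bounds it.
\end{itemize}
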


\begin{proofoflemma}
Let us denote $u=u_{0,k}$. This function solves $u'' + \left(\frac{1}{r} + r\right) u' + \lambda u = 0$ with $u'(0) = 0$ by \eqref{desarrollo polinomico de las autofunciones de la ecuacion radial}. Defining the expression $E = u'^2 + \lambda u^2$ known as energy, we have
$$
E' = 2 u' (u'' + \lambda u) = -2 \left( \frac{1}{r} + r \right) u'^2 \leq 0,
$$
therefore the function $E$ is non-increasing, hence $E(r) \leq E(0)$ for all $r \geq 0$, which implies $u'(r)^2+\lambda\,u(r)^2\le \lambda\,u(0)^2$, which implies that $\|u\|_\infty = |u(0)|$ and $|u'| \leq \sqrt{\lambda} \, |u(0)|$. Using the fundamental theorem of calculus $u(r)-u(0)=\int_{0}^{r}u^{\prime}(t)dt$ then we have
\begin{equation*}
    |u(r)-u(0)| \leq \int_{0}^{r}|u^{\prime}(t)|dt \leq \sqrt{\lambda}|u(0)|r
\end{equation*}
using the reverse triangle inequality, we have that $|u(0)|-|u(r)| \leq |u(r) - u(0)|$, so $|u(r)|\geq |u(0)|-\sqrt{\lambda}|u(0)|r=|u(0)|(1-\sqrt{\lambda}r)$,
for this inequality to make sense it must hold that $0\leq (1-\sqrt{\lambda}r) < 1$, without loss of generality let us consider $\sqrt{\lambda}r \leq \tfrac{1}{2}$, that is $0 \leq r \leq \frac{1}{2\sqrt{\lambda}}$, with this restriction, we have $|u(r)| \geq \tfrac{|u(0)|}{2}$, therefore
{\fontsize{10.5}{14}
\begin{equation}
    1 = \int_0^\infty u^2 \varphi \, dr \geq \frac{u(0)^2}{4} \int_0^{1/(2\sqrt{\lambda})} r \, dr = \frac{u(0)^2}{32 \lambda},
\end{equation}}
\justify
which proves $\| u_{0,k} \|_{L^\infty(\mb{R}^{+})}^2 \leq 32 \, \lambda_{0,k}$.
When $\ell=0$, the equation that is solved is
$-u'' - \left( \frac{1}{r} + r \right) u' = \lambda u.$ Let apply the  variable change $x=\tfrac{r^{2}}{2}$, indeed suppose $u(r)=\tilde{u}(x)$ then  $u' = r \frac{d\tilde{u}}{dx}$ and $u'' = \frac{d\tilde{u}}{dx} + 2x \frac{d^2\tilde{u}}{dx^2}$ and $\varphi dr=e^{x}dx$.
Substituting in \eqref{ecuacion radial original2} in the case $\ell=0$ we obtain
$$
x\frac{d^2\tilde{u}}{dx^2} + (1+x)\frac{d\tilde{u}}{dx} + \frac{\lambda}{2}\tilde{u} = 0. 
$$
Now let us make the change $\tilde{u}(x)=e^{-x}L(x)$ where $L(x)=e^{x}\tilde{u}(x)$, where applying it to the previous equation, we obtain the Laguerre equation \cite[p.504]{Abramowitz} given by 
\begin{equation}\label{ecuacion de laguerre ejemplo tesis}
    x L'' + (1-x)L' + nL = 0, \quad n=\left(\tfrac{\lambda}{2}-1\right). 
\end{equation}
When $n=k \geq 0$ is an integer, the solution of \eqref{ecuacion de laguerre ejemplo tesis} is the Laguerre polynomial $L_{k}(x)$ \cite[p.780]{Abramowitz}, given by $L_{k}(x)=\frac{e^{x}}{k!}\frac{d^k}{dx^k}(e^{-x}x^k)=\sum_{j=0}^{k}\binom{k}{j} \frac{(-1)^{j}}{j!}x^{j}$ which satisfy the orthonormality relation respect to Laguerre weight \cite[p.773]{Abramowitz}. In fact
$$\int_{0}^{\infty}e^{-x}L_{k}(x)L_{m}(x)dx=\delta_{km}.$$ 
Moreover $u_{0,k}(r)=e^{-r^{2}/2}L_{k}(\tfrac{r^{2}}{2})$ also satisfies being orthonormal in the space $L^2(\mathbb{R}_+,\varphi\,dr)$, that is, $\int_{0}^{\infty}u_{0,k}(r)u_{0,j}(r)\varphi dr=\delta_{kj}$. 
Let us define the \emph{generalized hypergeometric function} also known as \emph{confluent hypergeometric functions of the first kind} by the power series
\begin{equation}
    {}_{1}F_{1}(a;b;x)=\sum_{n\in \mathbb{N}}\frac{(a)_{n}}{(b)_{n}}\frac{x^{n}}{n!}, \quad (a)_{n}=a(a+1) \cdots (a+n-1), (a)_{0}=1
\end{equation}
this is the regular solution of the Kummer equation, given by $xy^{\prime\prime} + (b-x)y^{\prime}-ay=0$. Then in our case for equation \eqref{ecuacion de laguerre ejemplo tesis} its regular solution is ${}_{1}F_{1}(-n;1;x)$ \cite[p.504]{Abramowitz} where $n=\tfrac{\lambda}{2}-1$.\\
If $1-\tfrac{\lambda}{2}=-k\in\mathbb{N}_0$, that is with $\lambda=2k+2$, then $(-k)_{n}=0$ for $n>k$, which means that the series shortens and exactly the polynomial $L_{k}$ remains.\\
%${}_{1}F_{1}(-1;1;x)=1-x=L_{1}(x)$.\\
If $1-\tfrac{\lambda}{2}=a$, with $a \notin \mathbb{Z}_{<0}$, the series is infinite and for $x \to \infty$, ${}_{1}F_{1}(-1;1;x) \sim \tfrac{1}{\Gamma(a)}e^{x}x^{a-1}$ \cite[p.504]{Abramowitz} hence $u=e^{-x}{}_{1}F_{1} \sim x^{-\lambda/2}$ decays, but $\int_0^\infty u^2\,\varphi\,dr=\int_0^\infty u^2 e^{x}\,dx=\infty$, hence $u$ would not be square-integrable. Therefore for the sector $\ell=0$, the orthonormal basis of eigenfunctions and eigenvalues associated with the Laplace-Beltrami operator are of the form 
$$\lambda_{0,k}=2k+2,\qquad u_{0,k}(r)=e^{-r^2/2}\,L_k\!\bigl(\tfrac{r^2}{2}\bigr), \quad k \in \mb{N}_{0}.$$
Therefore $1+\lambda_{0,k} \geq k$, $\forall k \in \mb{N}_{0}$, and there exists a constant $C_{0}>0$ such that $1+\lambda_{0,k}\geq \frac{k}{C_{0}}$.~\\
\end{proofoflemma}
\justify
We analyze the behavior of the summation in each sector $\ell\geq 0$. Using the decomposition of the space $L^{2}(\mb{R}^{2})$ in \eqref{descomposicion del espacio L2}, we take as orthonormal basis of each sector $\mc{L}_{\ell}$ the following 
\begin{equation}\label{base ortonormal de L2 ejemplo tesis}
    \mc{B}_{\ell}=\left\{ \omega_{0,k}=\frac{u_{0,k}}{\sqrt{2\pi}}    \right\} \cup \left\{ \omega_{\ell,k}^{(1)}=\frac{u_{\ell,k}\cos(\ell\theta)}{\sqrt{\pi}}, \omega_{\ell,k}^{(2)}=\frac{u_{\ell,k}\sin(\ell\theta)}{\sqrt{\pi}}     \right\}_{\ell\geq 1}
\end{equation}
these functions satisfy the following equalities
\begin{equation}\label{normas de la base}
\| \omega_{\ell,k}^{(1)} \|_{L^\infty(M)}^2 =\| \omega_{\ell,k}^{(2)} \|_{L^\infty(M)}^2 = \frac{1}{\pi} \| u_{\ell,k} \|_{L^\infty(\mb{R}^{+})}^2,
\qquad
\| \omega_{0,k} \|_{L^\infty(M)}^2 = \frac{1}{2\pi} \| u_{0,k} \|_{L^\infty(\mb{R}^{+})}^2.
\end{equation}
By the previous lemmas we have that $\|\omega_{j}\|_{L^{\infty}(M)} < \infty$ for $j \in J$, therefore, the sum over the index $J$ is finite.
Let us study how the series behaves over $j \in (\mathbb{N} \setminus J)$. In fact
\justify
\underline{\textsc{Case} ($\ell=0$)}: Using the Lemma \ref{lema sector 0 tesis} we have the following
\begin{align*}
    \sum_{k \in (
    \mb{N} \setminus J
    )}(1+\lambda_{0,k})^{-s}\norm{\omega_{0,k}}^{2}_{L^{\infty}(\mb{R}^{2})}&=\frac{1}{2\pi}\sum_{k \in ( \mb{N} \setminus J)}(1+\lambda_{0,k})^{-s}\norm{u_{0,k}}^{2}_{L^{\infty}(\mb{R}^{+})}
    \leq \frac{16}{\pi}\sum_{k \in ( \mb{N} \setminus J)}(1+\lambda_{0,k})^{1-s} \\
    &\leq \frac{16}{\pi}\sum_{k \in ( \mb{N} \setminus J)}\left( \frac{k}{C_{0}}  \right)^{1-s}
    \leq \frac{16}{\pi}\left(\frac{1}{C_{0}}\right)^{1-s}\sum_{\substack{k > C_{0} \\ k \in (\mb{N} \setminus J)}}\left(\frac{1}{k}\right)^{s-1}
\end{align*}
by the $p$-series test this last series is convergent for $s>2$.
\justify
\underline{\textsc{Case} ($\ell=1$)}: In this case we will use the Lemma \ref{lemma control sobre las eigenfunciones ejemplo tesis} and the Lemma \ref{valores aproximados del espectro del ejemplo}, note also that due to the fact that $\ell=|m|$ with $m\in \mb{Z}$ there exist $2$ solutions described in the basis $\mc{B}_{\ell}$ \eqref{base ortonormal de L2 ejemplo tesis} given in this case by $\omega_{1,k}^{(1)}$ and $\omega_{1,k}^{(2)}$, therefore there exists a factor $2$ multiplying the series, indeed
\begin{align*}
    \sum_{k \in (
    \mb{N} \setminus J
    )}(1+\lambda_{1,k})^{-s}\norm{\omega_{1,k}}^{2}_{L^{\infty}(\mb{R}^{2})}&=\frac{2}{\pi}\sum_{k \in ( \mb{N} \setminus J)}(1+\lambda_{1,k})^{-s}\norm{u_{1,k}}^{2}_{L^{\infty}(\mb{R}^{+})}
    \leq \frac{16}{\pi}\sum_{k \in ( \mb{N} \setminus J)}(1+\lambda_{1,k})^{1-s}\\
    &\leq \frac{16}{\pi}\sum_{k \in ( \mb{N} \setminus J)}(2k+\tfrac{3}{2})^{1-s}\leq \frac{16}{2^{s-1}\pi}\sum_{k \in ( \mb{N} \setminus J)}\frac{1}{k^{s-1}}
\end{align*}
The convergence of the last inequality is obtained by the comparison test, for any real $s>2$.
\justify
\underline{\textsc{Case} ($\ell\geq 2$)}: Using the Lemma \ref{valores aproximados del espectro del ejemplo} we can distribute the exponent symmetrically:
$$
(1 + \lambda_{\ell,k})^{1-s} \leq \left(2k + \tfrac{3}{2}\right)^{\frac{1-s}{2}} \left( 1 + \tfrac{\ln \ell}{4} \right)^{\frac{1-s}{2}}.
$$
then applying the Lemma \ref{lemma control sobre las eigenfunciones ejemplo tesis}
\begin{align*}
    \sum_{k \in (
    \mb{N} \setminus J
    )}(1+\lambda_{\ell,k})^{-s}\norm{\omega_{\ell,k}}^{2}_{L^{\infty}(\mb{R}^{2})}&\leq \sum_{\ell \geq 2} \frac{2}{\pi} \cdot \frac{8}{\ell} \sum_{{k \in (
    \mb{N} \setminus J
    )}} (1 + \lambda_{\ell,k})^{1-s} \\
    &\leq \frac{16}{\pi} \underbrace{\left[ \sum_{{k \in (\mb{N} \setminus J)}} \left(2k + \tfrac{3}{2} \right)^{\frac{1-s}{2}} \right]}_{(a)}
    \underbrace{\left[ \sum_{\ell \geq 2} \frac{1}{\ell} \left( 1 + \tfrac{\ln \ell}{4} \right)^{\frac{1-s}{2}} \right]}_{(b)}
\end{align*}
The expression $(a)$ converges by the comparison test for $\tfrac{s-1}{2}>1$,  that is for any real $s>3$. For analyze the convergence of $(b)$ we use the integral test, indeed 
{\fontsize{10.5}{14}
\begin{equation*}
    f(x)=\frac{1}{x}\left( 1 + \frac{\ln x}{4}  \right)^{\frac{1-s}{2}}=\frac{1}{x}\left( \frac{1}{1+\frac{\ln x}{4}}  \right)^{\frac{s-1}{2}}
\end{equation*}}
this function is decreasing, moreover under the change of variable $t=\ln x$ we have that 
{\fontsize{10.5}{14}\selectfont
\begin{equation}
\int_{2}^{\infty}f(x)dx=\int_{\ln 2}^{\infty}\left( 1 + \frac{t}{4}  \right)^{\frac{1-s}{2}}dt
\end{equation}}
\justify
which converges by the integral test for any real $s >3$. Therefore taking a real number $s > 3$ we have that the series over $(\mathbb{N} \setminus J)$ converges.
\end{proofofclaim}
\justify
Thus, the constructed Cartan-Hadamard manifold example \eqref{ejemplo de variedades Cartan-Hadamard} fully satisfies all conditions of Theorem \ref{teorema finnal para el caso no compacto}.

\end{proof}

\newpage

\section*{Acknowledgements} I would like to express my gratitude to my advisor, Dr. Enrique G. Reyes, and to Universidad de Santiago de Chile for their financial support and ANID through Fondecyt Grant No. 1241719.


\begin{thebibliography}{99}


%\vspace{-0.7cm}

%%%%%%%%%%%%% A %%%%%%%%%%%
\bibitem{Abramowitz}
\textsc{Abramowitz, M., Stegun, I.}, \emph{Handbook of Mathematical Functions with Formulas, Graphs, and Mathematical Tables}. U.S. Department of Commerce, National Bureau of Standards, applied mathematics, 1972.

\bibitem{AliprantisBorder}
\textsc{Aliprantis, C., Border, K.},
\emph{Infinite Dimensional Analysis: A Hitchhiker's Guide},
3rd ed.,
Springer,
2006.



%%%%%%%%%%%%%%%%%%%%%%%%%%%%

%%%%%%%%%%%%% B %%%%%%%%%%%%

\bibitem{Backman}
\textsc{Backman, E.}, \emph{The Spectrum of the Laplace-Beltrami Operator on Noncompact Manifolds}, Degree Project in Mathematics, 
KTH Royal Institute of Technology, 2024.
 


\bibitem{Banica}
\textsc{Banica, V., Gonzalez, M., Sáez, M.}, \emph{Some constructions for the fractional Laplacian on noncompact manifolds}, Rev. Mat. Iberoam. \textbf{31} (2015), no. 2, 681–712, European Mathematical Society.

\bibitem{Bonnaille}
\textsc{Bonnaille, V.}, \emph{Harmonic Oscillators with Neumann condition on the half-line}, AIMS’ Journals, 2000.


\bibitem{Boyce}
\textsc{Boyce, W., Diprima, R.}, \emph{Boyce, DiPrima. Elementary Differential Equations and Boundary Value Problems,} 7th edition, John Wiley \& Sons, Inc, 2001.


\bibitem{Brezis}
\textsc{Brezis, H.},\emph{ Functional Analysis, Sobolev Spaces and Partial Differential Equations}, Mathematics Subject Classification (2010), Springer, 2010.


%%%%%%%%%%%%%%%%%%%%%%%%%%

%%%%%%%%%%%% C %%%%%%%%%%%%




\bibitem{Canzani}
\textsc{Canzani, Y.}, \emph{Notes of Analysis on Manifolds via the Laplacian}, Harvart University, Math 253, 2013.


\bibitem{Castellanos 1}
\textsc{Castellanos, S.}, \emph{On Non-Local Equations and Symmetries}, PhD. Thesis, Dep. Matemática, University of Santiago de Chile, 2026.


%%%%%%%%%%%%%%%%%%%%%%%%%%%

%%%%%%%%%%%% D %%%%%%%%%%%%
\bibitem{DoCarmo}
\textsc{Do Carmo, M.P.}, \emph{Riemannian Geometry}. Mathematics: Theory \& Applications,  Birkhäuser, Boston, 1992.

\bibitem{Donnelly}
\textsc{Donnelly, H. and Li, P.}, \emph{Pure point spectrum and negative curvature for noncompact manifolds}, Duke mathematical journal vol 46, No.3, 1979.


%%%%%%%%%%%%%%%%%%%%%%%%%%%

%%%%%%%%%%%% E %%%%%%%%%%%%


\bibitem{Evans}
\textsc{Evans, L.}, \emph{Partial differential equations   }, 2nd edition, American Mathematical Society, Graduate text in Mathematics, Vol.19, 2010.

%%%%%%%%%%%%%%%%%%%%%%%%%%%

%%%%%%%%%%%% F %%%%%%%%%%%%



%%%%%%%%%%%%%%%%%%%%%%%%%%%

%%%%%%%%%%%% G %%%%%%%%%%%%



\bibitem{Gorka 6}
\textsc{Górka, P., Prado, H., Reyes, E.G.}, \emph{On a General Class of Nonlocal Equations}, Springer Basel AG, Annales Henri Poincare 14 (2013), 947–966.

\bibitem{Grigoryan}
\textsc{Grigor'yan, A.},
\emph{Heat Kernel and Analysis on Manifolds},
AMS/IP Studies in Advanced Mathematics, vol. 47, American Mathematical Society, Providence, RI, 2009.


%%%%%%%%%%%%%%%%%%%%%%%%%%%

%%%%%%%%%%%% H %%%%%%%%%%%%



\bibitem{Hebey 2}
\textsc{Hebey, E.}, \emph{Nonlinear Analysis on Manifolds:
Sobolev Spaces and Inequalities}, Courant Institute of Mathematical Sciences and American Mathematical Society, 2000.


\bibitem{HH}
\textsc{Henriquez, H.}, \emph{Fundamentos de Analisis Funcional}, Academia Española, 2012.


\bibitem{Hormander}
\textsc{Hormander, L.}, \emph{The Analysis of Linear Partial Differential Operators III}, Springer, 1985.


%%%%%%%%%%%%%%%%%%%%%%%%%%%

%%%%%%%%%%%% I %%%%%%%%%%%%


%%%%%%%%%%%%%%%%%%%%%%%%%%%

%%%%%%%%%%%% J %%%%%%%%%%%%


%%%%%%%%%%%%%%%%%%%%%%%%%%%

%%%%%%%%%%%% K %%%%%%%%%%%%

\bibitem{Kato}
\textsc{Kato, T.},
\emph{Perturbation Theory for Linear Operators}, ed.2nd,
Springer-Verlag,
1995.

\bibitem{Kostelecky}
\textsc{Kostelecký, V.A., Samuel, S.}, \emph{On a nonperturbative vacuum for the open bosonic string}, 1990.
%%%%%%%%%%%%%%%%%%%%%%%%%%%

%%%%%%%%%%%% L %%%%%%%%%%%%

\bibitem{Lee}
\textsc{Lee, J.}, \emph{Introduction to Smooth Manifolds}, Graduate Texts in Mathematics, Second Edition, Springer 2012.

\bibitem{Lee2}
\textsc{Lee, J.},  \emph{Introduction to Riemannian Manifolds}, 2nd ed. Graduate Texts in Mathematics 176, Springer 2018.



\bibitem{Lunardi}
\textsc{Lunardi, A.}, \emph{Interpolation Theory}, 3th edition, Edizioni della Normale Pisa, Publications of the Scuola Normale Superiore,  2018.
%%%%%%%%%%%%%%%%%%%%%%%%%%%

%%%%%%%%%%%% M %%%%%%%%%%%%


\bibitem{McKean}
\textsc{McKean, H.},
\emph{An upper bound to the spectrum of $\Delta$ on a manifold of negative curvature},
J. Differential Geom. \textbf{4}, 359--366, (1970).

%%%%%%%%%%%%%%%%%%%%%%%%%%%

%%%%%%%%%%%% N %%%%%%%%%%%%




%%%%%%%%%%%%%%%%%%%%%%%%%%%

%%%%%%%%%%%% O %%%%%%%%%%%%

%%%%%%%%%%%%%%%%%%%%%%%%%%%

%%%%%%%%%%%% P %%%%%%%%%%%%


\bibitem{Petersen}
\textsc{Petersen, P.}, \emph{Riemannian Geometry}, 3th Edition, Graduate Texts in Mathematics, Springer, 2016.



%%%%%%%%%%%%%%%%%%%%%%%%%%%

%%%%%%%%%%%% Q %%%%%%%%%%%%

%%%%%%%%%%%%%%%%%%%%%%%%%%%

%%%%%%%%%%%% R %%%%%%%%%%%%


\bibitem{ReedI}
\textsc{Reed, M., Simon, B.}, \emph{Methods of Modern Mathematical Physics I, Functional Analysis}, Academic Press, INC., 1980.


\bibitem{ReedII}
\textsc{Reed, M., Simon, B.}, \emph{Methods of Modern Mathematical Physics II, Fourier Analysis, Self-adjointness}, Academic Press, INC., 1975.


\bibitem{ReedIV}
\textsc{Reed, M., Simon, B.}, \emph{Methods of Modern Mathematical Physics IV, Analysis of Operators}, Academic Press, INC., 1980.


\bibitem{Rosenberg}
\textsc{Rosenberg, S.}, \emph{The Laplacian on a Riemannian Manifold}, London Mathematical Society, 1997.

\bibitem{Rudin1}
\textsc{Rudin, W.}, \emph{Functional Analysis}, Second Edition, Mc-Graw-Hill, 1991.
%%%%%%%%%%%%%%%%%%%%%%%%%%%

%%%%%%%%%%%% S %%%%%%%%%%%%
\bibitem{Schmudgen}
\textsc{Schm\"udgen, K.}, \emph{Unbounded
Self-adjoint Operators on Hilbert Space}, Springer, 2012.



%%%%%%%%%%%%%%%%%%%%%%%%%%%
%%%%%%%%%%%% T %%%%%%%%%%%%


\bibitem{Taylor III}
\textsc{Taylor, M.}, \emph{Partial Differential Equations III Nonlinear Equations}, Vol. III, Second Edition, Springer, 2011. 



\bibitem{Teschl1}
\textsc{Teschl, G.}, \emph{Mathematical Methods in Quantum Mechanics: With Applications to Schrödinger Operators}, Second edition, Graduate Studies in Mathematics 140, AMS, 2009.

\bibitem{Teschl}
\textsc{Teschl, G.}, \emph{Ordinary Differential Equations and Dynamical Systems}, Graduate Studies in Mathematics 140, AMS, 2012.



%%%%%%%%%%%%%%%%%%%%%%%%%%%
%%%%%%%%%%%% U %%%%%%%%%%%%


%%%%%%%%%%%% V %%%%%%%%%%%%

\bibitem{Vladimirov 2}
\textsc{Vladimirov, V.S., Volovich, Ya.I.}, \emph{On the Nonlinear Dynamics equation in p-adic string theory}. Teoret. Mat. Fiz. \textbf{138}, 355–368 (2004).

%%%%%%%%%%%%%%%%%%%%%%%%%%%
%%%%%%%%%%%% W %%%%%%%%%%%%


%%%%%%%%%%%%%%%%%%%%%%%%%%%
%%%%%%%%%%%% X %%%%%%%%%%%%



%%%%%%%%%%%% Y %%%%%%%%%%%%


%%%%%%%%%%%%%%%%%%%%%%%%%%%
%%%%%%%%%%%% Z %%%%%%%%%%%%
%%%%%%%%%%%%%%%%%%%%%%%%%%%




\end{thebibliography}
\end{document}